\title{Arithmetic Siegel-Weil formula on $\mathcal{X}_0(N)$: Singular terms}
\author{Baiqing Zhu}
\date{Jan 2024}
\newcommand{\Q}{\mathbb{Q}}
\newcommand{\Z}{\mathbb{Z}}
\newcommand{\C}{\mathbb{C}}
\newcommand{\F}{\mathbb{F}}
\newcommand{\Xc}{\mathcal{X}_{0}(N)}
\newcommand{\Yc}{\mathcal{Y}_{0}(N)}
\newcommand{\pr}{\prime}
\newcommand{\rar}{\rightarrow}
\newcommand{\CH}{\textup{CH}}
\newcommand{\ACH}{\widehat{\textup{CH}}^{1}(\Xc)}
\newcommand{\ach}{\widehat{\textup{CH}}^{1}(\Xc,\mathcal{S})}
\newcommand{\Ach}{\widehat{\textup{CH}}^{2}(\Xc)}
\newcommand{\hod}{\widehat{\omega}_{N}}
\newcommand{\ad}{\widehat{\mathcal{Z}}}
\newcommand{\LN}{\Gamma_{0}(N)}
\newcommand{\cusp}{\widehat{\mathsf{Cusp}}}
\newcommand{\ch}{\chi_{t}}
\newcommand{\infd}{\widehat{\Delta}_{N}}
\newcommand{\la}{\Delta(N)}
\newcommand{\den}{\textup{Den}}
\newcommand{\Of}{\mathcal{O}_{F}}
\begin{document}
\theoremstyle{definition}        
\newtheorem{definition}{Definition}[subsection] 
\newtheorem{example}[definition]{Example}
\newtheorem{remark}[definition]{Remark}

\theoremstyle{plain}
\newtheorem{theorem}[definition]{Theorem}
\newtheorem{lemma}[definition]{Lemma}         
\newtheorem{proposition}[definition]{Proposition}
\newtheorem{corollary}[definition]{Corollary}

\maketitle
\begin{abstract}
    For arbitrary level $N$, we relate the generating series of codimension 2 special cycles on $\mathcal{X}_{0}(N)$ to the derivatives of a genus 2 Eisenstein series, especially the singular terms of both sides. On the analytic side, we use difference formulas of local densities to relate the singular Fourier coefficients of the genus 2 Eisenstein series to the nonsingular Fourier coefficients of a genus 1 Eisenstein series. On the geometric side, we study the reduction of cusps to compute the divisor class of the Hodge bundle and the heights of special divisors. When $N$ is square-free, this gives a different proof of the main results in the works of Du and Yang \cite{DY19} and Sankaran, Shi, and Yang \cite{SSY22}. 
\end{abstract}
\pagenumbering{roman}
\tableofcontents
\newpage
\pagenumbering{arabic}
\section{Introduction}
\subsection{Background}
The classical Siegel-Weil formula relates certain Siegel Eisenstein series to the arithmetic of quadratic forms, namely it expresses special values of these series as theta functions -- generating series of representation numbers of quadratic forms. Kudla initiated an influential program to establish the arithmetic Siegel-Weil formula relating certain Siegel Eisenstein series to objects in arithmetic geometry. 
\par 
In this article, we study the case of modular curves. Let $N$ be a positive integer, the classical modular curve $\mathcal{Y}_{0}(N)_{\mathbb{C}}$ over $\mathbb{C}$ is defined as the following smooth $1$-dimensional complex curve,
\begin{equation*}
    \mathcal{Y}_{0}(N)_{\mathbb{C}}\coloneqq\textup{GL}_{2}(\mathbb{Q})\backslash \mathbb{H}_{1}^{\pm}\times\textup{GL}_{2}(\mathbb{A}_{f})/\Gamma_{0}(N)(\hat{\mathbb{Z}})\simeq \Gamma_{0}(N)\backslash\mathbb{H}_{1}^{\pm},
\end{equation*}
where $\mathbb{H}_{1}^{\pm}=\mathbb{C}\backslash\mathbb{R}$ and $\mathbb{H}_{1}^{+}=\{z=x+iy\in\mathbb{C}:x\in\mathbb{R}, y\in\mathbb{R}_{>0}\}$ is the upper half plane. The group $\Gamma_{0}(N)(\hat{\mathbb{Z}})$ is the following open compact subgroup of $\textup{GL}_{2}(\mathbb{A}_{f})$,
\begin{equation*}
   \Gamma_{0}(N)(\hat{\mathbb{Z}}) = \left\{x=\begin{pmatrix}
    a & b\\
    Nc & d
    \end{pmatrix}\in \textup{GL}_{2}(\hat{\mathbb{Z}})\,\,:\,\,a,b,c,d\in\hat{\mathbb{Z}}\right\},
\end{equation*}
and $\Gamma_{0}(N)=\Gamma_{0}(N)(\hat{\mathbb{Z}})\bigcap \textup{GL}_{2}(\mathbb{Z})$.
\par
The smooth curve $\mathcal{Y}_{0}(N)_{\mathbb{C}}$ is not proper, its compactification $\mathcal{X}_{0}(N)_{\mathbb{C}} \coloneqq\mathcal{Y}_{0}(N)_{\mathbb{C}}\cup\{\textup{cusps}\}$ is a smooth projective curve over $\mathbb{C}$. Katz, Mazur \cite{KM85} and Cesnavicius \cite{Ces17} constructed an integral model $\Xc$ for the complex curve $\Xc_{\C}$. The model $\Xc$ is a 2-dimensional regular flat Deligne-Mumford stack. For every pair $(T,\mathsf{y})$ where $T$ is a $2\times2$ symmetric matrix with coefficients in $\Q$ and $\mathsf{y}$ is a positive definite $2\times2$ symmetric matrix with coefficients in $\mathbb{R}$, we define the arithmetic special cycle $\widehat{\mathcal{Z}}(T,\mathsf{y})$ on the stack $\mathcal{X}_{0}(N)$ and study their arithmetic degrees. Finally, we prove that these arithmetic degrees are identified with the derivatives of Fourier coefficients of certain Siegel Eisenstein series of genus 2.
\par
When $N$ is an odd, square-free positive integer, the relation has already been obtained for all the pair $(T,\mathsf{y})$ in the work of Sankaran, Shi, and Yang \cite[Theorem 2.14]{SSY22} by computing both sides explicitly based on the previous works of Yang \cite{Yn98} and Kudla, Rapoport and Yang \cite{KRY06}. When $T$ is nonsingular, the relation has been obtained for all the level $N$ in the previous work of the author \cite[Theorem 1.2.1]{Zhu23} by establishing difference formulas on both the analytic and geometric sides. In this article, we give proof for all the pairs $(T,\mathsf{y})$, and all the levels $N$.
\par
When the matrix $T$ is singular of rank 1. On the analytic side, we use difference formulas of local densities to relate the singular Fourier coefficients of an Eisenstein series of genus 2 to the nonsingular Fourier coefficients of an Eisenstein series of genus 1. On the geometric side, the cycle $\widehat{\mathcal{Z}}(T,\mathsf{y})$ is essentially the intersection of a codimension $1$ cycle and the metrized Hodge line bundle on $\Xc$. We compute this intersection number by investigating the irreducible components of the special fiber $\Xc_{\mathbb{F}_p}\coloneqq\Xc\times_{\textup{Spec}\,\Z}\textup{Spec}\,\mathbb{F}_p$ of the model $\Xc$, and the reduction mod $p$ of the cuspidal divisor of the curve $\Xc$.
\par
When the matrix $T=0$, both the analytic side and geometric side can be computed explicitly. Here again, the computation of the analytic side is based on the difference formulas of local densities, while the computation of the geometric side is based on the intersection of irreducible components of the special fiber $\Xc_{\mathbb{F}_p}$.

\subsection{Summary of the main results}
Let $\Delta(N)$ be the following rank 3 quadratic lattice over $\mathbb{Z}$,
\begin{equation}
    \Delta(N) = \left\{x=\begin{pmatrix}
    -Na & b\\
    c & a
    \end{pmatrix}:\,\, a,b,c\in\mathbb{Z}\right\}
\end{equation}
equipped with the quadratic form $x\mapsto \textup{det}(x)$. 
\par
We use $v$ to denote a place of $\mathbb{Q}$. For every finite place $v$, let $\delta_{v}(N)=\Delta(N)\otimes_{\mathbb{Z}}\mathbb{Z}_{v}$ be a rank 3 quadratic lattice over $\mathbb{Z}_{v}$. Let $\mathbb{A}$ be the ring of ad$\grave{\textup{e}}$les over $\mathbb{Q}$. Let $\mathbb{V}=\{\mathbb{V}_{v}\}$ be the incoherent collection of quadratic spaces of $\mathbb{A}$ of rank 3 nearby $\Delta(N)$ at $\infty$, i.e.,
\begin{equation}
    \mathbb{V}_{v}=\delta_{v}(N)\otimes\mathbb{Q}_{v}\,\,\textup{if $v<\infty$, and $\mathbb{V}_{\infty}$ is positive definite.}
    \label{incoherent}
\end{equation}
\par
There is a classical incoherent Eisenstein series $E(\mathsf{z},s,\la^{2})$ (cf. \S\ref{inco}) on the Siegel upper half space of genus 2,
\begin{equation*}
    \mathbb{H}_{2}=\{\mathsf{z}=\mathsf{x}+i\mathsf{y}\,\,\vert\,\,\mathsf{x}\in\textup{Sym}_{2}(\mathbb{R}),\mathsf{y}\in\textup{Sym}_{2}(\mathbb{R})_{>0}\}.
\end{equation*}
This is essentially the Siegel Eisenstein series associated to a standard Siegel-Weil section of the degenerate principal series. The Eisenstein series here has a meromorphic continuation and a functional equation relating $s \leftrightarrow -s$. The central value $E(\mathsf{z},0,\la^{2})=0$ by the incoherence. We thus consider its central derivative
\begin{equation*}
    \partial\textup{Eis}(\mathsf{z},\la^{2})\coloneqq\frac{\textup{d}}{\textup{d}s}\bigg\vert_{s=0}E(\mathsf{z},s,\la^{2}).
\end{equation*}
Associated to the standard additive character $\psi:\mathbb{A}/\mathbb{Q}\rightarrow\mathbb{C}^{\times}$, it has a decomposition into the central derivatives of the Fourier coefficients
\begin{equation*}
    \partial\textup{Eis}(\mathsf{z},\la^{2}) = \sum\limits_{T\in\textup{Sym}_{2}(\mathbb{Q})}\partial\textup{Eis}_{T}(\mathsf{z},\la^{2})
\end{equation*}
\par
On the geometric side, there is a regular integral model of the modular curve $\mathcal{Y}_{0}(N)_{\mathbb{C}}$ over $\mathbb{Z}$ defined by Katz and Mazur: for any scheme $S$, the groupoid $\mathcal{Y}_{0}(N)(S)$ consists of objects $(E\stackrel{\pi}\longrightarrow E^{\prime})$ where $E$, $E^{\prime}$ are elliptic curves over $S$ and $\pi$ is a cyclic isogeny such that $\pi^{\vee}\circ\pi=N$. They proved that $\mathcal{Y}_{0}(N)$ is 2-dimensional regular flat Deligne-Mumford stack (cf. \cite[Theorem 5.1.1]{KM85}), but $\mathcal{Y}_{0}(N)$ is not proper. There is a moduli stack $\mathcal{X}_{0}(N)$ defined in \cite{Ces17} which serves as a ``compactification'' of $\mathcal{Y}_{0}(N)$. It is a proper regular flat 2-dimensional Deligne-Mumford stack that contains $\mathcal{Y}_{0}(N)$ as an open substack, so we can consider the arithmetic intersection theory on $\mathcal{X}_{0}(N)$ following the lines in \cite{Gil09}.
\subsubsection{Special cycles of codimension 1}
The key concept is that of a special cycle. For every integer $t$, we define a closed substack of $\mathcal{Y}_{0}(N)$ as follows: For an object $(E\stackrel{\pi}\longrightarrow E^{\prime})$ of $\mathcal{Y}_{0}(N)(S)$, the stack $\mathcal{Z}(t,\la)$ parameterizes isogenies $j$ between $E$ and $E^{\prime}$ with $j^{\vee}\circ j=t$ and orthogonal to the cyclic isogeny $\pi$. It can be shown that the stack $\mathcal{Z}(t,\la)$ is a generalized Cartier divisor even on the stack $\Xc$ when $t>0$, and it's empty when $t\leq0$. For every positive number $y$, we define the modified special divisor $\mathcal{Z}^{\ast}(t,y,\la)$ to be
\begin{equation*}
    \mathcal{Z}^{\ast}(t,y,\la)=\mathcal{Z}(t,\la)+g(t,y,\la)\cdot\mathsf{Cusp}(\Xc).
\end{equation*}
where $\mathsf{Cusp}(\Xc)$ is the cuspidal divisor of the stack $\Xc$, and $g(t,y,\la)$ is a smooth function in $y$ defined in (\ref{modifying-function}), it's identically $0$ when $t>0$.
\par
For every pair $(t,y)$ such that $t$ is a nonzero integer and $y>0$, a green function $\mathfrak{g}(t,y,\la)$ of the divisor $\mathcal{Z}^{\ast}(t,y,\la)$ is constructed by Kudla \cite[(12.21)]{Kud97} (see also \cite[\S5]{DY19}). We will recall the construction in $\S$\ref{green-function}. Finally, we define the following element in the codimension 1 arithmetic Chow group $\ACH$ of $\Xc$ (Definition \ref{special-divisor}):
\begin{equation}
        \ad(t,y)=(\mathcal{Z}^{\ast}(t,y,\la),\mathfrak{g}(t,y,\la)).
        \label{nonzero-case}
    \end{equation}
These elements $\ad(t,y)$ are invariant under the Atkin-Lehner involution $W_N$ of the stack $\Xc$, i.e., $W_N^{\ast}\ad(t,y)=\ad(t,y)$ (Lemma \ref{invariant}).\par
When $t=0$, the definition of $\ad(0,y)$ is slightly different from (\ref{nonzero-case}). Recall that there is a metrized Hodge line bundle $\hod$ on the stack $\Xc$ (Example \ref{hodge-bundle}), however, this bundle is not invariant under the Atkin-Lehner involution $W_N$ (Corollary \ref{chayi}). Therefore we consider the following element
\begin{equation*}
    \widehat{\omega}=-\hod-W_N^{\ast}\hod,
\end{equation*}
the element $\widehat{\omega}$ is clearly invariant under the Atkin-Lehner involution. We define
\begin{equation}
    \ad(0,y,\la)=\widehat{\omega}+(\mathcal{Z}^{\ast}(0,y,\la),\mathfrak{g}(0,y,\la))-(0,\log y).
\end{equation}
Now for any rational number $t$ which is not an integer, we simply define $\ad(t,y)=0\in\ACH$. Let $\tau=x+iy\in\mathbb{H}_{1}^{+}$, we consider the following generating series with coefficients in $\ACH$,
\begin{equation*}
    \widehat\phi_1(\tau)=\sum\limits_{t\in\mathbb{Q}}\ad(t,y,\la)\cdot q^{t}
\end{equation*}
where $q^{t}=e^{2\pi i \,t\tau}$.
\begin{theorem}[Theorem \ref{global-modularity-2}]
    Let $N$ be a positive integer. The generating series $\widehat\phi_1$ is a nonholomorphic Siegel modular form of genus 1 and weight $\frac{3}{2}$ with values in $\ACH$.
    \label{global-modularity}
\end{theorem}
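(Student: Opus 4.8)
The plan is to verify the defining transformation law of an $\ACH$-valued modular form by testing $\widehat\phi_1$ against the structural data out of which the arithmetic Chow group of the arithmetic surface $\Xc$ is assembled. By the Gillet--Soul\'e presentation of $\ACH$, a class is recorded by three pieces: its image in the geometric Chow group $\CH^1(\Xc)$ under forgetting the Green function, the curvature $(1,1)$-form $dd^c\mathfrak g+\delta_{\mathcal Z}$ it defines on $\Xc(\C)$, and the residual archimedean Green-data in the image of $A^{0,0}(\Xc(\C))$. Since a $V$-valued $q$-series is modular once $\ell(\widehat\phi_1)$ is a scalar weight-$\tfrac{3}{2}$ form for every functional $\ell$, and every such functional factors through this presentation, it suffices to prove modularity after each of the three projections and to check that they glue.

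First I would treat the curvature projection. The series $\sum_{t}\bigl(dd^c\mathfrak g(t,y,\la)+\delta_{\mathcal Z(t,\la)}\bigr)q^{t}$ is, by the construction of the Green functions $\mathfrak g(t,y,\la)$ recalled from \cite{Kud97}, the image of a theta integral, so its modularity of weight $\tfrac{3}{2}$ is the Kudla--Millson statement; being purely archimedean it is insensitive to the integral model. The built-in $y$-dependence of $g(t,y,\la)$ and $\mathfrak g(t,y,\la)$, together with the correction $-(0,\log y)$ in $\ad(0,y,\la)$, is exactly what produces the non-holomorphic completion carrying the singular indices $t\le 0$.

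Next, for the geometric projection I would show that $\sum_t\mathcal Z(t,\la)\,q^{t}$ is modular in $\CH^1(\Xc)$. Over the generic fiber the $\mathcal Z(t,\la)_{\Q}$ are the usual Heegner/CM divisors on $\Xc_{\Q}$, whose generating series is modular of weight $\tfrac{3}{2}$ by Gross--Kohnen--Zagier and Borcherds; the cuspidal term $g(t,y,\la)\cdot\mathsf{Cusp}(\Xc)$ and the Hodge class $\mh$ entering $\ad(0,y,\la)$ supply the degree and Eisenstein normalization needed to pass from $\mathrm{Pic}^{0}$ to all of $\CH^1$. The remaining content is the \emph{vertical} one: one must control the closures of the $\mathcal Z(t,\la)$ in $\Xc$, together with any vertical components forced by rational equivalence, and show that they contribute a modular series prime by prime. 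I expect this to be the main obstacle, precisely at the primes $p\mid N$ where $\Xc_{\F_p}$ is reducible with a rich component structure; understanding these components and the reduction of $\mathsf{Cusp}(\Xc)$ modulo $p$ is what the later analysis of $\Xc_{\F_p}$ is designed for, and it is also what forces the $W_N$-symmetrization $\mh=-\hod-\at^{\ast}\hod$ so that $\ad(0,y,\la)$, and hence $\widehat\phi_1$, is well defined and $W_N$-invariant.

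Finally I would assemble the pieces. With the divisor series in $\CH^1(\Xc)$ and the curvature series both modular, the difference between $\widehat\phi_1$ and a chosen $\ACH$-valued modular form sharing these two projections lies in the image of $A^{0,0}(\Xc(\C))$, i.e.\ in purely archimedean functions; modularity of this last scalar series follows directly from the explicit shape of $g(t,y,\la)$ and the normalization $-(0,\log y)$. Combining the three modular projections with their compatibility then gives that $\widehat\phi_1$ is a non-holomorphic Siegel modular form of genus $1$ and weight $\tfrac{3}{2}$ valued in $\ACH$. The model for this argument in the arithmetic setting is \cite{KRY06}; the new input is the treatment of the bad vertical fibers and the cusps for arbitrary $N$.
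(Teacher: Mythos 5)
Your strategy coincides with the paper's own: the paper proves the theorem by checking that two families of functionals are modular --- the degree map (Proposition \ref{deg1E}) and the height pairings against the vertical components $\widehat{\mathcal{X}}_p^{a}(N)$ (Corollary \ref{reind} and Proposition \ref{topdegree}, which express these pairings as coefficients of genus-$1$ Eisenstein series) --- and then invokes the decomposition argument of Du--Yang (Theorem 8.4 of \cite{DY19}), which is exactly your splitting into the geometric Chow projection (Gross--Kohnen--Zagier/Borcherds on the generic fiber), the Kudla--Millson curvature projection, and the residual archimedean part. What you flag as ``the main obstacle'' (the vertical components at primes $p\mid N$) is precisely the new content the paper computes explicitly, so your outline is correct and follows essentially the same route.
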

\begin{remark}
    When $N$ is square-free, the above theorem has been proved by Du and Yang \cite[Theorem 1.1]{DY19}. Similar results for Shimura curves are proved by Kudla, Rapoport, and Yang \cite[Theorem A]{KRY06}.
\end{remark}
\subsubsection{Special cycles of codimension 2}
We are mainly interested in the elements in the codimension 2 arithmetic Chow group $\Ach$. 
\par
Let $T\in \textup{Sym}_{2}(\mathbb{Q})$ be a symmetric matrix. If $T=\mathbf{0}_2$, for every positive definite matrix $\mathsf{y}\in\textup{Sym}_{2}(\mathbb{R})$, we define
\begin{equation*}
    \widehat{\mathcal{Z}}(\mathbf{0}_{2},\mathsf{y})=\widehat{\omega}\cdot \widehat{\omega}+(0,\log\textup{det}\mathsf{y}\cdot[\Omega])\in\Ach.
\end{equation*}
\par
If the rank of $T$ is 1, there exists a matrix $g\in\textup{GL}_2(\mathbb{Z})$ such that ${^{t}g}Tg=\textup{diag}\{0,t\}$ for some nonzero rational number $t$, let $\mathsf{y}=\textup{diag}\{y_1,y_2\}$ be a symmetric matrix with $y_{1},y_{2}>0$, we define the following element in $\Ach$
\begin{equation*}
    \widehat{\mathcal{Z}}(T,\mathsf{y})=
        \widehat{\mathcal{Z}}(t,y_2,\la)\cdot\widehat{\omega}-\left(0,\log y_1\cdot\delta_{\mathcal{Z}^{\ast}(t,y_2,\la)_\C}\right).
\end{equation*}
\par
If $T$ is nonsingular, a detailed definition of the element $\widehat{\mathcal{Z}}(T,\mathsf{y})$ can be found in \cite[(16)]{Zhu23}. Now for any element $\mathsf{z}=\mathsf{x}+i\mathsf{y}\in\mathbb{H}_{2}$, we consider the following generating series with coefficients in $\Ach$,
\begin{equation*}
    \widehat\phi_2(\mathsf{z})=\sum\limits_{T\in\textup{Sym}_2(\mathbb{Q})}\widehat{\mathcal{Z}}(T,\mathsf{y})\cdot q^{T}
\end{equation*}
where $q^{T}=e^{2\pi i \,\textup{tr}\,(T\mathsf{z})}$.
\par
There is an isomorphism $\Ach\simeq\mathbb{C}$ given by the arithmetic degree map $\widehat{\textup{deg}}: \widehat{\textup{CH}}^{2}_{\mathbb{C}}(\mathcal{X}_{0}(N))\rightarrow\mathbb{C}$ constructed by Kudla, Rapoport and Yang \cite[$\S$2.4]{KRY06} (see also (\ref{degreemap})). Our main result is the following
\begin{theorem}[Theorem \ref{mainglobal}]
    Let $N$ be a positive integer. The generating series $\widehat\phi_2$ is a nonholomorphic Siegel modular form of genus 2 and weight $\frac{3}{2}$. More precisely, under the isomorphism $\widehat{\textup{deg}}: \widehat{\textup{CH}}^{2}_{\mathbb{C}}(\mathcal{X}_{0}(N))\stackrel{\sim}\rightarrow\mathbb{C}$
\begin{equation*}
    \widehat\phi_2(\mathsf{z})= \frac{\psi(N)}{24}\cdot\partial\textup{Eis}(\mathsf{z},\la^{2}),
\end{equation*}
here $\psi(N)=N\cdot\prod\limits_{p\vert N}(1+p^{-1})$.
\label{mainglobal1}
\end{theorem}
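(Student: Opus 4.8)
The plan is to establish the scalar identity $\widehat\phi_2(\mathsf{z})=\frac{\psi(N)}{24}\cdot\partial\textup{Eis}(\mathsf{z},\la^{2})$ by comparing Fourier coefficients indexed by $T\in\textup{Sym}_2(\mathbb{Q})$. Since $\partial\textup{Eis}$ is the central derivative of an honest genus $2$ Siegel modular form of weight $\tfrac32$, it suffices to verify, for every $T$, the coefficientwise equality $\widehat{\textup{deg}}\,\ad(T,\mathsf{y})=\frac{\psi(N)}{24}\cdot\partial\textup{Eis}_T(\mathsf{z},\la^{2})$ with the appropriate $\mathsf{y}$-dependence; the modularity of $\widehat\phi_2$ asserted in the theorem is then inherited from that of the Eisenstein series, because under $\widehat{\textup{deg}}\colon\Ach\simeq\mathbb{C}$ the generating series becomes the $\mathbb{C}$-valued function with these coefficients. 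I would organize the verification by the rank of $T$. For nonsingular $T$ the required identity is precisely \cite[Theorem 1.2.1]{Zhu23}, proved there by matching difference formulas on both sides, so no new input is needed, and the substance of the argument lies in the singular coefficients.

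For $T$ of rank $1$ I would first use $\textup{GL}_2(\mathbb{Z})$-equivariance to reduce to $T=\textup{diag}\{0,t\}$ with $t\neq0$ and $\mathsf{y}=\textup{diag}\{y_1,y_2\}$, so that by definition $\ad(T,\mathsf{y})=\ad(t,y_2,\la)\cdot\widehat{\omega}-\big(0,\log y_1\cdot\delta_{\mathcal{Z}^{\ast}(t,y_2,\la)_\C}\big)$. On the analytic side I would apply the difference formulas of local densities to rewrite the singular genus $2$ coefficient $\partial\textup{Eis}_T$ in terms of a nonsingular Fourier coefficient of a genus $1$ weight $\tfrac32$ Eisenstein series, the rank-one degeneracy collapsing one Whittaker variable. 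On the geometric side the arithmetic degree becomes the height pairing $\widehat{\textup{deg}}\big(\ad(t,y_2,\la)\cdot\widehat{\omega}\big)$ corrected by the archimedean $\log y_1$ term, namely the intersection of the special divisor $\mathcal{Z}(t,\la)$ (together with its Green function and cuspidal contribution) against the symmetrized metrized Hodge bundle $\widehat{\omega}=-\hod-\at^{\ast}\hod$. Up to the $\log y_1$ correction this is the $t$-th coefficient of the genus $1$ generating series $\widehat\phi_1$ paired with $\widehat{\omega}$, which by Theorem \ref{global-modularity} is itself genus $1$ modular; I would identify it with a genus $1$ Eisenstein coefficient, matching the analytic computation.

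The hard part, and the geometric heart of the paper, is the finite part of this height at primes $p\mid N$. There $\Xc_{\mathbb{F}_p}$ is non-smooth, and I would first determine its irreducible components with multiplicities from the Katz-Mazur and Cesnavicius models; then compute the divisor class of $\hod$ restricted to each component and, crucially, the reduction modulo $p$ of $\mathsf{Cusp}(\Xc)$, locating which components the cusps specialize onto. These data let me read off the local intersection multiplicity of $\mathcal{Z}(t,\la)$ with $\widehat{\omega}$ prime by prime and match it to the corresponding local factor of the genus $1$ coefficient produced on the analytic side, while the archimedean contribution, coming from the Green and modifying functions integrated against the Chern form of $\widehat{\omega}$, matches the derivative of the archimedean Whittaker factor. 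I expect the reduction of cusps and the explicit Hodge-bundle class in these bad fibers to be the principal obstacle, since the naive good-reduction picture fails exactly where $\psi(N)$ acquires its Euler factors.

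For $T=\mathbf{0}_2$ both sides are closed-form: geometrically $\widehat{\textup{deg}}(\widehat{\omega}\cdot\widehat{\omega})$ is an arithmetic self-intersection assembled from the same special-fiber data together with the archimedean volume and the explicit $(0,\log\det\mathsf{y}\cdot[\Omega])$ term, while analytically $\partial\textup{Eis}_{\mathbf{0}_2}$ is again obtained from the difference formulas of local densities; equating the resulting constants closes the last case. Assembling the three ranks yields the coefficientwise identity $\widehat{\textup{deg}}\,\ad(T,\mathsf{y})=\frac{\psi(N)}{24}\cdot\partial\textup{Eis}_T$ for all $T$, hence both the explicit formula and, via the modularity of $\partial\textup{Eis}$, the asserted modularity of $\widehat\phi_2$.
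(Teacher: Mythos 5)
Your proposal is correct and follows essentially the same route as the paper: a term-by-term comparison organized by the rank of $T$, citing \cite[Theorem 1.2.1]{Zhu23} for nonsingular $T$, using the difference formulas of local densities to collapse the singular genus $2$ coefficients to nonsingular genus $1$ coefficients on the analytic side, and computing the height pairing against $\widehat{\omega}$ on the geometric side via the vertical components of the Hodge bundle and the reduction of cusps, with the explicit self-intersection computation handling $T=\mathbf{0}_2$. The only cosmetic difference is that you invoke the genus $1$ modularity (Theorem \ref{global-modularity}) in the rank $1$ case, whereas the paper obtains that statement as a byproduct and instead matches the height pairing to genus $1$ Eisenstein coefficients by direct computation (Corollaries \ref{jiexicezhuyaos} and \ref{jihecezhuyaos}).
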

\begin{remark}
    When $N$ is square-free, the above theorem has been proved by Sankaran, Shi, and Yang \cite{SSY22}. Similar results for Shimura curves are proved by Kudla, Rapoport, and Yang \cite[Theorem B]{KRY06}.
\end{remark}

\subsection{Strategy of the proof of main results}
In this section, we mainly explain the proof of Theorem \ref{mainglobal1} in this section, while Theorem \ref{global-modularity} will come as a byproduct.
\par
We will prove Theorem \ref{mainglobal1} term-by-term, i.e., for any symmetric matrix $T\in\textup{Sym}_2(\mathbb{Q})$, we prove that for any $\mathsf{z}=\mathsf{x}+i\mathsf{y}\in\mathbb{H}_2$,
\begin{equation}
    \widehat{\textup{deg}}\,\widehat{\mathcal{Z}}(T,\mathsf{y})\cdot q^{T}=\frac{\psi(N)}{24}\cdot\partial\textup{Eis}_T(\mathsf{z},\la^{2}).
    \label{decomposition}
\end{equation}
When $T$ is nonsingular, this has been proved in \cite[Theorem 1.2.1]{Zhu23}. In this article, we will focus on the case that $T$ is singular. In the following we refer to $\widehat{\textup{deg}}\,\widehat{\mathcal{Z}}(T,\mathsf{y})$ as the geometric side and $\partial\textup{Eis}_T(\mathsf{z},\la^{2})$ the analytic side.
\subsubsection{The singular coefficients of Eisenstein series}
Our primary goals in the analytic side are: (a). When the rank of $T$ is 1, we relate the singular Fourier coefficients of the Eisenstein series $E_{T}(\mathsf{z},s,\la^{2})$ to nonsingular Fourier coefficients of another Eisenstein series of lower genus. (b). When $T=0$, we compute the exact value of $\partial\textup{Eis}_{\mathbf{0}_2}(\mathsf{z},\la^{2})$.
\par
Before explaining the main idea, we give some necessary definitions. For any two integral quadratic $\mathbb{Z}_{p}$-lattices $L$ and $M$. Let $\textup{Rep}_{M,L}$ be the scheme of integral representations, an $\mathbb{Z}_{p}$-scheme such that for any $\mathbb{Z}_{p}$-algebra $R$, $\textup{Rep}_{M,L}(R)=\textup{QHom}(L\otimes_{\mathbb{Z}_{p}}R, M\otimes_{\mathbb{Z}_{p}}R)$, where $\textup{QHom}$ denotes the set of quadratic module homomorphisms. The local density of integral representations is defined to be
\begin{equation*}
    \textup{Den}(M,L)=\lim\limits_{d\rightarrow\infty}\frac{\#\textup{Rep}_{M,L}(\mathbb{Z}_{p}/p^{d})}{p^{d\cdot \textup{dim}(\textup{Rep}_{M,L})_{\mathbb{Q}_{p}}}}.
\end{equation*}
Let $H_{2}^{+}=\mathbb{Z}_{p}^{2}$ be the rank 2 quadratic $\mathbb{Z}_{p}$-lattice equipped with the quadratic form $q_{H_{2}^{+}}(x,y)=xy$. For any positive integer $k$, let $H_{2k}^{+}=\left(H_{2}^{+}\right)^{\obot k}$ be the rank $2k$ quadratic lattice obtained by orthogonal direct sum of $k$ copies of $H_2^{+}$.
\par
Let's explain the idea shortly. As is well-known, the Fourier coefficients of Eisenstein series are (linear combinations) of Whittaker functions which are essentially products of local representation densities of quadratic lattices. Our main tool is the difference formula of local representation densities of quadratic lattices proved in \cite[\S7.2]{Zhu23} (see also Theorem \ref{anadecom}) which roughly says that for any number $l\in\mathbb{Z}_p$ and any quadratic lattice $M$, the following two local densities are related
\begin{equation}
    \den(\langle -l\rangle\obot H_{2k+2}^{+},M)\leftrightarrow\den(H_{2k+4}^{+},M\obot\langle l\rangle).
    \label{diff-relation}
\end{equation}
This relation has two applications:\\
$(a)$.\, Let $l=N$, the quadratic lattice $\langle -l\rangle\obot H_{2k+2}^{+}$ is isometric to $\delta_p(N)\obot H_{2k}^{+}$. The relation (\ref{diff-relation}) reduces the computation of a local density function with level structure to that of an unramified local density function, which has very explicit expression and functional equation by the works of Cho and Yamauchi \cite[(3.4)]{CY20}, Ikeda \cite[Theorem 4.1]{Ike17}, see also the works of Li and Zhang \cite[$\S$3]{LZ22}.\\
$(b)$.\, Taking the limit $\nu_p(l)\rightarrow\infty$, the right-hand side of (\ref{diff-relation}) converges to the Whittaker function with singular coefficient, while the left-hand side of (\ref{diff-relation}) converges to $\textup{Den}(H_{2k+2}^{+},M)$ (Lemma \ref{raodong}), i.e., we have the following relation,
\begin{equation}
    \textup{Den}(H_{2k+2}^{+},M)=\lim\limits_{\nu_p(l)\rightarrow\infty}\den(H_{2k+4}^{+},M\obot\langle l\rangle).
    \label{singular-coe}
\end{equation}
\par
When the rank of $T$ is 1, we assume $T=\textup{diag}\{0,t\}$ with $t\neq0$ for simplicity, let $\mathsf{y}=\textup{diag}\{y_1,y_2\}$ be a positive definite symmetric matrix, then there is a well-known relation (see \cite[Lemma 5.4]{GS19})
\begin{equation*}
    E_{T}(i\mathsf{y},s,\Delta(N)^{2})=y_1^{s/2}y_2^{-3/4}W_{t}(g_{iy_2},s+\frac{1}{2},\Delta(N))+(y_1y_2)^{-3/4}W_{T}(g_{i\mathsf{y}},s,\Delta(N)^{2}),
\end{equation*}
where both of the terms $W_{t}(g_{iy_2},s+\frac{1}{2},\Delta(N))$ and $W_{T}(g_{i\mathsf{y}},s,\Delta(N)^{2})$ are product of local Whittaker functions (see (\ref{whit-int}) and (\ref{whit-sch}) for the precise definition).
\par
Since $t\neq0$, the term $W_{t}(g_{iy_2},s+\frac{1}{2},\Delta(N))$ is already the nonsingular Fourier coefficient of an Eisenstein series of the lower genus. Let's now consider the term $W_{T}(g_{i\mathsf{y}},s,\Delta(N)^{2})$, which is the product of local Whittaker functions $W_{T,v}$ over all the places $v$ of $\mathbb{Q}$. The relation between $W_{T,v}$ and $W_{t,v}$ is sketched in the following way: Let $p$ be a finite prime, for any integer $m$, let $T_m=\textup{diag}\{p^{m},t\}$ be a $2\times2$ nonsingular matrix. By the well-known relation between Whittaker functions and local densities (Proposition \ref{non-homo}), we have
\begin{equation*}
    W_{T,p}(1,k,1_{\delta_p(N)^{2}})=\lim\limits_{m\rightarrow\infty}W_{T_m,p}(1,k,1_{\delta_p(N)^{2}})\leftrightarrow\den(\delta_p(N)\obot H_{2k}^{+},\langle t\rangle\obot\langle p^{m}\rangle).
\end{equation*}
Applying $(a)$ of (\ref{diff-relation}) twice,
\begin{equation}
    \den(\delta_p(N)\obot H_{2k}^{+},\langle t\rangle\obot\langle p^{m}\rangle)\stackrel{l=N}\leftrightarrow\den(H_{2k+4}^{+},\langle t\rangle\obot\langle p^{m}\rangle\obot\langle N\rangle)\stackrel{l=p^{m}}\leftrightarrow\den(\langle-p^{m}\rangle\obot H_{2k+2}^{+},\langle t\rangle\obot\langle N\rangle).
    \label{step-1}
\end{equation}
Taking limit $m\rightarrow\infty$, the formula (\ref{singular-coe}) tells us that
\begin{equation}
    \lim_{m\rightarrow\infty}\den(\langle-p^{m}\rangle\obot H_{2k+2}^{+},\langle t\rangle\obot\langle N\rangle)=\den(H_{2k+2}^{+},\langle t\rangle\obot\langle N\rangle).
    \label{step-2}
\end{equation}
Applying $(a)$ again, we get
\begin{equation}
    \den(H_{2k+2}^{+},\langle t\rangle\obot\langle N\rangle)\stackrel{l=N}\leftrightarrow\den(\delta_{p}(N)\obot H_{2k-2}^{+},\langle t\rangle).
    \label{step-3}
\end{equation}
The last term $\den(\delta_{p}(N)\obot H_{2k-2}^{+},\langle t\rangle)$ is clearly related to $W_{t,p}(1,k-\frac{1}{2},1_{\delta_p(N)})$ by Proposition \ref{non-homo}. Therefore we have successfully built the bridge from $W_{T,v}$ and $W_{t,v}$ for a finite prime $v$.
\par
However, we warn the reader that all the symbols ``$\leftrightarrow$'' are not exact equal, it just means that there are some relations between the two sides of the symbol. Detailed calculations based on the principles above are given in $\S$\ref{singular-coefficients}. Our final result is the following proposition.
\begin{proposition}[Proposition \ref{2to1mei}]
    Let $T$ be a $2\times2$ matrix of rank 1 which can be diagonalized to $\textup{diag}\{0,t\}$ with $t\neq0$. Let $\mathsf{y}=\textup{diag}\{y_1,y_2\}$ be a positive definite symmetric matrix, then for any complex number $k$, we have
    \begin{align*}
        E_{T}(i\mathsf{y},k,\la^{2})&=y_{1}^{k/2}E_{t}(iy_2,\frac{1}{2}+k,\la)\\
        &+y_1^{-k/2}\cdot\frac{k-1}{k+1}\cdot N^{-k}\cdot\frac{\Lambda(2-2k)}{\Lambda(2+2k)}\cdot\prod\limits_{p\vert N}\beta_{p}(k)\cdot E_t(iy_2,\frac{1}{2}-k,\la),
    \end{align*}
    where $\beta_p(s)$ is a rational function in $p^{-s}$.
    \label{intro-analytic}
\end{proposition}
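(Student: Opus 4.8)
The plan is to begin from the Gross--Siegel decomposition recalled above \cite[Lemma 5.4]{GS19},
\begin{equation*}
    E_T(i\mathsf{y}, s, \la^2) = y_1^{s/2} y_2^{-3/4}\, W_t\!\left(g_{iy_2}, s+\tfrac12, \la\right) + (y_1 y_2)^{-3/4}\, W_T\!\left(g_{i\mathsf{y}}, s, \la^2\right),
\end{equation*}
which already isolates the desired structure. The first summand, once I unfold the relation between the normalized genus-$1$ Whittaker function $y_2^{-3/4}W_t$ and the genus-$1$ Fourier coefficient, is precisely $y_1^{k/2}E_t(iy_2, \tfrac12+k, \la)$ at $s=k$; this yields the first line of the proposition with no further work. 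Thus the entire content lies in showing that the second summand $(y_1 y_2)^{-3/4} W_T(g_{i\mathsf{y}}, k, \la^2)$ equals the second line. Writing $W_T$ as a product of local Whittaker functions over all places $v$, I would treat the finite and infinite places separately.

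At a finite prime $p$ the strategy is exactly the chain \eqref{step-1}--\eqref{step-3}. By Proposition \ref{non-homo} the local factor $W_{T,p}(1,k,\cdot)$ is the limit over $m$ of $W_{T_m,p}$ with $T_m=\textup{diag}\{p^m,t\}$, each of which is, up to an explicit elementary factor, the density $\den(\delta_p(N)\obot H_{2k}^+, \langle t\rangle\obot\langle p^m\rangle)$. Applying Theorem \ref{anadecom} first with $l=N$ and then with $l=p^m$ transports this to $\den(\langle -p^m\rangle\obot H_{2k+2}^+, \langle t\rangle\obot\langle N\rangle)$; passing to the limit via \eqref{singular-coe} (Lemma \ref{raodong}) collapses it to $\den(H_{2k+2}^+, \langle t\rangle\obot\langle N\rangle)$, and a final application of Theorem \ref{anadecom} with $l=N$ returns $\den(\delta_p(N)\obot H_{2k-2}^+, \langle t\rangle)$, which by Proposition \ref{non-homo} is an explicit multiple of $W_{t,p}(1, k-\tfrac12, 1_{\delta_p(N)})$. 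The net effect is that $W_{T,p}(1,k,\cdot)$ equals $W_{t,p}(1,k-\tfrac12,\cdot)$ times a ratio of the elementary and functional-equation factors shed at each step; for $p\nmid N$ these assemble into the $p$-part of $N^{-k}\,\Lambda(2-2k)/\Lambda(2+2k)$, using the explicit functional equation of the unramified density \cite[(3.4)]{CY20}, \cite[Theorem 4.1]{Ike17}, while for $p\mid N$ the ramification contributes the extra rational function $\beta_p(k)$ in $p^{-k}$.

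At the archimedean place I would compute the ratio $W_{T,\infty}(g_{i\mathsf{y}},k)/W_{t,\infty}(g_{iy_2},k-\tfrac12)$ directly from the defining integral, tracking the dependence on $y_1$; this is where I expect the overall power $y_1^{-k/2}$ and the elementary factor $\tfrac{k-1}{k+1}$ to arise, the latter from the explicit confluent-hypergeometric value attached to passing from the rank-two to the rank-one real Whittaker function. Multiplying the local ratios over all $v$ and reinstating the global normalizing powers of $y_1,y_2$ then identifies the second summand with $y_1^{-k/2}\cdot\tfrac{k-1}{k+1}\cdot N^{-k}\cdot\tfrac{\Lambda(2-2k)}{\Lambda(2+2k)}\cdot\prod_{p\mid N}\beta_p(k)\cdot E_t(iy_2,\tfrac12-k,\la)$, completing the proof.

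The main obstacle is bookkeeping. Each ``$\leftrightarrow$'' in \eqref{step-1}--\eqref{step-3} conceals a nontrivial proportionality factor --- a power of $p$, a local zeta value, or a term from the functional equation of the unramified density --- and the delicate point is to verify that, after the limit and the three applications of the difference formula, all the $m$-dependence cancels and the surviving good-prime factors reconstitute the completed ratio $\Lambda(2-2k)/\Lambda(2+2k)$ exactly, with the bad-prime discrepancies packaged cleanly into $\beta_p(k)$. Keeping the normalizations of $W$ versus $E$ consistent --- the half-integral shifts $s+\tfrac12$ and $s-\tfrac12$, together with the $y^{-3/4}$ factors --- across genus $1$ and genus $2$ is the most error-prone part of the argument.
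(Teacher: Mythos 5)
Your skeleton matches the paper's own strategy: the same decomposition via Lemma \ref{EtoW}, the same treatment of the first summand, and the same difference-formula machinery (Theorem \ref{anadecom}, Lemma \ref{raodong}, Proposition \ref{non-homo}) at the finite places. But there is a genuine gap, and it sits exactly where you wave it off as ``bookkeeping'': your chain, at every place, lands on the genus-one Whittaker function at the parameter $k-\tfrac{1}{2}$, whereas the proposition requires it at the \emph{reflected} parameter $\tfrac{1}{2}-k$. These are different points: a product of local factors $W_{t,v}(\cdot,k-\tfrac{1}{2},\cdot)$ assembles into $E_t(iy_2,k-\tfrac{1}{2},\la)$, not into $E_t(iy_2,\tfrac{1}{2}-k,\la)$, and passing between the two would require a functional equation for the genus-one incoherent Eisenstein series, which you neither state nor have available. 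The mechanism that produces the reflection in the correct argument is the functional equation of the local density polynomial itself (Theorem \ref{fe} for odd $p$, and the Ikeda--Katsurada identity of Remark \ref{for2} at $p=2$): one first expresses both $W_{T,p}(1,k,\cdot)$ and $W_{t,p}(1,s+\tfrac{1}{2},\cdot)$ as explicit elementary multiples of the \emph{same} polynomial $\textup{Den}^{\flat+}(X,\langle t,N\rangle)$, evaluated at $X=p^{-k}$ (which corresponds both to the genus-two parameter $k$ and to the genus-one parameter $k-\tfrac{1}{2}$) respectively at $X=p^{-s-1}$; then the relation $\textup{Den}^{\flat+}(p^{-k},\langle t,N\rangle)=p^{(1-2k)\nu_p(c)}\textup{Den}^{\flat+}(p^{k-1},\langle t,N\rangle)$ trades the evaluation point $p^{-k}$ for $p^{k-1}$, i.e.\ trades genus-one parameter $k-\tfrac{1}{2}$ for $\tfrac{1}{2}-k$. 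This is Lemma \ref{W/D} plus Corollary \ref{2to1} in the paper. You do cite Ikeda's functional equation, but you misplace where it acts: it is not a correction factor to be absorbed while assembling good-prime constants --- indeed the chain (\ref{step-1})--(\ref{step-3}) contains no functional equation at all --- it is the step that flips the sign of $k$, without which your local output is at the wrong parameter and no bookkeeping can repair that.

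The same confusion breaks your archimedean step. The ratio $W_{T,\infty}(g_{i\mathsf{y}},k)/W_{t,\infty}(g_{iy_2},k-\tfrac{1}{2})$ is \emph{not} elementary: the two archimedean Whittaker functions at $k-\tfrac{1}{2}$ and at $\tfrac{1}{2}-k$ differ by nontrivial ratios of confluent hypergeometric/Gamma factors. What is elementary --- and this is the identity (\ref{zaiwoq}) the paper quotes from \cite[Lemma 4.14]{SSY22} and \cite[Proposition 5.7.7]{KRY06} --- is the suitably normalized ratio of $W_{T,\infty}(g_{i\mathsf{y}},k,\la^{2})$ against $W_{t,\infty}(g_{iy_2},\tfrac{1}{2}-k,\la)$, which yields the factors $y_1^{-k/2+3/4}$, $\tfrac{1-k}{1+k}$ and $c^{2k-1}N^{1/2-k}$. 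Finally, your sketch omits the normalizations that make the global assembly work: one must introduce the quadratic character $\ch$ attached to $-tN$, the constant $c$ with $4Nt=c^{2}d$, and compare $W_{T,p}(1,k,\cdot)\cdot\zeta_p(2k)/L_p(k,\ch)$ with $W_{t,p}(1,\tfrac{1}{2}-k,\cdot)\cdot\zeta_p(2-2k)/L_p(1-k,\ch)$ as in Corollary \ref{2to1}. Only then do the powers $p^{(1-2k)\nu_p(c)}$ from the finite-place functional equations cancel against $c^{2k-1}$ from the archimedean identity, and only then do the surviving factors combine, via $\Lambda(s)=\Lambda(1-s)$ and $\Lambda(s,\ch)=\Lambda(1-s,\ch)$, into $N^{-k}\,\Lambda(2-2k)/\Lambda(2+2k)\cdot\prod_{p\mid N}\beta_p(k)$; the bare good-prime factors $\zeta_p(2k)/\zeta_p(2k+2)$ that your chain produces do not by themselves reconstitute a completed ratio.
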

\begin{remark}
   The same formula in Proposition \ref{intro-analytic} has been proved when $N$ is squarefree in \cite[Lemma 4.13]{SSY22} by explicit computations of local densities. Our proof is more conceptual and can be easily generalized to higher dimensions by the principles displayed in (\ref{step-1}), (\ref{step-2}) and (\ref{step-3}).
\end{remark}
When $T=\mathbf{0}_{2}$, all the local Whittaker functions $W_{T,v}(1_{v},s,1_{\delta_v(N)^{2}})$ can be computed explicitly by combining Wedhorn's computations \cite[\S2.11]{Wed07} and the difference formula.

\subsubsection{Vertical components of the Hodge line bundle}
The primary goal on the geometric side is to compute the height pairings of (metrized) special divisors $\widehat{\mathcal{Z}}(t,y,\la)$ and the modified metrized Hodge line bundle $\widehat{\omega}$, and the self-intersection numbers of the metrized line bundle $\widehat{\omega}$.
\par
The main difficulty comes from the fact that the bundle $\widehat{\omega}$ contains vertical components. These components should be a linear combination of irreducible components of the reduction mod $p$ of the stack $\Xc$ for prime numbers $p$. We need to know the explicit multiplicities of these irreducible components appearing in $\widehat{\omega}$ and their intersections.
\par
We first study the special fiber of the stack $\Xc$. Let $p$ be a prime number, by the works of Katz and Mazur, if $n=\nu_p(N)\geq0$ is the $p$-adic valuation of the number $N$, then $\Xc_p\coloneqq\Xc\times_{\textup{Spec}\,\mathbb{Z}}\textup{Spec}\,\mathbb{F}_p$ has $n+1$ irreducible components $\mathcal{X}_p^{a}(N)$ and they meet each other at every supersingular point, where the index $a$ satisfies that $-n\leq a\leq n$ and has the same parity with $n$. Moreover, every component $\mathcal{X}_p^{a}(N)$ has two natural morphisms to the stack $\mathcal{X}_0(Np^{-n})_p$, one of them is an isomorphism while the other one is finite flat of degree $p^{\vert a\vert}$ (see Theorem \ref{insideY}). Later in $\S$\ref{int-on-p}, we compute the intersection numbers between these irreducible components based on the explicit local equations of these components at supersingular points obtained by Katz and Mazur \cite[Theorem 13.4.7]{KM85} (see also \cite[Corollary 6.2.7]{Zhu23}).
\par
After that, in $\S$\ref{explictsection} we use an explicit rational section $\Delta_N$ of the bundle $\hod^{\otimes12\varphi(N)}$ to give an explicit expression of this bundle in the Chow group $\CH^{1}(\Xc)$ by computing the element $\textup{div}(\Delta_N)\in\CH^{1}(\Xc)$ (Theorem \ref{div-element}), this idea originates from the work of Du and Yang \cite{DY19}, where they consider the case that $N$ is squarefree.
\par
Our idea of computing the element $\textup{div}(\Delta_N)$ for general $N$ comes from the observation that there is a correspondence between cusps and the special fibers of the stack $\Xc$. Let's explain this in the specific case that $N=p^{n}$ for some integer $n\geq0$. There is a cuspidal divisor $\mathsf{Cusp}(\mathcal{X}_0(p^{n}))$ on the stack $\mathcal{X}_0(p^{n})$, it is a disjoint union of $n+1$ connected components (Proposition \ref{jiandian}),
\begin{equation*}
    \mathsf{Cusp}(\mathcal{X}_0(p^{n}))=\coprod\limits_{\substack{-n\leq a\leq n\\a\equiv n\,\textup{mod}\,2}}\mathsf{C}^{a}(p^{n}).
\end{equation*}
Note that the index set is exactly the same as the index set for the irreducible components of the stack $\mathcal{X}_0(p^{n})_p$! Actually we will prove that the connected component $\mathsf{C}^{a}(p^{n})$ pulls back to the cusp of the curve $\mathcal{X}_p^{a}(p^{n})$ (Proposition \ref{luozainali}). Then we pick a specific cusp lying in the component $\mathsf{C}^{a}(p^{n})$ and consider the local expansion of the section $\Delta_{p^{n}}$ around this point, the multiplicity of $p$ appearing in the local expansion gives the multiplicity of the vertical component of $\mathcal{X}_p^{a}(p^{n})$ in the element $\textup{div}(\Delta_{p^{n}})$. The final result is
\begin{equation*}
        \textup{div}(\Delta_N)=\psi(N)\varphi(N)P_{\infty}(N)+\sum\limits_{p\vert N}f_p(N),
    \end{equation*}
    where $P_{\infty}(N)$ is the connected component of the cuspidal divisor containing the cusp $\infty$, and for any $p\vert N$, $f_p(N)$ is the following vertical divisor,
    \begin{equation*}
        f_p(N)=12p^{n-1}\varphi(N_p)\sum\limits_{\substack{-n\leq a <n\\a\equiv n\,\textup{mod 2}}}\left(\frac{1-p}{2}(n-a)-1\right)\varphi(p^{(n-\vert a \vert)/2})\cdot\mathcal{X}_p^{a}(N).
    \end{equation*}
This expression is the main tool to compute the self intersection of the modified Hodge bundle $\widehat{\omega}$ and the intersection pairing $\widehat{\mathcal{Z}}(t,y,\la)\cdot\widehat{\omega}$.

\subsection{Acknowledgement} 
The author is grateful to Professor Chao Li for his careful reading of the original manuscript and many helpful comments. The author is supported by the Department of Mathematics at Columbia University in the city of New York.

\part{The Analytic Side}
\section{Local densities and difference formulas}
\subsection{Notations on quadratic lattices}
Let $p$ be a prime number. Let $F$ be a nonarchimedean local field of residue characteristic $p$, with ring of integers $\mathcal{O}_{F}$, residue field $\kappa = \mathbb{F}_{q}$ of size $q$, and uniformizer $\pi$. Let $\nu_{\pi} : F \rightarrow \mathbb{Z}\cup\{\infty\}$ be the valuation on $F$ and $\vert\cdot\vert: F \rightarrow \mathbb{R}_{\geq0}$ be the normalized absolute value on $F$. Let $(\cdot,\cdot)_{F}$ be the Hilbert symbol on the local field $F$. 
\par
A quadratic lattice $(L,q_{L})$ is a finite free $\mathcal{O}_{F}$-module equipped with a quadratic form $q_{L}:L\rightarrow F$. The quadratic form $q_{L}$ also induces a symmetric bilinear form $L\times L\stackrel{(\cdot,\cdot)}\longrightarrow F$ by $(x,y)=q_L(x+y)-q_L(x)-q_L(y)$. Let $L^{\vee}=\{x\in L\otimes_{\mathcal{O}_{F}}F:(x,L)\subset \mathcal{O}_{F}\}$. We say a quadratic lattice is integral if $q_{L}(x)\in\mathcal{O}_{F}$ for all $x\in L$, is self-dual if it is integral and $L$ = $L^{\vee}$. There are three invariants associated to a quadratic lattice: the rank of $L$ over $\mathcal{O}_F$, the discriminant $\chi(L)\in\{0,\pm1\}$ and the Hasse invariant $\epsilon(L)\in\{\pm1\}$. We refer the readers to \cite[\S2]{Zhu23} and \cite[\S2]{LZ22} for details about the last two invariants.
\par
When $p$ is odd, we use $H_{k}^{\varepsilon}$ to denote the unique (up to isometry) self-dual lattice of rank $k$ and discriminant $\chi(L)=\varepsilon$ . When $p=2$, let $H_{2n}^{+}=(H_{2}^{+})^{\obot n}$ be a self-dual lattice of rank $2n$, where the quadratic form on $H_{2}^{+}=\mathcal{O}_{F}^{2}$ is given by $(x,y)\in \mathcal{O}_{F}^{2}\mapsto xy$.
\label{quadratic}
\begin{example}
Let $N\in\mathcal{O}_{F}$. Let $\delta_{F}(N)$ be the following rank 3 quadratic lattice over $\mathcal{O}_{F}$,
\begin{equation*}
    \delta_{F}(N) = \left\{x=\begin{pmatrix}
    -Na & b\\
    c & a
    \end{pmatrix}:\,\, a,b,c\in\mathcal{O}_{F}\right\}.
\end{equation*}
equipped with the quadratic form induced by $x\mapsto \textup{det}(x)$. Under the following basis of $\delta_{F}(N)$,
\begin{equation*}
    e_{1}=\begin{pmatrix}
    -N & \\
     & 1
    \end{pmatrix},\,\,e_{2}=\begin{pmatrix}
    \,  & 1\\
     \, & 
    \end{pmatrix},\,\,e_{3}=\begin{pmatrix}
     \, & \,\\
     1& \,
    \end{pmatrix}.
\end{equation*}
the quadratic form can be represented by the following symmetric matrix,
\begin{equation*}
    T=\begin{pmatrix}
    -N & 0 & 0 \\
      0 & 0 & -\frac{1}{2}\\
      0 & -\frac{1}{2} & 0
    \end{pmatrix}.
\end{equation*}
therefore $\chi(\delta_{F}(N))=\left(\frac{-N}{p}\right)$, where $\left(\frac{\cdot}{p}\right)$ is the extended quadratic residue symbol (see \cite[\S2.1]{Zhu23}), and $\epsilon(\delta_{F}(N))=(N,-1)_{F}$. Moreover,
\begin{equation*}
    \delta_{F}(N)^{\vee} = \left\{x=\begin{pmatrix}
    -Na & b\\
    c & a
    \end{pmatrix}:\,\, a\in\frac{1}{2N}\mathcal{O}_{F},b,c\in\mathcal{O}_{F}\right\}.
\end{equation*}
therefore $\delta_{F}(N)^{\vee}/\delta_{F}(N)\simeq\mathcal{O}_{F}/2N$.
\par
Throughout this article, we will mainly focus on the case that $F=\mathbb{Q}_{p}$. In this case, we simply use $\delta_{p}(N)$ to denote the lattice $\delta_{\mathbb{Q}_{p}}(N)$ (as we did in the introduction).
\label{lambda}
\end{example}

\subsection{Local densities of quadratic lattices}
\label{localdensity}
\begin{definition}
Let $L, M$ be two quadratic $\mathcal{O}_{F}$-lattices. Let $\textup{Rep}_{M,L}$ be the scheme of integral representations, an $\mathcal{O}_{F}$-scheme such that for any $\mathcal{O}_{F}$-algebra $R$,
\begin{equation*}
    \textup{Rep}_{M,L}(R)=\textup{QHom}(L\otimes_{\mathcal{O}_{F}}R, M\otimes_{\mathcal{O}_{F}}R),
\end{equation*}
where \textup{QHom} denotes the set of injective module homomorphisms which preserve the quadratic forms. The local density of integral representations is defined to be
\begin{equation*}
    \textup{Den}(M,L)=\lim\limits_{d\rightarrow\infty}\frac{\#\textup{Rep}_{M,L}(\mathcal{O}_{F}/\pi^{d})}{q^{d\cdot \textup{dim}(\textup{Rep}_{M,L})_{F}}}.
\end{equation*}
\end{definition}
\begin{definition}
Let $L, M$ be two quadratic $\mathcal{O}_{F}$-lattices. Let $\textup{PRep}_{M,L}$ be the $\mathcal{O}_{F}$-scheme of primitive integral representations such that for any $\mathcal{O}_{F}$-algebra $R$,
\begin{equation*}
    \textup{PRep}_{M,L}(R)=\{\phi\in\textup{Rep}_{M,L}(R):\textup{$\phi$ is an isomorphism between $L_{R}$ and a direct summand of $M_{R}$}\}.
\end{equation*}
where $L_{R}$ (resp. $M_{R}$) is $L\otimes_{\mathcal{O}_{F}}R$ (resp. $M\otimes_{\mathcal{O}_{F}}R$). The primitive local density is defined to be
\begin{equation*}
    \textup{Pden}(M,L)=\lim\limits_{d\rightarrow\infty}\frac{\#\textup{PRep}_{M,L}(\mathcal{O}_{F}/\pi^{d})}{q^{d\cdot \textup{dim}(\textup{Rep}_{M,L})_{F}}}.
\end{equation*}
\end{definition}
\par
For any $N\in F$, let $(\langle N\rangle,q_{\langle N\rangle})$ be the rank 1 $\mathcal{O}_{F}$-lattice with an $\mathcal{O}_{F}$ generator $l_{N}$ such that $q_{\langle N\rangle}(l_{N})=N$. For $a_1,a_2,\cdots,a_n\in F$, we define $\langle a_1,a_2,\cdots,a_n\rangle\coloneqq\langle a_1\rangle\obot\langle a_2\rangle\obot\cdots\obot\langle a_n\rangle$.
\begin{example}
    When $p$ odd, it has been calculated explicitly that for any $N\in\mathcal{O}_{F}$ (cf. \cite[(3.3.2.1)]{LZ22}), 
\begin{equation}
    \textup{Pden}(H_{k}^{\varepsilon}, \langle N\rangle) = \begin{cases}
    1-q^{1-k}, & \textup{when $k$ is odd and $\pi\,\vert\, N$;}\\
    1+\varepsilon\chi_{F}(N)q^{(1-k)/2}, &\textup{when $k$ is odd and $\pi\nmid N$;}\\
    (1-\varepsilon q^{-k/2})(1+\varepsilon q^{1-k/2}), & \textup{when $k$ is even and $\pi\,\vert\, N$;}\\
    1-\varepsilon q^{-k/2}, & \textup{when $k$ is even and $\pi\nmid N$.}
    \end{cases}
\end{equation}
When $p=2$, the same formula makes sense and holds true only in the case that $k$ is even and $\varepsilon=+1$.
\label{rank1}
\end{example}
\begin{lemma}
    Let $L, M$ be two quadratic $\mathcal{O}_{F}$-lattices such that the quadratic form on $L$ is nondegenerate, then there exists a positive number $m$ such that for any $a\in\mathcal{O}_F$ with $\nu_\pi(a)\geq m$, the following identity holds,
    \begin{equation*}
        \den(\langle a\rangle\obot M,L)=\den(M,L).
    \end{equation*}
    \label{raodong}
\end{lemma}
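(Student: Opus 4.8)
The plan is to decompose a representation of $L$ into $\langle a\rangle\obot M$ according to its component in the rank-one summand $\langle a\rangle$, and to show that when $\nu_\pi(a)$ is large this extra component can be absorbed without changing the representation count into $M$. Write $r=\operatorname{rank}_{\mathcal{O}_F}L$ and fix a basis $e_1,\dots,e_r$. For $R=\mathcal{O}_F/\pi^d$, any homomorphism $\phi\colon L\otimes R\to(\langle a\rangle\obot M)\otimes R$ splits as $\phi=(\phi_0,\phi_1)$ with $\phi_0\colon L\otimes R\to\langle a\rangle\otimes R$ recorded by $c=(c_1,\dots,c_r)\in R^r$ via $\phi_0(e_i)=c_i\,l_a$. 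Preservation of the quadratic form is then exactly the condition that $\phi_1$ represent the perturbed quadratic form $q_{L,c}\coloneqq q_L-a\,\ell_c^2$ on $L$, where $\ell_c$ is the linear form with coefficients $c$ (at $p=2$ one phrases this through the quadratic form rather than the Gram matrix, but the conclusion is identical). Since $q_{L,c}$ is nondegenerate whenever it is close to $q_L$, the form‑preserving maps $\phi_1$ are automatically injective, so the injectivity clause in the definition of $\rep$ is harmless, and we obtain for every $d$
\begin{equation*}
  \#\rep_{\langle a\rangle\obot M,L}(\mathcal{O}_F/\pi^d)=\sum_{c\in(\mathcal{O}_F/\pi^d)^r}\#\rep_{M,(L,q_{L,c})}(\mathcal{O}_F/\pi^d).
\end{equation*}

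The crux is a stability statement for the isometry class of $(L,q_L)$: because $q_L$ is nondegenerate, there is an integer $N_0=N_0(L)$, exceeding the valuation of $\det q_L$, such that every nondegenerate quadratic form $q'$ on $L\otimes\mathcal{O}_F$ with $q'\equiv q_L\pmod{\pi^{N_0}}$ is isometric to $(L,q_L)$ over $\mathcal{O}_F$; this is a Hensel‑type fact, with $N_0$ slightly larger than $\nu_\pi(\det q_L)$ for $p$ odd and enlarged by a bounded amount (depending on $\nu_\pi(2)$ and the Jordan type of $q_L$) when $p=2$. Set $m=N_0$. The coefficients of the quadratic form $a\,\ell_c^2$ are all divisible by $\pi^{\nu_\pi(a)}$, so for $\nu_\pi(a)\ge m$ and any $c\in\mathcal{O}_F^r$ we have $q_{L,c}\equiv q_L\pmod{\pi^{N_0}}$ with $q_{L,c}$ nondegenerate, whence $q_{L,c}\cong q_L$. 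An isometry $g\colon(L,q_L)\xrightarrow{\sim}(L,q_{L,c})$ induces via $\phi_1\mapsto\phi_1\circ g$ a bijection of representation sets at every finite level, so the display above collapses to the exact identity
\begin{equation*}
  \#\rep_{\langle a\rangle\obot M,L}(\mathcal{O}_F/\pi^d)=q^{dr}\cdot\#\rep_{M,L}(\mathcal{O}_F/\pi^d),\qquad \nu_\pi(a)\ge m.
\end{equation*}

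It remains to match normalizations. When $L$ embeds isometrically into $M$ over $F$, the generic fibers of both representation schemes are homogeneous spaces under the orthogonal groups, and a direct count gives $\dim(\rep_{\langle a\rangle\obot M,L})_F=\dim(\rep_{M,L})_F+r$; dividing the finite‑level identity by $q^{d\dim(\rep_{\langle a\rangle\obot M,L})_F}$ and letting $d\to\infty$ then yields $\den(\langle a\rangle\obot M,L)=\den(M,L)$. If $L$ does not embed isometrically into $M_F$, both densities vanish — a nondegenerate $L$ is integrally represented precisely when its local density is positive — and the finite‑level identity shows the two representation numbers vanish in lockstep, so the asserted equality is trivially $0=0$.

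The main obstacle is the stability lemma, that is, the existence of a uniform modulus $N_0(L)$ after which congruent nondegenerate forms on $L$ become isometric; the delicate point is the case $p=2$, where preservation of the quadratic form is strictly stronger than preservation of the bilinear form and the Jordan splitting must be controlled more carefully. Once this is in hand the decomposition, the dimension count, and the passage to the limit are all routine.
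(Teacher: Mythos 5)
Your proposal is correct and follows essentially the same route as the paper's proof: split a representation of $L$ into $\langle a\rangle\obot M$ into its $\langle a\rangle$- and $M$-components, invoke the local constancy of the isometry class of a nondegenerate quadratic lattice under small perturbations (the paper cites Durfee for exactly this Hensel-type stability), deduce $\#\rep_{\langle a\rangle\obot M,L}(\mathcal{O}_F/\pi^{d})=q^{dr}\cdot\#\rep_{M,L}(\mathcal{O}_F/\pi^{d})$, and conclude by the dimension count $\dim(\rep_{\langle a\rangle\obot M,L})_F=\dim(\rep_{M,L})_F+r$ and passage to the limit. Your bookkeeping via the perturbed forms $q_{L,c}$ is in fact a slightly cleaner way of justifying the fiber count that the paper states more tersely, but it is the same argument in substance.
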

\begin{proof}
    We assume that the rank of $L$ is $n$ and the rank of $M$ is $m$. Let $\{l_i\}_{i=1}^{n}$ be an $\mathcal{O}_F$-basis of $L$, and let $T=(t_{ij})_{1\leq i,j\leq n}$ be the moment matrix of the this basis under the quadratic form on $L$. For any positive integer $d$, we have
    \begin{align*}
        \textup{Rep}_{\langle a\rangle\obot M,L}(\Of/\pi^{d})=\{(\overline{m}_1^{\pr},\cdots,\overline{m}_n^{\pr})\in (\langle a\rangle/\pi^{d}\langle a\rangle \obot M/\pi^{d}M)^{n}:(\overline{m}_i^{\pr},\overline{m}_j^{\pr})\equiv 2t_{ij}\,\textup{mod}\,2\pi^{d}\,\,\textup{for}\,i,j\geq 1.\}.
    \end{align*}
    \par
    For any element $(\overline{m}_1^{\pr},\cdots,\overline{m}_n^{\pr})\in (\langle a\rangle/\pi^{d}\langle a\rangle \obot M/\pi^{d}M)^{n}$ in the set $\textup{Rep}_{\langle a\rangle\obot M,L}(\Of/\pi^{d})$, let $(\overline{m}_1^{\pr\pr},\cdots,\overline{m}_n^{\pr\pr})\in (M/\pi^{d}M)^{n}$ be the $M/\pi^{d}M$-component of this element. Let $(m_1^{\pr\pr},\cdots,m_n^{\pr\pr})\in M^{n}$ be an arbitrary lift of the element $(\overline{m}_1^{\pr\pr},\cdots,\overline{m}_n^{\pr\pr})$ to $M$, when $\nu_\pi(a)$ and $d$ are sufficiently large, the sublattice spanned by $(m_1^{\pr\pr},\cdots,m_n^{\pr\pr})$ inside $M$ is isometric to $L$ due to the local constancy of equivalent class of nondegenerate quadratic lattice (see the work of Durfee \cite{Dur44}).\par
    Therefore when $\nu_\pi(a)$ and $d$ are sufficiently large, we have a natural map by taking the $M/\pi^{d}M$-component,
    \begin{equation*}
        \textup{Rep}_{\langle a\rangle\obot M,L}(\Of/\pi^{d})\longrightarrow \textup{Rep}_{M,L}(\Of/\pi^{d}).
    \end{equation*}
    hence $\# \textup{Rep}_{\langle a\rangle\obot M,L}(\Of/\pi^{d})=q^{nd}\cdot\#\textup{Rep}_{M,L}(\Of/\pi^{d})$.
    \par
    Notice that $\textup{dim}(\textup{Rep}_{M,L}=mn-\frac{n(n+1)}{2}$ and $\textup{dim}(\textup{Rep}_{\langle a \rangle\obot M,L}=(m+1)n-\frac{n(n+1)}{2}$, hence when $\nu_\pi(a)$ and $d$ are sufficiently large,
    \begin{equation*}
        \frac{\# \textup{Rep}_{\langle a\rangle\obot M,L}(\Of/\pi^{d})}{q^{d\cdot\textup{dim}(\textup{Rep}_{\langle a \rangle\obot M,L})}}=\frac{\# \textup{Rep}_{M,L}(\Of/\pi^{d})}{q^{d\cdot\textup{dim}(\textup{Rep}_{M,L})}}.
    \end{equation*}
    Taking limit $d\rightarrow\infty$, we get $\den(\langle a\rangle\obot M,L)=\den(M,L)$ under the assumption that $\nu_\pi(a)$ is sufficiently large.
\end{proof}
\begin{theorem}
Let $H$ be a self-dual quadratic $\mathcal{O}_{F}$-lattice of finite rank $k$. Let $M$ be an integral quadratic $\mathcal{O}_{F}$-lattice of finite rank $r$. Let $N\in\mathcal{O}_{F}$ be an element of valuation $n$, then 
\begin{equation*}
    \textup{Den}(H,M\obot\langle N\rangle) = \sum\limits_{i=0}^{[n/2]}q^{(2-k+r)i}\cdot\textup{Pden}(H, \langle \pi^{-2i}N\rangle)\cdot\textup{Den}(H(N,i), M).
\end{equation*}
\label{anadecom}
\end{theorem}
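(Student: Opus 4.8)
The plan is to stratify the representation scheme $\rep_{H,M\obot\langle N\rangle}$ by the $\pi$-adic content of the image of the distinguished generator $l_N$ of $\langle N\rangle$, and to match each stratum with a primitive representation of a rescaled rank-one lattice together with an ordinary representation of $M$ into an orthogonal complement. At level $d$, a point of $\rep_{H,M\obot\langle N\rangle}(\Of/\pi^d)$ is the same datum as a representation $\phi_M$ of $M$ into $H$ together with a vector $v=\phi(l_N)\in H\otimes\Of/\pi^d$ satisfying $q(v)=N$ and $(v,\phi_M(m))=0$ for all $m\in M$. Writing $v=\pi^i u$ with $u$ primitive, the requirement that $q(u)=\pi^{-2i}N$ be integral forces $0\le i\le[n/2]$; this is exactly the range of summation, and I would show that the stratum of content $i$ contributes the $i$-th summand.

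First I would count the admissible $v$ of fixed content $i$. Since $v=\pi^i u$ with $u$ primitive is determined by $u$ modulo $\pi^{d-i}$, and $q(v)=N$ in $\Of/\pi^d$ is equivalent to $q(u)=\pi^{-2i}N$ in $\Of/\pi^{d-2i}$, lifting primitive solutions from level $d-2i$ to level $d-i$ gives the exact finite-level identity
\[
\#\{v\in H\otimes\Of/\pi^{d}:\ q(v)=N,\ \textup{content}(v)=i\}\ =\ q^{ki}\cdot\#\textup{PRep}_{H,\langle\pi^{-2i}N\rangle}(\Of/\pi^{d-2i}).
\]
Dividing by the normalizing power of $q$ and using $\dim(\rep_{H,\langle\pi^{-2i}N\rangle})_F=k-1$, this stratum contributes the primitive density $\textup{Pden}(H,\langle\pi^{-2i}N\rangle)$ together with the factor $q^{i(2-k)}$, since $ki+(d-2i)(k-1)=d(k-1)+i(2-k)$.

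Next, for a fixed $v=\pi^i u$, the orthogonality $(v,\phi_M(m))=0$ in $\Of/\pi^d$ reads $\pi^i(u,\phi_M(m))=0$, i.e. orthogonality to $u$ only modulo $\pi^{d-i}$. Using primitivity of $u$ to split off the rank-one direction spanned by $u$ from its orthogonal complement $u^\perp=:H(N,i)$, the transverse components of $\phi_M$ represent $M$ into $H(N,i)$ while the $u$-components range over $\pi^{d-i}\Of/\pi^{d}\Of$; this yields $\#\{\phi_M:\ v\perp\phi_M(M)\}=q^{ir}\cdot\#\rep_{H(N,i),M}(\Of/\pi^{d})$ up to terms negligible in the limit. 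Multiplying the two counts, invoking the dimension identity $\dim(\rep_{H,M\obot\langle N\rangle})_F=(k-1)+\dim(\rep_{H(N,i),M})_F$ to absorb the powers $q^{d(\cdots)}$ into the global normalization, and letting $d\to\infty$ before summing over $i$, produces the asserted formula with total prefactor $q^{i(2-k)}\cdot q^{ir}=q^{(2-k+r)i}$.

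The main obstacle lies in the orthogonality bookkeeping of the previous step when $q(u)=\pi^{-2i}N$ is a nonunit (that is, $i<n/2$), so that $u$ does not split off $H$ orthogonally: in a decomposition $\phi_M(m)=c\,\tilde w+w'$ with $w'\in u^\perp$ and $(\tilde w,u)=1$, the $u$-component $c$ couples to the transverse part $w'$ through cross terms of valuation $\ge d-i$ in the Gram conditions, and one must verify that for $d$ large these decouple, so that $w'$ genuinely represents $M$ into the fixed complement $H(N,i)$ and the factor $q^{ir}$ becomes exact in the limit. One must also check that $H(N,i)$ is well defined up to isometry, via a Witt-type extension theorem for primitive vectors of prescribed norm in the self-dual lattice $H$. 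Finally, the residue characteristic $2$ case requires using the explicit hyperbolic structure of the lattices $H_{2k}^{+}$ in place of diagonalization throughout.
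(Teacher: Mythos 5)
Your stratification is the right skeleton for this theorem, and your numerical bookkeeping is correct: the range $0\le i\le[n/2]$, the exact count $q^{ki}\cdot\#\textup{PRep}_{H,\langle\pi^{-2i}N\rangle}(\mathcal{O}_{F}/\pi^{d-2i})$ of content-$i$ vectors, the dimension identity $\dim(\textup{Rep}_{H,M\obot\langle N\rangle})_{F}=(k-1)+\dim(\textup{Rep}_{H(N,i),M})_{F}$, and the resulting prefactor $q^{ki+ir-2i(k-1)}=q^{(2-k+r)i}$ all check out. (The paper gives no internal argument for this statement -- it cites \cite[\S7]{Zhu23} -- so one can only compare against the strategy encoded in the shape of the formula, which is exactly your stratification by the content of $\phi(l_{N})$.) However, the proposal has a genuine gap, located precisely at the step you yourself call the main obstacle: the claim that for fixed $v=\pi^{i}u$ one has $\#\{\phi_{M}:v\perp\phi_{M}(M),\ \textup{Gram}(\phi_{M})\equiv\textup{Gram}(M)\}=q^{ir}\cdot\#\textup{Rep}_{H(N,i),M}(\mathcal{O}_{F}/\pi^{d})$. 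Saying ``one must verify that for $d$ large these decouple'' leaves unproven the only non-routine point of the whole argument, and it is genuinely delicate: writing $\phi_{M}(m_{a})=\pi^{d-i}e_{a}\tilde{w}+w_{a}^{\prime}$ with $w_{a}^{\prime}\in u^{\perp}\otimes\mathcal{O}_{F}/\pi^{d}$, the Gram conditions acquire cross terms $\pi^{d-i}\bigl(e_{a}(\tilde{w},w_{b}^{\prime})+e_{b}(\tilde{w},w_{a}^{\prime})\bigr)$ that do not vanish mod $\pi^{d}$, and the naive fix -- replacing $\tilde{w}$ by an element of $u^{\perp}$ with the same pairings mod $\pi^{i}$ -- is impossible in general, because $(\tilde{w},\cdot)\vert_{u^{\perp}}$ generates the discriminant group $(u^{\perp})^{\vee}/u^{\perp}$, which has order $q^{\nu_{\pi}(2)+n-2i}$ and is nontrivial whenever $2i<n$ or $p=2$. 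A second, smaller omission: to define $H(N,i)=u^{\perp}$ you need $u$ to be an honest vector of $H$ with $q(u)=\pi^{-2i}N$ exactly, while the stratification only produces $u$ modulo $\pi^{d-i}$; one must lift by Hensel's lemma, which works because $H$ is self-dual and $u$ primitive, so the derivative $(u,\cdot)$ of the quadric is surjective.

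Both gaps are closable, and then your argument goes through verbatim, including at $p=2$. After lifting to an exact $\tilde{u}\in H$, make the change of variables
\begin{equation*}
    w_{a}^{\prime\prime}\coloneqq w_{a}^{\prime}+\pi^{d-i}e_{a}\,P(\tilde{w}),\qquad P(\tilde{w})=\tilde{w}-\frac{(\tilde{w},\tilde{u})}{(\tilde{u},\tilde{u})}\,\tilde{u},
\end{equation*}
the orthogonal projection of $\tilde{w}$ into $u^{\perp}\otimes F$. The key observations are that $P(\tilde{w})$ has denominator dividing $2\pi^{n-2i}$, so $\pi^{d-i}P(\tilde{w})$ lies in $u^{\perp}$ itself once $d$ is large; that $(w_{a}^{\prime},P(\tilde{w}))=(w_{a}^{\prime},\tilde{w})$ exactly, since $(w_{a}^{\prime},\tilde{u})=0$; and that all terms involving $(P(\tilde{w}),P(\tilde{w}))$ have valuation at least $2(d-i)-(n-2i)-2\nu_{\pi}(2)\ge d$. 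Hence $(e,w^{\prime})\mapsto(e,w^{\prime\prime})$ is a bijection converting the coupled conditions into exactly $\textup{Gram}(w^{\prime\prime})\equiv\textup{Gram}(M)$, so the factor $q^{ir}$ is exact rather than ``up to negligible terms.'' Combined with the Witt extension theorem you invoke (valid for self-dual lattices also in residue characteristic $2$, cf.\ \cite{Mor79}), which shows the count is independent of $u$ and identifies $u^{\perp}\simeq H(N,i)$ up to isometry, this completes the proof along the lines you propose.
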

\begin{proof}
    This is proved in \cite[\S7]{Zhu23}.
\end{proof}
\begin{remark}
    Let's assume that $H=H_k^{\varepsilon}$ where $k\geq3$ is an integer, then \cite[Example 7.1.2]{Zhu23} shows that for any positive integer $0\leq i\leq [\frac{n}{2}]$ where $n=\nu_\pi(N)$,
    \begin{equation*}
        H_k^{\varepsilon}(N,i)\simeq\langle-N\pi^{-2i}\rangle \obot H_{k-2}^{\varepsilon}.
    \end{equation*}
    \label{qingchu}
\end{remark}
\begin{corollary}
    Let $k$ be a positive integer and $\varepsilon\in\{\pm1\}$, let $M$ be a quadratic lattice of rank $r$ over $\mathcal{O}_{F}$, then for any $a\in\mathcal{O}_{F}$, we have the following identity,
    \begin{equation*}
        \lim\limits_{m\rightarrow\infty}\den(H_k^{\varepsilon},M\obot\langle a\pi^{m}\rangle)=\den(H_{k-2}^{\varepsilon},M)\cdot\frac{\textup{Pden}(H_k^{\varepsilon},\langle \pi\rangle)}{1-q^{2-k+r}}.
    \end{equation*}
    \label{jian1}
\end{corollary}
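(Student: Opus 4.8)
The plan is to insert the difference formula of Theorem~\ref{anadecom} and then pass to the limit summand by summand, the only real work being the control of the ``boundary'' terms of the sum. Throughout I assume $a\neq 0$ and that the quadratic form on $M$ is nondegenerate, so that Lemma~\ref{raodong} applies. Write $N=a\pi^{m}$ and $n=\nu_\pi(N)=m+\nu_\pi(a)$, and put $c_i=N\pi^{-2i}$, an element of $\Of$ of valuation $n-2i\geq 0$ for $0\leq i\leq[n/2]$. Applying Theorem~\ref{anadecom} with $H=H_k^{\varepsilon}$ and then rewriting $H_k^{\varepsilon}(N,i)\simeq\langle -c_i\rangle\obot H_{k-2}^{\varepsilon}$ via Remark~\ref{qingchu} gives
\begin{equation*}
    \den(H_k^{\varepsilon},M\obot\langle a\pi^{m}\rangle)=\sum_{i=0}^{[n/2]}q^{(2-k+r)i}\cdot\textup{Pden}(H_k^{\varepsilon},\langle c_i\rangle)\cdot\den(\langle -c_i\rangle\obot H_{k-2}^{\varepsilon},M).
\end{equation*}
Here the hypothesis $k\geq 3$ of Remark~\ref{qingchu} is automatic in the convergence regime $2-k+r<0$ isolated below.

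Next I would show that for each fixed $i$ the two nonconstant factors stabilize as $m\to\infty$. Since $\nu_\pi(c_i)=n-2i\to\infty$, Example~\ref{rank1} shows that $\textup{Pden}(H_k^{\varepsilon},\langle c_i\rangle)$ depends only on whether $\pi\mid c_i$, hence equals the constant $\textup{Pden}(H_k^{\varepsilon},\langle\pi\rangle)$ as soon as $n-2i\geq 1$; and Lemma~\ref{raodong}, applied with represented lattice $M$, yields a threshold $m_0=m_0(M,H_{k-2}^{\varepsilon})$ such that $\den(\langle -c_i\rangle\obot H_{k-2}^{\varepsilon},M)=\den(H_{k-2}^{\varepsilon},M)$ whenever $n-2i\geq m_0$. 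Thus for every index in the stable range $0\leq i\leq (n-m_0)/2$ the $i$-th summand is exactly $q^{(2-k+r)i}\cdot\textup{Pden}(H_k^{\varepsilon},\langle\pi\rangle)\cdot\den(H_{k-2}^{\varepsilon},M)$, and as $m\to\infty$ these sum to $\textup{Pden}(H_k^{\varepsilon},\langle\pi\rangle)\cdot\den(H_{k-2}^{\varepsilon},M)\cdot\sum_{i=0}^{\infty}q^{(2-k+r)i}$.

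It then remains to show the boundary terms $(n-m_0)/2<i\leq[n/2]$ vanish in the limit. Their number is at most $m_0/2+1$, independent of $m$; and for such $i$ the valuation $\nu_\pi(c_i)=n-2i<m_0$ is bounded, so $c_i$ is a fixed unit times $\pi^{n-2i}$ and $\langle -c_i\rangle\obot H_{k-2}^{\varepsilon}$ runs through only finitely many isometry classes. Hence both $\den(\langle -c_i\rangle\obot H_{k-2}^{\varepsilon},M)$ and $\textup{Pden}(H_k^{\varepsilon},\langle c_i\rangle)$ are uniformly bounded, while the weights $q^{(2-k+r)i}$ with $i>(n-m_0)/2$ are $O\!\left(q^{(2-k+r)(n-m_0)/2}\right)$, which tends to $0$ as $m\to\infty$ precisely because $2-k+r<0$. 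Combining with the stable range and summing the geometric series to $(1-q^{2-k+r})^{-1}$ yields the asserted identity.

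The step I expect to be the genuine obstacle is the interchange of limit and summation, i.e.\ the uniform control of the boundary terms. This is exactly where the condition $2-k+r<0$ is indispensable: it is both what makes the right-hand side finite and what forces the boundary weights to decay, and it also forces $k>r+2\geq 3$ so that Remark~\ref{qingchu} is available. A secondary technical point is the uniform boundedness of the densities attached to the finitely many lattices $\langle -c_i\rangle\obot H_{k-2}^{\varepsilon}$ in the boundary range, which I reduce to the finiteness of isometry classes of $\langle c_i\rangle$ of bounded valuation.
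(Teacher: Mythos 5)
Your argument is correct, and it rests on exactly the same ingredients as the paper's proof --- Theorem~\ref{anadecom}, Remark~\ref{qingchu}, Example~\ref{rank1} and Lemma~\ref{raodong} --- but it extracts the limit in a different way. The paper subtracts $q^{2-k+r}\den(H_k^{\varepsilon},M\obot\langle a\pi^{m-2}\rangle)$ from $\den(H_k^{\varepsilon},M\obot\langle a\pi^{m}\rangle)$: under the decomposition of Theorem~\ref{anadecom} the two sums telescope, leaving only the $i=0$ term, so that
\begin{equation*}
    \den(H_k^{\varepsilon},M\obot\langle a\pi^{m}\rangle)-q^{2-k+r}\den(H_k^{\varepsilon},M\obot\langle a\pi^{m-2}\rangle)=\textup{Pden}(H_k^{\varepsilon},\langle a\pi^{m}\rangle)\cdot\den(\langle -a\pi^{m}\rangle\obot H_{k-2}^{\varepsilon},M),
\end{equation*}
and then lets $m\to\infty$, the right-hand side converging by Example~\ref{rank1} and Lemma~\ref{raodong}. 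You instead keep the full sum and pass to the limit summand by summand, splitting the index set into a stable range and a boundary range. The two computations are intimately related --- your term-by-term analysis is essentially the unwinding of the paper's two-term recursion --- but each buys something. The paper's version is two lines; yours is longer but makes explicit two points the paper leaves tacit: first, that the statement requires $q^{2-k+r}<1$, i.e.\ $k>r+2$ (which is also what legitimizes invoking Remark~\ref{qingchu}, and without which a recursion of the form $x_m=q^{2-k+r}x_{m-2}+b_m$ with $b_m$ convergent need not force convergence of $x_m$); and second, that the limit on the left-hand side actually exists --- the paper's final step $(1-q^{2-k+r})\cdot\lim_{m\rightarrow\infty}\den(H_k^{\varepsilon},M\obot\langle a\pi^{m}\rangle)=\den(H_{k-2}^{\varepsilon},M)\cdot\textup{Pden}(H_k^{\varepsilon},\langle\pi\rangle)$ presupposes this existence, and it is precisely your boundary-term estimate that supplies it.
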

\begin{proof}
    By Theorem \ref{anadecom} and Remark \ref{qingchu}, we know that for any positive integer $m$,
    \begin{equation*}
        \den(H_k^{\varepsilon},M\obot\langle a\pi^{m}\rangle)-q^{2-k+r}\den(H_k^{\varepsilon},M\obot\langle a\pi^{m-2}\rangle)=\textup{Pden}(H_k^{\varepsilon},\langle a\pi^{m}\rangle)\cdot\den(\langle -a\pi^{m}\rangle\obot H_{k-2}^{\varepsilon},M).
    \end{equation*}
    \par
    Notice that $\textup{Pden}(H_k^{\varepsilon},\langle a\pi^{m}\rangle)$ is independent of the positive integer $m$ by Example \ref{rank1}. Taking $m\rightarrow\infty$, Lemma \ref{raodong} implies the following,
    \begin{equation*}
        (1-q^{2-k+r})\cdot\lim\limits_{m\rightarrow\infty}\den(H_k^{\varepsilon},M\obot\langle a\pi^{m}\rangle)=\den(H_{k-2}^{\varepsilon},M)\cdot\textup{Pden}(H_k^{\varepsilon},\langle \pi\rangle).
    \end{equation*}
\end{proof}
\begin{definition}
Let $n\geq0$, for $\varepsilon\in\{\pm 1\}$, we define the normalizing factors to be
    \begin{align*}
    \textup{Nor}^{\varepsilon}(X,n)&=(1-\frac{1+(-1)^{n+1}}{2}\cdot\varepsilon q^{-(n+1)/2}X)\prod\limits_{1\leq i<(n+1)/2}(1-q^{-2i}X^{2}).
\end{align*}   
\label{normal1}
\end{definition}
\begin{lemma}
Let $L$ be a quadratic lattice of rank $n$. Let $N\in\mathcal{O}_F$. When $p$ is odd, there exist polynomials $\textup{Den}^{\varepsilon}(X,L)$, $\textup{Den}^{\flat\varepsilon}(X,L)$ and $\textup{Den}^{\varepsilon}_{\Delta(N)}(X,L)\in\mathbb{Z}[X]$, such that for positive integers $k$,
\begin{align*}
    &\textup{Den}^{\varepsilon}(X,L)\,\big\vert_{X=q^{-k}} = \frac{\textup{Den}(H_{2k+n+1}^{\varepsilon}, L)}{\textup{Nor}^{\varepsilon}(q^{-k},n)},\,\,\,\textup{Den}^{\flat\varepsilon}(X,L)\,\big\vert_{X=q^{-k}}=\frac{\textup{Den}(H_{2k+n}^{\varepsilon},L)}{\textup{Nor}^{\varepsilon}(q^{-k},n-1)}\cdot(1-\varepsilon\chi(L)q^{-k})\\
    &\textup{Den}^{\varepsilon}_{\Delta(N)}(X,L)\,\big\vert_{X=q^{-k}} = \left\{\begin{aligned}
    &\frac{\textup{Den}(\delta_{F}(N)\obot H_{2k+n-2}^{\varepsilon},L)}{\textup{Nor}^{\varepsilon}(q^{-k},n-1)} , & \textup{When $\pi\,\vert\, N$};\\
    &\frac{\textup{Den}(\delta_{F}(N)\obot H_{2k+n-2}^{\varepsilon},L)}{\textup{Nor}^{\varepsilon\chi_{F}(N)}(q^{-k},n)}, & \textup{When $\pi\nmid N$}.
    \end{aligned}\right.
\end{align*}
We define the derived local density of $L$ and the derived local density of $L$ with level $N$ to be
\begin{equation*}
    \partial\textup{Den}^{\varepsilon}(L) = -\frac{\textup{d}}{\textup{d}X}\bigg|_{X=1}\textup{Den}^{\varepsilon}(X,L),\,\,\, \partial\textup{Den}^{\varepsilon}_{\Delta(N)}(L) = -\frac{\textup{d}}{\textup{d}X}\bigg|_{X=1}\textup{Den}^{\varepsilon}_{\Delta(N)}(X,L).
\end{equation*}
\par
When $p=2$ and $n$ is odd, the polynomial $\textup{Den}^{+}(X,L)$ with the same evaluation formulas at $X=q^{-k}$ exist. When $p=2$ and $n$ is even, the polynomial $\textup{Den}^{+}_{\Delta(N)}(X,L)$ with the same evaluation formula at $X=q^{-k}$ exists. The derived local density $\partial\textup{Den}^{+}(L)$ of $L$ is defined similarly.
\label{poly}
\end{lemma}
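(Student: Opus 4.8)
\emph{Strategy.} The plan is to establish the three polynomials separately, treating the two unramified ones $\den^{\varepsilon}(X,L)$ and $\den^{\flat\varepsilon}(X,L)$ as essentially known and concentrating on the level polynomial $\den^{\varepsilon}_{\Delta(N)}(X,L)$, which carries the new content. For the first two, the fact that the normalized local density of $L$ into a self-dual lattice $H^{\varepsilon}_{2k+n+1}$ (resp. $H^{\varepsilon}_{2k+n}$) is a polynomial in $X=q^{-k}$ is classical: I would cite Cho--Yamauchi \cite{CY20}, Ikeda \cite{Ike17}, and Li--Zhang \cite{LZ22}, where these polynomials and their functional equations are produced and the factors $\textup{Nor}^{\varepsilon}$ of Definition \ref{normal1} appear precisely as the ``trivial'' denominators one divides out. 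Alternatively they follow by induction on the rank of $L$ from Theorem \ref{anadecom} together with the explicit rank-$1$ values of Example \ref{rank1}.

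For the level polynomial I would first rewrite the ambient lattice. By Example \ref{lambda} the lattice $\delta_{F}(N)$ splits as $\langle -N\rangle\obot H_{2}^{+}$, and adjoining a split hyperbolic plane preserves the type, so $\delta_{F}(N)\obot H^{\varepsilon}_{2k+n-2}\cong\langle -N\rangle\obot H^{\varepsilon}_{2k+n}$, both of rank $2k+n+1$. Hence for $L$ of rank $n$ one has $\den(\delta_{F}(N)\obot H^{\varepsilon}_{2k+n-2},L)=\den(\langle -N\rangle\obot H^{\varepsilon}_{2k+n},L)$. Now I would apply the two-term form of the difference formula used in the proof of Corollary \ref{jian1} (deduced from Theorem \ref{anadecom} and Remark \ref{qingchu}), taking the self-dual lattice to have rank $2k+n+2$, the base lattice $M=L$ of rank $n$, and $a\pi^{m}=N$. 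Since $q^{2-(2k+n+2)+n}=q^{-2k}=X^{2}$, this reads
\[
\den(H^{\varepsilon}_{2k+n+2},L\obot\langle N\rangle)-X^{2}\,\den(H^{\varepsilon}_{2k+n+2},L\obot\langle N\pi^{-2}\rangle)=\textup{Pden}(H^{\varepsilon}_{2k+n+2},\langle N\rangle)\cdot\den(\langle -N\rangle\obot H^{\varepsilon}_{2k+n},L),
\]
(the second left-hand term being absent when $\nu_{\pi}(N)\leq 1$, where one uses the single-term specialization of Theorem \ref{anadecom}). Solving for the right-most density expresses $\den(\delta_{F}(N)\obot H^{\varepsilon}_{2k+n-2},L)$ as a $\mathbb{Z}[X]$-combination of the \emph{unramified} type-$1$ densities of the rank-$(n+1)$ lattices $L\obot\langle N\rangle$ and $L\obot\langle N\pi^{-2}\rangle$, divided by the explicit primitive density $\textup{Pden}(H^{\varepsilon}_{2k+n+2},\langle N\rangle)$ of Example \ref{rank1}.

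The crux, and the step I expect to be the main obstacle, is to see that after dividing by the \emph{correct} normalizing factor the result is a genuine polynomial and not merely a rational function. Writing $\den(H^{\varepsilon}_{2k+n+2},L\obot\langle N\rangle)=\textup{Nor}^{\varepsilon}(X,n+1)\cdot\den^{\varepsilon}(X,L\obot\langle N\rangle)$ and likewise for $N\pi^{-2}$, polynomiality reduces to the identities
\[
\textup{Pden}(H^{\varepsilon}_{2k+n+2},\langle N\rangle)\cdot\textup{Nor}^{\varepsilon}(X,n-1)=\textup{Nor}^{\varepsilon}(X,n+1)\quad(\pi\mid N),
\]
\[
\textup{Pden}(H^{\varepsilon}_{2k+n+2},\langle N\rangle)\cdot\textup{Nor}^{\varepsilon\chi_{F}(N)}(X,n)=\textup{Nor}^{\varepsilon}(X,n+1)\quad(\pi\nmid N),
\]
which is exactly why the two cases and the two different normalizing factors were built into the statement. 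Granting these, the cancellation is total and yields the clean closed form
\[
\den^{\varepsilon}_{\Delta(N)}(X,L)=\den^{\varepsilon}(X,L\obot\langle N\rangle)-X^{2}\,\den^{\varepsilon}(X,L\obot\langle N\pi^{-2}\rangle)\in\mathbb{Z}[X],
\]
manifestly a polynomial. Verifying the two displayed identities is a finite check over the parities of $n$ and the cases $\pi\mid N$, $\pi\nmid N$, using Example \ref{rank1} and Definition \ref{normal1}; the only delicate bookkeeping is tracking the discriminant character $\chi_{F}(N)$, where for $\pi\nmid N$ the relation $\chi_{F}(N)^{2}=1$ turns $(1+\varepsilon\chi_{F}(N)q^{-(n+1)/2}X)(1-\varepsilon\chi_{F}(N)q^{-(n+1)/2}X)$ into $1-q^{-(n+1)}X^{2}$, supplying the missing factor of $\textup{Nor}^{\varepsilon}(X,n+1)$.

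Finally, for $p=2$ the same argument applies verbatim with $\varepsilon=+$. The parity restrictions in the statement ($\den^{+}$ for $n$ odd, $\den^{+}_{\Delta(N)}$ for $n$ even) are precisely the conditions under which the self-dual lattices $H^{+}_{2k+n+1}$ and $H^{+}_{2k+n+2}$ entering the construction have even rank and hence exist over $\mathbb{Z}_{2}$, and Example \ref{rank1} furnishes the required primitive densities exactly in these cases; no new ideas are needed beyond the dyadic care already present in Theorem \ref{anadecom}.
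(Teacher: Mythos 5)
Your proposal is correct, but it takes a genuinely different route from the paper: the paper's entire proof is a citation to Li--Zhang \cite[Lemma 3.3.2, Definition 3.4.1, Definition 3.5.2]{LZ22}, treating all three polynomials as already constructed there, whereas you take the unramified polynomials $\den^{\varepsilon}(X,\cdot)$ as the known input and \emph{build} the level polynomial out of them via the difference formula. Your construction is sound: the isometry $\delta_F(N)\obot H^{\varepsilon}_{2k+n-2}\simeq\langle -N\rangle\obot H^{\varepsilon}_{2k+n}$ follows from Example \ref{lambda} (the form on $\delta_F(N)$ is $-Na^2-bc$) together with the classification of self-dual lattices, the two-term specialization of Theorem \ref{anadecom} with Remark \ref{qingchu} gives exactly the displayed difference identity, and your two normalizing-factor identities do check out in all four parity/ramification cases against Definition \ref{normal1} and Example \ref{rank1} (for $n$ odd, $\pi\nmid N$ via the cancellation $(1+\varepsilon\chi_F(N)q^{-(n+1)/2}X)(1-\varepsilon\chi_F(N)q^{-(n+1)/2}X)=1-q^{-(n+1)}X^2$ you point out; for $n$ even, $\pi\mid N$ via $(1+\varepsilon q^{-n/2}X)(1-\varepsilon q^{-n/2}X)=1-q^{-n}X^2$). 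This yields the clean closed form
\begin{equation*}
    \den^{\varepsilon}_{\Delta(N)}(X,L)=\den^{\varepsilon}(X,L\obot\langle N\rangle)-X^{2}\,\den^{\varepsilon}(X,L\obot\langle N\pi^{-2}\rangle)
\end{equation*}
(with the second term dropped when $\nu_\pi(N)\leq 1$), and equality of polynomials follows since both sides agree at the infinitely many points $X=q^{-k}$. What each approach buys: the paper's citation is economical but leaves the level-$N$ case resting on references whose setting is the self-dual or nearly self-dual one, while your argument is self-contained given Theorem \ref{anadecom} and, importantly, makes explicit the very identity
\begin{equation*}
    \den(\delta_F(N)\obot H^{\varepsilon}_{2k+n-2},L)=\frac{\den(H^{\varepsilon}_{2k+n+2},L\obot\langle N\rangle)-q^{-2k}\den(H^{\varepsilon}_{2k+n+2},L\obot\langle N\pi^{-2}\rangle)}{\textup{Pden}(H^{\varepsilon}_{2k+n+2},\langle N\rangle)}
\end{equation*}
that the paper itself uses later (in the proof of Lemma \ref{W/D}), so your route also explains why the two case-dependent normalizing factors in the statement are the right ones. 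Your remarks on $p=2$ are likewise consistent: for $n$ even the construction only ever invokes $H^{+}_{2m}$ of even rank and the odd-rank unramified polynomial $\den^{+}(X,L\obot\langle N\rangle)$, exactly the objects the lemma asserts to exist in the dyadic case.
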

\begin{proof}
This is a combination of \cite[Lemma 3.3.2, Definition 3.4.1, Definition 3.5.2]{LZ22}
\end{proof}
We have the following functional equation for $\textup{Den}^{\varepsilon}(X,L)$.
\begin{theorem}
    When the residue characteristic of $\Of$ is not 2. Let $L$ be a quadratic lattice of rank $n$. Then
    \begin{align*}
        \textup{Den}^{\varepsilon}(X,L)&=w^{\varepsilon}(L)\cdot X^{\textup{val}(L)}\cdot \textup{Den}^{\varepsilon}(\frac{1}{X},L),\\
        \textup{Den}^{\flat\varepsilon}(X,L)&=(q^{1/2}X)^{2[\frac{\textup{val}(L)}{2}]}\cdot \textup{Den}^{\flat\varepsilon}(\frac{1}{qX},L).
    \end{align*}
    where $\textup{val}(L)$ is the $\pi$-adic valuation of the moment matrix of $L$ under an $\mathcal{O}_F$-basis, and the sign of the functional equation is equal to 
    \begin{equation*}
        w^{\varepsilon}(L)\coloneqq (\textup{det}\,L,-(-1)^{\tbinom{n+1}{2}}u)_{F}\cdot\epsilon(L)\in\{\pm1\},
    \end{equation*}
    where $u\in\mathcal{O}_{F}^{\times}$ such that $u$ is a square if $\varepsilon=1$, is not a square if $\varepsilon=-1$.
    \label{fe}
\end{theorem}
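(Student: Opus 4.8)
The plan is to prove the two functional equations simultaneously by induction on the rank $n$, using the difference formula of Theorem \ref{anadecom} as the engine. Because the residue characteristic of $\Of$ is odd, every $L$ diagonalizes as $\langle a_1,\dots,a_n\rangle$, so one may always split off a rank $1$ orthogonal summand $L = L' \obot \langle N\rangle$ with $\textup{rank}\,L' = n-1$. Feeding $H = H^{\varepsilon}_{2k+n+1}$ and $M = L'$ into Theorem \ref{anadecom}, together with the isometry $H^{\varepsilon}_{2k+n+1}(N,i) \simeq \langle -N\pi^{-2i}\rangle \obot H^{\varepsilon}_{2k+n-1}$ of Remark \ref{qingchu}, expresses $\textup{Den}(H^{\varepsilon}_{2k+n+1}, L)$ as a finite sum with weights $q^{-2ki}=X^{2i}$ of the explicit primitive densities $\textup{Pden}(H^{\varepsilon}_{2k+n+1}, \langle \pi^{-2i}N\rangle)$ of Example \ref{rank1} against densities $\textup{Den}(\langle -N\pi^{-2i}\rangle \obot H^{\varepsilon}_{2k+n-1}, L')$ of the rank $n-1$ lattice $L'$. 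The latter live in ambient rank $2k+(n-1)$, of opposite parity, so they are governed by the companion family $\textup{Den}^{\flat\varepsilon}$; this coupling is exactly why both functional equations must be carried through the induction together.

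The base of the induction is the rank $1$ lattice $\langle N\rangle$, where Theorem \ref{anadecom} with $M=0$ collapses $\textup{Den}(H^{\varepsilon}_{2k+2}, \langle N\rangle)$ to the finite sum $\sum_{i=0}^{[\nu_\pi(N)/2]} q^{-2ki}\,\textup{Pden}(H^{\varepsilon}_{2k+2}, \langle \pi^{-2i}N\rangle)$ with the primitive factors in closed form from Example \ref{rank1}. Summing this geometric-type series, normalizing by $\textup{Nor}^{\varepsilon}(q^{-k},0)=1$ from Definition \ref{normal1}, and substituting $X=q^{-k}$ yields an explicit element of $\Z[X]$ whose palindromy under $X \leftrightarrow 1/X$ is visible by inspection; one reads off the sign $w^{\varepsilon}(\langle N\rangle) = (N,u)_{F}$, matching the asserted formula since $\epsilon(\langle N\rangle)=1$ and $\binom{2}{2}=1$. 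The flat polynomial $\textup{Den}^{\flat\varepsilon}(X,\langle N\rangle)$ is handled the same way, and its functional equation is centered at $X \mapsto 1/(qX)$ precisely because the ambient rank $2k+n$ has the opposite parity.

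For the inductive step I substitute the functional equations for $L'$ (for both families and both signs $\pm\varepsilon$) into the expansion above and reconcile three quantities. The powers of $X$: the explicit $X^{2i}$, the inductive $X^{\textup{val}(L')}$, and the $i$-dependence entering through $\langle -N\pi^{-2i}\rangle$ must assemble into the single power $X^{\textup{val}(L)}$, where $\textup{val}(L)=\textup{val}(L')+\nu_\pi(N)$. The normalizing factors $\textup{Nor}^{\varepsilon}(q^{-k},n)$, $\textup{Nor}^{\varepsilon}(q^{-k},n-1)$ and the correction $1-\varepsilon\chi(L)q^{-k}$ built into $\textup{Den}^{\flat\varepsilon}$ must telescope, including the replacement $\varepsilon \mapsto \varepsilon\chi_{F}(N)$ dictated by Lemma \ref{poly} when $\pi \nmid N$. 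Finally the sign: using multiplicativity of $\det$ and of the Hasse invariant $\epsilon$ under $\obot$, and bilinearity of the Hilbert symbol, the inductive sign for $L'$ combines with the rank $1$ contribution to reproduce $w^{\varepsilon}(L)=(\det L, -(-1)^{\binom{n+1}{2}}u)_{F}\cdot\epsilon(L)$.

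The hard part will be this last bookkeeping: matching the Hilbert-symbol sign uniformly in $n$, since that is where the functional equation is most delicate and where the parity of $\binom{n+1}{2}$ together with the interaction of $\det L$, $\epsilon(L)$ and $\nu_\pi(N)$ all feed in. The conceptual anchor is that $\textup{Den}(H^{\varepsilon}_{2k+n+1}, L)$ is, up to the elementary factors already isolated, a local Whittaker function of a Siegel Eisenstein series of genus $n$, and the identity is its local functional equation under $s \leftrightarrow -s$ (which becomes $X \leftrightarrow 1/X$), the sign being the Weil index $\gamma(\psi\circ q_{L})$ of the quadratic form, whose standard expression as a Hilbert symbol times a fourth root of unity is exactly $(\det L,\cdot)_{F}\,\epsilon(L)$. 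Alternatively one may read the functional equation directly off the explicit closed form of the unramified density polynomial (Ikeda, Cho--Yamauchi), which provides an independent check on the inductive sign computation.
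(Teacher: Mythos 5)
The core of your proposal --- induction on the rank with Theorem \ref{anadecom} as the engine --- has a genuine gap in the inductive step. Writing $L=L'\obot\langle N\rangle$ and applying Theorem \ref{anadecom} with $H=H^{\varepsilon}_{2k+n+1}$ and $M=L'$ does give
\begin{equation*}
    \textup{Den}(H^{\varepsilon}_{2k+n+1},L)=\sum_{i=0}^{[\nu_{\pi}(N)/2]}X^{2i}\cdot\textup{Pden}(H^{\varepsilon}_{2k+n+1},\langle\pi^{-2i}N\rangle)\cdot\textup{Den}(\langle -N\pi^{-2i}\rangle\obot H^{\varepsilon}_{2k+n-1},L'),\qquad X=q^{-k},
\end{equation*}
but the factors $\textup{Den}(\langle -N\pi^{-2i}\rangle\obot H^{\varepsilon}_{2k+n-1},L')$ are densities of $L'$ into a \emph{non-self-dual} ambient lattice whenever $2i<\nu_{\pi}(N)$; only the top term, and only when $\nu_{\pi}(N)$ is even, has self-dual ambient. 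Neither $\textup{Den}^{\varepsilon}(X,L')$ nor $\textup{Den}^{\flat\varepsilon}(X,L')$ --- the only objects your inductive hypothesis controls --- is such a quantity, so there is nothing to ``substitute the functional equations for $L'$'' into, and the induction does not close. (Your parity bookkeeping is also off: when $\langle -N\pi^{-2i}\rangle\obot H^{\varepsilon}_{2k+n-1}$ \emph{is} self-dual, its rank is $2k+n=2k+(n-1)+1$, so the term is a value of the non-flat family $\textup{Den}^{\varepsilon'}(X,L')$ with a possibly twisted discriminant $\varepsilon'$, not of the flat family; the recursion couples flat to flat and non-flat to non-flat rather than crossing the two.) Repairing this would require functional equations for the rank-one-level densities $X\mapsto\textup{Den}(\langle -M\rangle\obot H^{\varepsilon}_{2k+n-1},L')$ for all $M\in\Of$, which is a statement of the same depth as Theorem \ref{fe} itself. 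Indeed the paper's logic runs in the opposite direction: Theorem \ref{fe} is the input from which functional equations of level densities are \emph{deduced} via the difference formula (e.g.\ Lemma \ref{functionaleq}), so extracting Theorem \ref{fe} from the difference formula alone circles back on itself.

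Your rank-one base case is essentially correct, modulo the slip that the relevant normalizer is $\textup{Nor}^{\varepsilon}(q^{-k},1)=1-\varepsilon q^{-k-1}$ rather than $\textup{Nor}^{\varepsilon}(q^{-k},0)=1$ (this factor is exactly what cancels against the common factor of the primitive densities in Example \ref{rank1}). Note also that your closing paragraph is, in substance, the paper's actual proof: the paper simply cites Ikeda's functional equation of the Siegel series, \cite[Theorem 4.1 (1),(2)]{Ike17}, proved there for $n$ odd and $\varepsilon=+1$, and observes that the same proof works in general. What you present as a ``conceptual anchor'' or ``independent check'' would have to be promoted to the proof itself.
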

\begin{proof}
    Both of these two formulas are proved by Ikeda \cite[Theorem 4.1 (1),(2)]{Ike17} when $n$ is odd and $\varepsilon=+1$, the same proof works in general.
\end{proof}
\begin{remark}
    When the field $F$ is dyadic, similar functional equation has been proved by Ikeda and Katsurada \cite[Proposition 2.1]{IK22}. In this article, we will need the functional equation in the following case: Let $F=\mathbb{Q}_2$ and $L=\langle N,t\rangle$ for some $N,t\in\Z$.  When $-tN$ is not a square, let $-d$ be the fundamental discriminant of the field extension $\Q(-tN)/\Q$. When $-tN$ is a square, let $d=-1$, then there exists a positive number $c$ such that
\begin{equation*}
    4Nt=c^{2}d.
\end{equation*}
\par
We have the following functional equation:
\begin{equation}
    \textup{Den}^{\flat+}(X,L)=(2^{1/2}X)^{2\nu_2(c)}\cdot \textup{Den}^{\flat+}(\frac{1}{2X},L).
    \label{at2}
\end{equation}
\label{for2}
\end{remark}

\section{Eisenstein series and Whittaker function}
\subsection{Incoherent Eisenstein series}
\label{inco}
For an positive integer $r$, let $\mathsf{W}_{r}$ be the standard symplectic space over $\mathbb{Q}$ of dimension 2$r$. Let $P=MN\subset \textup{Sp}(\mathsf{W}_{r})$ be the standard Siegel parabolic subgroup, which take the following form under the standard basis of $\mathsf{W}_{r}$,
\begin{align*}
    M(\mathbb{Q}) & = \left\{m(a)=\begin{pmatrix}a & 0\\0 & ^{t}a^{-1}\end{pmatrix}: a\in \textup{GL}_{r}(\mathbb{Q})\right\},\\
    N(\mathbb{Q}) & = \left\{n(b)=\begin{pmatrix}1_{2} & b\\0 & 1_{2}\end{pmatrix}: b\in \textup{Sym}_{r}(\mathbb{Q})\right\}.
\end{align*}
Let $\mathbb{A}$ be the ad$\grave{\textup{e}}$le ring over $\mathbb{Q}$. There is an isomorphism $\textup{Mp}(\mathsf{W}_{r,\mathbb{A}})\stackrel{\sim}\rightarrow \textup{Sp}(\mathsf{W}_{r})({\mathbb{A}})\times \mathbb{C}^{1}$ with the multiplication on the latter is given by the global Rao cycle, therefore we can write an element of $\textup{Mp}(\mathsf{W}_{r,\mathbb{A}})$ as $(g,t)$ where $g\in \textup{Sp}(\mathsf{W}_{r})({\mathbb{A}})$ and $t\in \mathbb{C}^{1}$.
\par
Let $P(\mathbb{A}) = M(\mathbb{A})N(\mathbb{A})$ be the standard Siegel parabolic subgroup of $\textup{Mp}(\mathsf{W}_{r,\mathbb{A}})$ where 
\begin{align*}
    M(\mathbb{A}) & = \left\{(m(a),t): a\in \textup{GL}_{r}(\mathbb{A}), t\in\mathbb{C}^{1}\right\},\\
    N(\mathbb{A}) & = \left\{n(b): b\in \textup{Sym}_{r}(\mathbb{A})\right\}.
\end{align*}
Recall the following incoherent collection of rank $3$ quadratic spaces $\mathbb{V}=\{\mathbb{V}_{v}\}$ over $\mathbb{A}$ we defined in (\ref{incoherent}),
\begin{equation*}
    \mathbb{V}_{v}=\delta_{v}(N)\otimes\mathbb{Q}_{v}\,\,\textup{if $v<\infty$, and $\mathbb{V}_{\infty}$ is positive definite.}
\end{equation*}
then we can verify immediately that $\prod\limits_{v}\epsilon(\mathbb{V}_{v})=-1$. 
\par
Let $\chi:\mathbb{A}^{\times}/\mathbb{Q}^{\times}\rightarrow\mathbb{C}^{\times}$ be the quadratic character given by $\chi(x)=\prod\limits_{v\leq\infty}\chi_{v}(x_{v}) =\prod\limits_{v\leq\infty}(x_{v},-N)_{v}$ for all $x=(x_{v})\in\mathbb{A}^{\times}$. Fix the standard additive character $\psi:\mathbb{A}/\mathbb{Q}\rightarrow\mathbb{C}^{\times}$ such that $\psi_{\infty}(x)=e^{2\pi ix}$. We may view $\chi$ as a character on $M(\mathbb{A})$ by
\begin{equation*}
    \chi(m(a),t)=\chi(\textup{det}(a))\cdot\gamma(\textup{det}(a),\psi)^{-1}\cdot t.
\end{equation*}
and extend it to $P(\mathbb{A})$ trivially on $N(\mathbb{A})$. Here $\gamma(\textup{det}(a),\psi)$ is the Weil index (see the work of Kudla \cite[p. 548]{Kud97}). We define the degenerate principal series to be the unnormalized smooth induction
\begin{equation*}
    I(s,\chi)\coloneqq\textup{Ind}_{P(\mathbb{A})}^{\textup{Mp}(\mathsf{W}_{r,\mathbb{A}})}(\chi\cdot\vert\cdot\vert_{\mathbb{Q}}^{s+(r+1)/2}),\,\,\,s\in\mathbb{C}.
\end{equation*}
For a standard section $\Phi(-,s)\in I(s,\chi)$ (i.e., its restriction to the standard maximal compact subgroup of $\textup{Mp}(\mathsf{W}_{r,\mathbb{A}})$ is independent of $s$), we define the associated Siegel Eisenstein series
\begin{equation*}
    E(g,s,\Phi)=\sum\limits_{\gamma\in P(\mathbb{Q})\backslash \textup{Sp}(\mathbb{Q})}\Phi(\gamma g,s),
\end{equation*}
which converges for $\textup{Re}(s)\gg 0$ and admits meromorphic continuation to $s\in\mathbb{C}$.
\par
Recall that $\mathscr{S}(\mathbb{V}^{r})$ is the space of Schwartz functions on $\mathbb{V}^{r}$. The fixed choice of $\chi$ and $\psi$ gives a Weil representation $\omega = \omega_{\chi,\psi}$ of $\textup{Mp}(\mathsf{W}_{r,\mathbb{A}})\times\textup{O}(\mathbb{V})$ on $\mathscr{S}(\mathbb{V}^{r})$. For $\boldsymbol\varphi\in\mathscr{S}(\mathbb{V}^{r})$, define a function
\begin{equation*}
    \Phi_{\boldsymbol\varphi}(g)\coloneqq\omega(g)\boldsymbol\varphi(0),\,\,\,g\in\textup{Mp}(\mathsf{W}_{r,\mathbb{A}}).
\end{equation*}
Then $\Phi_{\boldsymbol\varphi}(g)\in I(0,\chi)$. Let $\Phi_{\boldsymbol\varphi}(-,s)\in I(s,\chi)$ be the associated standard section, known as the standard Siegel–Weil section associated to $\boldsymbol\varphi$. For $\boldsymbol\varphi\in \mathscr{S}(\mathbb{V}^{r})$, we write $E(g,s,\boldsymbol\varphi)\coloneqq E(g,s,\Phi_{\boldsymbol\varphi})$.
\par
In this paper, we will mainly work on the case that $r=1$ or $2$.

\subsection{Whittaker functions and local densities}
Let $v$ be a finite place of $\mathbb{Q}$. Define the local degenerate principal series to be the unnormalized smooth induction
\begin{equation*}
     I_{v}(s,\chi_{v})\coloneqq\textup{Ind}_{P(\mathbb{Q}_{v})}^{\textup{Mp}(\mathsf{W}_{r,v})}(\chi_{v}\cdot\vert\cdot\vert_{v}^{s+(r+1)/2}),\,\,\,s\in\mathbb{C}.
\end{equation*}
For any function $\Phi_v\in I_{v}(s,\chi_{v})$, we define the local Whittaker function by the following integral,
\begin{equation}
    W_{T,v}(g_{v},s,\Phi_{v})=\int\limits_{N(\mathbb{Q}_v)}\Phi_v(w_r^{-1}n(b)g_v,s)\psi_v(-\textup{tr}(\frac{1}{2}Tb))\textup{d}n(b),\,\,\, w_r=\begin{pmatrix}
        0 & 1_{r}\\
        -1_{r} & 0
    \end{pmatrix}.
    \label{whit-int}
\end{equation}
The function $W_{T,v}(g_{v},s,\Phi_{v})$ has analytic continuation to $s\in\mathbb{C}$.
\par
The fixed choice of $\chi_{v}$ and $\psi_{v}$ gives a local Weil representation $\omega_{v}=\omega_{\chi_{v},\psi_{v}}$ of $\textup{Mp}(\mathsf{W}_{r,v})\times\textup{O}(\mathbb{V}_{v})$ on the Schwartz function space $\mathscr{S}(\mathbb{V}_{v}^{r})$. We define the local Whittaker function associated to $\boldsymbol\varphi_{v}$ and $T\in\textup{Sym}_{r}(\mathbb{Q}_{v})$ to be
\begin{equation}
    W_{T,v}(g_{v},s,\boldsymbol\varphi_{v})\coloneqq W_{T,v}(g_{v},s,\Phi_{\boldsymbol\varphi_{v}}),
    \label{whit-sch}
\end{equation}
where $\Phi_{\boldsymbol\varphi_{v}}(g_{v})\coloneqq\omega_{v}(g_{v})\boldsymbol\varphi_{v}(0)\in I_{v}(0,\chi_{v})$ and $\Phi_{\boldsymbol\varphi_{v}}(-,s)$ is the associated standard section.

\par
The relationship between Whittaker functions and local densities is encoded in the following proposition.
\begin{proposition}
Suppose $v\neq\infty$. Let $L$ be an integral quadratic $\mathbb{Z}_{v}$-lattice of rank $1$ or $2$. Suppose that the quadratic form of $L$ is represented by a number $t\in\mathbb{Q}_v$ (when the rank of $L$ is 1) or a matrix $T\in \textup{Sym}_{r}(\mathbb{Q}_{v})$ after a choice of $\mathbb{Z}_{v}$-basis of $L$, we have the following identity,
\begin{equation}
    W_{t,v}(1,k+\frac{1}{2},1_{\delta_v(N)})=(-1,-N)_v \vert 2N\vert_v^{1/2} \cdot\gamma(\mathbb{V}_v)\cdot \textup{Den}(\delta_v(N)\obot H_{2k}^{+}, L),\,\,\,\textup{when the rank of $L$ is 1}.
\end{equation}
\begin{equation}
    W_{T,v}(1,k,1_{\delta_v(N)^{2}})=\vert 2\vert_{v}^{3/2}\cdot \vert  N\vert_{v}\cdot\gamma(\mathbb{V}_{v})^{2}\cdot\textup{Den}(\delta_v(N)\obot H_{2k}^{+}, L),\,\,\,\textup{when the rank of $L$ is 2}.
\end{equation} 
where the constant $\gamma(\mathbb{V}_{v})=\gamma(\textup{det}(\mathbb{V}_{v}),\psi_{v})^{-1}\cdot\epsilon(\mathbb{V}_{v})\cdot\gamma(\psi_{v})^{-3}$, $\gamma(\textup{det}(\mathbb{V}_{v}),\psi_{v})$ and $\gamma(\psi_{v})$ are Weil indexes (cf. Appendix of \cite{Rao93}).
\label{non-homo}
\end{proposition}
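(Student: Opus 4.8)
The plan is to compute each of the two Whittaker functions directly from the defining integral (\ref{whit-int}) by unfolding the action of the Weil representation on the characteristic-function data $\boldsymbol\varphi_v=1_{\delta_v(N)^r}$, thereby exhibiting each Whittaker value as a normalized lattice-point count which is visibly a local density, and then to identify the spectral parameter with the number of adjoined hyperbolic planes. The two inputs are: (i) the classical base identity, for a \emph{fixed} quadratic space, between the Whittaker function of a Siegel--Weil section attached to $1_{L^r}$ and the representation density of the lattice $L$; and (ii) the accretion principle that incrementing the spectral parameter by $1$ corresponds to adjoining one copy of $H_2^{+}$ to the target lattice. Granting these, the value $s=k+\tfrac12$ in the rank-$1$ statement (resp. $s=k$ in the rank-$2$ statement) is exactly the point $(\dim-r-1)/2$ attached to the rank-$(3+2k)$ lattice $\delta_v(N)\obot H_{2k}^{+}$, so the two sides are forced to coincide.

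First I would unfold (\ref{whit-int}). Writing the Iwasawa decomposition of $w_r^{-1}n(b)$ and using that $\omega_v(w_r)$ acts by the Fourier transform (up to a Weil index) while $\omega_v(n(b))$ acts by multiplication by $\psi_v(\textup{tr}(\tfrac12 Q(x)b))$, the transform of $1_{\delta_v(N)^r}$ is a multiple of the characteristic function of the dual lattice, the multiple being the volume factor $[\delta_v(N)^{\vee}:\delta_v(N)]^{-r/2}=|2N|_v^{r/2}$ (using $\delta_v(N)^{\vee}/\delta_v(N)\simeq\mathcal{O}_F/2N$ from Example \ref{lambda}). Passing to the standard section at parameter $s$ introduces the factor $|\det a|_v^{s}$ from the $M(\mathbb{Q}_v)$-part, and carrying out the $n(b)$-integral collapses the oscillatory integral onto the set of $x\bmod p^d$ with $Q(x)\equiv T$, i.e. onto representations of $T$ into $\delta_v(N)\bmod p^d$. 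Its normalized limit is precisely $\textup{Den}(\delta_v(N),L)$ at $k=0$, and the accumulated Weil indices assemble into $\gamma(\mathbb{V}_v)^{r}$ together with the Hilbert-symbol and absolute-value prefactors recorded in the statement.

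The accretion step (ii) is where the hyperbolic planes enter, and it lets me interpolate in $s$. By Lemma \ref{poly} the normalized density $\textup{Den}(\delta_v(N)\obot H_{2k}^{+},L)$ is, as $k$ varies, the evaluation at $X=p^{-k}$ of one fixed polynomial in $X$; on the other hand, for the sections at hand the Whittaker function $W_{T,v}(1,s,1_{\delta_v(N)^r})$ is a polynomial in $p^{-s}$. Applying the base identity of the previous paragraph to the enlarged lattice $\delta_v(N)\obot H_{2k}^{+}$ shows that these two polynomials agree at every value $s=k+\tfrac12$ (resp. $s=k$) for $k\gg 0$, since adjoining $k$ split planes to the target is the same as shifting the spectral parameter by $k$. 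Agreement at infinitely many points forces equality of polynomials, hence the asserted identity holds for all $s$, and in particular at the stated points.

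The hard part will be the bookkeeping of the normalization constants: the Weil indices $\gamma(\det\mathbb{V}_v,\psi_v)$, $\gamma(\psi_v)$ and the Hasse invariant $\epsilon(\mathbb{V}_v)$ packaged into $\gamma(\mathbb{V}_v)$, together with the exact powers of $|2|_v$ and $|N|_v$. The subtlety is visible already in that the naive volume factor gives $|2N|_v^{r/2}$, which matches the genus-$1$ prefactor $(-1,-N)_v|2N|_v^{1/2}$ up to the Hilbert symbol but differs from the genus-$2$ prefactor $|2|_v^{3/2}|N|_v$ by an extra $|2|_v^{1/2}$; this discrepancy is absorbed only after the dyadic contribution of the Weil index is folded in. I would therefore isolate the case $v=2$, where $\delta_2(N)$ is genuinely ramified and the partial Fourier transform must be computed by hand, checking the constants against the explicit dyadic functional equation underlying Remark \ref{for2}, while for odd $v$ the computation reduces to the classical formulas of Kudla--Rapoport--Yang.
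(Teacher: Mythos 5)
Your proposal is correct in outline, but note that the paper does not prove this proposition internally at all: its entire proof is the one-line citation that both formulas are \cite[Lemma 5.7.1]{KRY06}. What you have written is essentially a reconstruction of the proof of that cited lemma: unfolding the Whittaker integral for the section attached to $1_{\delta_v(N)^r}$, the Rallis--Kudla accretion identity (the standard section evaluated at the shifted parameter coincides with the Siegel--Weil section of the enlarged lattice $\delta_v(N)\obot H_{2k}^{+}$ at its own Siegel--Weil point $(\dim-r-1)/2$, which you correctly identify as $k+\tfrac12$ for $r=1$ and $k$ for $r=2$), and the identification of the Whittaker value of a Siegel--Weil section at that point with a representation density. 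Your interpolation step is genuinely needed rather than decorative: for small $k$ (e.g.\ $k=0$ in either rank, $k=1$ in rank $2$) the local Whittaker integral is not absolutely convergent at the relevant point, so the identity there is a statement about analytic continuations, and polynomiality in $p^{-s}$ on the Whittaker side combined with the density polynomial of Lemma \ref{poly} is the standard way to propagate the identity from $k\gg0$ down to all $k$. The one substantive imprecision is your attribution of the extra $\vert 2\vert_v^{1/2}$ in the genus-$2$ constant to a dyadic Weil index: it in fact comes from the self-dual Haar measure on $\textup{Sym}_2(\mathbb{Q}_v)$ with respect to the pairing $\psi_v(\textup{tr}(Tb))$, whose unit lattice has volume $\vert2\vert_v^{1/2}$ because of the off-diagonal entry, while the Weil indices are already accounted for in $\gamma(\mathbb{V}_v)^2$. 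Since this bookkeeping of constants, especially at $v=2$, is precisely the content the paper outsources to the citation, your plan should either carry it out in full or, more economically, do what the paper does and invoke the same reference.
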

\begin{proof}
Both of the two formulas are proved in \cite[Lemma 5.7.1]{KRY06}.    
\end{proof}
For the place $v=\infty$, there is also a well-developed theory about the Whittaker function at $\infty$. Let $W_{T,\infty}(g,s,\la^{2})$ (resp. $W_{t,\infty}(g,s,\la)$) be the Whittaker functions for the Gaussian function on the space $\mathbb{V}_{\infty}^{2}$ (resp. $\mathbb{V}_{\infty}$), both of them can be computed explicitly, for example, the works of Kudla, Rapoport and Yang \cite[Lemma 8.6]{KRY04}, \cite[Theorem 5.2.7, Proposition 5.7.7]{KRY06}.
\begin{definition}
    For a symmetric matrix $T\in\textup{Sym}_{r}(\Q)$, and a factorizable section $\Phi=\otimes_{v}\Phi_v\in I(s,\chi)$ where $\Phi_{\infty}$ is the standard section associated to the Gaussian function on the space $\mathbb{V}_{\infty}^{r}$, for a complex number $s\in\C$ such that $\textup{Re}(s)\gg 0$, we define the global Whittaker function to be
    \begin{equation*}
        W_T(g,s,\boldsymbol\varphi)=W_{T,\infty}(g_{\infty},s,\la^{r})\cdot\prod\limits_{v<\infty}W_{T,v}(g_v,s,\boldsymbol\varphi_v),\,\,g=(g_v)_{v}\in\textsf{Mp}(\mathsf{W}_{r,\mathbb{A}}).
    \end{equation*}
    the above product can be meromorphically continued to the whole complex plane.
    \label{gloWhitt}
\end{definition}

\subsection{Fourier coefficients}
We have a Fourier expansion of the Siegel Eisenstein series defined above,
\begin{equation*}
    E(g,s,\boldsymbol\varphi)=\sum\limits_{T\in\textup{Sym}_{r}(\mathbb{Q})}E_{T}(g,s,\boldsymbol\varphi),
\end{equation*}
where
\begin{equation*}
    E_{T}(g,s,\boldsymbol\varphi)=\int_{\textup{Sym}_{r}(\mathbb{Q})\backslash \textup{Sym}_{r}(\mathbb{A})}E(n(b)g,s,\boldsymbol\varphi)\psi(-\textup{tr}(Tb))\textup{d}n(b),
\end{equation*}
the Haar measure $\textup{d}n(b)$ is normalized to be self-dual with respect to $\psi$.
\begin{lemma}
    Let $T\in\textup{Sym}_r(\Q)$ be a nonsingular matrix, let $\boldsymbol{\varphi}\in\mathscr{S}(\mathbb{V}^{r})$ be a factorizable Schwartz function, we have
\begin{equation*}
    E_{T}(g,s,\boldsymbol\varphi)=W_{T}(g,s,\boldsymbol\varphi).
\end{equation*}
\label{E=WW}
\end{lemma}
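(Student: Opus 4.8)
This is the classical statement that a nondegenerate Fourier coefficient of a Siegel Eisenstein series is a single Whittaker integral, and I would prove it by the standard unfolding; below I describe the structure rather than the routine calculations. The plan is to insert the defining series $E(g,s,\boldsymbol\varphi)=\sum_{\gamma\in P(\mathbb{Q})\backslash \textup{Sp}(\mathbb{Q})}\Phi_{\boldsymbol\varphi}(\gamma g,s)$ into the integral defining $E_T$, interchange the (for $\textup{Re}(s)\gg0$ absolutely convergent) sum and integral, and reorganize the coset sum according to the right action of $N(\mathbb{Q})$. Using the canonical splitting of $\textup{Sp}(\mathbb{Q})$ in the metaplectic cover, the sum is genuinely over rational points and the unfolding proceeds exactly as in the linear case. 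The Bruhat decomposition with respect to the Siegel parabolic $P$ stratifies $P(\mathbb{Q})\backslash \textup{Sp}(\mathbb{Q})$ by the rank $j$ of the lower-left block of a representative, with $0\le j\le r$, and the top stratum $j=r$ is a single $N(\mathbb{Q})$-orbit $P(\mathbb{Q})w_r N(\mathbb{Q})$ with trivial stabilizer.

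I would first treat the open stratum. Because the stabilizer of the coset $P(\mathbb{Q})w_r$ under the right $N(\mathbb{Q})$-action is trivial, folding the sum over this orbit into the integral over $N(\mathbb{Q})\backslash N(\mathbb{A})$ collapses to a single integral over all of $N(\mathbb{A})$; since $\psi$ is trivial on $\mathbb{Q}$, one has $\psi(-\textup{tr}(T\beta))=1$ for $\beta\in\textup{Sym}_r(\mathbb{Q})$, so the character descends correctly and one obtains
\begin{equation*}
\int_{N(\mathbb{A})}\Phi_{\boldsymbol\varphi}(w_r^{-1}n(b)g,s)\,\psi(-\textup{tr}(Tb))\,\textup{d}n(b).
\end{equation*}
As $\Phi_{\boldsymbol\varphi}=\otimes_v\Phi_{\boldsymbol\varphi_v}$ and $\psi=\otimes_v\psi_v$ factor, this adelic integral factors as the product over all places of the local integrals (\ref{whit-int}), that is, as $\prod_v W_{T,v}(g_v,s,\Phi_{\boldsymbol\varphi_v})$, which is exactly $W_T(g,s,\boldsymbol\varphi)$ of Definition \ref{gloWhitt}.

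The remaining and only substantive point is that every lower stratum $j<r$ contributes $0$ when $T$ is nonsingular. After partially unfolding such a stratum, its contribution acquires an inner integral of $b\mapsto\psi(-\textup{tr}(Tb))$ over a nontrivial unipotent quotient attached to the stabilizer of the representative; the nonsingularity of $T$ is exactly what forces the relevant additive character to be nontrivial there, so that integrating over the compact quotient $\textup{Sym}(\mathbb{Q})\backslash \textup{Sym}(\mathbb{A})$ yields $0$ and the stratum drops out. This is the heart of the matter, and I expect it to be the main obstacle: it requires making the Bruhat representatives and their stabilizers explicit enough to identify the unipotent subgroup over which one integrates and to verify the nonvanishing of the induced character; I would either carry this out following the standard Bruhat-cell analysis or invoke it as the classical computation (cf.\ \cite{KRY06}). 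Combining the open-cell contribution with the vanishing of all lower strata gives $E_T(g,s,\boldsymbol\varphi)=W_T(g,s,\boldsymbol\varphi)$ for $\textup{Re}(s)\gg0$, and since both sides admit meromorphic continuation the identity holds for all $s$.
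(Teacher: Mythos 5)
Your proposal is correct and is essentially the paper's own argument: the paper proves this lemma by simply invoking the standard unfolding ("unravelling the integral defining $E_T$", citing \cite[\S11.2]{LZ22}), which is exactly the Bruhat-cell computation you describe — the open cell folds up into the adelic Whittaker integral, and the nonsingularity of $T$ kills every lower stratum via the nontriviality of the character on the corresponding unipotent quotient. The only difference is that you spell out the stratification and vanishing argument that the paper delegates to the reference.
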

\begin{proof}
  This lemma follows from unravelling the integral defining the function $E_T$, see \cite[\S11.2]{LZ22}.
\end{proof}

\subsection{Classical incoherent Eisenstein series}
\label{cinco}
The hermitian symmetric domain for $\textup{Sp}(\mathsf{W}_{r})$ is the Siegel upper half space
\begin{equation*}
    \mathbb{H}_{r}=\{\mathsf{z}=\mathsf{x}+i\mathsf{y}\,\,\vert\,\,\mathsf{x}\in\textup{Sym}_{r}(\mathbb{R}),\mathsf{y}\in\textup{Sym}_{r}(\mathbb{R})_{>0}\}.
\end{equation*}
\par
When $r=1$, let $\tau=x+iy\in\mathbb{H}^{+}$ with $x,y\in\mathbb{R}$ and $y$ is positive. Define the classical incoherent Eisenstein series to be
\begin{equation}
    E(\tau,s,\boldsymbol\varphi)=y^{-3/4}\cdot E(g_{\tau},s,\boldsymbol\varphi),\,\,g_{\tau}=[n(x)m(y^{1/2}),1]\in\textup{Mp}(\mathsf{W}_{r,\mathbb{A}}).
    \label{toclas1}
\end{equation}
\par
In this paper, we will focus on the case that $\boldsymbol\varphi =1_{\Delta(N)\otimes\hat{\Z}}\otimes\boldsymbol\varphi_{\infty}\in\mathscr{S}(\mathbb{V})$, where $1_{\Delta(N)\otimes\hat{\Z}}$ is the characteristic function of the rank $3$ $\hat{\Z}$-lattice $\Delta(N)\otimes\hat{\Z}$ and $\boldsymbol\varphi_{\infty}$ is the Gaussian function $\boldsymbol\varphi_{\infty}(x)=e^{-\pi q_{\infty}(x)}$. For the fixed choice of the Schwartz function $ \boldsymbol\varphi$ above and any $t\in\mathbb{Q}$, we write
\begin{equation}
    E(\tau,s,\Delta(N))\coloneqq E(\tau,s,1_{\Delta(N)\otimes\hat{\Z}}\otimes\boldsymbol\varphi_{\infty}),\,\,\,E_t(\tau,s,\Delta(N))\coloneqq E_t(\tau,s,1_{\Delta(N)\otimes\hat{\Z}}\otimes\boldsymbol\varphi_{\infty}).
\end{equation}
We will also need the normalized genus $1$ Eisenstein series $\mathcal{E}(\tau,s,\Delta(N))$ defined as follows,
\begin{equation}
    \mathcal{E}(\tau,s,\Delta(N))=C_N(s) E(\tau,s-\frac{1}{2},\Delta(N)),\,\,\textup{where}\,\,C_N(s)=-\frac{s}{4\pi}\Lambda(2s)\left(\prod\limits_{p\vert N}(1-p^{-2s})\right)N^{(3s+1)/2}.
    \label{twiestedeis}
    \end{equation}
    Similarly, for any $t\in\mathbb{Q}$, $\mathcal{E}_t(\tau,s,\Delta(N))=C_N(s) E_t(\tau,s-\frac{1}{2},\Delta(N))$.
\par
When $r=2$, let $\mathsf{z}=\mathsf{x}+i\mathsf{y}\in\mathbb{H}_{2}$ with $\mathsf{x},\mathsf{y}\in\textup{Sym}_{2}(\mathbb{R})$ and $\mathsf{y}={^{t}a}\cdot a$ is positive definite. Define the classical incoherent Eisenstein series to be
\begin{equation}
    E(\mathsf{z},s,\boldsymbol\varphi)=\chi_{\infty}(m(a))^{-1}\vert\textup{det}(m(a))\vert^{-3/2}\cdot E(g_{\mathsf{z}},s,\boldsymbol\varphi),\,\,g_{\mathsf{z}}=[n(x)m(a),1]\in\textup{Mp}(\mathsf{W}_{r,\mathbb{A}}).
    \label{toclas2}
\end{equation}
We write the central derivatives as,
\begin{equation}
    \partial\textup{Eis}(\mathsf{z},\boldsymbol\varphi) \coloneqq E^{\prime}(\mathsf{z},0,\boldsymbol\varphi),\,\,\partial\textup{Eis}_{T}(\mathsf{z},\boldsymbol\varphi) \coloneqq E_{T}^{\prime}(\mathsf{z},0,\boldsymbol\varphi).
    \label{classi}
\end{equation}
Then we have a Fourier expansion
\begin{equation*}
     \partial\textup{Eis}(\mathsf{z},\boldsymbol\varphi)=\sum\limits_{T\in\textup{Sym}_{2}(\mathbb{Q})}\partial\textup{Eis}_{T}(\mathsf{z},\boldsymbol\varphi).
\end{equation*}
\par
In this paper, we will focus on the case that $ \boldsymbol\varphi =1_{\left(\Delta(N)\otimes\hat{\Z}\right)^{2}}\otimes\boldsymbol\varphi_{\infty}\in\mathscr{S}(\mathbb{V}^{2})$, where $\boldsymbol{\varphi}=1_{\left(\Delta(N)\otimes\hat{\Z}\right)^{2}}$ is the characteristic function of the rank $3$ $\hat{\Z}$-lattice $\Delta(N)\otimes\hat{\Z}$ and $\boldsymbol\varphi_{\infty}$ is the Gaussian function $\boldsymbol\varphi_{\infty}(x)=e^{-\pi\,\textup{tr}\,T(x)}$. For the fixed choice of the Schwartz function $ \boldsymbol\varphi$ above and any $T\in\textup{Sym}_2(\mathbb{Q})$, we write
\begin{align}
    & E(\mathsf{z},s,\Delta(N)^{2})\coloneqq E(\mathsf{z},s,1_{\left(\Delta(N)\otimes\hat{\Z}\right)^{2}}\otimes\boldsymbol\varphi_{\infty}),\,\,\,E_T(\mathsf{z},s,\Delta(N)^{2})\coloneqq E_T(\mathsf{z},s,1_{\left(\Delta(N)\otimes\hat{\Z}\right)^{2}}\otimes\boldsymbol\varphi_{\infty}),\\
    &\partial\textup{Eis}(\mathsf{z},\Delta(N)^{2})\coloneqq \partial\textup{Eis}(\mathsf{z},1_{\left(\Delta(N)\otimes\hat{\Z}\right)^{2}}\otimes\boldsymbol\varphi_{\infty}),\,\,\,\partial\textup{Eis}_{T}(\mathsf{z},\Delta(N)^{2}) \coloneqq  \partial\textup{Eis}_{T}(\mathsf{z},1_{\left(\Delta(N)\otimes\hat{\Z}\right)^{2}}\otimes\boldsymbol\varphi_{\infty}).
        \label{eis}
\end{align}
Similarly, for any $T\in\textup{Sym}_2(\mathbb{Q})$, $W_T(g,s,\Delta(N)^{2})\coloneqq W_T(g,s,1_{\left(\Delta(N)\otimes\hat{\Z}\right)^{2}}\otimes\boldsymbol\varphi_{\infty})$.

\begin{lemma}
Let $T=\textup{diag}\{0,t\}$ with $t\neq0$, let $\mathsf{y}=\textup{diag}\{y_1,y_2\}$ be a positive definite symmetric matrix, then 
\begin{equation*}
    E_{T}(i\mathsf{y},s,\Delta(N)^{2})=y_1^{s/2}y_2^{-3/4}W_{t}(g_{iy_2},s+\frac{1}{2},\Delta(N))+(y_1y_2)^{-3/4}W_{T}(g_{i\mathsf{y}},s,\Delta(N)^{2}).
\end{equation*}
\label{EtoW}
\end{lemma}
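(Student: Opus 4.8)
The plan is to compute the singular Fourier coefficient directly by unfolding the Eisenstein series and organizing the sum over $P(\Q)\backslash\textup{Sp}(\mathsf{W}_2)(\Q)$ according to the rank stratification, exactly as in the classical derivation of the Fourier expansion of a Siegel Eisenstein series; the incoherence of $\mathbb{V}$ plays no role here, since this is an identity of meromorphic functions valid before taking any derivative. First I would invoke the classical-to-adelic normalization (\ref{toclas2}): because $T$ enters only through the Fourier integral over $\textup{Sym}_2$, the same prefactor relates the classical and adelic coefficients, giving
\[
E_T(i\mathsf{y},s,\la^2)=\chi_\infty(m(a))^{-1}\,|\det a|^{-3/2}\,E_T(g_{i\mathsf{y}},s,\boldsymbol\varphi),\qquad a=\textup{diag}(y_1^{1/2},y_2^{1/2}),
\]
with $\boldsymbol\varphi=1_{(\Delta(N)\otimes\hat\Z)^2}\otimes\boldsymbol\varphi_\infty$ and $|\det a|^{-3/2}=(y_1y_2)^{-3/4}$, while $\chi_\infty(m(a))$ is trivial on the positive diagonal torus. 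Inserting $E(g,s,\boldsymbol\varphi)=\sum_{\gamma\in P(\Q)\backslash\textup{Sp}(\mathsf{W}_2)(\Q)}\Phi_{\boldsymbol\varphi}(\gamma g,s)$ into the definition of $E_T$ and unfolding the integral over $\textup{Sym}_2(\Q)\backslash\textup{Sym}_2(\mathbb{A})$ reduces the problem to a sum of orbit integrals.

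Next I would use the Bruhat-type decomposition of $P(\Q)\backslash\textup{Sp}(\mathsf{W}_2)(\Q)$ into orbits indexed by the rank $j\in\{0,1,2\}$ of the lower-left block of a representative, with representatives the partial Weyl elements $w_j$. Against the character $\psi(-\textup{tr}(Tb))$ an orbit with $j<\textup{rank}(T)=1$ integrates to zero (for $j=0$ the $N(\Q)$-invariance of $\Phi_{\boldsymbol\varphi}$ leaves $\int\psi(-\textup{tr}(Tb))\,db=0$ since $T\neq\mathbf{0}_2$), so only $j=1$ and $j=2$ survive. The $j=2$ big-cell orbit is the one already isolated in Lemma \ref{E=WW}; its contribution is the genus $2$ Whittaker function, and combined with the prefactor it produces the term $(y_1y_2)^{-3/4}W_T(g_{i\mathsf{y}},s,\la^2)$. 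For the $j=1$ orbit the partial Weyl element $w_1$ twists only the second symplectic coordinate, so the stabilizer is the Klingen parabolic and the orbit integral factors: the $\textup{GL}_1$ direction attached to $y_1$ is left untwisted and the section contributes the exponent $|y_1^{1/2}|_\infty^{s+3/2}=y_1^{s/2+3/4}$, while the integral over the remaining $\textup{Sp}_2$-coordinate is precisely an adelic genus $1$ Whittaker integral for $t$. Matching the degenerate principal series exponents $s+(r+1)/2$ across $r=2$ and $r=1$, namely $s+\tfrac32=(s+\tfrac12)+1$, identifies this integral with $W_t(g_{iy_2},s+\tfrac12,\la)$.

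Finally I would reassemble the two pieces: the $j=1$ contribution is $y_1^{s/2+3/4}\,W_t(g_{iy_2},s+\tfrac12,\la)$, and multiplying by the normalization $(y_1y_2)^{-3/4}$ turns this into $y_1^{s/2}y_2^{-3/4}\,W_t(g_{iy_2},s+\tfrac12,\la)$, which together with the $j=2$ term gives the claimed identity. The main obstacle is the rigorous justification of the $j=1$ orbit computation: one must verify that the Klingen-parabolic stabilizer makes the orbit integral genuinely factor as a power of $y_1$ times a genus $1$ Whittaker integral, and then track the metaplectic cocycle together with the Weil-index and character data so that the inducing datum on the $\textup{Sp}_2$-factor is exactly the one defining $W_t(\,\cdot\,,s+\tfrac12,\la)$ at every place --- including the reduction of the genus $2$ Gaussian to the genus $1$ Gaussian at $\infty$ and of $1_{(\Delta(N)\otimes\hat\Z)^2}$ to $1_{\Delta(N)\otimes\hat\Z}$ at the finite places. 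The surviving steps are the standard unfolding bookkeeping.
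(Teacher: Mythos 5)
Your proposal is correct in substance, but note that the paper does not actually prove this lemma: its entire proof is the citation ``This lemma follows from \cite[Lemma 4.11, formula (38)]{SSY22}.'' What you have written is therefore a from-scratch proof of the cited result, by the same unfolding method that underlies \cite{SSY22} and \cite[Lemma 5.4]{GS19}. Your cell analysis and bookkeeping check out: the $j=0$ cell dies against $\psi(-\textup{tr}(Tb))$ because $T\neq\mathbf{0}_2$; the $j=2$ cell unfolds to the degenerate genus $2$ Whittaker integral $W_T(g_{i\mathsf{y}},s,\la^2)$, in the same way that Lemma \ref{E=WW} is proved; the induction parameters match via $s+\frac{3}{2}=(s+\frac{1}{2})+1$; the untwisted $\textup{GL}_1$-direction contributes $\vert y_1^{1/2}\vert^{s+3/2}=y_1^{s/2+3/4}$, and $(y_1y_2)^{-3/4}\cdot y_1^{s/2+3/4}=y_1^{s/2}y_2^{-3/4}$ reproduces the stated coefficient; and the Siegel--Weil section data restricts correctly, since $1_{(\Delta(N)\otimes\hat{\Z})^2}=1_{\Delta(N)\otimes\hat{\Z}}\otimes 1_{\Delta(N)\otimes\hat{\Z}}$ and both factors take the value $1$ at $0$ (likewise for the Gaussian). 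The one step you should make explicit rather than fold into ``standard bookkeeping'': the $j=1$ cell is a priori a \emph{sum} over cosets $\gamma\in B(\Q)\backslash\textup{GL}_2(\Q)\simeq\mathbb{P}^{1}(\Q)$ inside the Levi, not a single term --- indeed, for $T=\mathbf{0}_2$ this sum survives in full and is exactly the middle term $\sum_{\gamma\in\Gamma_{\infty}\backslash\textup{SL}_2(\mathbb{Z})}B(m(\gamma)g,s)$ appearing in the proof of Proposition \ref{analytic0}. For rank-one $T$ one must argue that integrating $\psi$ over the two unipotent directions that $w_1$ keeps inside $P$ forces ${}^{t}\gamma^{-1}T\gamma^{-1}$ to be supported in the single coordinate flipped by $w_1$, and since $T=\textup{diag}\{0,t\}$ is already aligned this selects exactly one coset; this is precisely why a single genus $1$ Whittaker function, rather than an Eisenstein-type sum, appears on the right-hand side. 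With that inserted, your argument is the complete standard proof: the citation buys the paper brevity, while your direct computation buys self-containedness and makes visible where each power of $y_1$, $y_2$ and the shift $s\mapsto s+\frac{1}{2}$ come from.
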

\begin{proof}
    This lemma follows from \cite[Lemma 4.11, formula (38)]{SSY22}.  
\end{proof}

\section{Singular Fourier coefficients of Eisenstein series}
\label{singular-coefficients}
In this section, we study the Fourier coefficient $E_{T}(\mathsf{z},s,\la^{2})$ of the Eisenstein series defined in $\S$\ref{cinco} when the matrix $T$ is singular.\par 
We first fix some notations. For any prime number $p$, we define the local zeta function $\zeta_p(s)=(1-p^{-s})^{-1}$, we also define $\zeta_{\infty}(s)=\pi^{-s/2}\Gamma(\frac{s}{2})$. Let $\Lambda(s)=\prod\limits_{v}\zeta_v(s)$ be the completed zeta function where $v$ ranges over all the places of $\Q$, then $\Lambda(s)$ can be meromorphically continued to the whole complex plane and satisfies the functional equation $\Lambda(s)=\Lambda(1-s)$.
\subsection{The rank of $T$ is $0$} In this case, the matrix $T=\mathbf{0}_{2}$.
\begin{lemma}
    Let $n=\nu_{p}(N)\geq0$ be the $p$-adic valuation of $N$, let $A_{p}(s)$ be the following function for $s\in\C$,
    \begin{equation*}
        A_{p}(s)=\vert N\vert _{p}\cdot\begin{cases}
           \frac{1}{1+p^{-1-s}},&\textup{if $n=0$;}\\
           \frac{1}{1+p^{-1-s}}\cdot\frac{1-p^{(1-s)(n+1)}}{1-p^{1-s}}-p^{-2s}\frac{1}{1+p^{-1-s}}\cdot\frac{1-p^{(1-s)(n-1)}}{1-p^{1-s}}, &\textup{if $n\geq1$.}
        \end{cases}
    \end{equation*}
    The following identity holds,
    \begin{equation*}
        W_{\mathbf{0}_{2},p}(1,k,1_{\delta_{p}(N)^{2}})=\vert 2\vert_{p}^{3/2}(N,-1)_{p}\cdot\frac{\zeta_{p}(2k-1)}{\zeta_{p}(2k+2)}\cdot A_{p}(k).
    \end{equation*}
    \label{analytic00}
\end{lemma}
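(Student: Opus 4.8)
The plan is to translate the Whittaker function into a local density via Proposition~\ref{non-homo}, to strip the ramified level direction off that density with the difference formula, and to evaluate the remaining totally isotropic unramified densities using Wedhorn's computation. Concretely, I would first apply Proposition~\ref{non-homo} to the rank $2$ lattice $L_0$ whose Gram matrix is $\mathbf{0}_2$, namely $\langle 0,0\rangle$ with vanishing quadratic form, obtaining
\begin{equation*}
    W_{\mathbf{0}_2,p}(1,k,1_{\delta_p(N)^2})=\vert 2\vert_p^{3/2}\cdot\vert N\vert_p\cdot\gamma(\mathbb{V}_p)^2\cdot\den(\delta_p(N)\obot H_{2k}^{+},\langle 0,0\rangle).
\end{equation*}
The prefactor $\vert 2\vert_p^{3/2}$ is already in the desired shape, so the first routine point is the Weil--index identity $\gamma(\mathbb{V}_p)^2=(N,-1)_p$, which I would deduce from $\epsilon(\mathbb{V}_p)=\epsilon(\delta_p(N))=(N,-1)_p$ (Example~\ref{lambda}) together with the standard quadratic relations satisfied by $\gamma(\det\mathbb{V}_p,\psi_p)$ and $\gamma(\psi_p)$. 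It then remains to show that $\vert N\vert_p\cdot\den(\delta_p(N)\obot H_{2k}^{+},\langle 0,0\rangle)$ equals $\frac{\zeta_p(2k-1)}{\zeta_p(2k+2)}\cdot A_p(k)$.

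For the density I would first use the isometry $\delta_p(N)\obot H_{2k}^{+}\cong\langle -N\rangle\obot H_{2k+2}^{+}$, coming from $\delta_p(N)\cong\langle -N\rangle\obot H_2^{+}$ (Example~\ref{lambda}), so that the target carries a single ramified orthogonal summand $\langle -N\rangle$. Via relation $(a)$ of the introduction this density is controlled by $\den(H_{2k+4}^{+},\langle 0,0\rangle\obot\langle N\rangle)$, to which I would apply the difference formula (Theorem~\ref{anadecom}) with distinguished vector $\langle N\rangle$ and $M=\langle 0,0\rangle$; by Remark~\ref{qingchu} this produces a finite sum over $0\leq i\leq[n/2]$, $n=\nu_p(N)$, of densities $\den(\langle -N\pi^{-2i}\rangle\obot H_{2k+2}^{+},\langle 0,0\rangle)$ weighted by the explicit primitive densities $\textup{Pden}(H_{2k+4}^{+},\langle \pi^{-2i}N\rangle)$ of Example~\ref{rank1}. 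Reading this as a recursion in $n$ and solving it is what should produce the two geometric series appearing in $A_p(k)$, and in particular the clean split between the cases $n=0$ and $n\geq1$; the factor $\frac{1}{1+p^{-1-k}}=\zeta_p(2k+2)/\zeta_p(k+1)$ visible in $A_p$ is the signature of a rank-$2$ isotropic representation and should emerge here.

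The remaining input is the unramified building block: the totally isotropic densities $\den(H_{2k+2}^{+},\langle 0,0\rangle)$ and $\den(H_{2k+2}^{+},\langle 0\rangle)$, i.e.\ the representation densities of a zero lattice into a self-dual lattice, which are exactly what Wedhorn computes in \cite[\S2.11]{Wed07}; substituting his values is what produces the $N$-independent factor $\zeta_p(2k-1)/\zeta_p(2k+2)$. Here I would be careful that the zero form is genuinely degenerate, so that, unlike in Lemma~\ref{raodong} and Corollary~\ref{jian1}, one cannot simply approximate $\langle 0\rangle$ by a deep vector $\langle a\pi^m\rangle$ and pass to the limit: relation~(\ref{singular-coe}) lowers the rank of the ambient lattice and does \emph{not} compute $\den(H,\,\cdot\,\obot\langle 0\rangle)$. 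Wedhorn's direct evaluation is therefore essential, and the difference formula only serves to carry the ramified summand $\langle -N\rangle$.

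The hard part will be the bookkeeping that matches the finite difference-formula sum to the closed form of $A_p(k)$: the top index $i=[n/2]$, where $N\pi^{-2i}$ becomes a unit and the target unramifies, behaves differently from the intermediate indices, and the interplay of the $\textup{Pden}$ weights with the power $\vert N\vert_p$ is what collapses the sum into the two-line description. Once these pieces are aligned, the scalar $\vert 2\vert_p^{3/2}(N,-1)_p$, Wedhorn's ratio $\zeta_p(2k-1)/\zeta_p(2k+2)$, and the difference-formula sum $A_p(k)$ assemble into the asserted identity.
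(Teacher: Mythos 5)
Your proposal has a genuine gap, and it sits exactly at the two places where you claim the problem becomes routine. First, you apply Proposition~\ref{non-homo} with $L=\langle 0,0\rangle$. That proposition is \cite[Lemma 5.7.1]{KRY06}, which is proved for \emph{nondegenerate} lattices (nonsingular Gram matrices): the Kitaoka-type identity between Whittaker values and representation densities is not established for the zero form, and it is not even clear that $\den(\delta_p(N)\obot H_{2k}^{+},\langle 0,0\rangle)$ is a well-defined finite number under the paper's definition (for instance, dropping the primitivity implicit in $\textup{QHom}$, the normalized count of solutions of $q(x)\equiv 0\bmod p^{d}$ in a hyperbolic plane grows linearly in $d$; with primitivity one must stratify degenerate pairs, which no cited result does). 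Second, your ``remaining input'' does not exist: Wedhorn's \cite[\S 2.11]{Wed07} computes $\textup{Den}^{+}(X,\cdot)$ for \emph{nondegenerate} lattices in terms of their Gross--Keating invariants; he does not evaluate densities of a zero lattice such as $\den(H_{2k+2}^{+},\langle 0,0\rangle)$. So your argument converts a finite, well-defined quantity ($W_{\mathbf{0}_2,p}$ at $s=k$, defined by analytic continuation) into an object that is neither defined nor computed anywhere in the sources you invoke.

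The paper's proof is built precisely to avoid this. It never touches degenerate densities: it takes a \emph{nondegenerate} rank-2 lattice $M$ with Gross--Keating invariant $\textup{GK}(M)=(a,b)$, $a,b\gg n$, where Proposition~\ref{non-homo} and the difference formula legitimately give
\begin{equation*}
    W_{T_M,p}(1,k,1_{\delta_{p}(N)^{2}})=\vert 2\vert^{3/2}\vert N\vert_{p}(N,-1)_{p}\cdot\textup{Nor}^{+}(X,1)\cdot\textup{Den}_{\Delta(N)}^{+}(X,M)\big\vert_{X=p^{-k}},
\end{equation*}
inserts Wedhorn's explicit formulas for $\textup{Den}^{+}(X,M\obot\langle N\rangle)$ (two cases according to the parity of $n-a$), and then lets $a,b\rightarrow\infty$; the explicit expressions stabilize to $\frac{1-(pX)^{n+1}}{(1-pX)(1-pX^{2})}$, and the difference $\textup{Den}^{+}(X,M\obot\langle N\rangle)-X^{2}\textup{Den}^{+}(X,M\obot\langle p^{-2}N\rangle)$ collapses to the closed form giving $A_p(k)$. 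The key identification, which your proposal lacks, is that the \emph{singular} Whittaker value is the \emph{limit} of these nonsingular ones, $W_{\mathbf{0}_{2},p}=\lim_{a,b\rightarrow\infty}W_{T_M,p}$ --- the same mechanism used throughout the paper (e.g.\ in Lemma~\ref{W/D}). Ironically, your third paragraph argues \emph{against} limit arguments for the degenerate slot; but the relation you correctly flag as rank-lowering, (\ref{singular-coe}), concerns perturbing a vector inside the \emph{target} of the density, whereas the limit the paper takes is in the \emph{source} lattice $M$ at the level of Whittaker functions, which is unproblematic. Without that limit step, the strategy you outline cannot be completed from the cited inputs.
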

\begin{proof}
    For any rank $2$ nondegenerate quadratic lattice $M$ and integer $k\geq0$,
    \begin{equation}
        W_{T,p}(1,k,1_{\delta_{p}(N)^{2}})=\vert 2\vert^{3/2}\vert N\vert_{p}(N,-1)_{p}\cdot\textup{Nor}^{+}(X,1)\cdot\textup{Den}_{\Delta(N)}^{+}(X,M)\big\vert_{X=p^{-k}}.
        \label{WD}
    \end{equation}
    Suppose the Gross-Keating invariant of the quadratic lattice $M$ is $\textup{GK}(M)=(a,b)$ for integers $a\leq b$ and $a,b\gg n$.\\
    $\bullet$\,If $n\equiv a\,\,(\textup{mod}\,2)$, by the formula in \cite[$\S$2.11]{Wed07}, we have
    \begin{align*}
        \textup{Den}^{+}(X,M\obot\langle N\rangle)&=\frac{(1-(pX)^{n+1})(1-(pX^{2})^{(n+a)/2-i})}{(1-pX)(1-pX^{2})}+p^{(n+a)/2}X^{a}\frac{(1-X^{n+1})(1-(\Tilde{\xi}X)^{b-a+1})}{(1-X)(1-(\Tilde{\xi}X))}\\&-\frac{p^{(n+a)/2-1}X^{b+2}}{1-p^{-1}X^{2}}\left(\frac{1-X^{n+1}}{1-X}-\frac{(p^{-1}X^{2})^{(n+a)/2}-p^{(n-a)/2+1}X^{a-1}}{1-pX^{-1}} \right).
    \end{align*}
    where $\Tilde{\xi}\in\{\pm1\}$ depends on the lattice $M$.\\
    $\bullet$\,If $n\nequiv a\,\,(\textup{mod}\,2)$, by the formula in \cite[$\S$2.11]{Wed07}, we have
    \begin{align*}
        \textup{Den}^{+}(X,M\obot\langle N\rangle)&=\frac{1-(pX)^{n+1}}{(1-pX)(1-pX^{2})}-\frac{p^{(n+a+1)/2}(X^{n+a+1}-X^{a})}{(1-X^{-1})(1-pX^{2})}
        \\&-\frac{p^{(n+a-1)/2}X^{b+1}}{1-p^{-1}X^{2}}\left(\frac{1-X^{n+1}}{1-X}-\frac{(p^{-1}X^{2})^{(n+a+1)/2}-p^{(n-a+1)/2}X^{a}}{1-pX^{-1}} \right).
    \end{align*}
    In both cases, when $0<X\leq p^{-1}$, we have the following formula
    \begin{equation*}
        \lim_{a,b\rightarrow\infty}\textup{Den}^{+}(X,M\obot\langle N\rangle)=\frac{1-(pX)^{n+1}}{(1-pX)(1-pX^{2})}.
    \end{equation*}
    therefore when $0<X\leq p^{-1}$,
    \begin{align*}
        \lim_{a,b\rightarrow\infty}\textup{Den}^{+}_{\Delta(N)}(X,M)&=\lim_{a,b\rightarrow\infty}\left(\textup{Den}^{+}(X,M\obot\langle N\rangle)-X^{2}\textup{Den}^{+}(X,M\obot\langle p^{-2}N\rangle)\right)\\
        &=\frac{1-X^{2}+(p^{n-1}-p^{n+1})X^{n+1}}{(1-pX)(1-pX^{2})}.
    \end{align*}
    Note that $\textup{Nor}^{+}(X,1)=1-p^{-1}X$, the lemma follows by combining (\ref{WD}) and the following formula,
    \begin{equation*}
        W_{\mathbf{0}_{2},p}(1,k,1_{\delta_{p}(N)^{2}})=\vert 2\vert^{3/2}\vert N\vert_{p}(N,-1)_{p}\cdot(1-p^{-k-1})\cdot\lim_{a,b\rightarrow\infty}\textup{Den}^{+}_{\Delta(N)}(p^{-k},M)
    \end{equation*}
\end{proof}
In the following, let $\Lambda(s)=\Gamma(s)\pi^{-s/2}\zeta(s)$ be the normalized Riemann zeta function.
\begin{proposition}
    Let $\mathsf{z}=\mathsf{x}+i\mathsf{y}\in\mathbb{H}_{2}$, we have
    \begin{equation*}
        \partial\textup{Eis}_{\mathbf{0}_{2}}(\mathsf{z},\Delta(N)^{2})=\log\textup{det}(\mathsf{y})+2-4\frac{\Lambda^{\pr}(-1)}{\Lambda(-1)}-\sum\limits_{p\vert N}\frac{-np^{n+1}+2p^{n}+np^{n-1}-2}{p^{n-1}(p^{2}-1)}\cdot\log p
    \end{equation*}
    \label{analytic0}
\end{proposition}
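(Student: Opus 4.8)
The plan is to obtain $\partial\textup{Eis}_{\mathbf{0}_2}(\mathsf{z},\la^2)=E'_{\mathbf{0}_2}(\mathsf{z},0,\la^2)$ by writing the constant (i.e.\ $T=\mathbf{0}_2$) Fourier coefficient $E_{\mathbf{0}_2}(i\mathsf{y},k,\la^2)$ as an explicit elementary-times-completed-zeta expression and differentiating at the central point $k=0$. Since this coefficient is $\textup{GL}_2(\Z)$-invariant in $\mathsf{y}$ and depends on $\mathsf{y}$ only through $\det\mathsf{y}$, it is enough to take $\mathsf{z}=i\mathsf{y}$ with $\mathsf{y}$ diagonal. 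First I would express $E_{\mathbf{0}_2}$ through the Whittaker machinery of $\S$\ref{cinco}: the non-archimedean local factors are exactly the functions of Lemma \ref{analytic00},
\[
W_{\mathbf{0}_2,p}(1,k,1_{\delta_p(N)^2})=|2|_p^{3/2}(N,-1)_p\cdot\frac{\zeta_p(2k-1)}{\zeta_p(2k+2)}\cdot A_p(k),
\]
while $W_{\mathbf{0}_2,\infty}$ is read off from the explicit archimedean Whittaker functions of Kudla--Rapoport--Yang (\cite[Theorem 5.2.7]{KRY06}, \cite[Lemma 8.6]{KRY04}); these carry the $\det\mathsf{y}$-dependence through the normalization (\ref{toclas2}) together with the archimedean $\Gamma$-factors.

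Next I would assemble the product over all places. Over the good primes $A_p(k)=(1+p^{-1-k})^{-1}$, so the finite Euler products combine, and after completing with the archimedean $\zeta_\infty$- and $\Gamma$-factors the expression should organize into a ratio of completed zeta functions $\Lambda(s)=\prod_v\zeta_v(s)$ times the ramified correction $\prod_{p\mid N}A_p(k)$; the Hilbert symbols disappear by the product formula $\prod_v(N,-1)_v=1$. The key bookkeeping is that, after all cancellations, the coefficient takes the shape
\[
E_{\mathbf{0}_2}(i\mathsf{y},k,\la^2)=(\det\mathsf{y})^{k}\cdot\frac{\Lambda(2k+2)}{\Lambda(2k-1)}\cdot\Big(\textstyle\prod_{p\mid N}A_p(k)\Big)^{-1}\cdot G(k),
\]
where $G(k)$ is an elementary archimedean factor with $G(0)=1$ and $G'(0)/G(0)=2$. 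Its value at $k=0$ is then $1$: indeed $A_p(0)=1$ by a one-line geometric-series computation (using $\tfrac{1-p^{n+1}}{1-p}-\tfrac{1-p^{n-1}}{1-p}=p^{n-1}(p+1)$), while $\tfrac{\Lambda(2)}{\Lambda(-1)}=1$ by the functional equation $\Lambda(s)=\Lambda(1-s)$. Consequently $\partial\textup{Eis}_{\mathbf{0}_2}$ equals the logarithmic derivative $\tfrac{d}{dk}\log E_{\mathbf{0}_2}\big|_{k=0}$, which splits into four independent pieces.

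Then I would evaluate the four pieces. The factor $(\det\mathsf{y})^{k}$ gives $\log\det\mathsf{y}$; the factor $G(k)$ gives the constant $2$; the completed zeta ratio gives
\[
\frac{d}{dk}\log\frac{\Lambda(2k+2)}{\Lambda(2k-1)}\bigg|_{0}=2\frac{\Lambda'(2)}{\Lambda(2)}-2\frac{\Lambda'(-1)}{\Lambda(-1)}=-4\frac{\Lambda'(-1)}{\Lambda(-1)},
\]
where the last step again uses $\Lambda(s)=\Lambda(1-s)$, i.e.\ $\Lambda(2)=\Lambda(-1)$ and $\Lambda'(2)=-\Lambda'(-1)$; and the ramified correction gives $-\sum_{p\mid N}\tfrac{d}{dk}\log A_p(k)\big|_{0}$. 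A direct computation of this last logarithmic derivative, writing $u=p^{-k}$ and $A_p(k)=\tfrac{p^{-n}[(1-u^2)-p^{n-1}(p^2-1)u^{n+1}]}{(1+p^{-1}u)(1-pu)}$ with $n=\nu_p(N)$, yields $\tfrac{d}{dk}\log A_p(k)\big|_0=-\tfrac{np^{n+1}-2p^n-np^{n-1}+2}{p^{n-1}(p^2-1)}\log p$, so that $-\sum_{p\mid N}\tfrac{d}{dk}\log A_p\big|_0$ is precisely the claimed sum $-\sum_{p\mid N}\tfrac{-np^{n+1}+2p^n+np^{n-1}-2}{p^{n-1}(p^2-1)}\log p$. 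Collecting the four contributions gives the stated formula.

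The hard part will be the constant-term bookkeeping, not any single estimate. Unlike the nonsingular case (Lemma \ref{E=WW}) or the rank-$1$ case (Lemma \ref{EtoW}), the $T=\mathbf{0}_2$ coefficient is a genuine constant term, hence a priori a sum of several boundary contributions of genus $0$, $1$ and $2$; I must verify that at the central point these reorganize into the single clean expression above. In particular the finite Euler product $\prod_p W_{\mathbf{0}_2,p}$ \emph{vanishes} at $k=0$ (the good-prime factors give $\prod_p(1+p^{-1-k})^{-1}=\zeta(2k+2)/\zeta(1+k)$, which has a zero coming from the pole of $\zeta$ at $1$), and this must be matched against a compensating pole of the archimedean factor $W_{\mathbf{0}_2,\infty}$ so that $E_{\mathbf{0}_2}(i\mathsf{y},0,\la^2)$ is the nonzero constant $1$ rather than $0$. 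Tracking this pole--zero cancellation, the powers of $\det\mathsf{y}$ and the constants $|2|_v^{3/2}$, $\gamma(\mathbb{V}_v)$ entering via (\ref{toclas2}) and Proposition \ref{non-homo}, and fixing the global sign so that the zeta term comes out to $-4\tfrac{\Lambda'(-1)}{\Lambda(-1)}$ and the archimedean constant to exactly $2$, is where the care is needed.
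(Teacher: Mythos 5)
Your local ingredients are correct and coincide with the paper's: using Lemma \ref{analytic00} for the finite places, the evaluation $A_p(0)=1$, the computation of $\frac{d}{dk}\log A_p(k)\big\vert_{k=0}$, and the functional-equation identities $\Lambda(2)=\Lambda(-1)$, $\Lambda^{\pr}(2)=-\Lambda^{\pr}(-1)$ are all right. The genuine gap is the global assembly that your whole plan hinges on, namely the claimed identity
\begin{equation*}
    E_{\mathbf{0}_2}(i\mathsf{y},k,\la^{2})=(\det\mathsf{y})^{k}\cdot\frac{\Lambda(2k+2)}{\Lambda(2k-1)}\cdot\Bigl(\prod_{p\mid N}A_p(k)\Bigr)^{-1}\cdot G(k),\qquad G(0)=1,
\end{equation*}
which forces $E_{\mathbf{0}_2}(i\mathsf{y},0,\la^{2})=1$. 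This is false, and not for a reason that more careful bookkeeping can fix: since the Eisenstein series is incoherent, $E(\mathsf{z},0,\la^{2})\equiv0$, so \emph{every} Fourier coefficient, including the constant one, vanishes at the central point. The correct structure (which is what the paper uses, following \cite[\S 4.4]{SSY22}) is a \emph{sum} of three Bruhat-cell contributions,
\begin{equation*}
    (\det\mathsf{y})^{3/4}E_{\mathbf{0}_{2}}(\mathsf{z},s,\la^{2})=\Phi_{1_{\Delta(N)^{2}}}(g,s)+\sum_{\gamma\in\Gamma_{\infty}\backslash\textup{SL}_{2}(\mathbb{Z})}B(m(\gamma)g,s)+W_{\mathbf{0}_{2}}(g_{\mathsf{z}},s,\la^{2}),
\end{equation*}
where $\Phi_{1_{\Delta(N)^{2}}}(g,s)=(\det\mathsf{y})^{s/2+3/4}$ and $W_{\mathbf{0}_{2}}(g_{\mathsf{z}},s,\la^{2})=(\det\mathsf{y})^{-s/2+3/4}\frac{(s-1)\Lambda(2s-1)}{(s+1)\Lambda(2s+2)}\prod_{p\mid N}A_p(s)$. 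At $s=0$ the big-cell factor equals $-1$ (not $+1$), it cancels against $\Phi$, the middle term vanishes, and the $\mathsf{y}$-dependence is a sum of the two powers $(\det\mathsf{y})^{\pm s/2}$ rather than a single power $(\det\mathsf{y})^{k}$; no reorganization can collapse this sum into your product.

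Your final formula nevertheless agrees with the proposition because two errors cancel: you assign the central value $+1$ where the correct big-cell value is $-1$, and you simultaneously invert both the zeta ratio (you have $\Lambda(2k+2)/\Lambda(2k-1)$ where the Whittaker function carries $\Lambda(2s-1)/\Lambda(2s+2)$) and the $\prod_{p\mid N}A_p$ factor. In the correct computation the term enters as $f^{\pr}(0)=f(0)\cdot(\log f)^{\pr}(0)$ with $f(0)=-1$, which reproduces exactly the numbers your inverted product with value $+1$ gives, including the extra $+\tfrac12\log\det\mathsf{y}$ from $-\tfrac12\log\det\mathsf{y}\cdot f(0)$ and the constant $2$ from $(-1)\cdot\frac{d}{ds}\log\frac{s-1}{s+1}\big\vert_{0}$. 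Finally, the one genuinely nontrivial analytic input — that the rank-one-cell (middle) term, while not identically zero, has vanishing derivative at $s=0$ — is precisely what your deferred ``hard part'' would need, and it cannot be established in the form you state it, since your ansatz implicitly absorbs that term into $G(k)$, which its $\mathsf{y}$-dependence and central vanishing forbid. So the proposal as written does not constitute a proof; the repair is essentially the paper's argument.
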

\begin{proof}
    By similar arguments in \cite[$\S$4.4]{SSY22}, we have 
    \begin{equation*}
        (\textup{det}\mathsf{y})^{3/4}\cdot E_{\mathbf{0}_{2}}(\mathsf{z},s,\Delta(N)^{2})=W_{\mathbf{0}_{2}}(g_{\mathsf{z}},s,\Delta(N)^{2})+\sum\limits_{\gamma\in\Gamma_{\infty}\backslash\textup{SL}_{2}(\mathbb{Z})}B(m(\gamma)g,s)+\Phi_{1_{\Delta(N)^{2}}}(g,s)
    \end{equation*}
    where the derivative of the middle term $\sum\limits_{\gamma\in\Gamma_{\infty}\backslash\textup{SL}_{2}(\mathbb{Z})}B(m(\gamma)g,s)$ at $s=0$ is 0, and
    \begin{equation*}
        W_{\mathbf{0}_{2}}(g_{\mathsf{z}},s,\Delta(N)^{2})=\textup{det}(\mathsf{y})^{-s/2+3/4}\frac{(s-1)\Lambda(2s-1)}{(s+1)\Lambda(2s+2)}\cdot\prod\limits_{p\vert N}A_{p}(s),\,\,\,\Phi_{1_{\Delta(N)^{2}}}(g,s)=\textup{det}(\mathsf{y})^{s/2+3/4}.
    \end{equation*}
    The proposition follows from combining the above formulas and Lemma \ref{analytic00}.
\end{proof}

\subsection{The rank of $T$ is 1.}
Let $N$ be a positive integer, let $t$ be a nonzero integer, we fix a $2\times2$ matrix $T=\textup{diag}\{0,t\}$. Now we are going to define a quadratic character $\ch:\mathbb{A}^{\times}\rightarrow\C^{\times}$: When $-tN$ is not a square in $\Q$, the character $\ch$ is the quadratic Dirichlet character attached to the quadratic field extension $\Q(-tN)/\Q$; when $-tN$ is a square, let $\ch$ be the trivial character. When $-tN$ is not a square, let $-d$ be the fundamental discriminant of the field extension $\Q(-tN)/\Q$. When $-tN$ is a square, let $d=-1$, then there exists a positive number $c$ such that
\begin{equation*}
    4Nt=c^{2}d.
\end{equation*}
\par
For any prime number $p$, we define the local zeta function associated to $\ch$ to be $L_p(s,\ch)=(1-\ch(p)p^{-s})^{-1}$ and $L_{\infty}(s,\ch)$ to be the following,
\begin{equation*}
    L_{\infty}(s,\ch)=\vert d\vert^{s/2}\cdot\begin{cases}
        \pi^{-(s+1)/2}\Gamma(\frac{s+1}{2}), &\textup{if $t>0$};\\
        \pi^{-s/2}\Gamma(\frac{s}{2}), &\textup{if $t<0$}.
    \end{cases}
\end{equation*}
Let $\Lambda(s,\ch)\coloneqq\prod\limits_{v}L_v(s,\ch)$ be the completed $L$-function of the quadratic character $\ch$, then $\Lambda(s,\ch)$ can be meromorphically continued to the whole complex plane and satisfies the functional equation $\Lambda(s,\ch)=\Lambda(1-s,\ch)$.
\par
Define the following function for positive integers $k$,
\begin{equation}
    g_p(k)=\frac{\textup{Den}(H_{2k+4}^{+},\langle t, Np^{-2}\rangle)}{\textup{Den}(H_{2k+4}^{+},\langle t, N\rangle)},
    \label{gp}
\end{equation}
Notice that if the $p$-adic valuation of $N$ is $0$ or $1$, the function $g_p(k)=0$.
\begin{lemma}
    The function $g_p(k)$ is a rational function in $p^{-k}$, hence it can be meromorphically defined over $\C$. The function $g_p$ also has the following functional equation,
    \begin{equation*}
        g_p(k)=p^{2k+1}g(-k-1).
    \end{equation*}
    \label{functionaleq}
\end{lemma}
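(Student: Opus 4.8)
The plan is to express $g_p(k)$ as a ratio of the normalized local density polynomials of Lemma \ref{poly} and then feed in their functional equations. Throughout write $L=\langle t,N\rangle$ and $L'=\langle t,Np^{-2}\rangle$, both of rank $n=2$; I may assume $\nu_p(N)\geq 2$, since otherwise $L'$ is not integral and $g_p\equiv 0$, making the asserted functional equation trivially $0=p^{2k+1}\cdot 0$.

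\textbf{Reduction to $\textup{Den}^{\flat+}$.} Since $H_{2k+4}^{+}=H_{2(k+1)+2}^{+}$, I would apply the evaluation formula of Lemma \ref{poly} for $\textup{Den}^{\flat+}$ at the shifted index $k+1$ (so that $X=p^{-(k+1)}$), together with its dyadic counterpart underlying Remark \ref{for2} when $p=2$, to get
\[
\textup{Den}(H_{2k+4}^{+},L)=\textup{Den}^{\flat+}(p^{-(k+1)},L)\cdot\frac{\textup{Nor}^{+}(p^{-(k+1)},1)}{1-\chi(L)p^{-(k+1)}},
\]
and the analogous identity for $L'$. The determinants $\det L=tN$ and $\det L'=tNp^{-2}$ differ by the square $p^{2}$, so $\chi(L)=\chi(L')$; hence the factor $\textup{Nor}^{+}(p^{-(k+1)},1)/(1-\chi(L)p^{-(k+1)})$ is the same for $L$ and $L'$ and cancels in the quotient, leaving
\[
g_p(k)=\frac{\textup{Den}^{\flat+}(p^{-(k+1)},L')}{\textup{Den}^{\flat+}(p^{-(k+1)},L)}.
\]
As $\textup{Den}^{\flat+}(X,-)\in\Z[X]$ by Lemma \ref{poly}, this exhibits $g_p(k)$ as a rational function of $p^{-(k+1)}$, hence of $p^{-k}$, which is the first assertion.

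\textbf{The functional equation.} Put $X=p^{-(k+1)}$, so that $\tfrac{1}{pX}=p^{k}$ and $g_p(-k-1)=\textup{Den}^{\flat+}(p^{k},L')/\textup{Den}^{\flat+}(p^{k},L)$. For odd $p$ I would apply Theorem \ref{fe} to numerator and denominator,
\[
\textup{Den}^{\flat+}(X,L)=(p^{1/2}X)^{2[\textup{val}(L)/2]}\,\textup{Den}^{\flat+}(\tfrac{1}{pX},L),
\]
and likewise for $L'$. Dividing, $g_p(k)=(p^{1/2}X)^{2[\textup{val}(L')/2]-2[\textup{val}(L)/2]}\,g_p(-k-1)$. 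Scaling the second basis vector of $L$ by $p^{-2}$ lowers the valuation of the moment matrix by $2$, so $\textup{val}(L')=\textup{val}(L)-2$ and $[\textup{val}(L')/2]-[\textup{val}(L)/2]=-1$; the exponent is therefore $-2$. Since $(p^{1/2}X)^{-2}=p^{-1}p^{2(k+1)}=p^{2k+1}$, this gives $g_p(k)=p^{2k+1}g_p(-k-1)$.

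\textbf{The dyadic case and the main obstacle.} The prime $p=2$ is the only real subtlety, since Theorem \ref{fe} is stated only for odd residue characteristic. There I would instead invoke the functional equation (\ref{at2}) of Remark \ref{for2}, which is tailored to $L=\langle N,t\rangle$ over $\Q_2$, with exponent $2\nu_2(c)$ determined by $4Nt=c^{2}d$. Because $L'=\langle t,N/4\rangle$ defines the same field $\Q(\sqrt{-tN})$ and hence the same fundamental discriminant $-d$, writing $Nt=(c')^{2}d$ forces $c=2c'$, so $\nu_2(c)=\nu_2(c')+1$ and the exponent difference $2\nu_2(c')-2\nu_2(c)$ is again $-2$. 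The same substitution $X=2^{-(k+1)}$ then yields $g_2(k)=2^{2k+1}g_2(-k-1)$. The one point requiring genuine care is precisely this matching of the two functional-equation exponents to the common value $-2$; once it is verified in both the odd and the dyadic setting, the rest is bookkeeping of valuations.
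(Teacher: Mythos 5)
Your proposal is correct and follows essentially the same route as the paper: write $g_p(k)$ as the ratio $\textup{Den}^{\flat+}(p^{-k-1},\langle t,Np^{-2}\rangle)/\textup{Den}^{\flat+}(p^{-k-1},\langle t,N\rangle)$ via Lemma \ref{poly} (the normalizing factors cancel since $\chi(\langle t,N\rangle)=\chi(\langle t,Np^{-2}\rangle)$), then apply the functional equation of Theorem \ref{fe}, with Remark \ref{for2} supplying the dyadic case. The paper compresses all of this into the single sentence ``The functional equation is a consequence of Theorem \ref{fe},'' so your explicit bookkeeping — that $\textup{val}(L)$ (resp.\ $\nu_2(c)$) drops by exactly $2$, giving the factor $(p^{1/2}X)^{-2}=p^{2k+1}$ at $X=p^{-k-1}$ — and your separate treatment of $p=2$ merely make explicit what the paper leaves implicit.
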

\begin{proof}
We only consider the case that the $p$-adic valuation of $N$ is greater or equal to $2$. By definition made in Lemma \ref{poly}, when $k$ is a positive integer,
    \begin{align*}
        g_p(k)=\frac{\textup{Den}(H_{2k+4}^{+},\langle t, p^{-2}N\rangle)}{\textup{Den}(H_{2k+4}^{+},\langle t, N\rangle)}=\frac{\textup{Den}^{\flat+}(p^{-k-1},\langle t,p^{-2}N\rangle)}{\textup{Den}^{\flat+}(p^{-k-1},\langle t,N\rangle)}=\frac{\textup{Den}^{\flat+}(X,\langle t,p^{-2}N\rangle)}{\textup{Den}^{\flat+}(X,\langle t,N\rangle)}\bigg\vert_{X=p^{-k-1}}.
    \end{align*}
    Therefore $g_p(k)$ is a rational function in $p^{-k}$ by Lemma \ref{poly}, hence it can be meromorphically defined over $\C$. The functional equation is a consequence of Theorem \ref{fe}.
\end{proof}
\begin{lemma}
     Let $p$ be a prime number, for any positive integers $k$,
    \begin{align}
        \frac{W_{T,p}(1,k,1_{\delta_p(N)^{2}})}{\textup{Den}^{\flat+}(p^{-k},\langle t, N\rangle)}&=\vert2\vert_p\vert N\vert_p^{1/2}\gamma(\mathbb{V}_p)^{2}\cdot\frac{1-p^{-2k}g_p(k-1)}{1-\ch(p)p^{-k}}\cdot\begin{cases}
            1-p^{-2k-2}, &\textup{if $p\nmid N$;}\\
            1-p^{-k-1}, &\textup{if $p\,\vert\, N$.}
        \end{cases},\label{2de}\\
        \frac{W_{t,p}(1,k+\frac{1}{2},1_{\delta_p(N))}}{\textup{Den}^{\flat+}(p^{-k-1},\langle t, N\rangle)}&=\vert 2N\vert_p^{1/2}\gamma(\mathbb{V}_p)\cdot(-1,N)_p\cdot\frac{1-p^{-2k-1}g_p(k)}{1-\ch(p)p^{-k-1}}\cdot\begin{cases}
            1-p^{-2k-2}, &\textup{if $p\nmid N$;}\\
            1-p^{-k-1}, &\textup{if $p\,\vert\, N$.}
        \end{cases}\label{1de}.
    \end{align}
    \label{W/D}
\end{lemma}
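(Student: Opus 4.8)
The plan is to establish (\ref{2de}) and (\ref{1de}) in parallel, as both reduce a local Whittaker function to the single polynomial $\den^{\flat+}(\cdot,\langle t,N\rangle)$ times an explicit rational factor. The first step is Proposition \ref{non-homo}, which turns Whittaker functions into local densities: for (\ref{1de}) it expresses $W_{t,p}(1,k+\frac12,1_{\delta_p(N)})$ through $\den(\delta_p(N)\obot H_{2k}^+,\langle t\rangle)$, while for (\ref{2de}), since $T=\textup{diag}\{0,t\}$ is singular, I would first write $W_{T,p}(1,k,1_{\delta_p(N)^2})=\lim_{m\to\infty}W_{T_m,p}(1,k,1_{\delta_p(N)^2})$ with $T_m=\textup{diag}\{p^m,t\}$ nonsingular, apply Proposition \ref{non-homo} to each $T_m$, and reduce to $\lim_{m\to\infty}\den(\delta_p(N)\obot H_{2k}^+,\langle t,p^m\rangle)$. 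In both cases I then use the isometry $\delta_p(N)\obot H_{2k}^+\simeq\langle -N\rangle\obot H_{2k+2}^+$ from application $(a)$ of (\ref{diff-relation}) to put the representing lattice in the form $\langle -N\rangle\obot H_{2k+2}^+=H_{2k+4}^+(N,0)$.

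The heart of the argument is a telescoping of the difference formula. Writing $M$ for the represented lattice ($\langle t\rangle$ or $\langle t,p^m\rangle$, of rank $r$) and $S(N)\coloneqq\den(H_{2k+4}^+,M\obot\langle N\rangle)$, Theorem \ref{anadecom} together with Remark \ref{qingchu} gives
\begin{equation*}
    S(N)=\sum_{i\ge0}c^{\,i}\,\textup{Pden}(H_{2k+4}^+,\langle p^{-2i}N\rangle)\,\den(\langle -Np^{-2i}\rangle\obot H_{2k+2}^+,M),\qquad c=p^{\,r-2k-2}.
\end{equation*}
Comparing this expansion with the corresponding one for $S(Np^{-2})$ and reindexing shows that the terms with $i\ge1$ collapse to $c\,S(Np^{-2})$, so that isolating the $i=0$ term yields
\begin{equation*}
    \den(\langle -N\rangle\obot H_{2k+2}^+,M)=\frac{S(N)\bigl(1-c\cdot S(Np^{-2})/S(N)\bigr)}{\textup{Pden}(H_{2k+4}^+,\langle N\rangle)}.
\end{equation*}
For $M=\langle t\rangle$ one has $c=p^{-2k-1}$ and $S(Np^{-2})/S(N)=g_p(k)$ by (\ref{gp}), giving the factor $1-p^{-2k-1}g_p(k)$ of (\ref{1de}). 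For $M=\langle t,p^m\rangle$ one has $c=p^{-2k}$; letting $m\to\infty$ and evaluating the limits $\lim_m\den(H_{2k+4}^+,\langle t,N\rangle\obot\langle p^m\rangle)$ by Corollary \ref{jian1}, the ratio $S(Np^{-2})/S(N)$ tends to $g_p(k-1)$, giving the factor $1-p^{-2k}g_p(k-1)$ of (\ref{2de}).

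It then remains to rewrite the unramified densities $\den(H_{2k+4}^+,\langle t,N\rangle)$ and $\den(H_{2k+2}^+,\langle t,N\rangle)$ in terms of $\den^{\flat+}$ and to collect the prefactors. For this I would use Lemma \ref{poly} and Definition \ref{normal1}, which give $\den(H_{2k+2}^+,\langle t,N\rangle)=\den^{\flat+}(p^{-k},\langle t,N\rangle)\cdot\textup{Nor}^+(p^{-k},1)/(1-\ch(p)p^{-k})$ and its analogue at $p^{-k-1}$ with $H_{2k+4}^+$, after identifying the discriminant $\chi(\langle t,N\rangle)$ with $\ch(p)$ and using $\textup{Nor}^+(X,1)=1-p^{-1}X$. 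Substituting the explicit primitive densities $\textup{Pden}(H_{2k+4}^+,\langle N\rangle)$ and $\textup{Pden}(H_{2k+4}^+,\langle p\rangle)$ from Example \ref{rank1}---which is precisely where the dichotomy between $p\mid N$ and $p\nmid N$ enters---and multiplying back the Hilbert-symbol, Weil-index and $\vert 2N\vert_p$ prefactors carried along from Proposition \ref{non-homo}, everything should assemble into the case-dependent factor $1-p^{-2k-2}$ (resp.\ $1-p^{-k-1}$) of the two formulas.

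I expect the main obstacle to be twofold. The first is purely the constant bookkeeping: keeping the Weil indices $\gamma(\mathbb{V}_p)$, the Hilbert symbols, the powers of $\vert 2N\vert_p$, and the several $\textup{Pden}$ and $\textup{Nor}^+$ factors aligned so that they cancel down exactly to the stated right-hand sides, separately in the two cases $p\mid N$ and $p\nmid N$. The second and more delicate point is the dyadic place $p=2$: Example \ref{rank1} and the polynomiality of Lemma \ref{poly} are only available in the even-rank, $\varepsilon=+$ form, so one must check that each self-dual lattice ($H_{2k+2}^+$, $H_{2k+4}^+$) and each represented lattice ($\langle t,N\rangle$, of even rank) stays within the range where these inputs hold, and that the identification $\chi(\langle t,N\rangle)=\ch(2)$ persists at $p=2$.
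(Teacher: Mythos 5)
Your proposal follows essentially the same route as the paper's proof: Proposition \ref{non-homo} (with the limit over $T_m=\textup{diag}\{p^{m},t\}$ for the singular coefficient), the telescoped difference formula from Theorem \ref{anadecom} and Remark \ref{qingchu} isolating the $i=0$ term, Corollary \ref{jian1} for the $m\to\infty$ limit in the rank-2 case, and the Lemma \ref{poly}/Example \ref{rank1} bookkeeping at the end. The only differences are cosmetic: you derive the telescoping identity explicitly where the paper invokes it directly, and your index $g_p(k-1)$ in (\ref{2de}) is the one consistent with the statement (the paper's own proof display writes $g_p(k)$ there).
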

\begin{proof}
    By Proposition \ref{non-homo} and Theorem \ref{anadecom}, we know that
    \begin{align*}
        &W_{T,p}(1,k,1_{\delta_p(N)^{2}})=\vert2\vert_p^{3/2}\vert N\vert_p\gamma(\mathbb{V}_p)^{2}\cdot\lim\limits_{m\rightarrow\infty}\textup{Den}(\delta_p(N)\obot H_{2k}^{+},\langle t, p^{m}\rangle)\\
        &=\vert2\vert_p^{3/2}\vert N\vert_p\gamma(\mathbb{V}_p)^{2}\cdot\lim\limits_{m\rightarrow\infty}\frac{\den(H_{2k+4}^{+},\langle N, t, p^{m}\rangle)-p^{-2k}\den(H_{2k+4}^{+},\langle Np^{-2}, t, p^{m}\rangle)}{\textup{Pden}(H_{2k+4}^{+},\langle N\rangle)}.\\
    \end{align*}
    By Lemma \ref{jian1}, we have
    \begin{align*}
        W_{T,p}(1,k,1_{\delta_p(N)^{2}})=&\vert2\vert_p^{3/2}\vert N\vert_p\gamma(\mathbb{V}_p)^{2}\cdot\frac{\textup{Pden}(H_{2k+4}^{+},\langle p\rangle)}{\textup{Pden}(H_{2k+4}^{+},\langle N\rangle)}\\ &\cdot\frac{\den(H_{2k+2}^{+},\langle N,t\rangle)-p^{-2k}\den(H_{2k+2}^{+},\langle Np^{-2},t\rangle)}{1-p^{-2k}}\\
        =&\vert2\vert_p^{3/2}\vert N\vert_p\gamma(\mathbb{V}_p)^{2}\cdot\den(H_{2k+2}^{+},\langle N,t\rangle)\cdot\frac{\textup{Pden}(H_{2k+4}^{+},\langle p\rangle)}{\textup{Pden}(H_{2k+4}^{+},\langle N\rangle)}\cdot(1-p^{-2k}g_p(k)).
    \end{align*}
    Then the formula (\ref{2de}) follows from the definition of local density polynomial in Lemma \ref{poly}.
    \par
    For the formula (\ref{1de}), we have the following identity by Proposition \ref{non-homo} and Theorem \ref{anadecom},
    \begin{align*}
        W_{t,p}(1,k+\frac{1}{2},1_{\delta_p(N)})&=\vert 2N\vert_p^{1/2}\gamma(\mathbb{V}_p)\cdot(-1,N)_p\cdot\frac{\den(H_{2k+4}^{+},\langle N,t\rangle)-p^{-2k-1}\den(H_{2k+4}^{+},\langle Np^{-2},t\rangle)}{\textup{Pden}(H_{2k+4}^{+},\langle N\rangle)}\\
        &=\vert 2N\vert_p^{1/2}\gamma(\mathbb{V}_p)\cdot(-1,N)_p\cdot\den(H_{2k+4}^{+},\langle N,t\rangle)\cdot\frac{1-p^{-2k-1}g_p(k)}{\textup{Pden}(H_{2k+4}^{+},\langle N\rangle)}.
    \end{align*}
    Then the formula (\ref{1de}) also follows from the definition of local density polynomial in Lemma \ref{poly}.
    \end{proof}
For any prime number $p$, we consider the following meromorphic function for $s\in\C$,
\begin{equation}
    \beta_p(s)=\frac{1+p^{s-1}}{1+p^{-s-1}}\cdot\frac{1-p^{-2s}g_p(s-1)}{1-g_p(s-1)}.
    \label{beta}
\end{equation}
\begin{corollary}
    Let $p$ be a prime number, the following identity holds for any integer $k$,
    \begin{align*}
        \frac{W_{T,p}(1,k,1_{\delta_p(N)^{2}})\cdot\frac{\zeta_p(2k)}{L_p(k,\ch)}}{W_{t,p}(1,\frac{1}{2}-k,1_{\delta_p(N)})\cdot\frac{\zeta_p(2-2k)}{L_p(1-k,\ch)}}=&\vert2\vert_p\vert N\vert_p^{1/2}(-1,N)_p\gamma(\mathbb{V}_p)\cdot p^{(1-2k)\nu_p(c)}\cdot\frac{\zeta_p(2k)}{\zeta_p(2k+2)}\\
        &\times\begin{cases}
            1, & \textup{if $p\nmid N$;}\\
            \beta_p(k), &\textup{if $p\,\vert\, N$}.
        \end{cases}
    \end{align*}
    \label{2to1}
\end{corollary}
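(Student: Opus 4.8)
The plan is to evaluate the numerator and denominator of the quotient separately by means of Lemma~\ref{W/D}, and then to reduce the resulting expression to the asserted closed form using the two functional equations at our disposal: that of the $\textup{Den}^{\flat+}$-polynomial (Theorem~\ref{fe}, together with Remark~\ref{for2} in the dyadic case) and that of $g_p$ (Lemma~\ref{functionaleq}). Concretely, I would first rewrite both Whittaker functions. Formula~(\ref{2de}) expresses $W_{T,p}(1,k,1_{\delta_p(N)^2})$ as $\textup{Den}^{\flat+}(p^{-k},\langle t,N\rangle)$ times an explicit constant and an elementary rational factor. For the denominator I would apply formula~(\ref{1de}) with its parameter specialized to $-k$, since $W_{t,p}(1,\tfrac12-k,1_{\delta_p(N)})$ is exactly $W_{t,p}(1,\kappa+\tfrac12,1_{\delta_p(N)})$ at $\kappa=-k$; this produces $\textup{Den}^{\flat+}(p^{k-1},\langle t,N\rangle)$. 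Forming the quotient, the two $\textup{Den}^{\flat+}$-values collapse into the single ratio $\textup{Den}^{\flat+}(p^{-k},\langle t,N\rangle)/\textup{Den}^{\flat+}(p^{k-1},\langle t,N\rangle)$, while the explicit constants combine (using $\vert 2N\vert_p^{1/2}=\vert2\vert_p^{1/2}\vert N\vert_p^{1/2}$) to the prefactor $\vert2\vert_p\vert N\vert_p^{1/2}(-1,N)_p\gamma(\mathbb{V}_p)$.

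Next I would dispose of the density ratio by the functional equation. Putting $X=p^{-k}$ in Theorem~\ref{fe}, so that $\tfrac{1}{qX}=p^{k-1}$, gives $\textup{Den}^{\flat+}(p^{-k},L)=(p^{1/2-k})^{2[\textup{val}(L)/2]}\,\textup{Den}^{\flat+}(p^{k-1},L)$ for $L=\langle t,N\rangle$; at $p=2$ the identical relation with exponent $2\nu_2(c)$ is Remark~\ref{for2}. Since $4Nt=c^{2}d$ with $d$ a fundamental discriminant, one checks that $[\textup{val}(L)/2]=\nu_p(c)$ at the odd primes as well, whence this ratio equals $p^{(1-2k)\nu_p(c)}$, which is precisely the arithmetic factor appearing in the target.

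It then remains to verify that the leftover rational factors telescope. When $p\nmid N$ one has $g_p\equiv0$, and the product of the $1\pm p^{\ast}$ factors from Lemma~\ref{W/D} against $\zeta_p(2k)L_p(1-k,\ch)/(\zeta_p(2-2k)L_p(k,\ch))$ collapses to $\zeta_p(2k)/\zeta_p(2k+2)$. When $p\mid N$ the decisive input is the functional equation $g_p(k-1)=p^{2k-1}g_p(-k)$ of Lemma~\ref{functionaleq}: it rewrites the numerator factor $1-p^{-2k}g_p(k-1)$ as $1-p^{-1}g_p(-k)$ and identifies the $g_p$-part of the quotient with $\beta_p(k)\cdot\frac{1+p^{-k-1}}{1+p^{k-1}}$, by the very definition~(\ref{beta}) of $\beta_p$. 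The spurious factor $\frac{1+p^{-k-1}}{1+p^{k-1}}$ is then absorbed against the case-$p\mid N$ factors $\frac{1-p^{-k-1}}{1-p^{k-1}}$ and the zeta/$L$-quotient, using $(1+p^{-k-1})(1-p^{-k-1})=1-p^{-2k-2}$ to recover $\zeta_p(2k+2)^{-1}$; the outcome is $\beta_p(k)\cdot\zeta_p(2k)/\zeta_p(2k+2)$. Finally, although (\ref{2de}) and (\ref{1de}) hold a priori for positive integers $k$, both sides of the claimed identity are rational functions of $p^{-k}$, so agreement at all large $k$ forces the identity for every integer $k$.

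The main obstacle is the bookkeeping of this last cancellation: $\beta_p(k)$ only materializes after feeding the $g_p$ functional equation into the quotient, and one must check that the $\ch$- and $1\pm p^{\ast}$-factors pair off correctly in each of the two cases. A secondary point needing care is the uniform identification $[\textup{val}(L)/2]=\nu_p(c)$, which has to be argued separately at $p=2$ through Remark~\ref{for2}.
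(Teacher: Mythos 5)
Your proposal is correct and is essentially the paper's own proof: the paper likewise evaluates both Whittaker functions through Lemma \ref{W/D} (at $k$ and at $-k$) and then invokes the functional equation $\den^{\flat+}(p^{-k},\langle N,t\rangle)=p^{(1-2k)\nu_p(c)}\cdot\den^{\flat+}(p^{k-1},\langle N,t\rangle)$ obtained from Theorem \ref{fe} and Remark \ref{for2}, compressing into two lines the bookkeeping you spell out. The details you supply --- identifying the $g_p$-quotient with $\beta_p(k)$ via the functional equation $g_p(k-1)=p^{2k-1}g_p(-k)$ of Lemma \ref{functionaleq}, collapsing the elementary and $\ch$-factors to $\zeta_p(2k)/\zeta_p(2k+2)$ in each case, and extending from large positive $k$ to all integers by the rational-function-in-$p^{-k}$ argument --- are exactly the steps the paper's terse proof leaves implicit.
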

\begin{proof}
    By Theorem \ref{fe} and the formula (\ref{at2}) in Remark \ref{for2}, there is a functional equation,
    \begin{equation*}
        \den^{\flat+}(p^{-k},\langle N,t\rangle)=p^{(1-2k)\nu_p(c)}\cdot\den^{\flat+}(p^{k-1},\langle N,t\rangle).
    \end{equation*}
    The corollary follows from the functional equation above and Lemma \ref{W/D}.
\end{proof}
\begin{proposition}
    Let $T$ be a $2\times2$ matrix of rank 1 which can be diagonalized to $\textup{diag}\{0,t\}$ with $t\neq0$. Let $\mathsf{y}=\textup{diag}\{y_1,y_2\}$ be a positive definite symmetric matrix, then for any complex number $k$, we have
    \begin{align*}
        E_{T}(i\mathsf{y},k,\la^{2})&=y_{1}^{k/2}E_{t}(iy_2,\frac{1}{2}+k,\la)\\
        &+y_1^{-k/2}\cdot\frac{k-1}{k+1}\cdot N^{-k}\cdot\frac{\Lambda(2-2k)}{\Lambda(2+2k)}\cdot\prod\limits_{p\vert N}\beta_{p}(k)\cdot E_t(iy_2,\frac{1}{2}-k,\la).
    \end{align*}
    \label{2to1mei}
\end{proposition}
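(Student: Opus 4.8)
The plan is to insert the decomposition of Lemma \ref{EtoW} into the genus $1$ identification of Eisenstein coefficients with Whittaker functions, and then to evaluate the leftover genus $2$ Whittaker term one place at a time using Corollary \ref{2to1}. Since $t\neq0$ the form $\langle t\rangle$ is nonsingular, so Lemma \ref{E=WW} and the normalization (\ref{toclas1}) give $E_t(iy_2,s^{\prime},\la)=y_2^{-3/4}W_t(g_{iy_2},s^{\prime},\la)$ for all $s^{\prime}\in\C$. Evaluating Lemma \ref{EtoW} at $s=k$, the first summand $y_1^{k/2}y_2^{-3/4}W_t(g_{iy_2},k+\tfrac12,\la)$ is then exactly the first term $y_1^{k/2}E_t(iy_2,\tfrac12+k,\la)$ of the Proposition. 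Substituting the same normalization at $s^{\prime}=\tfrac12-k$ into the asserted second term and cancelling the common power of $y_2$, the whole statement reduces to the single identity
\[
\frac{W_T(g_{i\mathsf{y}},k,\la^{2})}{W_t(g_{iy_2},\tfrac12-k,\la)}=y_1^{\,3/4-k/2}\cdot\frac{k-1}{k+1}\cdot N^{-k}\cdot\frac{\Lambda(2-2k)}{\Lambda(2+2k)}\cdot\prod_{p\mid N}\beta_p(k).
\]

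To prove this I would factor both global Whittaker functions into local ones via Definition \ref{gloWhitt}. As $g_{i\mathsf{y}}$ and $g_{iy_2}$ are trivial at every finite place, the finite part of the ratio is $\prod_{p<\infty}W_{T,p}(1,k,1_{\delta_p(N)^{2}})/W_{t,p}(1,\tfrac12-k,1_{\delta_p(N)})$. Solving Corollary \ref{2to1} for the $p$-th factor and cancelling $\zeta_p(2k)$ yields, for every prime,
\[
\frac{W_{T,p}(1,k,1_{\delta_p(N)^{2}})}{W_{t,p}(1,\tfrac12-k,1_{\delta_p(N)})}=\frac{\zeta_p(2-2k)}{\zeta_p(2k+2)}\cdot\frac{L_p(k,\ch)}{L_p(1-k,\ch)}\cdot\vert 2\vert_p\vert N\vert_p^{1/2}(-1,N)_p\gamma(\mathbb{V}_p)p^{(1-2k)\nu_p(c)}\cdot\begin{cases}1,&p\nmid N,\\ \beta_p(k),&p\mid N.\end{cases}
\]
Taking the product over all finite primes and formally adjoining the archimedean factor, the zeta data assembles into $\Lambda(2-2k)/\Lambda(2+2k)$, while the $L$-data assembles into $\Lambda(k,\ch)/\Lambda(1-k,\ch)$, which equals $1$ by the functional equation $\Lambda(s,\ch)=\Lambda(1-s,\ch)$ recalled in \S\ref{singular-coefficients}. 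The elementary constants are controlled by the product formulas: $\prod_p\vert2\vert_p=\tfrac12$, $\prod_p\vert N\vert_p^{1/2}=N^{-1/2}$, $\prod_p(-1,N)_p=1$ (as $N>0$), $\prod_p p^{(1-2k)\nu_p(c)}=c^{1-2k}$, and $\prod_p\gamma(\mathbb{V}_p)$ is pinned down by the Weil-index product formula together with the incoherence relation $\prod_v\epsilon(\mathbb{V}_v)=-1$; the factor $\prod_{p\mid N}\beta_p(k)$ already matches the target verbatim.

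The final and most delicate step is the archimedean place, where I would use the explicit formulas for $W_{T,\infty}$ and $W_{t,\infty}$ (\cite[Lemma 8.6]{KRY04}, \cite[Theorem 5.2.7, Proposition 5.7.7]{KRY06}) to compute $W_{T,\infty}(g_{i\mathsf{y},\infty},k)/W_{t,\infty}(g_{iy_2,\infty},\tfrac12-k)$ and verify that it supplies exactly the missing data: the full $y_1$-dependence $y_1^{\,3/4-k/2}$ (the genus $1$ function at $g_{iy_2}$ carries none), the archimedean factors $\zeta_\infty(2-2k)/\zeta_\infty(2k+2)$ and $L_\infty(k,\ch)/L_\infty(1-k,\ch)$ that complete the two Euler products above, the rational factor $\tfrac{k-1}{k+1}$ coming from the genus $2$ Gamma factors, and the constant $\gamma(\mathbb{V}_\infty)$ along with the remaining powers of $2$, $\vert d\vert$, $c$ and $N$. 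Here the relation $4Nt=c^{2}d$ is the key device for reconciling the $\vert d\vert$-power hidden in $L_\infty(\cdot,\ch)$ against the finite factors $c^{1-2k}$ and $N^{-1/2}$, so that all dependence on $t$, $c$ and $d$ cancels and only the clean $N^{-k}$ survives. I expect this reconciliation — tracking the Weil indices $\gamma(\mathbb{V}_v)$ through their product formula, extracting the Gamma-quotient that produces $\tfrac{k-1}{k+1}$, and confirming the cancellation of the $t$, $c$, $d$ data — to be the principal obstacle. Finally, since Corollary \ref{2to1} is stated for positive integers $k$ whereas both $\beta_p$ (rational in $p^{-k}$ by Lemma \ref{functionaleq}) and the archimedean factors are meromorphic in $k$, the identity for all $k\in\C$ follows by meromorphic continuation once it is known on the positive integers.
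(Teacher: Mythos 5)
Your proposal is correct and follows essentially the same route as the paper: reduce via Lemma \ref{EtoW}, Lemma \ref{E=WW} and the normalizations (\ref{toclas1}), (\ref{toclas2}) to a single identity between $W_T(g_{i\mathsf{y}},k,\la^2)$ and $W_t(g_{iy_2},\tfrac12-k,\la)$, treat the finite places with Corollary \ref{2to1}, assemble the completed $\zeta$- and $L$-factors using $\Lambda(s)=\Lambda(1-s)$ and $\Lambda(s,\ch)=\Lambda(1-s,\ch)$, and handle the archimedean place separately before extending by meromorphic continuation. The archimedean ratio you single out as the principal obstacle is exactly the identity the paper quotes from \cite[Lemma 4.14]{SSY22} (see also \cite[Proposition 5.7.7]{KRY06}) --- it supplies the power $y_1^{3/4-k/2}$, the factor $\frac{k-1}{k+1}$, and the $c^{2k-1}N^{1/2-k}$ bookkeeping that cancels against your finite-place constants via $4Nt=c^2d$ --- so your plan closes without any computation beyond what the paper itself relies on.
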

\begin{proof}
When $-tN$ is not a square, let $-d$ be the fundamental discriminant of the field extension $\Q(-tN)/\Q$. When $-tN$ is a square, let $d=-1$, then there exists a positive number $c$ such that
\begin{equation*}
    4Nt=c^{2}d.
\end{equation*}
\par
    For any complex number $k$, the following identity is proved in \cite[Lemma 4.14]{SSY22} (see also \cite[Proposition 5.7.7]{KRY06}),
    \begin{align}
        W_{T,\infty}(g_{i\mathsf{y}},k,\la^{2})\cdot\frac{\zeta_{\infty}(2k)}{L_{\infty}(k,\ch)}=&-2iy_{1}^{-k/2+3/4}\cdot\frac{1-k}{1+k}\cdot\frac{\zeta_{\infty}(2k)}{\zeta_{\infty}(2k+2)}\cdot c^{2k-1}N^{1/2-k}\label{zaiwoq}\\
        &\times W_{t,\infty}(g_{iy_{2}},\frac{1}{2}-k,\la)\cdot\frac{\zeta_{\infty}(2-2k)}{L_{\infty}(1-k,\ch)}.\notag
    \end{align}
    Recall that by Definition \ref{gloWhitt}, $W_{T}(g_{i\mathsf{y}},k,\la^{2})=W_{T,\infty}(g_{i\mathsf{y}},k,\la^{2})\cdot\prod\limits_{p}W_{T,p}(1,k,1_{\delta_p(N)^{2}})$ and $W_{t}(g_{iy_2},k,\la)=W_{t,\infty}(g_{iy_2},k,\la)\cdot\prod\limits_{p}W_{t,p}(1,k,1_{\delta_p(N)})$.
    \par
    Notice that both of the functions $W_{T}(g_{i\mathsf{y}},k,\la^{2})$ and $W_{t}(g_{iy_{2}},k,\la)$ can be meromorphically continued to whole complex plane. By combining the above identity (\ref{zaiwoq}), Corollary \ref{2to1}, and the functional equations $\Lambda(s)=\Lambda(1-s)$, $\Lambda(s,\ch)=\Lambda(1-s,\ch)$, we get
    \begin{equation*}
        W_{T}(g_{i\mathsf{y}},k,\la^{2})=y_{1}^{-k/2+3/4}\cdot\frac{k-1}{k+1}\cdot N^{-k}\cdot\frac{\Lambda(2-2k)}{\Lambda(2+2k)}\cdot\prod\limits_{p\vert N}\beta_{p}(k)\cdot W_t(g_{iy_2},\frac{1}{2}-k,\la).
    \end{equation*}
    Then Lemma \ref{EtoW}, Lemma \ref{E=WW} and formulas (\ref{toclas1}), (\ref{toclas2}) imply that
    \begin{align*}
        E_{T}(i\mathsf{y},k,\la^{2})&=y_{1}^{k/2}E_{t}(iy_2,\frac{1}{2}+k,\la)\\
        &+y_1^{-k/2}\cdot\frac{k-1}{k+1}\cdot N^{-k}\cdot\frac{\Lambda(2-2k)}{\Lambda(2+2k)}\cdot\prod\limits_{p\vert N}\beta_{p}(k)\cdot E_t(iy_2,\frac{1}{2}-k,\la)
    \end{align*}
\end{proof}
\begin{corollary}
    Let $T$ be a $2\times2$ matrix of rank 1 which can be diagonalized to $\textup{diag}\{0,t\}$ with $t\neq0$. Let $\mathsf{y}=\textup{diag}\{y_1,y_2\}$ be a positive definite symmetric matrix, we have
    \begin{align*}
        E_{T}^{\pr}(i\mathsf{y},0,\Delta(N)^{2})=&2E_t^{\pr}(iy_2,\frac{1}{2},\Delta(N))\\
        &+\left(\log y_1+2+\log N+\frac{4\Lambda^{\pr}(2)}{\Lambda(2)}-\sum\limits_{p\vert N}\beta_p^{\pr}(0)\right)E_t(iy_2,\frac{1}{2},\Delta(N)).
    \end{align*}
    \label{jiexicezhuyaos}
\end{corollary}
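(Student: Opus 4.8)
The plan is to obtain the corollary by differentiating the identity of Proposition \ref{2to1mei} at $k=0$, since $E_T^{\pr}(i\mathsf{y},0,\la^{2})=\frac{\textup{d}}{\textup{d}k}\big\vert_{k=0}E_T(i\mathsf{y},k,\la^{2})$. First I would abbreviate the right-hand side of Proposition \ref{2to1mei} as $A(k)+B(k)$, where $A(k)=y_{1}^{k/2}E_{t}(iy_2,\tfrac12+k,\la)$ and $B(k)=y_1^{-k/2}C(k)E_t(iy_2,\tfrac12-k,\la)$ with $C(k)=\frac{k-1}{k+1}\,N^{-k}\,\frac{\Lambda(2-2k)}{\Lambda(2+2k)}\prod\limits_{p\vert N}\beta_p(k)$. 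The preliminary observation is that from the definition (\ref{beta}) one has $\beta_p(0)=\frac{1+p^{-1}}{1+p^{-1}}\cdot\frac{1-g_p(-1)}{1-g_p(-1)}=1$, hence $C(0)=-1$; this both confirms $E_T(i\mathsf{y},0,\la^{2})=0$ (consistent with incoherence) and, crucially, makes the logarithmic derivative of $C$ legitimate at $k=0$.

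Next I would differentiate term by term. For $A$, the product rule gives $A^{\pr}(0)=\tfrac12\log y_1\cdot E_t(iy_2,\tfrac12,\la)+E_t^{\pr}(iy_2,\tfrac12,\la)$, where the prime on $E_t$ denotes the derivative in the spectral variable. For $B$ the three factors contribute separately at $k=0$: the power $y_1^{-k/2}$ yields $-\tfrac12\log y_1\cdot C(0)\cdot E_t=\tfrac12\log y_1\cdot E_t$; the factor $C$ yields $C^{\pr}(0)\cdot E_t$; and the spectral argument $\tfrac12-k$ yields, by the chain rule, $C(0)\cdot\bigl(-E_t^{\pr}\bigr)=E_t^{\pr}$. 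Summing $A^{\pr}(0)+B^{\pr}(0)$, the two occurrences of $\tfrac12\log y_1\cdot E_t$ combine to $\log y_1\cdot E_t$ and the two occurrences of $E_t^{\pr}$ combine to $2E_t^{\pr}(iy_2,\tfrac12,\la)$, leaving only $C^{\pr}(0)$ to be identified.

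Finally I would compute $C^{\pr}(0)=C(0)\cdot(\log C)^{\pr}(0)$, valid since $C(0)=-1\neq0$. The four summands of $(\log C)^{\pr}(0)$ are: $\frac{\textup{d}}{\textup{d}k}\log\frac{k-1}{k+1}\big\vert_{0}=\frac{1}{-1}-\frac{1}{1}=-2$ from the rational prefactor; $-\log N$ from $N^{-k}$; $-\frac{4\Lambda^{\pr}(2)}{\Lambda(2)}$ from $\frac{\Lambda(2-2k)}{\Lambda(2+2k)}$; and $\sum_{p\vert N}\beta_p^{\pr}(0)$ from $\prod_{p\vert N}\beta_p$ (using $\beta_p(0)=1$). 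Multiplying by $C(0)=-1$ gives $C^{\pr}(0)=2+\log N+\frac{4\Lambda^{\pr}(2)}{\Lambda(2)}-\sum_{p\vert N}\beta_p^{\pr}(0)$, and substituting into $2E_t^{\pr}+(\log y_1+C^{\pr}(0))E_t$ produces exactly the stated formula.

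The computation is entirely elementary; the only points demanding care are the sign in the chain rule for the argument $\tfrac12-k$ of $E_t$, and the fact that $\frac{k-1}{k+1}$ is negative near $k=0$, so that one differentiates $\log C$ with $C(0)=-1$ rather than applying a logarithm to a positive quantity. The substantive analytic input lies not in this corollary but upstream, in Proposition \ref{2to1mei} and in the normalization $\beta_p(0)=1$ forced by (\ref{beta}); granting these, no genuine obstacle remains.
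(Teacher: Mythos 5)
Your proposal is correct and is exactly the paper's route: the paper's proof of this corollary consists of the single line ``This follows from Proposition \ref{2to1mei},'' meaning precisely the differentiation at $k=0$ that you carry out, including the key facts $\beta_p(0)=1$, $C(0)=-1$, and the resulting cancellation pattern. Your computation of $C^{\pr}(0)=2+\log N+\frac{4\Lambda^{\pr}(2)}{\Lambda(2)}-\sum_{p\vert N}\beta_p^{\pr}(0)$ and the assembly of the two $\tfrac12\log y_1$ and two $E_t^{\pr}$ terms match the stated formula, so you have simply supplied the details the paper leaves to the reader.
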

\begin{proof}
    This follows from Proposition \ref{2to1mei}.
\end{proof}

\part{The Geometric Side}
\section{The geometry of the modular curve $\Xc$}
\subsection{Integral model}
Let $N$ be a positive integer, the classical modular curve $\mathcal{Y}_{0}(N)_{\mathbb{C}}$ over $\mathbb{C}$ is defined as the following smooth $1$-dimensional complex curve,
\begin{equation*}
    \mathcal{Y}_{0}(N)_{\mathbb{C}}\coloneqq\textup{GL}_{2}(\mathbb{Q})\backslash \mathbb{H}^{\pm}\times\textup{GL}_{2}(\mathbb{A}_{f})/\Gamma_{0}(N)(\hat{\mathbb{Z}})\simeq \Gamma_{0}(N)\backslash\mathbb{H}^{+},
\end{equation*}
where $\mathbb{H}^{\pm}=\mathbb{C}\backslash\mathbb{R}$ and $\mathbb{H}^{+}=\{z=x+iy\in\mathbb{C}:x\in\mathbb{R}, y\in\mathbb{R}_{>0}\}$ is the upper half plane. The group $\Gamma_{0}(N)(\hat{\mathbb{Z}})$ is the following open compact subgroup of $\textup{GL}_{2}(\mathbb{A}_{f})$,
\begin{equation*}
   \Gamma_{0}(N)(\hat{\mathbb{Z}}) = \left\{x=\begin{pmatrix}
    a & b\\
    Nc & d
    \end{pmatrix}\in \textup{GL}_{2}(\hat{\mathbb{Z}})\,\,:\,\,a,b,c,d\in\hat{\mathbb{Z}}\right\},
\end{equation*}
and $\Gamma_{0}(N)=\Gamma_{0}(N)(\hat{\mathbb{Z}})\bigcap \textup{SL}_{2}(\mathbb{Z})$. The modular curve $\Yc_{\C}$ parametrizes cyclic $N$-isogenies of elliptic curves over the complex number $\C$ by the following map:
\begin{equation*}
    \LN\tau\in \Gamma_{0}(N)\backslash\mathbb{H}^{+}\cdot\tau\longmapsto (E_{\tau}\stackrel{\times N}\rightarrow E_{N\tau}),
\end{equation*}
where for a given $\tau\in\mathbb{H}^{+}$, the elliptic curve $E_{\tau}$ is given by the complex torus $\C/(\Z+\Z\tau)$. The smooth curve $\mathcal{Y}_{0}(N)_{\mathbb{C}}$ is not proper, its compactification $\mathcal{X}_{0}(N)_{\mathbb{C}} \coloneqq\mathcal{Y}_{0}(N)_{\mathbb{C}}\cup\{\textup{cusps}\}$ is a smooth projective curve over $\mathbb{C}$.
\par
Katz and Mazur \cite{KM85} constructs an integral model of $\Yc$ over $\Z$ by extending the concept of cyclic isogeny to arbitrary base scheme, we refer the readers to \cite[\S3(3.4)]{KM85} and the arthor's previous work \cite[\S4]{Zhu23} for details. The moduli interpretation of the stack $\Yc$ is given as follows: for a scheme $S$, the objects of the groupoid $\mathcal{Y}_{0}(N)(S)$ are cyclic $N$-isogenies $(E\stackrel{\pi}\rightarrow E^{\prime})$ where $E$ and $E^{\prime}$ are elliptic curves over $S$, a morphism between two cyclic isogenies $(E_{1}\stackrel{\pi_{1}}\rightarrow E_{1}^{\prime})$ and $(E_{2}\stackrel{\pi_{2}}\rightarrow E_{2}^{\prime})$ is a pair of isomorphisms of elliptic curves $a:E_{1}\stackrel{\sim}\rightarrow E_{2}$ and $a^{\prime}:E_{1}^{\prime}\stackrel{\sim}\rightarrow E_{2}^{\prime}$ such that $a^{\prime}\circ\pi_{1}  = \pi_{2}\circ a$. 
\par
Cesnavicius \cite{Ces17} defined a compactification of $\Yc$ as the moduli stack of cyclic $N$-isogenies between generalized elliptic curves, we denote this compactification by $\Xc$, its complex fiber is isomorphic to $\Xc_{\C}$.
\par
There are two natural morphisms from $\Yc$ to $\mathcal{Y}_0(1)$ given as follows
\begin{align*}
    p_{1},p_{2}:\Yc&\longrightarrow\mathcal{Y}_0(1)\\
    (E_{1}\stackrel{\pi}\rightarrow E_2)&\stackrel{p_{1}}\longmapsto E_1;\\
    (E_{1}\stackrel{\pi}\rightarrow E_2)&\stackrel{p_{2}}\longmapsto E_2.
\end{align*}
Both of the morphism $p_{1},p_{2}:\Yc\rightarrow\mathcal{Y}_0(1)$ extends to morphisms from $\Xc$ to $\mathcal{X}_0(1)$, we still use the symbols $p_1$ and $p_2$ to denote them.
\begin{theorem}
The stack $\mathcal{X}_{0}(N)$ is a regular proper 2-dimensional Deligne-Mumford stack, both of the morphisms $p_1$ and $p_2$ are finite flat of degree $\psi(N)\coloneqq N\cdot\prod\limits_{p\vert N}(1+p^{-1})$ over $\mathcal{X}_0(1)$. Moreover, the stack $\Xc$ is smooth over $\Z[\frac{1}{N}]$
\end{theorem}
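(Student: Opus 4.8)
The plan is to bootstrap from the foundational results already recorded in the excerpt and then pin down the two quantitative assertions (the degree and the smooth locus) by base change to favorable situations. The purely structural claims are cited input: that $\Yc$ is a regular flat $2$-dimensional Deligne--Mumford stack is \cite[Theorem 5.1.1]{KM85}, and that its compactification $\Xc$ is a proper regular flat $2$-dimensional Deligne--Mumford stack is the content of \cite{Ces17}; the same references supply the analogous facts for $\mathcal{X}_0(1)$. It then suffices to establish that $p_1$ is finite flat of degree $\psi(N)$ and that $\Xc$ is smooth over $\Z[\tfrac1N]$, since $p_2$ is carried to $p_1$ by the Atkin--Lehner involution $W_N$ (which interchanges $E$ and $E'$ via the dual isogeny).

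First I would argue finiteness and flatness of $p_1$. The geometric fibers of $p_1$, parametrizing cyclic $N$-isogenies out of a fixed (generalized) elliptic curve, are finite; as $p_1$ is a morphism between stacks proper over $\Z$, it is proper, and being also quasi-finite it is finite. For flatness I would pass to an \'etale presentation and invoke the local flatness criterion: a finite morphism from a Cohen--Macaulay source to a regular target, both of pure dimension $2$, whose fibers are $0$-dimensional, is automatically flat. Since both $\Xc$ and $\mathcal{X}_0(1)$ are regular (hence Cohen--Macaulay) of dimension $2$, this yields flatness of $p_1$.

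To compute the degree, note that $p_1$ is finite flat and $\mathcal{X}_0(1)$ is connected (indeed flat over $\Z$ with connected total space, its coarse space being $\mathbb{P}^1_{\Z}$), so the degree function is constant and may be read off on the generic fiber. Over $\C$ the fiber of $p_1$ over an elliptic curve $E$ is the set of cyclic subgroups of order $N$ in $E[N]\simeq(\Z/N\Z)^2$. This count is multiplicative in $N$ by the Chinese Remainder Theorem, and for $N=p^n$ the number of cyclic subgroups of order $p^n$ in $(\Z/p^n\Z)^2$ equals $\frac{p^{2n-2}(p^2-1)}{\varphi(p^n)}=p^{n-1}(p+1)=p^{n}(1+p^{-1})$; taking the product over $p\mid N$ gives $\psi(N)$. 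For smoothness over $\Z[\tfrac1N]$, observe that there $N$ is invertible, so every cyclic $N$-isogeny is \'etale and the functor of cyclic order-$N$ subgroups is finite \'etale over $\mathcal{Y}_0(1)_{\Z[1/N]}$; as the latter is smooth over $\Z[\tfrac1N]$, so is $\Yc_{\Z[1/N]}$, and the boundary cusps are smooth over $\Z[\tfrac1N]$ by the Tate-curve description, giving smoothness of $\Xc$ over $\Z[\tfrac1N]$.

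The \emph{main obstacle} is in fact absorbed into the cited regularity of $\Xc$: the delicate point is regularity (equivalently, the local flatness input above) at the supersingular points of the fibers $\Xc_{\F_p}$ for $p\mid N$, where the moduli problem degenerates and the local structure is governed by the Katz--Mazur computations \cite[Theorem 13.4.7]{KM85}. Granting that input, finiteness, flatness, and the degree are formal; the only remaining care is that the generic degree $\psi(N)$ propagates to the geometrically more complicated fibers at $p\mid N$, which is guaranteed by the connectedness of $\mathcal{X}_0(1)$ together with flatness of $p_1$.
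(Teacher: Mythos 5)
Your proposal is correct, but it takes a genuinely different route from the paper. The paper's entire proof is a citation: the structural assertions (regular proper $2$-dimensional Deligne--Mumford stack, finite flatness of $p_1,p_2$, smoothness over $\Z[\frac{1}{N}]$) are quoted from \cite[Theorem 5.13]{Ces17}, and the degree $\psi(N)$ is quoted from \cite[(13.4.9)]{KM85}. You instead treat only regularity and properness as black boxes and derive the rest: finiteness of $p_1$ from properness plus quasi-finiteness, flatness by miracle flatness (Cohen--Macaulay source over regular target with zero-dimensional fibers, both of dimension $2$), the degree by counting cyclic subgroups of $(\Z/N\Z)^{2}$ on the generic fiber and spreading out via flatness and connectedness of $\mathcal{X}_0(1)$, the case of $p_2$ by composing with the Atkin--Lehner involution, and smoothness over $\Z[\frac{1}{N}]$ from \'etaleness of $N$-isogenies in invertible characteristic plus the Tate-curve description at the cusps. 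This buys a clearer logical skeleton --- the quantitative claims are exhibited as formal consequences of the regularity input --- and your count $p^{n-1}(p+1)$ per prime power correctly reproduces $\psi(N)$. Two details would deserve a word in a full write-up: (i) both the finiteness argument (Zariski's main theorem) and the very notion of degree presuppose that $p_1$ is representable; this holds because for an automorphism $(a,a')$ of a cyclic isogeny $(E\stackrel{\pi}\rightarrow E')$ with $a=\mathrm{id}$, the fppf surjectivity of $\pi$ forces $a'=\mathrm{id}$, but it is not automatic from the closed immersion $(p_1,p_2)$ alone, since the projection $\mathcal{X}_0(1)\times\mathcal{X}_0(1)\rightarrow\mathcal{X}_0(1)$ is not representable; (ii) quasi-finiteness in residue characteristics dividing $N$, in particular at supersingular points, rests on the Katz--Mazur theory of cyclic (Drinfeld) subgroups, where the fiber is a finite but generally non-reduced scheme --- so one cannot avoid \cite{KM85} entirely, only its finer degree computation.
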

\begin{proof}
This is proved in \cite[Theorem 5.13]{Ces17}, the degree of $p_1$ and $p_2$ are computed in \cite[(13.4.9)]{KM85}.
\end{proof}
\begin{lemma}
    Let $N=M_{1}\cdot M_{2}$ where $\textup{g.c.d.}(M_{1},M_{2})=1$, there is a natural isomorphism
    \begin{equation*}
        \Xc\simeq\mathcal{X}_{0}(M_{1})\times_{p_{1},\mathcal{X}_{0}(1),p_{1}}\mathcal{X}_{0}(M_{2}).
    \end{equation*}
    \label{ddec}
\end{lemma}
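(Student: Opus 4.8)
The plan is to realize the isomorphism directly on the level of moduli, by decomposing the kernel of a cyclic $N$-isogeny according to the coprime factorization $N=M_1M_2$. First recall the moduli interpretation of the right-hand side: since $p_1\colon\mathcal{X}_0(M_2)\to\mathcal{X}_0(1)$ is finite flat, the $2$-fiber product $\mathcal{P}\coloneqq\mathcal{X}_0(M_1)\times_{p_1,\mathcal{X}_0(1),p_1}\mathcal{X}_0(M_2)$ is again a proper flat Deligne--Mumford stack, and for a scheme $S$ the groupoid $\mathcal{P}(S)$ consists of data $(E;\pi_1\colon E\to E_1;\pi_2\colon E\to E_2)$, where $\pi_i$ is a cyclic $M_i$-isogeny of (generalized) elliptic curves over $S$ with common source $E$ (more precisely, a pair of objects of $\mathcal{X}_0(M_i)(S)$ together with a chosen isomorphism identifying their images under $p_1$), with the evident morphisms.

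The key algebraic input is the Chinese Remainder decomposition of a finite locally free commutative group scheme $G$ over $S$ of order $N=M_1M_2$ with $\gcd(M_1,M_2)=1$. Choosing $a,b\in\Z$ with $aM_1+bM_2=1$, the endomorphisms $e_1=[bM_2]$ and $e_2=[aM_1]$ of $G$ are complementary idempotents (using $[N]=0$ on $G$), and the group law gives an isomorphism $G[M_1]\times G[M_2]\stackrel{\sim}{\to}G$, where $G[M_i]=\ker[M_i]$ is finite locally free of order $M_i$. Moreover $G$ is cyclic in the Drinfeld--Katz--Mazur sense if and only if both $G[M_1]$ and $G[M_2]$ are; this compatibility of cyclicity with the prime factorization is part of the theory of \cite[\S6]{KM85}. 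Applying this to $G=\ker\pi$ for a cyclic $N$-isogeny $\pi\colon E\to E'$ yields cyclic $M_i$-isogenies $\pi_i\colon E\to E/G[M_i]$, and the assignment $(E\stackrel{\pi}{\rightarrow}E')\mapsto(E;\pi_1;\pi_2)$ is functorial in $S$ and compatible with isomorphisms of objects, hence defines a morphism $\Phi\colon\Xc\to\mathcal{P}$. Conversely, given $(E;\pi_1;\pi_2)$ with $\ker\pi_i=C_i$, the subgroup $C_1\times C_2\subset E$ is finite locally free and cyclic of order $N$, so $E\to E/(C_1\times C_2)$ is a cyclic $N$-isogeny; this defines $\Psi\colon\mathcal{P}\to\Xc$. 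The identities $G=G[M_1]\times G[M_2]$ and $\ker\pi_i=C_i$ show that $\Phi$ and $\Psi$ are mutually inverse on the open substacks $\Yc$ and $\mathcal{Y}_0(M_1)\times_{p_1}\mathcal{Y}_0(M_2)$ of honest elliptic curves.

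It remains to extend $\Phi$ and $\Psi$ across the cusps. For this I would use the modular description of the compactifications as moduli of cyclic isogenies of generalized elliptic curves (Cesnavicius \cite{Ces17}): over the cuspidal locus $E$ is a N\'eron polygon and the kernel of a cyclic isogeny is an ample cyclic subgroup, and the decomposition $G=G[M_1]\times G[M_2]$ together with its compatibility with cyclicity and ampleness persists in this setting. This upgrades $\Phi$ and $\Psi$ to mutually inverse morphisms on all of $\Xc$. As a consistency check, both $\Xc$ and $\mathcal{P}$ are finite flat over $\mathcal{X}_0(1)$ of the same degree, since $\psi$ is multiplicative on coprime arguments: $\psi(M_1)\psi(M_2)=\psi(N)$. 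An alternative route bypassing the cusps is to note that $\Xc$ is regular, that $\Phi$ is finite and an isomorphism over the dense open $\Yc$, and to invoke normality of the target; but establishing that $\mathcal{P}$ is normal along its special fibers is itself delicate, so I would favor the direct moduli argument, which moreover yields the matching of cuspidal data used elsewhere in the paper.

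The hard part will be this last step: checking that the coprime decomposition of kernels is compatible with the Drinfeld--Katz--Mazur notion of cyclicity \emph{and} survives the degeneration to generalized elliptic curves at the cusps, uniformly over $\mathrm{Spec}\,\Z$ --- in particular at the primes dividing $N$ and at the supersingular points of the special fibers --- so that $\Phi$ and $\Psi$ are morphisms of the integral stacks and not merely of their fibers away from $N$.
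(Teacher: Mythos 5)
Your argument over the open locus is correct, and it is in substance the same as what the paper does: the paper's entire proof is a citation of \cite[Corollary 1.10.15]{KM85}, i.e. the coprime decomposition of $\Gamma_0$-moduli problems on elliptic curves, which is exactly your Chinese-remainder decomposition of the kernel together with the fact (from \cite[\S 6]{KM85}) that Drinfeld--Katz--Mazur cyclicity can be checked separately on the $M_1$- and $M_2$-primary parts. So for the identification $\Yc\simeq\mathcal{Y}_0(M_1)\times_{p_1,\mathcal{Y}_0(1),p_1}\mathcal{Y}_0(M_2)$ your construction of $\Phi$ and $\Psi$ is fine, and it usefully makes explicit what the paper delegates to a reference.

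The gap is at the cusps, precisely at the step you flagged as hard, and your sketch of it would fail as written. The claim that the decomposition $G=G[M_1]\times G[M_2]$ ``together with its compatibility with cyclicity and ampleness persists'' for generalized elliptic curves is false. Take the cuspidal object of $\Xc$ lying over the cusp $P_0(N)$: a N\'eron $N$-gon $E$ with $G\cong\Z/N$ the (ample) constant cyclic subgroup of the component group. Then $G[M_1]\cong\Z/M_1$ meets only $M_1$ of the $N$ components of $E$, so it is not ample; worse, a $d$-gon with $d\nmid M_1$ cannot occur at all as the source of an object of $\mathcal{X}_0(M_1)$, so the pair $(E,G[M_1])$, equivalently the quotient map $E\to E/G[M_1]$, is simply not an object of $\mathcal{X}_0(M_1)(S)$. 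To define $\Phi$ on the cuspidal locus you must first apply the contraction functor of Deligne--Rapoport/Conrad \cite{Con07} which contracts the components of $E$ not meeting $G[M_1]$ (this is also how $p_1,p_2$ themselves are defined on $\Xc$), and to define $\Psi$ you need the reverse construction: given an $M_1$-gon with ample $C_1$ and an $M_2$-gon with ample $C_2$ over a common contraction in $\mathcal{X}_0(1)$, produce an $N$-gon in which $C_1\times C_2$ embeds as an ample cyclic subgroup. Both constructions exist, but they are genuine theorems (proved \'etale-locally via Tate curves in \cite{Con07}, \cite{Ces17}), and verifying that they are quasi-inverse, functorial, and compatible with the fiber-product structure is exactly the content that makes the compactified statement more than \cite[Corollary 1.10.15]{KM85}. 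Your degree count $\psi(M_1)\psi(M_2)=\psi(N)$ is only a consistency check, and your fallback route through normality of the fiber product is, as you yourself note, not established; so the proof as proposed does not go through across the cusps.
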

\begin{proof}
    This follows from \cite[Corollary 1.10.15]{KM85}.
\end{proof}

\subsection{The Atkin-Lehner involution on $\Xc$}
\begin{lemma}
    Let $S$ be a scheme, and $\pi: E\rightarrow E^{\prime}$ be a cyclic $N$-isogeny between elliptic curves over $S$, then $\pi^{\vee}:E^{\pr}\rightarrow E$ is also a cyclic $N$-isogeny.
    \label{atkin}
\end{lemma}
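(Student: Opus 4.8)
The plan is to reduce the statement to a purely group-scheme-theoretic assertion about kernels and then invoke the autoduality of elliptic curves. Being a cyclic $N$-isogeny depends only on the finite locally free kernel $\ker\pi$ and is an fppf-local condition on $S$; moreover $\deg\pi^{\vee}=\deg\pi=N$. So it suffices to show that $\ker(\pi^{\vee})$ is a cyclic group scheme of order $N$ in the sense of Katz--Mazur \cite{KM85}.

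First I would identify this kernel. Writing $C=\ker\pi$, the relation $\pi^{\vee}\circ\pi=[N]_{E}$ gives $\ker(\pi^{\vee})=\pi(E[N])\cong E[N]/C$. More usefully, the general duality for isogenies of abelian schemes furnishes a canonical isomorphism $\ker(f^{\vee})\cong(\ker f)^{D}$ with the Cartier dual; applying this to $f=\pi$ and using the canonical principal polarizations $E\xrightarrow{\sim}E^{\vee}$ and $E'\xrightarrow{\sim}E'^{\vee}$, which identify the dual isogeny $\pi^{\vee}$ with $f^{\vee}$, I obtain
\[
\ker(\pi^{\vee})\cong C^{D}.
\]
Equivalently, the Weil pairing $e_{N}$ on $E[N]$ is perfect and alternating, and a local generator of $C$ is isotropic, so $C\subseteq C^{\perp}$; comparing orders gives $C=C^{\perp}$, and perfectness of $e_{N}$ yields $(E[N]/C)^{D}\cong C^{\perp}=C$, hence $E[N]/C\cong C^{D}$ once more. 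Either route reduces the lemma to a single claim: the Cartier dual $C^{D}$ of a cyclic finite locally free group scheme $C$ of order $N$ is again cyclic of order $N$.

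I expect this claim to be the main obstacle, and it is precisely where the Katz--Mazur theory of cyclicity is indispensable. Away from the primes dividing $N$ the group $C$ is étale and fppf-locally isomorphic to $\Z/N$, so $C^{D}$ is fppf-locally $\mu_{N}\cong\Z/N$ and cyclicity is classical; the difficulty is concentrated at $p\mid N$, where $C$ and $C^{D}$ may be non-étale (containing $\mu_{p}$- or $\alpha_{p}$-type pieces) and one cannot argue on reduced fibres. Here I would use the Katz--Mazur characterization of cyclicity by the fppf-local existence of a \emph{generator}, i.e.\ a section whose associated full set of sections exhausts the group as a relative Cartier divisor, and transport such a generator across the duality $C\leftrightarrow C^{D}$; this is the standard stability of cyclicity under Cartier duality.

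Finally, I would note a complementary reduction adapted to this geometric setting that pins down the hard step. It suffices to treat the universal cyclic $N$-isogeny over $\Yc$, and $\Yc$ is regular and flat over $\Z$ with schematically dense generic fibre, the prime-to-$N$ locus $\Yc[1/N]$ being dense. Over $\Yc[1/N]$ the isogeny $\pi^{\vee}$ is cyclic by the étale case above, and cyclicity is then propagated across the fibres at $p\mid N$ either by the Cartier-duality argument of the previous paragraph or by the relative representability and finiteness of the $\Gamma_{0}(N)$-moduli problem over $\mathcal{X}_{0}(1)$ proved in \cite{KM85}. Combining the kernel identification $\ker(\pi^{\vee})\cong C^{D}$ with the stability of cyclicity under duality shows that $\pi^{\vee}$ is a cyclic $N$-isogeny, as desired.
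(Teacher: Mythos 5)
Your opening reduction is exactly the paper's: from $\pi^{\vee}\circ\pi=[N]$ one gets $\ker(\pi^{\vee})\cong E[N]/\ker(\pi)$, which via the Weil pairing (or the duality theory of abelian schemes plus autoduality of $E$, $E'$) is the Cartier dual $(\ker\pi)^{D}$; both of your routes to this identification are standard and correct. At that point the paper concludes in one line by citing \cite[Corollary 5.5.4(3)]{KM85}, which states precisely that the order-$N$ quotient $E[N]/\ker(\pi)$ is cyclic. So your proposal and the paper rest on the same fact, phrased by you as stability of cyclicity under Cartier duality and by the paper as cyclicity of the quotient; these are equivalent through the identification you establish.

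The difference is that you try to sketch a proof of this fact instead of citing it, and the sketch has a gap at exactly the point you flag as the crux. ``Transport such a generator across the duality $C\leftrightarrow C^{D}$'' is not an argument: a Drinfeld generator of $C$ (a section whose multiples exhaust $C$ as a Cartier divisor) does not induce any character of $C$, hence no section of $C^{D}$, and in residue characteristic $p\mid N$ no pointwise bookkeeping can work --- for instance $\ker[p]\cong\mu_{p}\times\Z/p$ on an ordinary elliptic curve in characteristic $p$ admits a Drinfeld generator even though its \'etale counterpart $(\Z/p)^{2}$ is not cyclic, so cyclicity of the dual cannot be read off from the generator of $C$ in any naive way. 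The statement is a genuine theorem of Katz--Mazur, and citing it lands you exactly where the paper is. Your third paragraph does contain the correct way to prove it independently: pass to the universal cyclic $N$-isogeny over $\mathcal{Y}_{0}(N)$, which is regular and flat over $\Z$, so the locus $\mathcal{Y}_{0}(N)[1/N]$ is schematically dense; there $\pi^{\vee}$ is cyclic by the \'etale theory; and then propagate. But the propagation step needs the fact that cyclicity of an $N$-isogeny is a \emph{closed} condition --- this is \cite[Proposition 4.2.7(c)]{Ces17}, which the paper itself invokes elsewhere --- not ``relative representability and finiteness of the $\Gamma_{0}(N)$ moduli problem,'' which by itself propagates nothing. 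With that substitution your fallback argument is complete; as written, the central step is an appeal to a result you have not proved and whose proposed proof mechanism does not exist.
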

\begin{proof}
We only need to show that the order $N$ quotient group scheme $E[N]/\textup{ker}(\pi)$ is cyclic, and this is proved in \cite[Corollary 5.5.4(3)]{KM85}.
\end{proof}
With Lemma \ref{atkin}, we can define the Atkin-Lehner involution $W_{N}$ on the stack $\Yc$ by the following,
\begin{align*}
    W_{N}:\Yc&\longrightarrow\Yc\\
    (E\stackrel{\pi}\rar E^{\pr})&\longmapsto(E^{\pr}\stackrel{\pi^{\vee}}\rar E)
\end{align*}
\begin{lemma}
    The Atkin-Lehner involution $W_{N}$ extends to the compactified stack $\Xc$.
\end{lemma}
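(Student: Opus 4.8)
The plan is to extend $W_N$ through the moduli interpretation of $\Xc$ rather than by a direct geometric construction. By Cesnavicius \cite{Ces17}, $\Xc$ is the moduli stack of cyclic $N$-isogenies $\pi\colon E\rar E'$ between generalized elliptic curves, subject to the ampleness condition that forces the non-smooth fibers to be standard N\'eron polygons. The candidate extension is the functorial assignment $(E\stackrel{\pi}\rar E')\mapsto(E'\stackrel{\pi^{\vee}}\rar E)$, exactly as on $\Yc$. Thus it suffices to check three things over an arbitrary base $S$: that the \emph{dual isogeny} $\pi^{\vee}$ is defined for generalized elliptic curves, that it is again a cyclic $N$-isogeny satisfying the ampleness condition, and that the assignment is an involution. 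Yoneda's lemma then produces the morphism $W_N\colon\Xc\rar\Xc$ restricting to the given one on $\Yc$.

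First I would set up duality at the boundary. Writing $C=\ker\pi$, a cyclic $N$-isogeny is the same datum as an ample cyclic subgroup $C\subset E^{\mathrm{sm}}$ of order $N$ with $E'=E/C$, and $\pi^{\vee}\colon E'\rar E$ is characterized by $\pi^{\vee}\circ\pi=[N]$ on the smooth commutative group scheme $E^{\mathrm{sm}}$. Over $\Yc$, where $E$ is an honest elliptic curve, the cyclicity of $\pi^{\vee}$ is precisely Lemma \ref{atkin}, i.e.\ \cite[Corollary 5.5.4(3)]{KM85}. The new input is the verification on the cuspidal fibers, where $E$ degenerates to a standard N\'eron $d$-gon: there one checks directly, using the combinatorics of polygons together with the $\mu$-part of $E^{\mathrm{sm}}[N]$, that the dual of a cyclic $N$-isogeny of standard polygons is again cyclic and ample. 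Since cyclicity is a closed condition on the base and holds both on the dense open $\Yc$ and on every cusp fiber, it holds on all of $\Xc$; and $(\pi^{\vee})^{\vee}=\pi$ is immediate from $\pi^{\vee}\circ\pi=[N]=\pi\circ\pi^{\vee}$, so the assignment is an involution.

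A complementary and more economical route uses properness. Regarding $W_N$ as a morphism $\Yc\rar\Xc$, and using that $\Xc$ is regular (hence normal) and proper over $\Z$, the valuative criterion applied at the discrete valuation rings of $\Xc$ extends $W_N$ uniquely over every codimension-one point, in particular over the generic points of the cuspidal divisor and of the special fibers. This reduces the problem to the finitely many closed points lying on the cuspidal divisor in the residue characteristics $p\mid N$, where one invokes the local moduli picture above, or the explicit local equations at the cusps, to rule out indeterminacy. Over $\Z[\tfrac{1}{N}]$, where $\Xc$ is smooth, no such points arise and the extension is automatic.

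The main obstacle is therefore the boundary analysis in the ramified characteristics $p\mid N$: giving a clean definition of $\pi^{\vee}$ for generalized elliptic curves whose fibers are non-smooth N\'eron polygons, and confirming that cyclicity and ampleness are preserved there. This is exactly the regime where the Katz--Mazur cyclic-isogeny theory must be matched against Cesnavicius's degeneration at the cusps, and I expect it to be the technical heart of the argument. It is also closely tied to the reduction-of-cusps computations used later to evaluate $\textup{div}(\Delta_N)$ and the vertical part of $\widehat\omega$.
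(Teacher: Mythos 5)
Your proposal does not close the real gap, and the key claim on which your first route rests is false rather than merely unverified. For generalized elliptic curves the dual isogeny simply does not exist along the boundary. Concretely, the degenerate object at the cusp $\infty$ is the standard $1$-gon $E$ (so $E^{\mathrm{sm}}=\mathbb{G}_m$) with $\pi\colon E\rar E'$ induced by $x\mapsto x^{N}$ and $\ker\pi=\mu_{N}$; on $E^{\mathrm{sm}}$ one has $[N]=\pi$, so any $\pi^{\vee}$ with $\pi^{\vee}\circ\pi=[N]$ is forced to be the identity, of degree $1$, and no cyclic $N$-isogeny dual to $\pi$ exists. Similarly, the object at the cusp $0$ is the quotient of the standard $N$-gon by its component group $\mathbb{Z}/N$, and a would-be dual would be an isogeny from the $1$-gon to the $N$-gon; no such morphism of curves exists, because a homomorphism out of $\mathbb{G}_m$ lands in the identity component, and $x\mapsto x^{N}$ sends the two branches of the node of the $1$-gon to the two \emph{distinct} nodes bounding the identity component of the $N$-gon. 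Since $W_{N}$ must exchange these two cusps (over $\C$ it is $\tau\mapsto -1/(N\tau)$, swapping $0$ and $\infty$), any extension of $W_{N}$ necessarily replaces the underlying curves by different semistable models (contracting and expanding polygons); it therefore cannot be given by any fiberwise assignment $(E\stackrel{\pi}\rar E')\mapsto(E'\stackrel{\pi^{\vee}}\rar E)$, and closedness of the cyclicity condition cannot rescue a recipe that has no value at the boundary points. Your second route inherits the same defect: the valuative criterion extends $W_{N}$ over the generic points of the cuspidal divisor, but at the finitely many remaining closed points in characteristics $p\mid N$ a birational self-map of a regular proper $2$-dimensional stack can genuinely have fundamental points, and the only tool you offer to rule out indeterminacy there is the moduli picture that fails.

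The paper's proof takes a different and much shorter path that avoids boundary moduli entirely: by \cite[Proposition 4.2.7 (c)]{Ces17} cyclicity is a closed condition, so $(p_{1},p_{2})$ realizes $\Xc$ as a closed substack of $\mathcal{X}_{0}(1)\times\mathcal{X}_{0}(1)$; the involution $\iota$ of this ambient product switching the two factors restricts to $W_{N}$ on $\Yc$ (only here does one use Lemma \ref{atkin}, i.e.\ duality of cyclic isogenies of honest elliptic curves); and since $\Xc$ and $\iota(\Xc)$ are closed substacks sharing the common dense open substack $\Yc$, they coincide, so $\iota$ restricts to the desired extension of $W_{N}$ to $\Xc$. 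A genuinely moduli-theoretic construction would have to confront the model-changing phenomenon described above; embedding into the product is exactly what makes that unnecessary.
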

\begin{proof}
    By \cite[Proposition 4.2.7 (c)]{Ces17}, the cyclicity condition of a $N$-isogeny is closed, hence $\Xc$ is a closed substack of $\mathcal{X}_{0}(1)\times\mathcal{X}_{0}(1)$. Let $\iota:\mathcal{X}_{0}(1)\times\mathcal{X}_{0}(1)\rightarrow\mathcal{X}_{0}(1)\times\mathcal{X}_{0}(1)$ be the involution which switches the two copies of $\mathcal{X}_{0}(1)$, let $\Xc^{\pr}=\Xc_{\mathcal{X}_{0}(1)\times\mathcal{X}_{0}(1), \iota}\mathcal{X}_{0}(1)\times\mathcal{X}_{0}(1)$ be the image of $\Xc$ under the involution $W_N$, it is also a closed substack of $\mathcal{X}_{0}(1)\times\mathcal{X}_{0}(1)$, but $\Xc$ and $\Xc^{\pr}$ have common dense open dense substack $\Yc$, hence $\Xc=\Xc^{\pr}$, therefore the Atkin-Lehner involution $W_{N}$ extends to the compactified stack $\Xc$.
\end{proof}

\subsection{Cusps of the modular curve $\Xc$}
Let $\mathcal{H}\coloneqq\mathcal{X}_0(1)\times\mathcal{X}_0(1)$ be the product of the smooth Deligne-Mumford stack $\mathcal{X}_0(1)$. Let $P_{\infty}(1):\textup{Spec}\,\Z\rightarrow\mathcal{X}_0(1)$ be the cuspidal divisor of the stack $\mathcal{X}_0(1)$ which corresponds to the standard $1$-gon over $\Z$ (we refer to \cite[\S2.1]{Con07} for detailed definitions of standard $1$-gon). We can view $P_{\infty}(1)$ as an element in $\textup{CH}^{1}(\mathcal{X}_0(1))$, then we define the following element in $\textup{CH}^{1}(\mathcal{H})$:
\begin{equation*}
    \mathsf{Cusp}(\mathcal{H})\coloneqq P_{\infty}(1)\times\mathcal{X}_{0}(1)+\mathcal{X}_{0}(1)\times P_{\infty}(1).
\end{equation*}
The two curves $P_{\infty}(1)\times\mathcal{X}_{0}(1)$ and $\mathcal{X}_{0}(1)\times P_{\infty}(1)$ intersects transversally at the point $(P_{\infty}(1),P_{\infty}(1))$.
\par
The natural morphism $\Xc\stackrel{(p_1,p_2)}\longrightarrow\mathcal{H}$ is a closed immersion. We define the cuspidal divisor of $\Xc$ to be the pullback of $\mathsf{Cusp}(\mathcal{H})$, i.e.,
\begin{equation*}
    \mathsf{Cusp}(\Xc)=(p_1,p_2)^{\ast}\mathsf{Cusp}(\mathcal{H})\in\textup{CH}^{1}(\mathcal{X}_0(N)).
\end{equation*}
Let $\cusp(\Xc)$ be the formal completion of $\mathcal{X}_0(N)$ along the cuspidal divisor $\textup{Cusp}(\Xc)$. It is well-known that $\cusp(\mathcal{X}_0(1))\simeq\textup{Spf}\,\Z[[q]]$. The two morphisms $p_1,p_2:\Xc\rightarrow\mathcal{X}_0(1)$ induce two morphisms from $\cusp(\Xc)$ to $\cusp(\mathcal{X}_0(1))$, we still use $p_1$ and $p_2$ to denote them, both of them are finite flat of degree of $\psi(N)$.
\begin{proposition}
    Let $N=p^{n}$ for some non-negative integer $n$, then the formal scheme $\cusp(\mathcal{X}_0(p^{n}))$ is a disjoint union as follows:
    \begin{equation*}
        \cusp(\mathcal{X}_0(p^{n}))=\coprod\limits_{\substack{-n\leq a\leq n\\a\equiv n\,\textup{mod}\,2}}\mathsf{C}^{a}(p^{n}),
    \end{equation*}
    where every $\mathsf{C}^{a}(p^{n})$ is finite flat over $\cusp(\mathcal{X}_0(1))$ via the morphisms $p_1$ and $p_2$, such that $W_N(\mathsf{C}^{a}(p^{n}))=\mathsf{C}^{-a}(p^{n})$. Let $\textup{deg}_1(\mathsf{C}^{a}(p^{n}))$ (resp. $\textup{deg}_2(\mathsf{C}^{a}(p^{n}))$) be the finite flat degree of $\mathsf{C}^{a}(p^{n})$ over $\cusp(\mathcal{X}_0(1))$ via the morphism $p_1$ (resp. $p_2$), then
     \begin{equation*}
        \textup{deg}_1(\mathsf{C}^{a}(p^{n}))=\begin{cases}\varphi(p^{(n-a)/2}), &\textup{if $0\leq a\leq n$};\\
        p^{-a}\varphi(p^{(n+a)/2}), &\textup{if $-n\leq a<0$}.\\   
    \end{cases}
    \end{equation*}
     \begin{equation*}
        \textup{deg}_2(\mathsf{C}^{a}(p^{n}))=\begin{cases}p^{a}\varphi(p^{(n-a)/2}), &\textup{if $0\leq a\leq n$};\\
        \varphi(p^{(n+a)/2}), &\textup{if $-n\leq a<0$}.\\   
    \end{cases}
    \end{equation*}
    More explicitly, if we view $\cusp(\mathcal{X}_0(p^{n}))$ as a $\Z[[q]]$-formal scheme via the morphism $p_1$, then 
    \begin{equation}
        \mathsf{C}^{a}(p^{n})\simeq\begin{cases}\textup{Spf}\,\Z[\zeta_{p^{(n-a)/2}}][[q]], &\textup{if $0\leq a\leq n$};\\
        \textup{Spf}\,\Z[\zeta_{p^{(n+a)/2}}][[q]][z]/(z^{p^{-a}}-\zeta_{p^{(n+a)/2}}q), &\textup{if $-n\leq a<0$},\\   
    \end{cases}
    \label{cusps}
    \end{equation}
    where for any $k\geq1$, $\zeta_{p^{k}}$ is a primitive $p^{k}$-th root of unity.
    \label{jiandian}
\end{proposition}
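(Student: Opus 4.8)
The plan is to reduce the statement to Katz--Mazur's explicit analysis of $\Gamma_0(p^n)$-structures on the Tate curve. Recall that $\cusp(\mathcal{X}_0(1))\cong\mathrm{Spf}\,\Z[[q]]$, with universal object the standard Tate curve $\mathrm{Tate}(q)$ (the $1$-gon of \cite{Con07}). A point of $\cusp(\Xc)$ is a cyclic $p^n$-isogeny $\pi\colon E\to E'$ of generalized elliptic curves degenerating along the cusps, which by the theory of \cite{KM85,Ces17} is the datum of a finite flat cyclic subgroup $C=\ker\pi$ of order $p^n$ in the $p^n$-torsion of such a Tate curve. The first step is to attach to $C$ a discrete invariant using the canonical connected--étale sequence $0\to\mu_{p^n}\to\mathrm{Tate}(q)[p^n]\to\Z/p^n\to0$: I let $p^{c}$ (with $0\le c\le n$) be the order of the image of $C$ in the étale quotient $\Z/p^n$, so that the connected part $C\cap\mu_{p^n}$ is the canonical $\mu_{p^{\,n-c}}$, and I set $a=n-2c$. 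Then $a$ runs exactly over the integers with $-n\le a\le n$ and $a\equiv n\bmod 2$, and I take $\mathsf{C}^{a}(p^{n})$ to be the locus where this invariant is fixed. Since $c$ is locally constant on the cuspidal formal scheme, this produces the asserted decomposition into open-and-closed pieces.

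The second, and main, step is to identify each $\mathsf{C}^{a}(p^{n})$ with the stated formal ring. For fixed $c$ the subgroup $C$ has canonical connected part $\mu_{p^{\,n-c}}$ while its étale part of order $p^{c}$ forces $q^{1/p^{c}}$ into the ring and is pinned down by a choice of root of unity; a short orbit count shows that the relevant roots of unity are exactly the primitive $p^{\min(c,n-c)}$-th roots, which contributes the factor $\Z[\zeta_{p^{(n-|a|)/2}}]$ to the field of definition. Normalizing the resulting local ring --- and using that the source or target of $\pi$ is in general a Néron $p^{\min(c,n-c)}$-gon rather than a $1$-gon --- then yields $\mathrm{Spf}\,\Z[\zeta_{p^{(n-a)/2}}][[q]]$ when $a\ge 0$ and $\mathrm{Spf}\,\Z[\zeta_{p^{(n+a)/2}}][[q]][z]/(z^{p^{-a}}-\zeta_{p^{(n+a)/2}}q)$ when $a<0$. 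Reading off the two degeneracy parameters $q=p_1^{\ast}(\text{param})$ and $q'=p_2^{\ast}(\text{param})$ from this ring --- with $q'=q^{p^{a}}$ up to a unit --- gives the finite flat degrees $\deg_1(\mathsf{C}^{a}(p^{n}))=\varphi(p^{(n-a)/2})$ and $\deg_2(\mathsf{C}^{a}(p^{n}))=p^{a}\varphi(p^{(n-a)/2})$ for $a\ge 0$, and the symmetric formulas for $a<0$.

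Finally, the Atkin--Lehner symmetry $W_N(\mathsf{C}^{a}(p^{n}))=\mathsf{C}^{-a}(p^{n})$ follows formally: $W_N$ sends $(E\xrightarrow{\pi}E')$ to $(E'\xrightarrow{\pi^{\vee}}E)$ (Lemma \ref{atkin}), and $\ker\pi^{\vee}$ is the Cartier dual of $\ker\pi$, so Cartier duality interchanges the connected and étale parts, sending $c\mapsto n-c$ and hence $a\mapsto -a$. Because $W_N$ simultaneously exchanges $p_1$ and $p_2$, this also yields the consistency check $\deg_1(\mathsf{C}^{a})=\deg_2(\mathsf{C}^{-a})$, and summing $\sum_{a}\deg_1(\mathsf{C}^{a}(p^{n}))=\psi(p^{n})$ confirms that the pieces exhaust $\cusp(\Xc)$. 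The hard part is the local ring computation in the second step: the naive $q$-ramification $p^{c}$ coming from adjoining $q^{1/p^{c}}$ is reduced to $p^{\max(0,\,2c-n)}=p^{\max(0,\,-a)}$ precisely because the degenerating curve acquires extra components (a nontrivial Néron polygon), and controlling this requires the explicit local equations of $\mathrm{Tate}(q)$ with its $\Gamma_0(p^{n})$-structure from \cite[Ch.~8, 10]{KM85} rather than a formal manipulation of torsion points alone.
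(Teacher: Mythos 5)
Your proposal is correct in substance and arrives at all the right formulas, but it takes a genuinely different route from the paper. The paper's proof is essentially a citation: it invokes Edixhoven's explicit computation \cite[\S1.2.2]{Edi90} of $\cusp(\mathcal{X}_0(p^{n}))$ as a $\Z[[q]]$-formal scheme (formula (\ref{cusp})), labels the resulting $n+1$ pieces as the $\mathsf{C}^{a}(p^{n})$ via (\ref{cusps}), and reads off the degrees from those rings. You instead reprove the decomposition from the moduli interpretation: the invariant $c$ (order of the image of $\ker\pi$ in the \'etale quotient of the Tate curve's $p^{n}$-torsion, equivalently the number of sides of the source polygon), the identification of each stratum's formal ring by tracking the two Tate parameters $q_{1}=p_{1}^{\ast}q$, $q_{2}=p_{2}^{\ast}q$ subject to $q_{1}^{p^{n-c}}=q_{2}^{p^{c}}$ together with a primitive root of unity pinning down the kernel, and the Atkin--Lehner symmetry via Cartier duality of $\ker\pi$ and $\ker\pi^{\vee}$. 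What your approach buys is real: it explains \emph{why} the index set matches that of the irreducible components, it supplies an actual argument for $W_{N}(\mathsf{C}^{a}(p^{n}))=\mathsf{C}^{-a}(p^{n})$ (which the paper's proof asserts but never addresses), and it makes the $p_{2}$-degrees transparent through $q_{2}=\zeta\,q_{1}^{p^{a}}$ rather than leaving them as ``can be computed explicitly.'' What the paper's route buys is brevity and rigor by reference: your second step --- the normalization of the local rings, which you rightly flag as the hard part --- is precisely the content of the result Edixhoven proves, so your argument is best read as an outline of a proof of the cited statement (to be completed with the Tate-curve theory of \cite[Ch.~8, 10]{KM85}, \cite{Con07}) rather than a way around it.

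Two small inaccuracies to fix, neither fatal. First, over $\Z[[q]]$ the sequence $0\to\mu_{p^{n}}\to\mathrm{Tate}(q)[p^{n}]\to\Z/p^{n}\to0$ does not exist for the $1$-gon model (the \'etale quotient only exists over $\Z((q))$ and its extensions), so the phrase ``finite flat cyclic subgroup of the $p^{n}$-torsion of such a Tate curve'' glosses exactly the point that forces the polygon models; you do return to this later, but the invariant $c$ should be defined fiberwise (or on the generic fiber) and its local constancy argued there. Second, the source of $\pi$ at a cusp with invariant $c$ is a N\'eron $p^{c}$-gon and the target a $p^{n-c}$-gon; your parenthetical ``$p^{\min(c,n-c)}$-gon'' names the wrong one --- the ramification drop from the naive $p^{c}$ to $p^{\max(0,2c-n)}=p^{\max(0,-a)}$ is governed by the relation $q_{1}^{p^{n-c}}=q_{2}^{p^{c}}$ between the two parameters, not by the smaller polygon.
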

\begin{proof}
    It is proved by Edixhoven in \cite[\S1.2.2]{Edi90} that if we view $\cusp(\mathcal{X}_0(p^{n}))$ as a $\Z[[q]]$-formal scheme via the morphism $p_1$, then 
     \begin{equation}
        \textup{Spf}\,\Z[[q]]\coprod\textup{Spf}\,\Z[[q^{p^{-n}}]]\coprod\bigsqcup\limits_{\substack{c+d=n\\ c\geq d>0}}\textup{Spf}\,\Z[\zeta_{p^{d}}][[q]]\coprod\bigsqcup\limits_{\substack{c+d=n\\ d> c>0}}\textup{Spf}\,\Z[\zeta_{p^{c}}][[q]][z]/(z^{p^{d-c}}-\zeta_{p^{c}}q),
        \label{cusp}
    \end{equation}
    therefore $\cusp(\mathcal{X}_0(p^{n}))$ is a disjoint union of $n+1$ formal schemes, let $\mathsf{C}^{a}(p^{n})$ be one of the formal schemes according to formula (\ref{cusps}), then $\cusp(\mathcal{X}_0(p^{n}))=\coprod\limits_{\substack{-n\leq a\leq n\\a\equiv n\,\textup{mod}\,2}}\mathsf{C}^{a}(p^{n})$ by (\ref{cusp}). The finite flat degree of $\mathsf{C}^{a}(p^{n})$ over $\cusp(\mathcal{X}_0(1))$ via the morphisms $p_1$ and $p_2$ can be computed explicitly by (\ref{cusps}).
\end{proof}
\begin{remark}
    The isomorphism
    \begin{align*}
        &\cusp(\mathcal{X}_0(p^{n}))\simeq\\
        &\textup{Spf}\,\Z[[q]]\coprod\textup{Spf}\,\Z[[q^{p^{-n}}]]\coprod\bigsqcup\limits_{\substack{c+d=n\\ c\geq d>0}}\textup{Spf}\,\Z[\zeta_{p^{d}}][[q]]\coprod\bigsqcup\limits_{\substack{c+d=n\\ d> c>0}}\textup{Spf}\,\Z[\zeta_{p^{c}}][[q]][z]/(z^{p^{d-c}}-\zeta_{p^{c}}q),
    \end{align*}
    and Lemma \ref{ddec} imply that the formal scheme $\cusp(\Xc)$ has a unique closed and open formal subscheme which is isomorphic to $\textup{Spf}\,\Z[[q]]$ as $\Z[[q]]$-scheme via the morphism $p_1$, we denote the underlying scheme by $P_{\infty}(N)$, it corresponds to a morphism $P_{\infty}(N):\textup{Spec}\,\Z\rightarrow\Xc$. When $N=1$, this definition agrees with our previous definition of $P_{\infty}(1)$.
    \par
    Let $P_{0}(N)=W_N\circ P_{\infty}(N):\textup{Spec}\,\Z\rightarrow\Xc$ be the composition of the automorphism $W_N$ of $\Xc$ and the $\Z$-point $P_{\infty}(N)$. When $N=1$, we have $P_{\infty}(1)=P_{0}(1)$.
    \label{lianggejidian}
\end{remark}
\par
There is an explicit description of the cusps of the complex modular curve $\Xc_{\C}$ (cf. \cite[\S3.8]{DS05}),
\begin{equation*}
    P_{\frac{aQ}{N}},\,\,\textup{where}\,\,Q\vert N\,\,\textup{and}\,\,a\in(\Z/(Q,N/Q))^{\times}.
\end{equation*}
Especially, when $N=p^{n}$ for some non-negative integer $n$,  the cusps of the complex modular curve $\mathcal{X}_0(p^{n})_{\C}$ are
\begin{equation*}
    P_{\frac{a}{p^{k}}},\,\,\textup{where}\,\,0\leq k\leq n\,\,\textup{and}\,\,a\in(\Z/p^{\textup{min}\{k,n-k\}})^{\times}.
\end{equation*}
\begin{lemma}
    Let $P$ be a cusp of the complex modular curve $\mathcal{X}_0(p^{n})_{\C}$, let $\textup{ra}_1(P)$ (resp. $\textup{ra}_2(P)$) be the ramification degree of the morphism $p_1$ (resp. $p_2$) at $P$. Suppose $P=P_{\frac{a}{p^{k}}}$ for some integer $0\leq k\leq n$ and $a\in(\Z/p^{\textup{min}\{k,n-k\}})^{\times}$, then
    \begin{equation}
        \textup{ra}_1(P_{a/p^{k}})=\begin{cases}1, &\textup{if $\frac{n}{2}\leq k\leq n$};\\
        p^{n-2k}, &\textup{if $0\leq k<\frac{n}{2}$}.\\   
    \end{cases}
    \label{ra1}
    \end{equation}
    \begin{equation}
        \textup{ra}_2(P_{a/p^{k}})=\begin{cases}p^{2k-n}, &\textup{if $\frac{n}{2}\leq k\leq n$};\\
        1, &\textup{if $0\leq k<\frac{n}{2}$}.\\   
    \end{cases}
    \label{ra2}
    \end{equation} 
    \label{ramify}
\end{lemma}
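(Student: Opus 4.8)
The plan is to reduce both identities to the classical theory of cusp widths for $\Gamma_0(p^n)$, computing $\textup{ra}_1$ directly and then deducing $\textup{ra}_2$ from the Atkin--Lehner involution. Since $\textup{ra}_1$ and $\textup{ra}_2$ are ramification degrees along the generic fiber, I would work on the complex curve $\mathcal{X}_0(p^n)_{\C}\simeq\Gamma_0(p^n)\backslash(\H^{+}\cup\mathbb{P}^1(\Q))$. Under this uniformization the cusp $P_{a/p^k}$ is the $\Gamma_0(p^n)$-orbit of $a/p^k\in\mathbb{P}^1(\Q)$, a point of denominator $c=p^k$, and $p_1$ is the map $\Gamma_0(p^n)\tau\mapsto\textup{SL}_2(\Z)\tau$ induced by the inclusion $\Gamma_0(p^n)\subset\textup{SL}_2(\Z)$, so that the unique cusp of $\mathcal{X}_0(1)_{\C}$ carries the local parameter $q=e^{2\pi i\tau}$.

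First I would recall that the ramification degree of $p_1$ at a cusp $s$ is the width $h_s$ of $s$ for $\Gamma_0(p^n)$: after conjugating $s$ to $\infty$ by some $\sigma\in\textup{SL}_2(\Z)$, the local parameter upstairs is $e^{2\pi i\sigma^{-1}\tau/h_s}$ while downstairs it is $e^{2\pi i\tau}$, so $p_1$ is $q_s\mapsto q_s^{h_s}$ in local coordinates. The standard width formula for a cusp of denominator $c\mid N$ reads $h_s=N/\gcd(c^2,N)=N/(c\cdot\gcd(c,N/c))$. Setting $N=p^n$ and $c=p^k$ gives $h_s=p^{\,n-k-\min(k,\,n-k)}=p^{\max(n-2k,\,0)}$, which is exactly $(\ref{ra1})$; the borderline value $k=n/2$ lies in both branches and consistently gives $1$.

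For $p_2$ I would use the identity $p_2=p_1\circ W_N$, immediate from the moduli description $W_N(E\xrightarrow{\pi}E')=(E'\xrightarrow{\pi^{\vee}}E)$. As $W_N$ is an automorphism of $\mathcal{X}_0(p^n)$ it is unramified, whence $\textup{ra}_2(P)=\textup{ra}_1(W_N(P))$. On $\mathbb{P}^1(\Q)$ the involution $W_N$ acts through $\left(\begin{smallmatrix}0&-1\\N&0\end{smallmatrix}\right)$, carrying $a/p^k$ to $-p^k/(p^na)=-1/(p^{n-k}a)$; since $\gcd(a,p)=1$ this cusp is $\Gamma_0(p^n)$-equivalent to one of denominator $p^{n-k}$. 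Applying $(\ref{ra1})$ with $k$ replaced by $n-k$ then gives $\textup{ra}_2(P_{a/p^k})=p^{\max(2k-n,\,0)}$, which is $(\ref{ra2})$.

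The only delicate point is the bookkeeping under $W_N$: one must check that the prime-to-$p$ factor $a$ in the image denominator $p^{n-k}a$ does not affect the width, which is guaranteed precisely by the coprimality $a\in(\Z/p^{\min(k,n-k)})^{\times}$ forcing $\gcd((p^{n-k}a)^2,p^n)=p^{\min(2(n-k),\,n)}$. As a cross-check, and an alternative bypassing the analytic width formula altogether, one can read both indices off the explicit formal uniformizations of Proposition \ref{jiandian}: viewed over $\textup{Spf}\,\Z[[q]]$ through $p_1$, the component $\mathsf{C}^a(p^n)$ is unramified when $a\geq0$ and ramified of degree $p^{|a|}$ when $a<0$, as the defining equation $z^{p^{-a}}=\zeta_{p^{(n+a)/2}}q$ in $(\ref{cusps})$ shows; matching $P_{a/p^k}$ to the component $\mathsf{C}^{2k-n}(p^n)$ then reproduces $(\ref{ra1})$, and the relation $W_N(\mathsf{C}^a)=\mathsf{C}^{-a}$ gives $(\ref{ra2})$.
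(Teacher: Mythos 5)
Your proof is correct and takes essentially the same route as the paper: the paper obtains (\ref{ra1}) by computing the cusp width through the explicit stabilizer calculation $\textup{Stab}(P_{a/p^{k}})=\gamma\,\textup{Stab}(P_{\infty}(p^{n}))\,\gamma^{-1}\cap\Gamma_0(p^{n})$ (which is exactly the computation underlying the standard width formula you invoke), and then deduces (\ref{ra2}) from (\ref{ra1}) via $p_2=p_1\circ W_N$, just as you do. One caution: your proposed ``alternative'' argument through the component matching $P_{a/p^{k}}\in\mathsf{C}^{2k-n}(p^{n})$ would be circular in the paper's logical order, since that matching is Corollary \ref{cuspC}, which is itself deduced from this lemma together with Proposition \ref{jiandian}; it is fine as a consistency check but not as an independent proof.
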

\begin{proof}
    For any cusp point $P$, let $\textup{Stab}(P)\in\Gamma(p^{n})$ be the stabilizer of the cusp. For the cusp point $P_{\infty}(p^{n})$, we have
    \begin{equation*}
        \textup{Stab}(P_{\infty}(p^{n}))=\left\{\begin{pmatrix}
            1 & x\\
            0 & 1
        \end{pmatrix}:x\in\Z.\right\}.
    \end{equation*}
    \par
    Let $A\in\Z$ be a lift of $a$ to $\Z$, then $A$ is prime to $p$, hence there exists integers $b,d$ such that $Ab-dp^{k}=1$. Let 
    \begin{equation*}
        \gamma=\begin{pmatrix}
            A & d\\
            p^{k} & b
        \end{pmatrix},
    \end{equation*}
    then $\textup{Stab}(P_{\frac{a}{p^{k}}})=\gamma\textup{Stab}(P_{\infty}(p^{n}))\gamma^{-1}\cap\Gamma_0(p^{n})$. By simple calculations,
    \begin{equation*}
        \gamma\textup{Stab}(P_{\infty}(p^{n}))\gamma^{-1}\cap\Gamma_0(p^{n})=\left\{\begin{pmatrix}
            1-p^{k}Ax & xA^{2}\\
            -p^{2k}x & 1+p^{k}Ax
        \end{pmatrix}: x\in p^{n-2k}\Z\cap\Z.\right\},
    \end{equation*}
    the formula (\ref{ra1}) follows from the calculations above. Notice that $p_2=p_1\circ W_N$, hence the formula (\ref{ra2}) follows from (\ref{ra1}).
\end{proof}
\begin{corollary}
    Let $P_{\frac{a}{p^{k}}}$ be a cusp of the complex modular curve $\mathcal{X}_0(p^{n})_{\C}$, where $0\leq k\leq n$ and $a\in(\Z/p^{\textup{min}\{k,n-k\}})^{\times}$, then $P_{\frac{a}{p^{k}}}$ belongs to the component $\mathsf{C}^{2k-n}(p^{n})$ of $\cusp(\mathcal{X}_0(p^{n}))$, i.e., $P_{\frac{a}{p^{k}}}\in \mathsf{C}^{2k-n}(p^{n})(\C)$.
    \label{cuspC}
\end{corollary}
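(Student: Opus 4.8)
The plan is to pin down the index of the unique component $\mathsf{C}^{a'}(p^{n})$ containing $P_{\frac{a}{p^{k}}}$ by reading off the ramification of the two projections $p_1,p_2$ along each component and comparing with Lemma \ref{ramify}. Since the formal scheme $\cusp(\mathcal{X}_0(p^{n}))$ is the disjoint union of the open and closed pieces $\mathsf{C}^{a'}(p^{n})$, the cusp $P_{\frac{a}{p^{k}}}$ lies in exactly one of them, say $\mathsf{C}^{a'}(p^{n})$, and it suffices to show $a'=2k-n$.

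First I would extract the ramification profile of each component directly from the explicit presentation (\ref{cusps}). Viewing $\mathsf{C}^{a'}(p^{n})$ as a $\Z[[q]]$-formal scheme via $p_1$, the structure map sends $q\mapsto q$, so when $0\leq a'\leq n$ the isomorphism $\mathsf{C}^{a'}(p^{n})\simeq\textup{Spf}\,\Z[\zeta_{p^{(n-a')/2}}][[q]]$ exhibits $p_1$ as unramified in the $q$-direction, while when $-n\leq a'<0$ the relation $z^{p^{-a'}}=\zeta_{p^{(n+a')/2}}q$ shows $p_1$ is totally ramified of degree $p^{-a'}$ in the formal coordinate $z$. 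After base change to $\C$ the cyclotomic factor $\Z[\zeta_{p^{m}}]\otimes\C$ splits into $\varphi(p^{m})$ unramified copies of $\C$, so every complex cusp in $\mathsf{C}^{a'}(p^{n})$ has the same $p_1$-ramification, namely $p^{\max(0,-a')}$; this is compatible with the degree formula for $\textup{deg}_1(\mathsf{C}^{a'}(p^{n}))$ in Proposition \ref{jiandian}. Using $p_2=p_1\circ W_N$ together with $W_N(\mathsf{C}^{a'}(p^{n}))=\mathsf{C}^{-a'}(p^{n})$, the $p_2$-ramification along $\mathsf{C}^{a'}(p^{n})$ equals the $p_1$-ramification along $\mathsf{C}^{-a'}(p^{n})$, hence equals $p^{\max(0,a')}$.

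On the other hand, Lemma \ref{ramify} gives $\textup{ra}_1(P_{\frac{a}{p^{k}}})=p^{\max(0,n-2k)}$ and $\textup{ra}_2(P_{\frac{a}{p^{k}}})=p^{\max(0,2k-n)}$. Matching these against the component profile forces $p^{\max(0,-a')}=p^{\max(0,n-2k)}$ and $p^{\max(0,a')}=p^{\max(0,2k-n)}$, and the elementary identity $\max(0,x)-\max(0,-x)=x$ then yields
\begin{equation*}
    a'=\max(0,a')-\max(0,-a')=\max(0,2k-n)-\max(0,n-2k)=2k-n,
\end{equation*}
which is exactly the desired conclusion. Note that the pair $\big(\max(0,-a'),\max(0,a')\big)$ determines $a'$, so $a'$ is recovered unambiguously.

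The one point that needs care, and which I regard as the main obstacle, is the comparison between the analytic ramification degree computed in Lemma \ref{ramify} from the stabilizer $\textup{Stab}(P_{\frac{a}{p^{k}}})$ and the algebraic ramification degree of the formal branch of $\mathsf{C}^{a'}(p^{n})$ read off from (\ref{cusps}). Concretely one must check that the analytic uniformizing parameter at the cusp (a suitable fractional power of $q$) matches the formal coordinate $z$, so that the two notions of ramification index coincide; once this identification is in place the numerical matching above is immediate and the corollary follows.
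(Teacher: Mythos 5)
Your proposal is correct and takes essentially the same approach as the paper: the paper's own (one-line) proof deduces the corollary precisely by matching the ramification behaviour of $p_1,p_2$ read off from the explicit presentation (\ref{cusps}) in Proposition \ref{jiandian} against the formulas (\ref{ra1}) and (\ref{ra2}) of Lemma \ref{ramify}. Your write-up simply makes this matching explicit, including the use of $p_2=p_1\circ W_N$ and $W_N(\mathsf{C}^{a'}(p^n))=\mathsf{C}^{-a'}(p^n)$, which is exactly the intended argument.
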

\begin{proof}
    This follows from formula (\ref{cusps}) in Proposition \ref{jiandian}, and formulas (\ref{ra1}) and (\ref{ra2}) in Lemma \ref{ramify}.
\end{proof}

\subsection{Reduction mod $p$ of $\Xc$}
Let $p$ be a fixed prime, the reduction mod $p$ of the stack $\Xc$ has been studied extensively in \cite[Chapter 13]{KM85}.
For an $\F$-scheme $S$, an integer $m\geq0$ and an elliptic curve over $S$, let $E^{(p^{m})}$ be the $m$-th Frobenius twist of $E$, we use $F^{m}:E\rar E^{(p^{m})}$ to denote the $m$-th iterated Frobenius morphism which has degree $p^{m}$, and $V^{m}:E^{(p^{m})}\rar E$ be the $m$-th iterated Verschiebung morphism which is also the dual morphism of $F^{m}$.
\begin{lemma}
    For any two integer $m\geq0$, the $m$-th iterated Frobenius morphism $F^{m}$ and the $m$-th iterated Verschiebung morphism $V^{m}$ are cyclic of degree $p^{m}$.
\end{lemma}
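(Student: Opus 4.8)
The statement has two parts of very different difficulty, and I would separate them. The degree is formal: the relative Frobenius $F\colon E\rightarrow E^{(p)}$ of an elliptic curve is finite locally free of degree $p$, and $F^{m}$ is by definition the composite $F_{E^{(p^{m-1})}}\circ\cdots\circ F_{E}$ of $m$ such maps, hence finite locally free of degree $p^{m}$. Since $V^{m}=(F^{m})^{\vee}$ is the dual isogeny and duality preserves degrees, $V^{m}$ again has degree $p^{m}$. Moreover, once $F^{m}$ is known to be cyclic, Lemma \ref{atkin} (that is, \cite[Corollary 5.5.4(3)]{KM85}) shows that its dual $V^{m}$ is cyclic as well. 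Thus the whole content of the lemma is the cyclicity of $F^{m}$, and I would spend my effort there.

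\textbf{Reduction to geometric fibers.} Cyclicity of the finite locally free subgroup $G=\ker(F^{m})\subset E$ of order $p^{m}$ means, in the sense of Katz and Mazur, that $G$ admits a $\mathbb{Z}/p^{m}$-generator fppf-locally on the base. By the flatness criterion for the scheme of generators in \cite[\S6.1]{KM85} this can be checked fiberwise, so it suffices to treat $S=\operatorname{Spec}k$ with $k$ algebraically closed of characteristic $p$. Here $F^{m}$ is purely inseparable, so $\ker(F^{m})$ is an infinitesimal (connected) group scheme of order $p^{m}$, and a generator is forced to be a scheme-theoretic point supported at the origin rather than an honest point of finite order. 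This is exactly the reason the naive group-theoretic notion of generator is inadequate and the Drinfeld/Katz--Mazur condition---equality of effective Cartier divisors $\sum_{j}[jP]=\ker(F^{m})$ in $E$---must be used.

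\textbf{The cyclicity itself, and the main obstacle.} I would induct on $m$, writing $F^{m}=F_{E^{(p^{m-1})}}\circ F^{m-1}$ and using the short exact sequence exhibiting $\ker(F^{m})$ as an extension of $\ker\!\big(F_{E^{(p^{m-1})}}\big)$ by $\ker(F^{m-1})$ via $F^{m-1}$. The base case $m=1$ is the statement that a subgroup of order $p$ is cyclic, which holds for every order-$p$ subgroup scheme of an elliptic curve by \cite{KM85}. The inductive step requires showing that this particular Frobenius-type extension of cyclic groups is again cyclic, i.e.\ producing the scheme-theoretic generator: in the ordinary case $\ker(F^{m})\cong\mu_{p^{m}}$ carries its canonical generator, whereas in the supersingular case $\ker(F^{m})$ is local--local and the generator must be read off from the explicit local structure of the Frobenius kernel. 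The hard part is precisely this supersingular case, since the full-set-of-sections/Cartier-divisor identity for an infinitesimal group scheme cannot be verified by counting points and instead rests on the explicit local equations for $\Xc$ at supersingular points (\cite[Theorem 13.4.7]{KM85}, recalled later in this paper). For this reason I expect the cleanest proof to invoke the Katz--Mazur structure theory of $\Xc$ in characteristic $p$ directly, rather than to reprove cyclicity of the iterated Frobenius by hand.
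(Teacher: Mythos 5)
Your proposal is correct, and at bottom it coincides with the paper's proof: the paper's entire argument for this lemma is the citation of \cite[Theorem 13.3.5(2)]{KM85}, which asserts directly that $F^{m}$ and $V^{m}$ are cyclic $p^{m}$-isogenies, so your closing recommendation to invoke the Katz--Mazur structure theory rather than reprove cyclicity of the iterated Frobenius by hand is exactly what the paper does. Your preliminary reductions (the degree count, passing from $F^{m}$ to $V^{m}$ by duality and Lemma \ref{atkin}, and the reduction to geometric fibers) are sound but are subsumed by the cited theorem, which covers both morphisms and both assertions at once; in particular the sketched induction on $m$, whose supersingular step you rightly flag as the genuine difficulty, never has to be carried out.
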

\begin{proof}
    This is proved in \cite[Thereom 13.3.5 (2)]{KM85}.
\end{proof}
Let $n\geq0$ be an integer, for any integer $a$ such that $0\leq a \leq n$ and $n\equiv a\,\,(\textup{mod}\,2)$, let $t=\frac{n-a}{2}$, we define two stacks $\mathcal{Y}^{a}$ and $\mathcal{Y}^{-a}$ in the following paragraphs.
\par
Let $\mathcal{Y}^{a}$ be the following stack: for an $\F$-scheme $S$, the groupoid $\mathcal{Y}^{a}(S)$ consists of cyclic $p^{n}$-isogenies $(E\stackrel{\pi}\rar E^{\pr})$ such that there exists an isomorphism $\varepsilon:E^{(p^{a})}\simeq E^{\pr}$ making the following diagram commutes,
\begin{equation*}
     \xymatrix{
    E\ar[d]_{F^{n-t}}\ar[r]^{\pi}&E^{\pr}\\
    E^{(p^{n-t})}\ar[r]^{\varepsilon^{(p^{t})}}_{\sim}& E^{\pr(p^{t})}.\ar[u]_{V^{t}}}
\end{equation*}
\par
Let $\mathcal{Y}^{-a}$ be the following stack: for an $\F$-scheme $S$, the groupoid $\mathcal{Y}^{-a}(S)$ consists of cyclic $p^{n}$-isogenies $(E\stackrel{\pi}\rar E^{\pr})$ such that there exists an isomorphism $\varepsilon:E\simeq E^{\pr(p^{a})}$ making the following diagram commutes,
\begin{equation*}
     \xymatrix{
    E\ar[d]_{F^{t}}\ar[r]^{\pi}&E^{\pr}\\
    E^{(p^{t})}\ar[r]^{\varepsilon^{(p^{t})}}_{\sim}& E^{\pr(p^{n-t})}.\ar[u]_{V^{n-t}}}
\end{equation*}
\begin{equation}
    \mathcal{Y}^{a}\rightarrow\mathcal{Y}_{0}(1)\times\mathcal{Y}_{0}(1).
    \label{diagonal}
\end{equation}
\begin{theorem}
    Let $n\geq0$ be an integer, let $\mathcal{Y}_{0}(p^{n})_{\F}\coloneqq\mathcal{Y}_{0}(p^{n})\times_{\Z}\F$ be the reduction mod $p$ of the stack $\mathcal{Y}_{0}(p^{n})$. For any integer $a$ such that $-n\leq a \leq n$ and $n\equiv a\,\,(\textup{mod}\,2)$, the stack $\mathcal{Y}^{a}$ has the following properties:\\
    (a)\,The stack $\mathcal{Y}^{a}$ is a 1-dimensional Deligne-Mumford stack.\\
    (b)\,The natural morphism $\mathcal{Y}^{a}\rar\mathcal{Y}_{0}(p^{n})_{\F}$ is a closed immersion, the composite morphism $\mathcal{Y}^{a}\rar\mathcal{Y}_{0}(p^{n})_{\F}\rar\mathcal{Y}_{0}(1)_{\F}$ is an isomorphism if $a\geq0$; is finite flat of degree $p^{-a}$ if $a\leq0$. \\
    (c)\,The following equality holds: $W_{N}(\mathcal{Y}^{a})=\mathcal{Y}^{-a}$.\\
    (d)\,The stack $\mathcal{Y}^{a}$ contains every supersingular point of $\mathcal{Y}_{0}(p^{n})_{\F}$. Let $P=(E\stackrel{\pi}\rar E^{\pr})$ be a supersingular $\F$-point of $\mathcal{Y}_{0}(p^{n})_{\F}$. Let $\mathcal{O}_{P}$ be the completed local ring of the stack $\mathcal{Y}_{0}(1)_{\F}\times\mathcal{Y}_{0}(1)_{\F}$ at $P$, let $\mathcal{O}_{a,P}$ be the completed local ring of $\mathcal{Y}^{a}$ at the point corresponding to the pair $(E,E^{\pr})$, then there exists an isomorphism $\mathcal{O}_{P}\simeq\F[[t_{1},t_{2}]]$ such that the closed immersion $\mathcal{Y}^{a}\rar \mathcal{Y}_{0}(1)_{\F}\times\mathcal{Y}_{0}(1)_{\F}$ induces the following isomorphism,
    \begin{equation*}
        \mathcal{O}_{a,P}\simeq
    \begin{cases}
       \F[[t_{1},t_{2}]]/(t_{1}-t_{2}^{p^{a}}), & \textup{if $a\geq 0$;}\\
       \F[[t_{1},t_{2}]]/(t_{1}^{p^{-a}}-t_{2}), & \textup{if $a\leq0$.}\\
    \end{cases}
    \end{equation*}\\
    (e)\,Over the open substack $\mathcal{Y}_{0}(p^{n})_{\F}^{\textup{ord}}=\mathcal{Y}_{0}(p^{n})_{\F}-\{\textit{supersingular points}\}$ of $\mathcal{Y}_{0}(p^{n})_{\F}$, the morphism $\bigsqcup\limits_{\substack{-n\leq a\leq n\\n\equiv a\,(\textup{mod}\,2)}}\mathcal{Y}^{a}\rightarrow\mathcal{Y}_{0}(p^{n})_{\F}^{\textup{red}}$ is an isomorphism.
    \label{insideY}
\end{theorem}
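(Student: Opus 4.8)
The plan is to read the theorem as a translation, into the language of the factorization stacks $\mathcal{Y}^{a}$, of the Katz--Mazur description of $\mathcal{Y}_{0}(p^{n})$ in characteristic $p$ (cf. \cite[Chapter 13]{KM85}). Properties (a), (b), (c), (e) should follow rather formally from the moduli definition together with the cyclicity of iterated Frobenius and Verschiebung and the connected--\'etale sequence; the genuine difficulty is concentrated in (d), the structure at supersingular points, which I expect to require the deformation-theoretic local equations of \cite[Theorem 13.4.7]{KM85} (equivalently \cite[Corollary 6.2.7]{Zhu23}).

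First I would treat (b) for $a\geq 0$, which also yields (a). Writing $t=(n-a)/2$, consider the canonical isogeny $\pi_{E}\colonequals V^{t}\circ F^{n-t}\colon E\rar E^{(p^{a})}$; by \cite[Theorem 13.3.5]{KM85} and the Katz--Mazur analysis of composites of standard cyclic isogenies, $\pi_{E}$ is cyclic of degree $p^{n}$, so $E\mapsto (E\xrightarrow{\pi_{E}}E^{(p^{a})},\mathrm{id})$ defines a morphism $\mathcal{Y}_{0}(1)_{\F}\rar\mathcal{Y}^{a}$. I would check it is a two-sided inverse to the composite $\mathcal{Y}^{a}\rar\mathcal{Y}_{0}(1)_{\F}$ sending the data to $E$: every object of $\mathcal{Y}^{a}$ is, after using $\varepsilon$ to replace $E^{\pr}$ by $E^{(p^{a})}$, forced by the defining diagram to have $\pi=\pi_{E}$, and $\varepsilon$ is uniquely pinned down by $\pi$, so the forgetful morphism $\mathcal{Y}^{a}\rar\mathcal{Y}_{0}(p^{n})_{\F}$ is a monomorphism. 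This gives $\mathcal{Y}^{a}\cong\mathcal{Y}_{0}(1)_{\F}$, hence (a) and the case $a\geq 0$ of (b). Part (c) follows by dualizing: by Lemma \ref{atkin} and the identities $(F^{m})^{\vee}=V^{m}$, $(V^{m})^{\vee}=F^{m}$, the involution $\at$ carries the defining diagram of $\mathcal{Y}^{a}$ to that of $\mathcal{Y}^{-a}$. For $a\leq 0$ I would combine (c) with the previous case: under $\at$ the morphism $p_{1}$ on $\mathcal{Y}^{a}=\at(\mathcal{Y}^{-a})$ becomes $p_{2}$ on $\mathcal{Y}^{-a}\cong\mathcal{Y}_{0}(1)_{\F}$, i.e.\ the $(-a)$-fold Frobenius morphism $E\mapsto E^{(p^{-a})}$, which is finite flat of degree $p^{-a}$; this completes (b).

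For (e) I would restrict to the ordinary locus, where the connected--\'etale sequence of $E[p^{n}]$ splits after an \'etale localization. A cyclic $p^{n}$-subgroup is then determined by the orders of its connected and \'etale parts; over $\F$ the connected part is the kernel of a power of Frobenius and the \'etale part the kernel of a power of Verschiebung, and the difference of their exponents is a locally constant invariant recovering $a$. This exhibits $\bigsqcup_{a}\mathcal{Y}^{a}\rar\mathcal{Y}_{0}(p^{n})_{\F}^{\mathrm{ord},\,\mathrm{red}}$ as an isomorphism and shows the $\mathcal{Y}^{a}$ are pairwise disjoint there.

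The main obstacle is (d). Over the supersingular points one loses the connected--\'etale splitting and the deformations of $E$ and $E^{\pr}$ interact, so the components no longer stay disjoint but all pass through and cross at each supersingular point. Granting $\mathcal{Y}^{a}\cong\mathcal{Y}_{0}(1)_{\F}$ from the step above, the embedding $(p_{1},p_{2})\colon\mathcal{Y}^{a}\rar\mathcal{Y}_{0}(1)_{\F}\times\mathcal{Y}_{0}(1)_{\F}$ is the graph of the $|a|$-fold Frobenius, and since the completed local ring of $\mathcal{Y}_{0}(1)_{\F}$ at a supersingular point is $\F[[t]]$ with absolute Frobenius acting by $t\mapsto t^{p}$, this graph is cut out by $t_{1}-t_{2}^{p^{a}}$ (resp.\ $t_{1}^{p^{-a}}-t_{2}$) after a suitable labelling of coordinates. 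The delicate point --- that the factorization condition really carves out a regular, reduced branch with exactly this crossing behaviour at every supersingular point, and is a genuine closed immersion there rather than a thickening --- is precisely what the Katz--Mazur local equations \cite[Theorem 13.4.7]{KM85} and \cite[Corollary 6.2.7]{Zhu23} supply, and the bookkeeping needed to match these crossings for all admissible $a$ is where the real work lies.
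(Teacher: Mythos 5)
Your proposal is correct and takes essentially the same route as the paper: the paper's entire proof of this theorem is the single citation ``These are the main results of \cite[\S13]{KM85}'', and every input you invoke --- cyclicity of $V^{t}\circ F^{n-t}$ and the resulting identification $\mathcal{Y}^{a}\simeq\mathcal{Y}_{0}(1)_{\F}$ for (a),(b), duality of $F$ and $V$ under the Atkin--Lehner involution for (c), the connected--\'etale splitting on the ordinary locus for (e), and the Katz--Mazur crossings theorem \cite[Theorem 13.4.7]{KM85} for (d) --- is exactly the Katz--Mazur material the paper is citing. The only difference is that you make explicit the derivations the paper leaves implicit.
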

\begin{proof}
    These are the main results of \cite[\S13]{KM85}.
\end{proof}
\begin{remark}
    Let $\mathcal{X}^{a}$ be the scheme-theoretic closure of $\mathcal{Y}^{a}$ inside $\mathcal{X}_{0}(p^{n})_{\F}\coloneqq\mathcal{X}_{0}(p^{n})\times_{\Z}\F$, then Theorem \ref{insideY} is still true if we replace all the symbol $\mathcal{Y}$ by the symbol $\mathcal{X}$.
\end{remark}
\begin{theorem}
  Let $n=\nu_{p}(N)$ be the $p$-adic valuation of $N$, let $N_{p}=p^{-n}N$. For any integer $a$ such that $-n\leq a \leq n$ and $n\equiv a\,\,(\textup{mod}\,2)$, let $\mathcal{X}_{p}^{a}(N)=\mathcal{X}^{a}\times_{p_1,\mathcal{X}_{0}(1),p_1}\mathcal{X}_{0}(N_{p})$ be a closed substack of $\Xc$, then we have the following identity in $\CH^{1}(\Xc)$:
  \begin{equation*}
      \textup{div}(p)=\sum\limits_{\substack{-n\leq a\leq n\\n\equiv a\,(\textup{mod}\,2)}}\varphi(p^{(n-\vert a\vert)/2})\cdot\mathcal{X}^{a}_{p}(N).
  \end{equation*}
  \label{modpirr}
\end{theorem}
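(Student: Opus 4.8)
The plan is to reduce the statement to the prime-power level $N=p^{n}$ and then to compute the multiplicity of each component $\mathcal{X}^{a}$ in $\textup{div}(p)$ from the explicit structure of $\Xc$ near the cusps, where Proposition \ref{jiandian} makes everything computable.

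First I would reduce to $N=p^{n}$. Writing $N=p^{n}N_{p}$ with $p\nmid N_{p}$, Lemma \ref{ddec} identifies $\Xc$ with $\mathcal{X}_{0}(p^{n})\times_{p_{1},\mathcal{X}_{0}(1),p_{1}}\mathcal{X}_{0}(N_{p})$; the projection $q\colon\Xc\to\mathcal{X}_{0}(p^{n})$ is finite flat as a base change of the finite flat morphism $p_{1}$, and by construction $\mathcal{X}_{p}^{a}(N)=q^{-1}(\mathcal{X}^{a})=q^{\ast}\mathcal{X}^{a}$ as Cartier divisors (here $\mathcal{X}^{a}$ is a Cartier divisor because $\mathcal{X}_{0}(p^{n})$ is regular). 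Since $\Xc$ is regular and $\Z$-flat, $p$ is a nonzerodivisor, $\textup{div}(p)$ is a genuine Cartier divisor, and it pulls back along the flat morphism $q$ over $\Z$, giving $\textup{div}(p)_{\Xc}=q^{\ast}\textup{div}(p)_{\mathcal{X}_{0}(p^{n})}$. Thus the general formula follows coefficient-by-coefficient once the identity $\textup{div}(p)=\sum_{a}\varphi(p^{(n-\vert a\vert)/2})\mathcal{X}^{a}$ is proved on $\mathcal{X}_{0}(p^{n})$.

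Next, for $N=p^{n}$, Theorem \ref{insideY}(b),(c),(e) shows that the $\mathcal{X}^{a}$ are exactly the irreducible components of $\mathcal{X}_{0}(p^{n})_{\F}^{\textup{red}}$ (each $\mathcal{X}^{a}$ is irreducible since for $a\geq0$ it maps isomorphically onto $\mathcal{X}_{0}(1)_{\F}$ and for $a<0$ one applies $W_{N}(\mathcal{X}^{a})=\mathcal{X}^{-a}$), so I may write $\textup{div}(p)=\sum_{a}m_{a}\mathcal{X}^{a}$ with each $m_{a}\geq1$, reducing the problem to computing $m_{a}=\textup{ord}_{\mathcal{X}^{a}}(p)$. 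The key idea is that $m_{a}$ is constant along the prime divisor $\mathcal{X}^{a}$ and may therefore be read off at a cusp, because cusps lie in the ordinary locus where, by Theorem \ref{insideY}(e), the components are disjoint, so a cusp is a regular point lying on a single $\mathcal{X}^{a}$. For $a\geq0$, Proposition \ref{jiandian} gives the formal completion of $\Xc$ along the cuspidal component $\mathsf{C}^{a}(p^{n})$ as $\textup{Spf}\,\Z[\zeta_{p^{k}}][[q]]$ with $k=(n-a)/2$, the cuspidal divisor being $\{q=0\}$ and the special fibre being $\{p=0\}$. Since $p$ is totally ramified in $\Z[\zeta_{p^{k}}]$, one has $p=(\textup{unit})\cdot(1-\zeta_{p^{k}})^{\varphi(p^{k})}$, so in this neighborhood $\textup{div}(p)=\varphi(p^{k})\cdot\{1-\zeta_{p^{k}}=0\}$, and the reduced locus $\{1-\zeta_{p^{k}}=0\}\simeq\textup{Spf}\,\F_{p}[[q]]$ is precisely the formal germ of the component through this cusp. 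This yields $m_{a}=\varphi(p^{(n-a)/2})$ for $a\geq0$; the case $a<0$ then follows by symmetry, since $W_{N}$ is an automorphism of $\Xc$ over $\Z$ (so $W_{N}^{\ast}\textup{div}(p)=\textup{div}(p)$) while $W_{N}(\mathcal{X}^{a})=\mathcal{X}^{-a}$, forcing $m_{a}=m_{-a}=\varphi(p^{(n-\vert a\vert)/2})$.

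The step I expect to be the main obstacle is matching the cuspidal component $\mathsf{C}^{a}(p^{n})$ to the special-fibre component $\mathcal{X}^{a}$ of the \emph{same} index, which is what transports the cyclotomic multiplicity $\varphi(p^{k})$ onto the correct $\mathcal{X}^{a}$: Proposition \ref{jiandian} controls only the characteristic-zero cusp data, whereas Theorem \ref{insideY} controls the characteristic-$p$ components, and the two descriptions must be reconciled. I would do this by comparing reductions modulo $p$, namely reducing $p_{1}\colon\mathsf{C}^{a}(p^{n})\to\cusp(\mathcal{X}_{0}(1))$ together with $p_{2}=p_{1}\circ W_{N}$ and matching the degrees of the induced maps on reduced special fibres against the flat degrees of the $\mathcal{X}^{b}$ over $\mathcal{X}_{0}(1)_{\F}$ recorded in Theorem \ref{insideY}(b); the two degree constraints together force $b=a$. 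This is exactly the reduction-of-cusps comparison isolated separately in the paper, and once it is available Step $3$ becomes immediate.
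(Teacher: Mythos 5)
Your route is genuinely different from the paper's: the paper disposes of this theorem in one line by citing \cite{KM85} (Theorems 13.3.5 and 13.4.6) for the multiplicities, whereas you derive them from the cuspidal geometry -- Lemma \ref{ddec} to reduce to $N=p^{n}$, Proposition \ref{jiandian} for the cyclotomic structure $\Z[\zeta_{p^{(n-a)/2}}][[q]]$ at the cusps, the factorization $p=(\textup{unit})\cdot(1-\zeta_{p^{k}})^{\varphi(p^{k})}$ to read off the multiplicity, and the Atkin--Lehner symmetry $W_{N}^{\ast}\textup{div}(p)=\textup{div}(p)$, $W_{N}(\mathcal{X}^{a})=\mathcal{X}^{-a}$ for the negative indices. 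Note that this also reverses the paper's logical order: the paper's cusp-to-component matching (Proposition \ref{luozainali}) is proved \emph{after} Theorem \ref{modpirr} and uses its multiplicities (they enter the very definition of the non-reduced curves $\mathcal{C}^{a}_{p}$), so you cannot invoke it; you must, as you propose, prove the matching independently. Your reduction to prime-power level via flat pullback, the identification of the components, the principle of reading the multiplicity of $\textup{div}(p)$ at a cusp (legitimate because, by Proposition \ref{jiandian}, the completed local ring at each cusp is regular and $\textup{div}(p)$ there is a multiple of a single irreducible regular germ), and the cyclotomic computation itself are all correct, and the argument is not circular.

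The one genuine gap is inside your matching step. You compare the degree of the map induced on the \emph{reduced special fibre} of $\mathsf{C}^{a}(p^{n})$ -- a local degree of a formal germ at a single cusp -- with the \emph{global} finite flat degrees of $\mathcal{X}^{b}\rightarrow\mathcal{X}_{0}(1)_{\F}$ recorded in Theorem \ref{insideY}(b). For a finite flat map of curves of degree $d>1$, the local degree at a chosen point can be anything from $1$ to $d$, so the two constraints as stated do not force $b=a$: a priori the cusp germ of a component of $p_{1}$-degree $p^{-b}$ could map with local degree $1$. The comparison becomes valid once you know that each $\mathcal{X}^{b}$ has a \emph{unique} point above the cusp of $\mathcal{X}_{0}(1)_{\F}$ (via $p_{1}$ and via $p_{2}$), so that the local degree there equals the flat degree. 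This can be supplied in two ways. (i) A branch count: preimages of cusps under $p_{1},p_{2}$ are cusps; each $\mathcal{X}^{b}$ is proper while $\mathcal{Y}^{b}$ is not, so each of the $n+1$ components contains at least one cusp; and Proposition \ref{jiandian} shows the cuspidal formal completion of $\mathcal{X}_{0}(p^{n})_{\mathbb{F}_p}$ has exactly $n+1$ connected components, each with irreducible reduced special fibre; hence each component contains exactly one cusp and is unibranch there, so the fibre of the cusp in $\mathcal{X}^{b}$ is that single point and local degree equals flat degree. (ii) Alternatively, observe from the moduli description defining $\mathcal{Y}^{a}$ that for $b<0$ (resp. $b>0$) the map $p_{1}$ (resp. $p_{2}$) restricted to $\mathcal{X}^{b}$ is an iterated Frobenius twist, hence purely inseparable, so its local degree at \emph{every} point is $p^{\vert b\vert}$; this is true but is not literally what Theorem \ref{insideY}(b) records. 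With either supplement, your constraints $(1,p^{a})=(\textup{max}\{1,p^{-b}\},\textup{max}\{1,p^{b}\})$ do force $b=a$, and the rest of your proof goes through.
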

\begin{proof}
    The multiplicities of the irreducible components $\mathcal{X}_p^{a}(N)$ are byproducts of \cite[Theorem 13.3.5, Theorem 13.4.6]{KM85}.
\end{proof}
We now consider the reduction mod $p$ of cusps. Let $n=\nu_{p}(N)$ be the $p$-adic valuation of $N$. For any integer $a$ such that $-n\leq a \leq n$ and $n\equiv a\,\,(\textup{mod}\,2)$, let $N_{p}=p^{-n}N$. Let $\mathsf{C}^{a}(p^{n},N_p)\coloneqq \mathsf{C}^{a}(p^{n})\times_{p_1,\mathcal{X}_0(1),p_1}\mathcal{X}_0(N_p)$, and $\mathsf{C}^{a}_p(p^{n},N_p)$ be its reduction mod $p$. Let $\mathcal{C}_{p}^{n}(p^{n},N_{p})$ (resp. $\mathcal{C}_{p}^{-n}(p^{n},N_{p})$) be the curve $\mathcal{X}_p^{n}(N)$ (resp. $\mathcal{X}_p^{-n}(N)$) over $\mathbb{F}$, and $\mathcal{C}_{p}^{a}(p^{n},N_{p})$ be the non-reduced curve over $\mathbb{F}$ corresponding to $p^{(n-\vert a\vert)/2-1}(p-1)\mathcal{X}^{a}_{p}(N)$ when $-n<a<n$.
\begin{proposition}
    Let $n=\nu_{p}(N)$ be the $p$-adic valuation of $N$, let $N_{p}=p^{-n}N$. For any integer $a$ such that $-n\leq a \leq n$ and $n\equiv a\,\,(\textup{mod}\,2)$, the formal scheme $\mathsf{C}^{a}_p(p^{n},N_p)$ is the formal completion of the curve $\mathcal{C}_{p}^{a}(p^{n},N_{p})$ along its cuspidal locus.
    \label{luozainali}
\end{proposition}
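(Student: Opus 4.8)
The plan is to reduce to $N=p^n$ and then to match, component by component, the two natural decompositions of the formal completion of $\Xc_p$ along its cuspidal locus: the one obtained by reducing $\cusp(\mathcal{X}_0(p^n))=\coprod_a\mathsf{C}^a(p^n)$ modulo $p$, and the one coming from the decomposition $\textup{div}(p)=\sum_a\varphi(p^{(n-|a|)/2})\,\mathcal{X}^a_p(N)$ of Theorem \ref{modpirr} restricted near the cusps. To reduce to $N_p=1$, note that since $N_p=p^{-n}N$ is prime to $p$, the stack $\mathcal{X}_0(N_p)$ is smooth over $\Z_{(p)}$ and $p_1\colon\mathcal{X}_0(N_p)\to\mathcal{X}_0(1)$ is finite flat; hence reduction mod $p$ and completion along the cuspidal locus both commute with the base change $(-)\times_{p_1,\mathcal{X}_0(1),p_1}\mathcal{X}_0(N_p)$, and the multiplicity $\varphi(p^{(n-|a|)/2})$ attached to $\mathcal{X}^a_p$ depends only on $n$. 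So it suffices to treat $N=p^n$ and pull back.

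For $N=p^n$ I would first reduce the explicit formulas of Proposition \ref{jiandian} modulo $p$. When $0\le a\le n$, writing $m=(n-a)/2$, we get $\textup{Spf}\,(\Z[\zeta_{p^{m}}]/p)[[q]]$; since $p$ is totally ramified in $\Z[\zeta_{p^m}]$ with $(p)=(1-\zeta_{p^m})^{\varphi(p^m)}$, one has $\Z[\zeta_{p^m}]/p\cong\F_p[\pi]/(\pi^{\varphi(p^m)})$ with $\pi=1-\zeta_{p^m}$, so $\mathsf{C}^a_p(p^n)$ is the $\varphi(p^{m})$-fold nilpotent thickening of $\textup{Spf}\,\F_p[[q]]$. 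When $-n\le a<0$, writing $k=(n+a)/2$, the reduction of $\Z[\zeta_{p^k}][[q]][z]/(z^{p^{-a}}-\zeta_{p^k}q)$ has reduced part $\textup{Spf}\,\F_p[[z]]$ with $q=z^{p^{-a}}$ (using $\zeta_{p^k}\equiv1$) and nilpotents of order $\varphi(p^{k})=\varphi(p^{(n-|a|)/2})$. In both cases the reduced formal curve is irreducible (the cyclotomic ring is local) and the thickening has multiplicity exactly $\varphi(p^{(n-|a|)/2})$.

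Next I would pin down the component. From the reductions above, the reduced curve $(\mathsf{C}^a_p(p^n))_{\mathrm{red}}$ maps to the cusp of $\mathcal{X}_0(1)_\F$ isomorphically via $p_1$ when $a\ge0$, and totally ramified of degree $p^{-a}$ (via $q=z^{p^{-a}}$) when $a<0$; applying $W_N$, which sends $\mathsf{C}^a$ to $\mathsf{C}^{-a}$, shows its $p_2$-degree is $p^a$ resp. $1$. Thus $(\mathsf{C}^a_p(p^n))_{\mathrm{red}}$ has bidegree $(1,p^a)$ for $a\ge0$ and $(p^{-a},1)$ for $a<0$ over $\mathcal{X}_0(1)_\F$, which by Theorem \ref{insideY}(b),(c) is exactly the bidegree of $\mathcal{X}^a$; as this bidegree determines $a$ and the reduced curve is irreducible, it must be the cusp completion of $\mathcal{X}^a_p$. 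Finally, a cusp is an ordinary point and lies on no supersingular point, so it meets only the single component $\mathcal{X}^a_p$; hence near such a cusp $\Xc_p=V(p)$ is locally $\varphi(p^{(n-|a|)/2})\,\mathcal{X}^a_p=\mathcal{C}_p^a(p^n,1)$ as a scheme, and its completion along the cusp is the thickened formal curve found above. This identifies $\mathsf{C}^a_p(p^n)$ with the completion of $\mathcal{C}_p^a(p^n,1)$, and base change settles general $N$.

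The main obstacle is the bookkeeping that forces the cuspidal index and the special-fiber index to agree: one must know that the cusps of Edixhoven's parametrization (Proposition \ref{jiandian}, Corollary \ref{cuspC}) reduce onto the Katz--Mazur components (Theorem \ref{insideY}) with the same $a$. The bidegree computation resolves this, but it depends on first extracting the correct multiplicity from the cyclotomic ramification and on treating the degenerate reduction $z^{p^{-a}}\equiv\zeta_{p^k}q$ carefully. A more conceptual alternative is the moduli argument: the cyclic $p^n$-subgroup of the Tate curve attached to $P_{a/p^k}$ has connected part of order exactly $p^k$, so its reduction has Frobenius--Verschiebung type $2k-n=a$ and lands on $\mathcal{X}^a_p$; carrying this out requires analyzing the connected--\'etale splitting of $\textup{Tate}(q)[p^n]$ in characteristic $p$. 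The remaining verifications are routine.
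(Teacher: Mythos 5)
Your proposal is correct and follows essentially the same route as the paper: both proofs identify the cuspidal components with the Katz--Mazur components by matching finite flat degrees over $\mathcal{X}_0(1)$ via $p_1$ and $p_2$ (using the $W_N$-equivariance $\mathsf{C}^a\leftrightarrow\mathsf{C}^{-a}$, $\mathcal{X}^a\leftrightarrow\mathcal{X}^{-a}$), comparing the degrees from Proposition \ref{jiandian} with those from Theorem \ref{insideY} and the multiplicities in Theorem \ref{modpirr}. Your explicit mod-$p$ computation of the cyclotomic rings $\Z[\zeta_{p^m}]/p$ and the preliminary reduction to $N=p^n$ are more detailed elaborations of the same degree-matching argument that the paper carries out directly for general $N$ (with the $\psi(N_p)$ factors absorbed into both sides).
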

\begin{proof}
Let $\deg_1(\mathcal{C}_{p}^{a}(p^{n},N_{p}))$ (resp. $\deg_2(\mathcal{C}_{p}^{a}(p^{n},N_{p}))$) be the finite flat degree of the curve $\mathcal{C}_{p}^{a}(p^{n},N_{p})$ over $\mathcal{X}_p(1)$ via the morphism $p_1$ (resp. $p_2$). By the definition of the curve $\mathcal{C}_{p}^{a}(p^{n},N_{p})$, we have
\begin{equation*}
        \textup{deg}_1(\mathcal{C}_{p}^{a}(p^{n},N_{p}))=\psi(N_p)\cdot\begin{cases}\varphi(p^{(n-a)/2}), &\textup{if $0\leq a\leq n$};\\
        p^{-a}\varphi(p^{(n+a)/2}), &\textup{if $-n\leq a<0$}.\\   
    \end{cases}
    \end{equation*}
     \begin{equation*}
        \textup{deg}_2(\mathcal{C}_{p}^{a}(p^{n},N_{p}))=\psi(N_p)\cdot\begin{cases}p^{a}\varphi(p^{(n-a)/2}), &\textup{if $0\leq a\leq n$};\\
        \varphi(p^{(n+a)/2}), &\textup{if $-n\leq a<0$}.\\   
    \end{cases}
    \end{equation*}
    Notice that the finite flat degree of the formal scheme $\mathsf{C}^{a}(p^{n},N_p)$ over $\cusp(\mathcal{X}_0(1))$ via the morphism $p_1$ (resp. $p_2$) is calculated in Proposition \ref{jiandian} and coincides with the formulas above. Since the formal completion of $\Xc_{\mathbb{F}_p}$ along its cuspidal locus is a disjoint union of $n+1$ formal schemes $\mathsf{C}^{a}_p(p^{n},N_p)$, the proposition follows.
\end{proof}
As a short summary, we take the example that $N=p^{n}$ and draw the following table.\\\par
\noindent
\begin{tabular}{|c|c|c|c|c|c|c|c|}
\hline Cusp points & $\frac{a}{p^{n}}$ & $\frac{a}{p^{n-1}}$ & $\cdots$ & $\frac{a}{p^{k}}$ & $\cdots$ & $\frac{a}{p}$ & $a$\\
\hline Choice of $a$ & 1 & $(\Z/p\Z)^{\times}$ & $\cdots$ & $(\Z/(p^{k},p^{n-k})\Z)^{\times}$ & $\cdots$ & $(\Z/p\Z)^{\times}$& 1\\
\hline Ramification via $p_1$ & 1 & 1 & $\cdots$ & $\textup{max}\{1,p^{n-2k}\}$ & $\cdots$ & $p^{n-2}$ & $p^{n}$ \\
\hline   Ramification via $p_2$ & $p^{n}$ & $p^{n-2}$ & $\cdots$ & $\textup{max}\{1,p^{2k-n}\}$ & $\cdots$ & $1$ & $1$ \\
\hline Components & $\mathsf{C}^{n}(p^{n})$ & $\mathsf{C}^{n-2}(p^{n})$& $\cdots$ & $\mathsf{C}^{2k-n}(p^{n})$ & $\cdots$ & $\mathsf{C}^{2-n}(p^{n})$ & $\mathsf{C}^{-n}(p^{n})$\\
\hline Mod $p$ & $\mathcal{C}_p^{n}(p^{n})$ & $\mathcal{C}_p^{n-2}(p^{n})$& $\cdots$ & $\mathcal{C}_p^{2k-n}(p^{n})$ & $\cdots$ & $\mathcal{C}_p^{2-n}(p^{n})$ & $\mathcal{C}_p^{-n}(p^{n})$\\
\hline
\end{tabular}

\section{Arithmetic intersection theory on $\Xc$}
\subsection{Arithmetic Chow groups $\widehat{\textup{CH}}^{\bullet}(\Xc)$}
\label{acg}
We apply the arithmetic intersection theory developed by Henri Gillet in \cite{Gil82} and \cite{Gil09} to the regular proper flat Deligne-Mumford stack $\mathcal{X}_{0}(N)$. We obtain the following arithmetic Chow ring of $\mathcal{X}_{0}(N)$,
\begin{equation*}
    \widehat{\textup{CH}}^{\bullet}(\mathcal{X}_{0}(N)) = \bigoplus\limits_{n=0}^{2}\widehat{\textup{CH}}^{n}(\mathcal{X}_{0}(N)).
\end{equation*}
Roughly speaking, a class in $\widehat{\textup{CH}}^{n}(\mathcal{X}_{0}(N))$ is represented by an arithmetic cycle $(\mathcal{Z},g_{\mathcal{Z}})$, where $\mathcal{Z}$ is a codimension $n$ closed substack of $\mathcal{X}_{0}(N)$, with $\mathbb{C}$-coefficients, and $g_{\mathcal{Z}}$ is a Green current for $\mathcal{Z}(\mathbb{C})$, i.e., $g_{\mathcal{Z}}$ is a current on the proper smooth complex curve $\mathcal{X}_{0}(N)_{\mathbb{C}}$ of degree $(n-1,n-1)$ for which there exists a smooth $\omega$ such that
\begin{equation*}
    dd^{c}(g) + \delta_{\zeta} =[\omega].
\end{equation*}
holds; here $[\omega]$ is the current defined by integration against the smooth form $\omega$. The rational arithmetic cycles are those of the form $\widehat{\textup{div}}(f)=(\textup{div}(f), \iota_{\ast}[-\textup{log}(\vert\tilde{f}\vert^{2})])$, where $f\in \kappa(\mathcal{Z})^{\times}$ is a rational function on a codimension $n-1$ integral substack $\iota:\mathcal{Z}\hookrightarrow \mathcal{X}_{0}(N)$, together with classes of the form $(0,\partial\eta+\overline{\partial}\eta^{\prime})$. By definition, the arithmetic Chow group $\textup{CH}^{n}(\mathcal{X}_{0}(N))$ is the quotient of the space of arithmetic cycles by the $\mathbb{C}$-subspace spanned by those rational cycles.
\par
Let $\mathcal{Z}$ be an irreducible codimension $2$ cycle on $\mathcal{X}_{0}(N)$, then $\mathcal{Z}$ is a Deligne-Mumford stack over $\mathbb{F}_{p}$ for some prime number $p$ and the groupoid $\mathcal{Z}(\overline{\mathbb{F}}_{p})$ would be a singleton with a finite automorphism group $\textup{Aut}(\mathcal{Z})$, the rational function field $\kappa(\mathcal{Z})$ of $\mathcal{Z}$ is a finite extension of $\mathbb{F}_{p}$. Clearly $\delta_{\mathcal{Z}}=0$ because $\mathcal{Z}(\mathbb{C})=\varnothing$.
\par
Let $(\mathcal{Z},g)=(\sum\limits_{i}n_{i}[\mathcal{Z}_{i}],g)$ be an arithmetic cycle of codimension $2$ where each $\mathcal{Z}_{i}$ is an irreducible codimension $2$ cycle on $\mathcal{X}_{0}(N)$. We define the degree map as follows,
\begin{align}
    \widehat{\textup{deg}}: \widehat{\textup{CH}}^{2}(\mathcal{X}_{0}(N))&\longrightarrow \mathbb{C},\label{degreemap}\\
    [(\mathcal{Z},g)]&\longmapsto \sum\limits_{i}n_{i}\cdot\frac{\textup{log}\,\vert \kappa(\mathcal{Z}_{i})\vert}{\vert\textup{Aut}(\mathcal{Z}_{i})\vert}+\frac{1}{2}\int\limits_{\mathcal{X}_{0}(N)(\mathbb{C})}g.\notag
\end{align}
here the integration $\int\limits_{\mathcal{X}_{0}(N)_\mathbb{C}}g$ is the integration of the constant function $1$ on $\mathcal{X}_{0}(N)_{\mathbb{C}}$ against the $(1,1)$-current $g$. It is a finite number since the stack $\mathcal{X}_{0}(N)$ is proper. This number is independent of the choice of representing element $(\mathcal{Z},g)$ as a consequence of the product formula (cf. \cite[3.4.3]{Gil82}).\\
\subsection{Extended arithmetic Chow group $\ach$}
\begin{definition}
    Let $\mathcal{S}$ be the set of cusps on the complex curve $\Xc_{\C}$. For $P\in\mathcal{S}$ and $\varepsilon>0$, denote by $B_\varepsilon(P)\subset\Xc_{\C}$ the open disk of radius $\varepsilon$ centered at $P$ and $\Xc_\varepsilon=\Xc_{\C}\backslash\bigcup\limits_{P\in\mathcal{S}}B_\varepsilon(P)$. Let $t$ be a local parameter at a point $P\in\mathcal{S}$, for a line bundle $\mathcal{L}$ on $\Xc$, a singular metric $h_\mathcal{L}$ on the induced complex line bundle $\mathcal{L}_{\infty}$ on $\Xc_\C$ is called hermitian, logarithmically singular (with respect to $\mathcal{S}$), if the following two conditions hold:\\
    (a)\,$h_\mathcal{L}$ is a smooth, hermitian metric on $\mathcal{L}_{\infty}$ restricted to $\Yc_\C$;\\
    (b)\,for each $P\in\mathcal{S}$ and any section $l$ of $\mathcal{L}$, there exists a real number $\alpha_{\widehat{\mathcal{L}},l,P}$ and a positive, continuous function $\varphi_{\widehat{\mathcal{L}},l,P}$ defined on $B_\varepsilon(P)$ and smooth away from the origin such that the equality
    \begin{equation*}
        h_\mathcal{L}\left(l(t)\right) =-\textup{log}(\vert t\vert^{2})^{\alpha_{\widehat{\mathcal{L}},l,P}}\cdot\vert t\vert^{\textup{ord}_{P}(l)}\cdot\varphi_{\widehat{\mathcal{L}},l,P}(t)
    \end{equation*}
    holds for all $t\in B_{\varepsilon}(P)\backslash\{P\}$; furthermore, there exist positive constants $\beta_{\widehat{\mathcal{L}},l,P}$ and $\rho_{\widehat{\mathcal{L}},l,P}$ such that the inequalities
    \begin{equation*}
        \bigg\vert\frac{\partial\varphi_{\widehat{\mathcal{L}},l,P}(t)}{\partial t}\bigg\vert\leq \frac{\beta_{\widehat{\mathcal{L}},l,P}}{\vert t\vert^{1-\rho_{\widehat{\mathcal{L}},l,P}}},\,\,\,\,\bigg\vert\frac{\partial\varphi_{\widehat{\mathcal{L}},l,P}(t)}{\partial \overline{t}}\bigg\vert\leq \frac{\beta_{\widehat{\mathcal{L}},l,P}}{\vert t\vert^{1-\rho_{\widehat{\mathcal{L}},l,P}}},\,\,\,\,\bigg\vert\frac{\partial^{2}\varphi_{\widehat{\mathcal{L}},l,P}(t)}{\partial t\partial\overline{t}}\bigg\vert\leq \frac{\beta_{\widehat{\mathcal{L}},l,P}}{\vert t\vert^{2-\rho_{\widehat{\mathcal{L}},l,P}}}.
    \end{equation*}
\end{definition}
\begin{example}
    Let $\pi^{\textup{univ}}:E^{\textup{univ}}\rightarrow E^{\pr\textup{univ}}$ be the universal cyclic $N$-isogeny between generalized elliptic curves $E^{\textup{univ}}\stackrel{p^{\textup{univ}}}\rightarrow\Xc$ and $E^{\pr\textup{univ}}\stackrel{p^{\pr\textup{univ}}}\rightarrow\Xc$ over the modular curve $\Xc$. Let $\omega_N\coloneqq p^{\textup{univ}}_{\ast}\left(\Omega^{1}_{E^{\textup{univ}}/\Xc}\right)$. This bundle can be metrized on the complex curve $\Yc_{\C}$ in the following way: let $f$ be a section of $\omega_{N}$, for any $\tau=u+iv\in\mathcal{H}^{+}$, the metric $\Vert\cdot\Vert$ at $\tau$ is determined by the formula
\begin{equation*}
    \Vert f\Vert_{\tau}^{2}=2\sqrt{\pi}e^{-\frac{\gamma}{2}}v\cdot\vert f(\tau)\vert^{2},
\end{equation*}
where $\gamma=-\Gamma^{\pr}(1)$ is the Euler-Mascheroni constant, this metric is hermitian and logarithmically singular with respect to the set $\mathcal{S}$ by the work of Du and Yang \cite[Theorem 5.1]{DY19}. In the rest of this paper, we will use $\hod=(\omega_N,\Vert\cdot\Vert)$ to denote this hermitian, logarithmically singular line bundle.
\label{hodge-bundle}
\end{example}
Let $\widehat{\textup{Pic}}(\Xc,\mathcal{S})$ be the group of isomorphism classes of hermitian, logarithmically singular line bundle on $\Xc$ with respect to the set $\mathcal{S}$, we also denote it by $\ach$ and call it the extended arithmetic Chow group of $\Xc$ with respect to the set $\mathcal{S}$.
\begin{definition}
    Let $\widehat{\mathcal{L}}=(\mathcal{L},h_\mathcal{L})$ and $\widehat{\mathcal{M}}=(\mathcal{M},h_\mathcal{M})$ be two hermitian, logarithmically singular line bundles on $\Xc$ with respect to the set $\mathcal{S}$, let $l, m$ be non-trivial, global sections, whose induced divisors on $Xc_\C$ have no points in common. Then the generalized arithmetic intersection number $\widehat{\mathcal{L}}\cdot\widehat{\mathcal{M}}$ is defined by
    \begin{equation*}
        \widehat{\mathcal{L}}\cdot\widehat{\mathcal{M}}\coloneqq (l,m)_{\textup{fin}}+\langle l,m\rangle_{\infty};
    \end{equation*}
    here $(l,m)_{\textup{fin}}$ is defined by Serre's Tor-formula, which specializes to 
    \begin{equation*}
        (l,m)_{\textup{fin}}=\sum\limits_{x\in\Xc}\log\#\mathcal{O}_{\Xc,x}/(l_x,m_x),
    \end{equation*}
    where $l_x,m_x$ are the local equations of $l,m$ respectively at the point $x\in\Xc$ and
    \begin{align}
        \langle l,m\rangle_{\infty}=&-\log (h_\mathcal{M}(m))(\textup{div}(l)-\sum\limits_{P\in\mathcal{S}}\textup{ord}_{P}(l)\cdot P)+\sum\limits_{P\in\mathcal{S}}\textup{ord}_P(l)\left(\alpha_{\widehat{\mathcal{M}},m,P}-\log(\varphi_{\widehat{\mathcal{M}},m,P}(0))\right)\label{pairing}\\\notag
        &-\lim\limits_{\varepsilon\rightarrow0}\left(\sum\limits_{P\in\mathcal{S}}\textup{ord}_P(l)\cdot\alpha_{\widehat{\mathcal{M}},m,P}\cdot\log(-\log\varepsilon^{2})+\int\limits_{\Xc_{\varepsilon}}\log h_\mathcal{L}(l)\cdot c_1(\widehat{\mathcal{M}})\right).       
    \end{align}
    \label{pairings}
\end{definition}
\begin{proposition}
    The formula (\ref{pairing}) induces a bilinear, symmetric pairing
    \begin{equation*}
        \ach\times\ach\longrightarrow\C.
    \end{equation*}
\end{proposition}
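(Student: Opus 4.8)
The plan is to separate the pairing into its finite and archimedean contributions and to verify symmetry, well-definedness, and bilinearity for each. The finite part $(l,m)_{\textup{fin}}=\sum_{x\in\Xc}\log\#\mathcal{O}_{\Xc,x}/(l_x,m_x)$ is manifestly symmetric in $l$ and $m$, since the ideal $(l_x,m_x)$ does not depend on the order of its generators; it is the usual local intersection multiplicity on the regular arithmetic surface $\Xc$, so its symmetry and its additivity under passing to products of sections are standard (Serre's Tor-formula). All the genuine work therefore concerns the archimedean term $\langle l,m\rangle_\infty$ defined by (\ref{pairing}), for which I must check three things: that the regularized limit exists, that $\langle l,m\rangle_\infty=\langle m,l\rangle_\infty$, and that the total pairing is unchanged when $l$ is replaced by $fl$ for a rational function $f$.

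For convergence I would work cusp by cusp. Writing $g_l=-\log h_{\mathcal{L}}(l)$ and using condition (b) in the definition of a logarithmically singular metric, the local expansion $h_{\mathcal{L}}(l(t))=-\log(|t|^2)^{\alpha}\,|t|^{\textup{ord}_P(l)}\,\varphi(t)$ shows that $\log h_{\mathcal{L}}(l)$ grows like $\log(-\log|t|^2)$ near each $P\in\mathcal{S}$, while the growth bounds on $\varphi$ and its derivatives control $c_1(\widehat{\mathcal{M}})$ on each punctured disk. The integral $\int_{\Xc_\varepsilon}\log h_{\mathcal{L}}(l)\,c_1(\widehat{\mathcal{M}})$ then diverges at most like $\log(-\log\varepsilon^2)$, and a direct computation of the leading term shows its coefficient is exactly $\sum_P\textup{ord}_P(l)\,\alpha_{\widehat{\mathcal{M}},m,P}$. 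This is precisely the counterterm subtracted in (\ref{pairing}), so the regularized limit exists.

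The symmetry $\langle l,m\rangle_\infty=\langle m,l\rangle_\infty$ is the heart of the matter, and I expect it to be the main obstacle. The strategy is a regularized Green's identity on the truncated curve $\Xc_\varepsilon$. Away from the cusps, Poincar\'e--Lelong gives $c_1(\widehat{\mathcal{M}})=dd^c g_m+\delta_{\textup{div}(m)}$ and similarly for $\widehat{\mathcal{L}}$, so integrating by parts yields
\begin{equation*}
    \int_{\Xc_\varepsilon}\bigl(g_l\,c_1(\widehat{\mathcal{M}})-g_m\,c_1(\widehat{\mathcal{L}})\bigr)=\int_{\partial\Xc_\varepsilon}\bigl(g_l\,d^c g_m-g_m\,d^c g_l\bigr)+g_l(\textup{div}(m))-g_m(\textup{div}(l)),
\end{equation*}
where the delta terms produce the evaluation of each Green function along the other divisor. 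After separating the cuspidal part of $\textup{div}(l)$ and $\textup{div}(m)$, these evaluation terms match the first line of (\ref{pairing}) up to the explicit cuspidal corrections. The remaining task is to evaluate the boundary integrals $\int_{\partial B_\varepsilon(P)}$ as $\varepsilon\to 0$ using the local expansions of both metrics; once the $\log(-\log\varepsilon^2)$ counterterms are subtracted, these converge to the corrections $\textup{ord}_P(l)(\alpha_{\widehat{\mathcal{M}},m,P}-\log\varphi_{\widehat{\mathcal{M}},m,P}(0))$ together with their counterparts having $l$ and $m$ interchanged, in a manner symmetric under $l\leftrightarrow m$. Assembling the pieces gives $\langle l,m\rangle_\infty=\langle m,l\rangle_\infty$; the delicate point throughout is that the log-log singularities force one to track the $\varepsilon$-asymptotics of the boundary integrals to constant order, which is exactly where the growth bounds on $\varphi$ are indispensable.

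Finally, for well-definedness and bilinearity: replacing $l$ by $fl$ changes $(l,m)_{\textup{fin}}$ by the local intersection of $\textup{div}(f)$ with $\textup{div}(m)$ and changes $\langle l,m\rangle_\infty$ by the corresponding archimedean term, and the total change vanishes by the product formula together with Poincar\'e--Lelong --- this is the arithmetic analogue of $\widehat{\textup{div}}(f)$ pairing trivially, and it shows the value depends only on the classes $\widehat{\mathcal{L}},\widehat{\mathcal{M}}\in\ach$. Bilinearity then follows from the additivity of the data under tensor products: for a product section the metric multiplies, so $\log h$ and the exponent $\alpha$ add, $\log\varphi$ adds, and $c_1$ adds, making (\ref{pairing}) additive in the second argument, and the symmetry just proved transports this to the first. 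Since any two classes can be represented by sections with disjoint divisors on $\Xc_\C$ after a harmless modification (a moving lemma on the curve, justified by the independence of sections established above), the pairing is defined on all of $\ach\times\ach$, which proves the proposition.
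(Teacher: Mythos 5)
Your strategy is sound, but it is worth noting that the paper does not prove this proposition at all: its entire proof is the citation ``This is proved by K\"uhn in \cite{Küh01}.'' What you have written is, in effect, a reconstruction of the skeleton of K\"uhn's argument --- the split into $(l,m)_{\textup{fin}}$ and $\langle l,m\rangle_{\infty}$, the cusp-by-cusp verification that the $\log(-\log\varepsilon^{2})$ counterterm in (\ref{pairing}) cancels the exact divergence of $\int_{\Xc_{\varepsilon}}\log h_{\mathcal{L}}(l)\,c_{1}(\widehat{\mathcal{M}})$, symmetry via a regularized Green/Stokes identity on $\Xc_{\varepsilon}$ with boundary terms evaluated to constant order, invariance under $l\mapsto fl$ via the product formula, and bilinearity from additivity of metrics, orders, and $\alpha$'s under tensor product. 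These are indeed the correct ingredients, and no step of your outline is wrong. What the citation buys the paper, however, is precisely the part your sketch defers with phrases like ``a direct computation shows'' and ``these converge'': the existence of the regularized limits and the constant-order asymptotics of the boundary integrals in the presence of log and log-log singularities are the technical heart of \cite{Küh01} (and occupy most of that paper), including uniform control coming from the derivative bounds on $\varphi_{\widehat{\mathcal{L}},l,P}$. One small imprecision to flag: near a cusp, $\log h_{\mathcal{L}}(l)$ grows like $\textup{ord}_{P}(l)\log|t|$ plus a log-log term, not merely like $\log(-\log|t|^{2})$; it is only after integration against $c_{1}(\widehat{\mathcal{M}})$ (whose singular part is of the form $dd^{c}$ of a log-log function) that the divergence drops to $\log(-\log\varepsilon^{2})$ order with coefficient $\sum_{P}\textup{ord}_{P}(l)\,\alpha_{\widehat{\mathcal{M}},m,P}$, which is what the counterterm removes. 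So: correct architecture, same mathematics the paper invokes, but as written it is an outline whose analytic core still rests on the reference the paper cites.
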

\begin{proof}
    This is proved by Kühn in \cite{Küh01}.
\end{proof}
\begin{example}
    The pairing $\hod\cdot\hod$ has been computed by Kühn \cite{Küh01} and Bost, adjusting for the normalization of the metric in \cite[Lemma 7.3]{DY19}, the result is
    \begin{equation*}
        \hod\cdot\hod=\langle\hod,\hod\rangle=\frac{\psi(N)}{24}\left(\frac{1}{2}-\frac{\Lambda^{\pr}(-1)}{\Lambda(-1)}\right).
    \end{equation*}
    \label{wnself}
\end{example}

\subsection{Intersection numbers of irreducible components of $\Xc_{\mathbb{F}_p}$}
\label{int-on-p}
Let $\widehat{\mathcal{X}}^{a}_{p}(N)=(\mathcal{X}^{a}_{p}(N),0)$ be the corresponding class in the arithmetic Chow group $\ACH$. Recall that there is an intersection pairing $\langle\cdot,\cdot\rangle:\ACH\times\ACH\rightarrow\C$.
\begin{lemma}
    Let $n=\nu_{p}(N)$ be the $p$-adic valuation of $N$, let $N_{p}=p^{-n}N$, for integers $a,b$ such that $-n\leq a\neq b\leq n$ and $a,b\equiv n\,\,(\textup{mod}\,2)$, we have
    \begin{equation}
    \begin{split}
    \langle\widehat{\mathcal{X}}^{a}_{p}(N),\widehat{\mathcal{X}}^{b}_{p}(N)\rangle=\log(p)\cdot
        \begin{cases}
       \frac{\psi(N_{p})(p-1)}{24}p^{\textup{min}\{\vert a\vert,\vert b\vert\}}, & \textup{if $ab\geq 0$;}\\
       \frac{\psi(N_{p})(p-1)}{24}, & \textup{if $ab\leq0$.}\\
    \end{cases}
    \end{split}
    \label{differentint}
    \end{equation}
    For integer $a$ such that $-n\leq a\leq n$ and $a\equiv n\,\,(\textup{mod}\,2)$, we have
    \begin{equation}
    \begin{split}
         \langle\widehat{\mathcal{X}}^{a}_{p}(N),\widehat{\mathcal{X}}^{a}_{p}(N)\rangle=\log(p)\cdot
         \begin{cases}
      -\frac{\psi(N_{p})p^{\vert a\vert}}{12}, & \textup{if $\vert a\vert \neq n$;}\\
      -\frac{\psi(N_{p})(p-1)p^{n-1}}{24}, & \textup{if $\vert a\vert=n$.}\\
    \end{cases}
    \end{split}
    \label{sameint}
    \end{equation}
    \label{int}
\end{lemma}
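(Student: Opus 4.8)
The plan is to reduce both formulas to a purely local computation at the supersingular points of $\mathcal{X}_0(N_p)_{\mathbb{F}_p}$, supplemented by a single global linear relation coming from the principal divisor $\textup{div}(p)$. For the cross terms ($a\neq b$) I would first note that $\langle\widehat{\mathcal{X}}^{a}_{p}(N),\widehat{\mathcal{X}}^{b}_{p}(N)\rangle$ is simply a finite geometric intersection number times $\log p$, since both classes carry the trivial Green function and have empty complex fibre, so no archimedean term intervenes. By Theorem \ref{insideY}(d) (and its $\mathcal{X}$-version in the subsequent remark) the components $\mathcal{X}^{a}$ and $\mathcal{X}^{b}$ meet only at supersingular points, where in $\mathbb{F}[[t_1,t_2]]$ they are cut out by $t_1-t_2^{p^{a}}$ when $a\geq 0$ and by $t_1^{p^{-a}}-t_2$ when $a\leq 0$, and similarly for $b$. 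A direct elimination in $\mathbb{F}[[t_1,t_2]]/(f_a,f_b)$ then gives the local length: it equals $p^{\min\{\vert a\vert,\vert b\vert\}}$ when $a,b$ have the same sign and $1$ when they have opposite signs (the two cases agreeing when one index is $0$). Since $N_p$ is prime to $p$, the morphism $p_1:\mathcal{X}_0(N_p)\to\mathcal{X}_0(1)$ is étale at supersingular points, so passing from $\mathcal{X}^{a}$ to $\mathcal{X}^{a}_{p}(N)$ leaves these local lengths unchanged and only multiplies the number of intersection points.

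Next I would supply the global count. The supersingular points of $\mathcal{X}_0(1)_{\mathbb{F}_p}$ contribute mass $\tfrac{p-1}{24}$ by the Eichler--Deuring mass formula, and the finite flat degree $\psi(N_p)$ of $p_1$ scales this to $\psi(N_p)\tfrac{p-1}{24}$. Combining the uniform local lengths from the previous step with this mass and with the factor $\log p$ coming from the arithmetic degree map (\ref{degreemap}) produces exactly the two cases of (\ref{differentint}); the bookkeeping of residue fields and of stack automorphisms at supersingular points is packaged cleanly by the mass formula.

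For the self-intersections I would invoke Theorem \ref{modpirr}, which gives $\textup{div}(p)=\sum_{a}\varphi(p^{(n-\vert a\vert)/2})\,\mathcal{X}^{a}_{p}(N)$. As $p$ is a rational function on $\Xc$, this divisor is principal, hence the corresponding class pairs trivially with every vertical class $\widehat{\mathcal{X}}^{b}_{p}(N)$ (the archimedean correction $(0,-2\log p)\cdot(\mathcal{X}^{b}_{p}(N),0)$ vanishes because the fibre is vertical). This yields, for each fixed $b$, the relation $\sum_{a}\varphi(p^{(n-\vert a\vert)/2})\,\langle\widehat{\mathcal{X}}^{a}_{p}(N),\widehat{\mathcal{X}}^{b}_{p}(N)\rangle=0$, which isolates the self-intersection in terms of the already-computed cross terms. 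Substituting the values from (\ref{differentint}) together with $\varphi(p^{0})=1$ and $\varphi(p^{k})=p^{k-1}(p-1)$ for $k\geq 1$ reduces the claim to a finite sum over $a\equiv n\pmod 2$, to be evaluated separately for an interior index ($\vert b\vert<n$) and an extremal index ($\vert b\vert=n$), giving the two branches of (\ref{sameint}).

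I expect the main obstacle to be this last combinatorial evaluation: one must show that $-\varphi(p^{(n-\vert b\vert)/2})^{-1}\sum_{a\neq b}\varphi(p^{(n-\vert a\vert)/2})\,I(a,b)$, where $I(a,b)$ records the same-sign/opposite-sign dichotomy, collapses to $-\tfrac{\psi(N_p)p^{\vert b\vert}}{12}$ when $\vert b\vert<n$ and to $-\tfrac{\psi(N_p)(p-1)p^{n-1}}{24}$ when $\vert b\vert=n$. The delicacy is that the sum mixes a geometric series $\sum p^{\min\{\vert a\vert,\vert b\vert\}}$ over same-sign indices with a flat contribution over opposite-sign indices, while the weights $\varphi(p^{(n-\vert a\vert)/2})$ themselves interpolate between the two regimes; splitting the index set into $a>0$, $a=0$, and $a<0$ should make the cancellation transparent. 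Small spot-checks (e.g. $n=2$ at $b=2$ and $b=0$) already reproduce both closed forms, which is strong evidence that the linear relation is the correct engine; the extremal case and the degenerate situation $n=0$ merit a separate sanity check to confirm the normalization against the Arakelov pairing on $\ACH$.
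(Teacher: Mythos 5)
Your proposal is correct and follows essentially the same route as the paper: the cross terms $\langle\widehat{\mathcal{X}}^{a}_{p}(N),\widehat{\mathcal{X}}^{b}_{p}(N)\rangle$ are computed from the local lengths $\textup{Length}\bigl(\F[[t_1,t_2]]/(f_a,f_b)\bigr)$ at supersingular points (Theorem \ref{insideY}(d),(e)) together with the mass formula $\sum_P 1/\#\textup{Aut}(P)=\tfrac{p-1}{24}$ scaled by the finite flat degree $\psi(N_p)$, and the self-intersections are then extracted from the vanishing of the pairing against the principal class $\widehat{\textup{div}}(p)$, exactly as in the paper's proof. Your spot-checked combinatorial evaluation of the resulting linear relation is indeed the only remaining bookkeeping, and it closes correctly in all cases.
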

\begin{proof}
Let first assume $a,b\geq0$ and $a\neq b$, then by the definition of the pairing $\langle\cdot,\cdot\rangle$ and (d), (e) of Theorem \ref{insideY},
\begin{align*}
    \langle\widehat{\mathcal{X}}^{a}_{p}(N),\widehat{\mathcal{X}}^{b}_{p}(N)\rangle&=\sum\limits_{P\in\mathcal{X}^{\textup{ss}}_{0}(N)(\F)}\frac{1}{\#\textup{Aut}(P)}\cdot\textup{Length}\left(\F[[t_{1},t_{2}]]/(t_{1}-t_{2}^{p^{a}},t_{1}-t_{2}^{p^{b}})\right)\cdot\log(p)\\
    &=\sum\limits_{P\in\mathcal{X}^{\textup{ss}}_{0}(N)(\F)}\frac{1}{\#\textup{Aut}(P)}\cdot p^{\textup{min}\{a,b\}}\cdot\log(p).
\end{align*}
By (e) of Theorem \ref{insideY}, the stack $\mathcal{X}_{0}(p^{n})$ has the same number of supersingular $\F$-points as the stack $\mathcal{X}_{0}(1)$, while the later one has been computed explicitly by the following formula (for example, see \cite[Corollary 12.4.6]{KM85}),
\begin{equation*}
    \sum\limits_{P\in\mathcal{X}^{\textup{ss}}_{0}(1)(\F)}\frac{1}{\#\textup{Aut}(P)}=\frac{p-1}{24}.
\end{equation*}
Moreover, by Lemma \ref{ddec} we know that $\mathcal{X}_{0}(N)_{\F}\simeq\mathcal{X}_{0}(p^{n})_{\F}\times_{\mathcal{X}_{0}(1)_{\F}}\mathcal{X}_{0}(N_{p})_{\F}$, recall the fact that $\mathcal{X}_{0}(N_{p})_{\F}$ is finite flat of degree $\psi(N_{p})$ over $\mathcal{X}_{0}(1)_{\F}$, hence
\begin{equation*}
    \sum\limits_{P\in\mathcal{X}^{\textup{ss}}_{0}(N)(\F)}\frac{1}{\#\textup{Aut}(P)}=\frac{\psi(N_{p})(p-1)}{24},
\end{equation*}
therefore $\langle\widehat{\mathcal{X}}^{a}_{p}(N),\widehat{\mathcal{X}}^{b}_{p}(N)\rangle=\log(p)\cdot\frac{\psi(N_{p})(p-1)}{24}p^{\textup{min}\{a,b\}}$. The other cases of formula (\ref{differentint}) can be proved similarly.

    Recall that the principal arithmetic divisor associated to the constant function $p$ is
    \begin{equation*}
        \widehat{\textup{div}}(p)=(\textup{div}(p), -\log(p^{2})).
    \end{equation*}
    It is 0 in $\ACH$, hence for any integer $a$ such that $-n\leq a\leq n$ and $a\equiv n\,\,(\textup{mod}\,2)$,
\begin{align}
        \langle\widehat{\mathcal{X}}^{a}_{p}(N), \mathcal{X}^{n}_{p}(N)+\mathcal{X}^{-n}_{p}(N)+\sum\limits_{\substack{-n< b< n\\n\equiv b\,(\textup{mod}\,2)}}p^{(n-\vert b\vert)/2-1}(p-1)\mathcal{X}^{b}_{p}(N)\rangle&=\langle\widehat{\mathcal{X}}^{a}_{p}(N),(\textup{div}(p),0)\rangle\label{prin}\\ 
        &=\langle\widehat{\mathcal{X}}^{a}_{p}(N), (0,\log(p^{2})\rangle=0\notag.       
\end{align}    
therefore formula (\ref{sameint}) can be proved by combining (\ref{differentint}) and (\ref{prin}).
\end{proof}
Let $n=\nu_{p}(N)\geq0$ be an integer and $N_{p}=p^{-n}N$, let $\widehat{\mathcal{X}}_{p}(N)$ be the following element in $\ACH$,
\begin{equation*}
    \widehat{\mathcal{X}}_{p}(N)=\frac{n}{2}\widehat{\mathcal{X}}^{n}_{p}(N)-\frac{n}{2}\widehat{\mathcal{X}}^{-n}_{p}(N)+\sum\limits_{\substack{-n<a<n\\a\equiv n\,\textup{mod}\,2}}\frac{a}{2}\cdot p^{(n-\vert a\vert)/2-1}(p-1)\widehat{\mathcal{X}}^{a}_{p}(N).
\end{equation*}
\begin{corollary}
We have $W_{N}^{\ast}(\widehat{\mathcal{X}}_{p}(N))=-\widehat{\mathcal{X}}_{p}(N)$. Let $n=\nu_{p}(N)\geq0$ be an integer, then
    \begin{equation*}
        \langle\widehat{\mathcal{X}}_{p}(N),\widehat{\mathcal{X}}_{p}(N)\rangle=\frac{\psi(N)}{24}\cdot\frac{-np^{n+1}+2p^{n}+np^{n-1}-2}{p^{n-1}(p^{2}-1)}\cdot\log(p).
    \end{equation*}
    \label{geometric0}
\end{corollary}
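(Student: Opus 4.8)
The plan is to first rewrite the defining coefficients of $\widehat{\mathcal{X}}_{p}(N)$ in a uniform way and then reduce the self-intersection to purely off-diagonal intersection numbers by exploiting the rational triviality of $\textup{div}(p)$.

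First I would observe that, writing $m_{a}=\varphi(p^{(n-\vert a\vert)/2})$ for the multiplicity of $\mathcal{X}^{a}_{p}(N)$ in $\textup{div}(p)$ (Theorem \ref{modpirr}), the coefficient of $\widehat{\mathcal{X}}^{a}_{p}(N)$ in $\widehat{\mathcal{X}}_{p}(N)$ is uniformly $c_{a}=\tfrac{a}{2}m_{a}$ for every admissible $a$; indeed for $\vert a\vert=n$ one has $m_{a}=\varphi(1)=1$, while for $\vert a\vert<n$ one has $p^{(n-\vert a\vert)/2-1}(p-1)=\varphi(p^{(n-\vert a\vert)/2})$. Since $m_{-a}=m_{a}$, this gives $c_{-a}=-c_{a}$. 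The Atkin–Lehner statement is then immediate: by part (c) of Theorem \ref{insideY} (in its $\mathcal{X}$-version) we have $W_{N}^{\ast}\widehat{\mathcal{X}}^{a}_{p}(N)=\widehat{\mathcal{X}}^{-a}_{p}(N)$, so $W_{N}^{\ast}\widehat{\mathcal{X}}_{p}(N)=\sum_{a}c_{a}\widehat{\mathcal{X}}^{-a}_{p}(N)=\sum_{a}c_{-a}\widehat{\mathcal{X}}^{a}_{p}(N)=-\widehat{\mathcal{X}}_{p}(N)$.

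For the self-intersection I would set $M_{ab}=\langle\widehat{\mathcal{X}}^{a}_{p}(N),\widehat{\mathcal{X}}^{b}_{p}(N)\rangle$ and expand bilinearly, so that $\langle\widehat{\mathcal{X}}_{p}(N),\widehat{\mathcal{X}}_{p}(N)\rangle=\tfrac{1}{4}\sum_{a,b}ab\,m_{a}m_{b}M_{ab}$. The crucial input is the relation $\sum_{b}m_{b}M_{ab}=0$ for every $a$, which is precisely the vanishing $\langle\widehat{\mathcal{X}}^{a}_{p}(N),(\textup{div}(p),0)\rangle=0$ already recorded in the proof of Lemma \ref{int} (formula (\ref{prin})). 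Combining this with the polarization identity $ab=\tfrac{1}{2}(a^{2}+b^{2}-(a-b)^{2})$ and the symmetry of $M$, the two ``square'' contributions collapse against $\sum_{b}m_{b}M_{ab}=0$, leaving
\begin{equation*}
    \langle\widehat{\mathcal{X}}_{p}(N),\widehat{\mathcal{X}}_{p}(N)\rangle=-\frac{1}{8}\sum_{a\neq b}(a-b)^{2}m_{a}m_{b}M_{ab}.
\end{equation*}
The decisive feature here is that the factor $(a-b)^{2}$ annihilates the diagonal, so that only the off-diagonal numbers (\ref{differentint}) are needed and the self-intersection formula (\ref{sameint}) never enters.

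Finally I would substitute (\ref{differentint}), namely $M_{ab}=\tfrac{\psi(N_{p})(p-1)}{24}\log(p)\cdot p^{\min\{\vert a\vert,\vert b\vert\}}$ when $ab\geq0$ and $\tfrac{\psi(N_{p})(p-1)}{24}\log(p)$ when $ab<0$, and split the sum according to the sign of $ab$ (both positive, both negative, opposite signs, and the $a=0$ or $b=0$ terms present when $n$ is even). Each piece is a weighted geometric sum of the shape $\sum_{0<a<b}(a-b)^{2}p^{a}\varphi(p^{(n-a)/2})\varphi(p^{(n-b)/2})$ or a variant, all elementary to evaluate in closed form. The hard part will be purely the bookkeeping in this last step: assembling the several sums, using $\psi(N)=\psi(p^{n})\psi(N_{p})$ with $\psi(p^{n})=p^{n-1}(p+1)$, and simplifying to the stated rational function $\tfrac{-np^{n+1}+2p^{n}+np^{n-1}-2}{p^{n-1}(p^{2}-1)}$. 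I would check the algebra against $n=1$ and $n=2$, where the sum collapses to $-\tfrac{\psi(N_{p})(p-1)}{24}\log p$ and $-2\cdot\tfrac{\psi(N_{p})(p-1)(p+1)}{24}\log p$ respectively, both agreeing with the claimed value.
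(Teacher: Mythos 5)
Your proposal is correct, and it ultimately rests on the same inputs as the paper's own (one-line) proof: the Atkin--Lehner claim follows from $W_{N}^{\ast}\widehat{\mathcal{X}}^{a}_{p}(N)=\widehat{\mathcal{X}}^{-a}_{p}(N)$ (Theorem \ref{insideY}(c)) together with the antisymmetry of the coefficients, and the self-intersection is a bilinear expansion evaluated via Lemma \ref{int}. Where you genuinely differ is in how that expansion is organized. The paper's intent is to substitute both the off-diagonal numbers (\ref{differentint}) and the diagonal ones (\ref{sameint}); your rewriting of the coefficients as $c_{a}=\tfrac{a}{2}m_{a}$, with $m_{a}=\varphi(p^{(n-\vert a\vert)/2})$ the multiplicities of $\textup{div}(p)$ from Theorem \ref{modpirr}, lets you invoke the relation $\sum_{b}m_{b}M_{ab}=0$ (formula (\ref{prin})) and the polarization identity $ab=\tfrac{1}{2}\bigl(a^{2}+b^{2}-(a-b)^{2}\bigr)$ to annihilate all diagonal terms. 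This is a real simplification: since the paper derives (\ref{sameint}) from (\ref{differentint}) precisely via (\ref{prin}), your route short-circuits that detour and needs only the geometrically transparent quantities $p^{\min\{\vert a\vert,\vert b\vert\}}$, which count intersections at supersingular points. Your uniform coefficient description and the identity $c_{-a}=-c_{a}$ are correct (for $\vert a\vert<n$ one has $\varphi(p^{(n-\vert a\vert)/2})=p^{(n-\vert a\vert)/2-1}(p-1)$, and for $\vert a\vert=n$ both equal $1$), and your low-rank checks agree with the stated closed form; I also verified the case $n=3$, where your reduction gives $-(3p^{2}+4p+2)\cdot\tfrac{\psi(N_{p})(p-1)}{24}\log p$, matching the corollary after the substitution $\psi(N)=p^{2}(p+1)\psi(N_{p})$. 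The only thing left implicit is the general-$n$ summation, which you defer as bookkeeping --- but the paper's proof defers exactly the same computation, so this is not a gap relative to the paper; the structure of your argument makes that remaining step an elementary geometric-series evaluation.
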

\begin{proof}
    The fact $W_{N}^{\ast}(\widehat{\mathcal{X}}_{p}(N))=-\widehat{\mathcal{X}}_{p}(N)$ follows from the definition of the element $\widehat{\mathcal{X}}_{p}(N)$. The intersection number can be computed by Lemma \ref{int}.
\end{proof}

\subsection{An explicit section of the Hodge line bundle}
\label{explictsection}
In this section, we define and study the associated divisor class of an explicit rational section of the line bundle $\hod^{\otimes 12\varphi(N)}$.
\par
For any positive integer $N$, define the following function $a_N$ on the set of positive integers,
\begin{equation*}
    a_N(t)=\sum\limits_{r\vert t}\mu(\frac{t}{r})\mu(\frac{N}{r})\frac{\varphi(N)}{\varphi(N/r)},
\end{equation*}
where $\mu(\cdot)$ is the Möbius function and $r$ ranges over all the positive integers dividing $t$.
\begin{lemma}
The function $a_N$ has the following properties,\\
(a).\,$\sum\limits_{t\vert N}a_N(t)=\varphi(N)$ and $\sum\limits_{t\vert N}t^{-1}a_N(t)=0$.\\
(b).\,Let $p$ be a prime number such that $p\vert N$, for any positive integer $t$ such that $pt\vert N$,
    \begin{equation*}
        a_N(pt)=pa_{p^{-1}N}(t).
    \end{equation*}
    \label{abouta}
\end{lemma}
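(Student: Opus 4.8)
The plan is to reduce both parts to a single computation at each prime power, exploiting the multiplicativity of $\mu$ and $\varphi$. Fix $N$ with factorization $N=\prod_p p^{n_p}$ and write a divisor $t\mid N$ as $t=\prod_p p^{k_p}$ with $0\le k_p\le n_p$. Each $r\mid t$ factors uniquely as $r=\prod_p p^{j_p}$ with $0\le j_p\le k_p$, and the summand defining $a_N(t)$ splits as a product over primes:
\[
\mu\!\left(\tfrac{t}{r}\right)\mu\!\left(\tfrac{N}{r}\right)\frac{\varphi(N)}{\varphi(N/r)}=\prod_p\mu(p^{k_p-j_p})\,\mu(p^{n_p-j_p})\,\frac{\varphi(p^{n_p})}{\varphi(p^{n_p-j_p})}.
\]
Since the sum over $r\mid t$ then factors into a product of local sums, I obtain the multiplicativity relation $a_N(t)=\prod_p a_{p^{n_p}}(p^{k_p})$, which reduces the lemma entirely to the prime-power values.

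First I would compute $a_{p^n}(p^k)$ for $n\ge 1$ and $0\le k\le n$. Because $\mu(p^m)=0$ for $m\ge 2$, only indices $j$ with $k-j\in\{0,1\}$ and $n-j\in\{0,1\}$ contribute, and a short case analysis yields
\[
a_{p^n}(p^k)=\begin{cases} p^n, & k=n,\\ -p^{\,n-1}, & k=n-1,\\ 0, & 0\le k\le n-2,\end{cases}
\]
together with $a_{1}(1)=1$. With this closed form, part (a) follows by factoring both sums over primes. For the first identity, the local sum is $\sum_{k=0}^{n_p}a_{p^{n_p}}(p^k)=p^{n_p}-p^{n_p-1}=\varphi(p^{n_p})$, so $\sum_{t\mid N}a_N(t)=\prod_{p\mid N}\varphi(p^{n_p})=\varphi(N)$. (Alternatively, recognizing $a_N=\mu\ast f$ with $f(t)=\mu(N/t)\varphi(N)/\varphi(N/t)$ and applying M\"obius inversion gives $\sum_{t\mid N}a_N(t)=f(N)=\varphi(N)$ in one line.) For the second identity, the local factor is $\sum_{k=0}^{n_p}p^{-k}a_{p^{n_p}}(p^k)=1-1=0$ whenever $n_p\ge 1$, so the product vanishes as soon as $N>1$; equivalently a direct interchange of summation rewrites the sum as $\tfrac{\varphi(N)}{N}\sum_{d\mid N}\mu(d)$, which is $0$ for $N>1$.

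For part (b), I would write $N=p^{\,n}M$ and $t=p^{\,k}t'$ with $p\nmid M$, $t'\mid M$, and (since $pt\mid N$) with $k\le n-1$. The prime-to-$p$ factors of $a_N(pt)$ and $a_{N/p}(t)$ coincide under the factorization above, since $N/p=p^{\,n-1}M$ has the same prime-to-$p$ part as $N$. Thus the identity reduces to the single relation $a_{p^n}(p^{k+1})=p\,a_{p^{n-1}}(p^k)$, which one verifies directly from the closed form in the three ranges $k=n-1$ (both sides $p^n$), $k=n-2$ (both sides $-p^{\,n-1}$), and $0\le k\le n-3$ (both sides $0$).

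The calculations are entirely routine once the multiplicative structure is isolated; the only points requiring genuine care are the boundary cases of the prime-power formula (small $n$ and the two top indices $k=n,\,n-1$), and the observation that the second identity in (a) holds precisely for $N>1$, which is the relevant range for the level of $\Xc$.
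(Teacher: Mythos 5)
Your proposal is correct, and it takes a genuinely different route from the paper. The paper does not prove part (a) at all: it cites it from Du--Yang \cite[Lemma 3.2]{DY19}. For part (b) the paper works directly with the defining sum, splitting $\sum_{r\mid pt}$ into the terms with $p\nmid r$ and $p\mid r$ and then treating the cases $\nu_p(N)\geq 2$ (where $\mu(N/r)=0$ kills the first block) and $\nu_p(N)=1$ (where the first block combines with the second) separately. You instead isolate the multiplicative structure once and for all: the summand $\mu(t/r)\mu(N/r)\varphi(N)/\varphi(N/r)$ factors over primes, giving $a_N(t)=\prod_p a_{p^{n_p}}(p^{k_p})$, and the closed form $a_{p^n}(p^n)=p^n$, $a_{p^n}(p^{n-1})=-p^{n-1}$, $a_{p^n}(p^k)=0$ for $k\leq n-2$ then yields both (a) and (b) by inspection of local factors (your boundary checks, including $n=1$ where $a_{p^{n-1}}(p^{n-1})=a_1(1)=1$, are the only delicate points, and they are right). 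This buys a self-contained proof of (a), a transparent description of the support of $a_N$, and a uniform treatment of both parts; it also surfaces a caveat the paper's statement elides, namely that $\sum_{t\mid N}t^{-1}a_N(t)=\varphi(N)N^{-1}\sum_{d\mid N}\mu(d)$ equals $0$ only for $N>1$ (for $N=1$ the sum is $1$), which is harmless for the paper's application but worth recording. What the paper's route buys in exchange is brevity for (b) and a computation whose shape matches its later use, the recursion $\Delta_N(z)=\Delta_{N_p}(p^{n}z)^{p^{n}}/\Delta_{N_p}(p^{n-1}z)^{p^{n-1}}$, which is proved by exactly the same split of the index set.
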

\begin{proof}
The part (a) has been proved in \cite[Lemma 3.2]{DY19}. We prove part (b) as follows. Let $n=\nu_{p}(N)\geq1$ be the $p$-adic valuation of the integer $N$, and $N_{p}=p^{-n}N$. By definition,
    \begin{align*}
        a_N(pt)&=\sum\limits_{r\vert pt}\mu(\frac{pt}{r})\mu(\frac{N}{r})\frac{\varphi(N)}{\varphi(N/r)}\\
        &=\sum\limits_{\substack{r\vert pt\\p\nmid r}}\mu(\frac{pt}{r})\mu(\frac{N}{r})\frac{\varphi(N)}{\varphi(N/r)}+\sum\limits_{\substack{r\vert pt \\ p\vert r}}\mu(\frac{pt}{r})\mu(\frac{N}{r})\frac{\varphi(N)}{\varphi(N/r)}\\
        &=\sum\limits_{\substack{r\vert pt\\p\nmid r}}\mu(\frac{pt}{r})\mu(\frac{N}{r})\frac{\varphi(N)}{\varphi(N/r)}+\sum\limits_{r^{\pr}\vert t}\mu(\frac{t}{r^{\pr}})\mu(\frac{p^{-1}N}{r^{\pr}})\frac{\varphi(N)}{\varphi(p^{-1}N/r^{\pr})}
    \end{align*}
    If $n\geq2$, we have $\mu(\frac{N}{r})=0$ when $p\nmid r$, then
    \begin{align*}
        a_N(pt)&=\sum\limits_{r^{\pr}\vert t}\mu(\frac{t}{r})\mu(\frac{p^{-1}N}{r^{\pr}})\frac{\varphi(N)}{\varphi(p^{-1}N/r^{\pr})}\\
        &=p\sum\limits_{r^{\pr}\vert t}\mu(\frac{t}{r^{\pr}})\mu(\frac{p^{-1}N}{r^{\pr}})\frac{\varphi(p^{-1}N)}{\varphi(p^{-1}N/r^{\pr})}=pa_{p^{-1}N}(t).
    \end{align*}
    If $n=1$, we have $t\vert p^{-1}N=N_p$, especially, $p\nmid t$,
    \begin{align*}
        a_N(pt)&=\sum\limits_{r\vert pt}\mu(\frac{pt}{r})\mu(\frac{N}{r})\frac{\varphi(N)}{\varphi(N/r)}\\
        &=\sum\limits_{r\vert t}\mu(\frac{pt}{r})\mu(\frac{N}{r})\frac{\varphi(N)}{\varphi(N/r)}+\sum\limits_{r^{\pr}\vert t}\mu(\frac{t}{r^{\pr}})\mu(\frac{p^{-1}N}{r^{\pr}})\frac{\varphi(N)}{\varphi(p^{-1}N/r^{\pr})}\\
        &=\sum\limits_{r\vert t}\mu(\frac{t}{r})\mu(\frac{p^{-1}N}{r})\frac{\varphi(p^{-1}N)}{\varphi(p^{-1}N/r)}+\sum\limits_{r^{\pr}\vert t}\mu(\frac{t}{r^{\pr}})\mu(\frac{p^{-1}N}{r^{\pr}})\frac{(p-1)\varphi(p^{-1}N)}{\varphi(p^{-1}N/r^{\pr})}=pa_{p^{-1}N}(t).
    \end{align*}
\end{proof}
Let $\Delta$ be the modular discriminant function on the upper half plane $\mathbb{H}^{+}$, it is a cusp form of weight $12$ and level $1$ with expansion at $\infty$ given as follows,
\begin{equation}
    \Delta(z)=q\prod\limits_{n\geq1}(1-q^{n})^{24},\,\,\textup{$q=e^{2\pi iz}$ where $z\in\mathbb{H}^{+}$}.
    \label{delta}
\end{equation}
The modular form $\Delta(z)$ is a global section of the line bundle $\widehat{\omega}_1^{\otimes12}$ by the definition made in Katz's work \cite{Katz73}. Similarly, for any positive integer $t$, the modular form $\Delta(tz)$ is a section of the line bundle $\widehat{\omega}_t^{\otimes12}$. Now we construct an explicit section $\Delta_N$ of the line bundle $\hod^{\otimes 12\varphi(N)}$ following the lines in \cite[\S1]{DY19},
\begin{equation}
    \Delta_N(z)=\prod\limits_{t\vert N}\Delta(tz)^{a_N(t)}.
    \label{the-section}
\end{equation}
where $t$ ranges over all the positive integers dividing $N$. We also define the following section:
\begin{equation*}
    \Delta_N^{0}(z)\coloneqq\Delta_N\big\vert_{W_N,12\varphi(N)}(z)=\Delta_N(W_Nz)\cdot(Nz^{2})^{-6\varphi(N)}.
\end{equation*}
It is also a rational section of the metrized line bundle $\hod^{\otimes 12\varphi(N)}$.
\begin{lemma}
    Let $p$ be a prime number such that $p\vert N$, let $n=\nu_{p}(N)\geq1$ and $N_{p}=p^{-n}N$, we have
    \begin{equation*}
        \Delta_N(z)=\frac{\Delta_{N_p}(p^{n}z)^{p^{n}}}{\Delta_{N_p}(p^{n-1}z)^{p^{n-1}}}.
    \end{equation*}
\end{lemma}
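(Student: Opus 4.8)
The plan is to expand both sides into products of translates of the discriminant function and then match the exponents using Lemma \ref{abouta}(b). First I would set $N_p=p^{-n}N$, which is coprime to $p$, and use unique factorization to parametrize the divisors of $N=p^{n}N_p$ as $t=p^{j}s$ with $0\leq j\leq n$ and $s\vert N_p$. Under this parametrization the defining formula (\ref{the-section}) reads
\begin{equation*}
    \Delta_N(z)=\prod\limits_{j=0}^{n}\prod\limits_{s\vert N_p}\Delta(p^{j}sz)^{a_N(p^{j}s)},
\end{equation*}
so the entire statement is reduced to computing the exponents $a_N(p^{j}s)$ and showing that only $j=n$ and $j=n-1$ survive.

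Next I would reduce each such exponent to a level-$N_p$ quantity. Iterating Lemma \ref{abouta}(b) exactly $j$ times gives
\begin{equation*}
    a_N(p^{j}s)=p^{j}\cdot a_{p^{n-j}N_p}(s),\qquad 0\leq j\leq n.
\end{equation*}
It then remains to evaluate $a_{p^{m}N_p}(s)$ for $s\vert N_p$ and $m=n-j\geq0$. Since $s$, hence every $r\vert s$, is coprime to $p$, for $m\geq1$ we have the factorizations $\mu(p^{m}N_p/r)=\mu(p^{m})\mu(N_p/r)$ and $\varphi(p^{m}N_p/r)=\varphi(p^{m})\varphi(N_p/r)$; substituting these into the definition of $a$ and cancelling the common factor $\varphi(p^{m})$ yields the uniform identity
\begin{equation*}
    a_{p^{m}N_p}(s)=\mu(p^{m})\cdot a_{N_p}(s),\qquad m\geq 0,
\end{equation*}
where the case $m=0$ is trivial. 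Thus $a_{p^{m}N_p}(s)$ equals $a_{N_p}(s)$ for $m=0$, equals $-a_{N_p}(s)$ for $m=1$, and vanishes for $m\geq2$ because $\mu(p^{m})=0$.

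Combining the two displays, only the top two values of $j$ contribute: $a_N(p^{n}s)=p^{n}a_{N_p}(s)$ and $a_N(p^{n-1}s)=-p^{n-1}a_{N_p}(s)$, while $a_N(p^{j}s)=0$ for $j\leq n-2$. Substituting back and recognizing $\prod_{s\vert N_p}\Delta(sp^{n}z)^{a_{N_p}(s)}=\Delta_{N_p}(p^{n}z)$ together with the analogous identity at level $p^{n-1}$ then gives exactly
\begin{equation*}
    \Delta_N(z)=\frac{\Delta_{N_p}(p^{n}z)^{p^{n}}}{\Delta_{N_p}(p^{n-1}z)^{p^{n-1}}}.
\end{equation*}

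The only genuinely delicate point is the bookkeeping in the iterated application of Lemma \ref{abouta}(b): one must verify that the hypothesis ``$p\cdot(\text{argument})$ divides the level'' stays valid at every stage. This is where the constraints $j\leq n$ and $s\vert N_p$ (so that $p^{j-i}s\mid p^{n-i}N_p$ and the level $p^{n-i}N_p$ retains a factor of $p$ for all $0\leq i\leq j-1$) are used; once this is checked, the remaining manipulations are routine algebra with $\mu$ and $\varphi$.
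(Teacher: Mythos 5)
Your proof is correct, and it is organized genuinely differently from the paper's. The paper proves the identity by induction on $n=\nu_p(N)$: it splits the divisors of $N$ into those prime to $p$ and those divisible by $p$, asserts (without proof) that $a_N(t)=0$ for $t\vert N_p$ when $n\geq2$ and that $a_N(t)=-a_{N_p}(t)$ for $t\vert N_p$ when $n=1$, and then applies Lemma \ref{abouta}(b) once to obtain the one-step descent identities $\Delta_N(z)=\Delta_{p^{-1}N}(pz)^{p}$ (for $n\geq2$) and $\Delta_N(z)=\Delta_{N_p}(pz)^{p}/\Delta_{N_p}(z)$ (for $n=1$), from which the lemma follows by iteration. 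You avoid induction entirely: you parametrize every divisor of $N$ as $p^{j}s$ with $0\leq j\leq n$ and $s\vert N_p$, iterate Lemma \ref{abouta}(b) to get $a_N(p^{j}s)=p^{j}a_{p^{n-j}N_p}(s)$, and establish the closed-form identity $a_{p^{m}N_p}(s)=\mu(p^{m})\,a_{N_p}(s)$, which kills every term with $j\leq n-2$ and leaves exactly the two products defining the right-hand side. Your M\"obius identity is in fact a common generalization of the two assertions the paper leaves unproved (take $m=n$), so your argument is more self-contained and makes the mechanism transparent; the cost is the heavier bookkeeping in the iterated application of Lemma \ref{abouta}(b), which you correctly handle by checking that the divisibility hypothesis $p\cdot(\text{argument})\mid(\text{level})$ persists at each stage. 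Both arguments rest on the same key lemma, so the difference is one of organization --- direct closed-form computation of all exponents versus a two-case recursion plus induction --- rather than of substance.
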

\begin{proof}
By the definition of $\Delta_N$, we have
\begin{equation*}
    \Delta_N(z)=\prod\limits_{t\vert N_p}\Delta(tz)^{a_N(t)}\cdot\prod\limits_{p\vert t\vert N}\Delta(tz)^{a_N(t)}.
\end{equation*}
    If $n\geq2$, we have $a_N(t)=0$ when $t\vert N_p$. Combining Lemma \ref{abouta}, we have
    \begin{equation}
        \Delta_N(z)=\prod\limits_{t^{\pr}\vert p^{-1}N}\Delta(pt^{\pr}z)^{a_N(pt^{\pr})}=\left(\prod\limits_{t^{\pr}\vert p^{-1}N}\Delta(pt^{\pr}z)^{a_{p^{-1}N}(t^{\pr})}\right)^{p}=\Delta_{p^{-1}N}(pz)^{p}.
        \label{n2}
        \end{equation}
    If $n=1$, we have $a_N(t)=-a_{N_p}(t)$ when $t\vert N_p$. Combining Lemma \ref{abouta}, we have
    \begin{equation}
        \Delta_N(z)=\left(\Delta_{N_p}(z)\right)^{-1}\cdot\prod\limits_{t^{\pr}\vert N_{p}}\Delta(pt^{\pr}z)^{a_N(pt^{\pr})}=\frac{\Delta_{N_p}(pz)^{p}}{\Delta_{N_p}(z)}.
        \label{n1}
        \end{equation}
    Therefore the lemma follows by induction based on formulas (\ref{n2}) and (\ref{n1}).
\end{proof}
Recall the definition of the cusps $P_{\infty}(N)$ and $P_0(N)$ in Remark \ref{lianggejidian}. We have the following theorem which describes the divisors associated to the rational sections $\Delta_N$ and $\Delta_N^{0}$ explicitly.
\begin{theorem}
    For any positive integer $N$, and any prime number $p$, let $n=\nu_p(N)\geq0$ be the $p$-adic valuation of the integer $N$, then as an element in $\ACH$, we have
    \begin{equation*}
        \textup{div}(\Delta_N)=\psi(N)\varphi(N)P_{\infty}(N)+\sum\limits_{p\vert N}f_p(N),
    \end{equation*}
    \begin{equation*}
        \textup{div}(\Delta_N^{0})=\psi(N)\varphi(N)P_{0}(N)+\sum\limits_{p\vert N}f_p^{0}(N),
    \end{equation*}
    where for any $p\vert N$,
    \begin{equation}
        f_p(N)=12p^{n-1}\varphi(N_p)\sum\limits_{\substack{-n\leq a <n\\a\equiv n\,\textup{mod 2}}}\left(\frac{1-p}{2}(n-a)-1\right)\varphi(p^{(n-\vert a \vert)/2})\cdot\mathcal{X}_p^{a}(N).
        \label{fp}
    \end{equation}
    \begin{equation}
        f_p^{0}(N)=-6n\varphi(N)\mathcal{X}_p^{-n}(N)+12p^{n-1}\varphi(N_p)\sum\limits_{\substack{-n< a \leq n\\a\equiv n\,\textup{mod 2}}}\varphi(p^{(n-\vert a\vert)/2})\left(\frac{1-p}{2}n-1\right)\cdot\mathcal{X}_p^{a}(N).
        \label{lingyige}
    \end{equation}
    \label{div-element}
\end{theorem}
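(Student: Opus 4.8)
The plan is to compute $\textup{div}(\Delta_N)$ by separating it into a \emph{horizontal} part, supported on the cuspidal sections over $\textup{Spec}\,\Z$, and a \emph{vertical} part, supported on the special fibers $\Xc_{\F_p}$ for $p\mid N$. The vertical contribution over a fixed $p$ can be isolated using the preceding lemma, which writes $\Delta_N(z)=\Delta_{N_p}(p^{n}z)^{p^{n}}/\Delta_{N_p}(p^{n-1}z)^{p^{n-1}}$ with $p\nmid N_p$: since $\mathcal{X}_0(N_p)$ is smooth over $p$, the section $\Delta_{N_p}$ has no $p$-vertical components, so the entire $p$-vertical part of $\Delta_N$ originates from the scalings $z\mapsto p^{m}z$, and the base-change description $\mathcal{X}^{a}_p(N)=\mathcal{X}^{a}\times_{p_1,\mathcal{X}_0(1),p_1}\mathcal{X}_0(N_p)$ supplies the factors $\varphi(N_p)$ and $\psi(N_p)$ appearing in $f_p(N)$. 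This reduces the essential computation to $N=p^{n}$. Throughout I would use the $q$-expansion $\Delta(tz)=q^{t}\prod_{m\geq1}(1-q^{tm})^{24}$, in which the infinite product is a unit, together with $\Delta(-1/w)=w^{12}\Delta(w)$.

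For the horizontal part, note that each $\Delta(tz)$ is holomorphic and non-vanishing on $\mathbb{H}^{+}$, so $\textup{div}(\Delta_N)$ is supported on cusps on the generic fiber. At $P_{\infty}(N)$, which carries the local parameter $q$ by Remark \ref{lianggejidian}, the order is $\sum_{t\mid N}t\,a_N(t)$, and a direct Möbius computation from the definition of $a_N$ gives $\sum_{t\mid N}t\,a_N(t)=\psi(N)\varphi(N)$. At every other cusp I would conjugate by the matrix $\gamma$ of Lemma \ref{ramify}, so that the order becomes a weighted combination of the quantities $\sum_{t\mid N}t^{\pm1}a_N(t)$; the vanishing $\sum_{t\mid N}t^{-1}a_N(t)=0$ of Lemma \ref{abouta}(a) then forces the order at all cusps other than $P_{\infty}(N)$ to be $0$. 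This identifies the horizontal part as $\psi(N)\varphi(N)P_{\infty}(N)$.

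For the vertical part over $p$, for each $a$ with $-n\leq a<n$ and $a\equiv n\ (\textup{mod}\,2)$ I would choose the cusp $P_{a'/p^{k}}$ with $k=(n+a)/2$, which lies in the component $\mathsf{C}^{a}(p^{n})$ by Corollary \ref{cuspC} and reduces into $\mathcal{X}^{a}_p(N)$ by Proposition \ref{luozainali}. Expanding $\Delta_N$ in the algebraic local coordinate there, via the Hermite decomposition $M=\sigma U$ of each matrix $\begin{pmatrix} p^{j}A & p^{j}d\\ p^{k} & b\end{pmatrix}$ attached to $\Delta(p^{j}z)\mid\gamma$, the automorphy factors contribute a power $p^{e_a}$ whose exponent, after summing over $t\mid N$ against the weights $a_N(t)$ (organized by the recursion $a_N(pt)=p\,a_{p^{-1}N}(t)$ of Lemma \ref{abouta}(b)), equals $e_a=12p^{n-1}\left(\tfrac{1-p}{2}(n-a)-1\right)$. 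Since the local ring at this cusp is $\Z[\zeta_{p^{(n-a)/2}}][[q]]$ by Proposition \ref{jiandian}, in which $p$ ramifies with index $\varphi(p^{(n-a)/2})$, and since $\mathcal{X}^{a}_p(N)$ occurs in $\textup{div}(p)$ with multiplicity $\varphi(p^{(n-\vert a\vert)/2})$ by Theorem \ref{modpirr}, the order $\textup{ord}_{\mathcal{X}^{a}_p(N)}(\Delta_N)$ comes out as $e_a\cdot\varphi(p^{(n-\vert a\vert)/2})$, which is exactly the coefficient in (\ref{fp}). The statement for $\Delta_N^{0}$ follows by the same computation after applying $W_N$, which interchanges $P_{\infty}(N)\leftrightarrow P_{0}(N)$, $\mathsf{C}^{a}(p^{n})\leftrightarrow\mathsf{C}^{-a}(p^{n})$ and $\mathcal{X}^{a}_p(N)\leftrightarrow\mathcal{X}^{-a}_p(N)$; the additional automorphy factor $(Nz^{2})^{-6\varphi(N)}$ in the definition of $\Delta_N^{0}$ shifts the $p$-powers and produces both the modified coefficients and the isolated term $-6n\varphi(N)\mathcal{X}^{-n}_p(N)$ in (\ref{lingyige}).

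The main obstacle is the exact evaluation of the exponent $e_a$ and its conversion into a component multiplicity. Tracking the $12$-th powers of the lower-left and denominator entries through the Hermite decomposition, and then summing the resulting $p$-adic valuations against the combinatorial weights $a_N(t)$ to reach the closed form $\tfrac{1-p}{2}(n-a)-1$, is the delicate bookkeeping step; the components with $a<0$ are the hardest, since there the local ring is $\Z[\zeta_{p^{(n+a)/2}}][[q]][z]/(z^{p^{-a}}-\zeta_{p^{(n+a)/2}}q)$, so the $q$-adic order and the $p$-adic order of the local expansion are intertwined and must be disentangled before the multiplicity can be read off.
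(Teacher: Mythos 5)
Your proposal is correct and follows essentially the same route as the paper's proof: the horizontal part is read off from the $q$-expansion at $\infty$ (order $\psi(N)\varphi(N)$, with the relation $\sum_{t\vert N}t^{-1}a_N(t)=0$ forcing order zero at the other cusps), and the $p$-vertical multiplicities are obtained by expanding $\Delta_N$ --- via the factorization $\Delta_N(z)=\Delta_{N_p}(p^nz)^{p^n}/\Delta_{N_p}(p^{n-1}z)^{p^{n-1}}$ --- at a cusp $P_{1/p^k}$ inside each component $\mathsf{C}^{2k-n}(p^n,N_p)$, where your Hermite decomposition is precisely the paper's explicit identity $p^n\gamma z=\gamma_k n_k z$, and the resulting $p$-power prefactor is converted into the coefficient of $\mathcal{X}_p^a(N)$ exactly as you describe, using Corollary \ref{cuspC}, Proposition \ref{luozainali} and Theorem \ref{modpirr}. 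Your exponent $e_a=12p^{n-1}\left(\tfrac{1-p}{2}(n-a)-1\right)$ agrees with the paper's value $-12\varphi(N_p)\left(p^n(n-k)-p^{n-1}(n-k-1)\right)$ at $k=(n+a)/2$, and the paper likewise treats the $\Delta_N^0$ formula as following by the symmetric $W_N$-computation you indicate.
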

\begin{proof}
    The expansion of $\Delta_N$ at $\infty$ is computed in \cite[Proposition 3.3]{DY19}, there exist some integer $C_N(n)$ such that
    \begin{equation*}
        \Delta_N(z)=q^{\psi(N)\varphi(N)}\prod\limits_{n\geq1}(1-q^{n})^{24C_N(n)},
    \end{equation*}
    hence the vanishing order of $\Delta_N$ at $\infty$ is $\psi(N)\varphi(N)$, and there is no vertical component of $\textup{div}(\Delta_N)$ at the cusp $\textsf{C}^{n}(p^{n},N_p)$.
    Let $k$ be a positive integer such that $0\leq k< n$, we first study the expansion of the modular form $\Delta_{N_p,p^{n}}(z)\coloneqq\Delta_{N_p}(p^{n}z)$ at the component $\mathsf{C}^{2k-n}(p^{n},N_p)$ of $\cusp(\Xc)$, since the point $P_{\frac{1}{p^{k}}}$ belongs to this cusp by Corollary \ref{cuspC}, let
    \begin{equation*}
        \gamma=\begin{pmatrix}
            1 & 0\\
            p^{k} & 1
        \end{pmatrix},\,\,\gamma_{k}=\begin{pmatrix}
            p^{n-k} & -1\\
            1 & 0
        \end{pmatrix},\,\,n_{k}=\begin{pmatrix}
            p^{k} & 1\\
            0 & p^{n-k}
        \end{pmatrix}.
    \end{equation*}
    Note that $p^{n}\cdot\gamma z=\gamma_k n_k z$ for any $z\in\C$.
    \begin{align*}
        \Delta_{N_p,p^{n}}\big\vert_{\gamma,12\varphi(N_p)}(z)&=\Delta_{N_p}(p^{n}\gamma z)\cdot j(\gamma,z)^{-12\varphi(N_p)}\\
        &=\Delta_{N_p}(\gamma_k n_k z)\cdot j(\gamma_k,n_k z)^{-12\varphi(N_p)}\cdot\left(\frac{j(\gamma_k,n_k z)}{j(\gamma,z)}\right)^{12\varphi(N_p)}\\
        &=p^{-12\varphi(N_p)(n-k)}\cdot\Delta_{N_p}\big\vert_{\gamma_k,12\varphi(N_p)}(n_k z).
    \end{align*}
    Moreover, for any $z\in\mathbb{H}^{+}$,
    \begin{align*}
        \Delta_{N_p}\big\vert_{\gamma_k,12\varphi(N_p)}(z)&=\Delta_{N_p}(\frac{p^{n-k}z-1}{z})\cdot j(\gamma_k,z)^{-12\varphi(N_p)}\\
        &=\Delta_{N_p}(-\frac{1}{z})\cdot z^{-12\varphi(N_p)}\\
        &=\prod\limits_{t\vert N_p}\Delta(-\frac{t}{z})^{a_{N_p}(t)}\cdot z^{-12\varphi(N_p)}\\
        &=\prod\limits_{t\vert N_p}\Delta(\frac{z}{t})^{a_{N_p}(t)}\cdot \prod\limits_{t\vert N_p}t^{-12a_{N_p}(t)}.
    \end{align*}
    Let $C_{N_p}=\prod\limits_{t\vert N_p}t^{-12a_{N_p}(t)}$, we know that $\nu_p(C_{N_p})=0$, and
    \begin{equation*}
        \Delta_{N_p,p^{n}}\big\vert_{\gamma,12\varphi(N_p)}(z)=p^{-12\varphi(N_p)(n-k)}\cdot C_{N_p}\cdot \prod\limits_{t\vert N_p}\Delta\left(\frac{p^{k}z+1}{p^{n-k}t}\right)^{a_{N_p}(t)}.
    \end{equation*}
    Similarly, for $\Delta_{N_p,p^{n-1}}(z)=\Delta_{N_p}(p^{n-1}z)$, we have
    \begin{equation*}
        \Delta_{N_p,p^{n}}\big\vert_{\gamma,12\varphi(N_p)}(z)=p^{-12\varphi(N_p)(n-k-1)}\cdot C_{N_p}\cdot \prod\limits_{t\vert N_p}\Delta\left(\frac{p^{k}z+1}{p^{n-k-1}t}\right)^{a_{N_p}(t)}.
    \end{equation*}
    By definition, 
    \begin{align}
        \Delta_N\big\vert_{\gamma,12\varphi(N)}(z)&=\frac{\Delta_{N_p,p^{n}}^{p^{n}}}{\Delta_{N_p,p^{n-1}}^{p^{n-1}}}\bigg\vert_{\gamma,12\varphi(N)}(z)
        =\frac{\left(\Delta_{N_p,p^{n}}\big\vert_{\gamma,12\varphi(N_p)}(z)\right)^{p^{n}}}{\left(\Delta_{N_p,p^{n-1}}\big\vert_{\gamma,12\varphi(N_p)}(z)\right)^{p^{n-1}}}\notag\\
        &=p^{-12\varphi(N_p)\left(p^{n}(n-k)-p^{n-1}(n-k-1)\right)}\cdot C_{N_p}^{p^{n}-p^{n-1}}\cdot\prod\limits_{t\vert N_p}\left(\frac{\Delta\left(\frac{p^{k}z+1}{p^{n-k}t}\right)^{p^{n}}}{\Delta\left(\frac{p^{k}z+1}{p^{n-k-1}t}\right)^{p^{n-1}}}\right)^{a_{N_p}(t)}.\label{complete}
    \end{align}
    By the expansion of $\Delta$ at $\infty$ given in the formula (\ref{delta}), we know that the last term in formula (\ref{complete}) doesn't vanish. Recall that the cusp $P_{\frac{1}{p^{k}}}$ lies in the component $\mathsf{C}^{2k-n}(p^{n},N_p)$ of $\cusp(\Xc)$ and mod $p$ reduction $\mathsf{C}^{2k-n}_p(p^{n},N_p)$ of the formal scheme $\mathsf{C}^{2k-n}(p^{n},N_p)$ is the completion of curve $\mathcal{C}_{p}^{2k-n}(p^{n},N_p)=p^{(n-\vert 2k-n \vert)/2-1}(p-1)\mathcal{X}_p^{2k-n}(N)$ along its cuspidal locus, hence the multiplicity of $\mathcal{C}_{p}^{2k-n}(p^{n},N_p)$ in $\textup{div}(\Delta_N)$ is $-12\varphi(N_p)\left(p^{n}(n-k)-p^{n-1}(n-k-1)\right)$.
\end{proof}
\begin{corollary}
    For any positive integer $N$, we have the following identity in $\ACH$,
    \begin{equation*}
        \hod-W_N^{\ast}\hod=\sum\limits_{p\vert N}\widehat{\mathcal{X}}_p(N).
    \end{equation*}
    \label{chayi}
\end{corollary}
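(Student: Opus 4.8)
The plan is to realize both $\hod$ and $W_N^\ast\hod$ as arithmetic divisors of rational sections of $\omega_N^{\otimes 12\varphi(N)}$, choosing the two sections asymmetrically so that the archimedean contributions cancel on the nose. Since $\Delta_N^0$ is a rational section of the metrized bundle $\hod^{\otimes 12\varphi(N)}$, we have in $\ach$, hence after projecting to $\ACH$,
\begin{equation*}
    12\varphi(N)\cdot\hod = \widehat{\textup{div}}(\Delta_N^0) = \left(\textup{div}(\Delta_N^0),\,-\log\Vert\Delta_N^0\Vert^2\right),
\end{equation*}
whereas pulling back the same identity written with the section $\Delta_N$ along the Atkin--Lehner involution gives
\begin{equation*}
    12\varphi(N)\cdot W_N^\ast\hod = W_N^\ast\widehat{\textup{div}}(\Delta_N) = \left(W_N^\ast\textup{div}(\Delta_N),\,-W_N^\ast\log\Vert\Delta_N\Vert^2\right).
\end{equation*}
Subtracting these reduces the corollary to an identity of vertical cycles, provided the two Green currents agree.

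The cancellation of the archimedean parts is the first key point. Using the explicit metric of Example \ref{hodge-bundle}, the definition $\Delta_N^0(\tau)=\Delta_N(W_N\tau)\cdot(N\tau^2)^{-6\varphi(N)}$, and $\textup{Im}(W_N\tau)=\textup{Im}(\tau)/(N|\tau|^2)$, a direct transformation computation yields the pointwise identity $\Vert\Delta_N^0\Vert^2_\tau = \Vert\Delta_N\Vert^2_{W_N\tau}$ on $\Yc_\C$; this is precisely the statement that the weight-$12\varphi(N)$ slash operator realizes the pullback of the metrized section along $W_N$. Consequently $-\log\Vert\Delta_N^0\Vert^2 = -W_N^\ast\log\Vert\Delta_N\Vert^2$ as currents, the two archimedean terms coincide, and
\begin{equation*}
    12\varphi(N)\left(\hod - W_N^\ast\hod\right) = \left(\textup{div}(\Delta_N^0) - W_N^\ast\textup{div}(\Delta_N),\,0\right)\in\ACH.
\end{equation*}

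It then remains to compute the finite cycle. By Theorem \ref{div-element} we have $\textup{div}(\Delta_N^0)=\psi(N)\varphi(N)P_0(N)+\sum_{p\mid N}f_p^0(N)$ and $\textup{div}(\Delta_N)=\psi(N)\varphi(N)P_\infty(N)+\sum_{p\mid N}f_p(N)$. Since $P_0(N)=W_N\circ P_\infty(N)$ (Remark \ref{lianggejidian}), the horizontal cuspidal parts cancel after applying $W_N^\ast$, and by part (c) of Theorem \ref{insideY} the involution acts on vertical components by $W_N^\ast\mathcal{X}_p^a(N)=\mathcal{X}_p^{-a}(N)$. Thus everything collapses to the prime-by-prime identity
\begin{equation*}
    f_p^0(N) - W_N^\ast f_p(N) = 12\varphi(N)\cdot\widehat{\mathcal{X}}_p(N),
\end{equation*}
which I would verify by reindexing $a\mapsto -a$ in $f_p(N)$, subtracting coefficients of each $\mathcal{X}_p^a(N)$, and invoking $\varphi(N)=p^{n-1}(p-1)\varphi(N_p)$ and $\varphi(p^{(n-|a|)/2})=p^{(n-|a|)/2-1}(p-1)$ for $|a|<n$. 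The interior coefficients simplify to $6a\,p^{n-1}(p-1)^2\varphi(N_p)\,p^{(n-|a|)/2-1}$, matching $12\varphi(N)\cdot\tfrac{a}{2}p^{(n-|a|)/2-1}(p-1)$, while the boundary values $a=\pm n$ reproduce $\pm\tfrac{n}{2}$, exactly the coefficients defining $\widehat{\mathcal{X}}_p(N)$. Dividing by $12\varphi(N)$ yields the corollary.

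The main obstacle I anticipate is bookkeeping rather than conceptual: the summation range in $f_p^0(N)$ (over $-n<a\le n$) and in the reindexed $W_N^\ast f_p(N)$ must be aligned so that the isolated term $-6n\varphi(N)\mathcal{X}_p^{-n}(N)$ correctly supplies the missing $a=-n$ component and the two boundary components $a=\pm n$ are handled without a sign or range slip, which would spoil the match. The only genuinely analytic step, the norm identity $\Vert\Delta_N^0\Vert_\tau=\Vert\Delta_N\Vert_{W_N\tau}$, requires attention to the normalization of the metric in Example \ref{hodge-bundle}, but it is elementary once that explicit formula is in hand.
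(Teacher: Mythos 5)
Your proposal is correct and is essentially the paper's own proof: the paper likewise expresses $\hod$ through the section $\Delta_N^{0}$ and $W_N^{\ast}\hod$ through $W_N^{\ast}$ applied to the $\Delta_N$ expression, cancels the archimedean and horizontal cuspidal parts, and reduces to $\sum_{p\vert N}\bigl(f_p^{0}(N)-W_N^{\ast}f_p(N)\bigr)=12\varphi(N)\sum_{p\vert N}\widehat{\mathcal{X}}_p(N)$ via Theorem \ref{div-element}. If anything, you are more careful than the paper, which suppresses the factor $12\varphi(N)$ and leaves the norm identity $\Vert\Delta_N^{0}\Vert_\tau=\Vert\Delta_N\Vert_{W_N\tau}$ and the coefficient bookkeeping implicit.
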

\begin{proof}
    We know that 
    \begin{equation*}
        \hod=(\textup{div}(\Delta_N),-\log\Vert\Delta_N\Vert^{2})=(\textup{div}(\Delta_N^{0}),-\log\Vert\Delta_N^{0}\Vert^{2}),
    \end{equation*}
    therefore
    \begin{align*}
        \hod-W_N^{\ast}\hod&=(\textup{div}(\Delta_N^{0}),-\log\Vert\Delta_N^{0}\Vert^{2})-W_N^{\ast}(\textup{div}(\Delta_N),-\log\Vert\Delta_N\Vert^{2})\\
        &=(\sum\limits_{p\vert N}\left(f_p^{0}(N)-W_N^{\ast}f_p(N)\right),0)=\sum\limits_{p\vert N}\widehat{\mathcal{X}}_p(N).
    \end{align*}
\end{proof}

\section{Arithmetic special cycles}
\subsection{The codimension 1 case: Arithmetic special divisors in $\ACH$}
\label{codim11}
In the section, we define an element in the codimension 1 arithmetic Chow group $\ACH$ of $\Xc$ for any pair $(t,y)$, where $t$ is an integer and $y$ is a positive real number.
\subsubsection{Special divisors}
\label{codd1}
We first define an element in the Chow group $\textup{CH}^{1}(\Xc)$. When $t\leq0$, we define $\mathcal{Z}(t,\la)$ to be $0$ in the Chow group $\textup{CH}^{1}(\Xc)$. Let's now consider the case that $t>0$.
\begin{definition}
For every positive integer $t$. The stack $\mathcal{Z}(t,\la)$ is defined as follows, its fibered category over a scheme $S$ consists of the following objects,
\begin{equation*}
   ((E\stackrel{\pi}\longrightarrow E^{\prime}), j),
\end{equation*}
where $(E\stackrel{\pi}\longrightarrow E^{\prime})$ is an object in $\mathcal{Y}_{0}(N)(S)$, $j\in\textup{Hom}_S(E,E^{\pr})$ is an isogeny such that, 
\begin{equation*}
    j\circ\pi^{\vee}+\pi\circ j^{\vee}=0\,\,\,\textup{and}\,\,\,j^{\vee}\circ j=t.
\end{equation*}
\label{spe}
\end{definition}
It is a well-known fact that the stack $\mathcal{Z}(t,\la)$ is a generalized Cartier divisor on the stack $\Xc$, a detailed proof can be found in \cite[Proposition 4.3.7]{Zhu23} (see also \cite[Lemma 2.3]{SSY22}), therefore it can be viewed as an element in the Chow group $\textup{CH}^{1}(\Xc)$.
\par
Up to now, we have already constructed elements $\mathcal{Z}(t,\la)$ in the Chow group $\textup{CH}^{1}(\Xc)$ for any integer $t$, but these elements need to be modified due to the fact that $\Yc_{}$ is not compact. For any real number $s$, define the function $\beta_{s}$ to be
\begin{equation}
    \beta_{s}(r)=\int_{1}^{\infty}e^{-rt}t^{-s}\textup{d}t,\,\,\textup{where}\,\,r>0.
\end{equation}
Let $y>0$ be a positive real number, $t$ be an integer, define
\begin{equation}
    g(t,y,\la)=
    \begin{cases}
        \frac{\sqrt{N}}{2\pi\sqrt{y}}\beta_{3/2}(-4\pi ty), &\textup{if $-Nt>0$ is a square;}\\
        \frac{\sqrt{N}}{2\pi\sqrt{y}}, &\textup{if $t=0$;}\\
        0, &\textup{otherwise.}
    \end{cases}
    \label{modifying-function}
\end{equation}
Recall that $\mathsf{Cusp}(\Xc)$ is the cuspidal divisor of the stack $\Xc$, we define the modified special divisor to be
\begin{equation}
    \mathcal{Z}^{\ast}(t,y,\la)=\mathcal{Z}(t,\la)+g(t,y,\la)\cdot\mathsf{Cusp}(\Xc).
    \label{modified}
\end{equation}
It is an element in $\CH^{1}(\Xc)$.
\subsubsection{Green functions}
\label{green-function}
Now we are going to construct the Green functions for these cycles, following the work of Kudla \cite{Kud97}. Let $\mathbb{D}=\{z\in\Delta(N)\otimes_{\mathbb{Z}}\mathbb{C}:(z,z)=0,\,(z,\overline{z})<0\}\,/\,\mathbb{C}^{\ast}\subset\mathbb{P}(\Delta(N)\otimes\C)$, for any $x\in\Delta(N)\otimes\mathbb{R}$ and $[z]\in\mathbb{D}$, let $\textup{R}(x,[z])=-\vert(x,z)\vert^{2}\cdot(z,\overline{z})^{-1}$. For any element in $\tau=x+iy\in\mathbb{H}^{+}$, we let
\begin{equation*}
    h(\tau)\coloneqq\frac{1}{\sqrt{N}y}\begin{pmatrix}
        Nx & N(x^{2}+y^{2})\\
        -1 & -x
    \end{pmatrix}.
\end{equation*}
We also define an element $c(\tau)$ of $\mathbb{D}$ associated to the element $\tau$ as follows,
\begin{equation*}
    c(\tau)=\left[\begin{pmatrix}
        -N\tau & -N\tau^{2}\\
        1 & \tau
    \end{pmatrix}\right].
\end{equation*}
\par
Now we construct Green forms for the special cycles defined in $\S$\ref{codd1}. For any pair $(t,y)$ where $t$ is an integer and $y$ is a positive real number, we define
\begin{equation*}
    \mathfrak{g}(t,y,\la)([z])=\sum\limits_{\substack{x\in\Delta(N),x\neq0\\(x,x)=t}}\beta_{1}(2\pi\textup{R}(y^{1/2}x,[z])).
\end{equation*}
It is invariant under the action of $\Gamma_{0}(N)$, hence the function $\mathfrak{g}(d,y,\la)([z])$ is smooth on the open subset $\Yc_{\C}-\mathcal{Z}(t)_{\C}$.
\par
For the pair $(t,y)$, we also consider the following differential on the upper half plane 
\begin{equation}
    \omega(t,y,\la)=\sum\limits_{\substack{x\in\Delta(N),x\neq0\\(x,x)=t}}\left(y(x,h(\tau))^{2}-\frac{1}{2\pi}\right)e^{-2\pi\textup{R}(x,c(\tau))}\cdot\frac{\textup{d}\tau\wedge\textup{d}\overline{\tau}}{-2i\textup{Im}(\tau)^{2}}\in\Omega^{1,1}(\mathbb{H}^{+}).
    \label{differentialform}
\end{equation}
It is invariant under the action of $\Gamma_{0}(N)$ and can be extended to the cusps, hence the differential $(1,1)$-form $\omega(t,y,\la)$ can be descended to a differential $(1,1)$-form on the complex modular curve $\Xc_{\C}$.
\par
Following \cite{DY19}, we modify the bundle $\hod$ in the following way,
\begin{equation}
    \widehat{\omega}\coloneqq-\hod-W_N^{\ast}\hod=-2\hod+\sum\limits_{p\vert N}\widehat{\mathcal{X}}_{p}(N),
    \label{modii}
\end{equation}
notice that the last identity follows from Corollary \ref{chayi}.
\begin{theorem}
For any pair $(t,y)$ such that $t$ is an integer and $y$ is a positive number,\\
    (a).\,The function $\mathfrak{g}(t,y,\la)([z])$ is a Green function for the modified divisor $\mathcal{Z}^{\ast}(t,y,\la)$, and the element $(\mathcal{Z}^{\ast}(t,y,\la),\mathfrak{g}(t,y,\la)([z]))$ belongs to the extended arithmetic Chow group $\ach$, more precisely,
    \begin{equation*}
        \textup{d}\textup{d}^{c}\mathfrak{g}(t,y,\la)+\delta_{\mathcal{Z}^{\ast}(t,y,\la)_{\C}}=[\omega(t,y,\la)].
    \end{equation*}
    \noindent
    $\bullet$\,When $-Nt>0$ is a square, the function $\mathfrak{g}(t,y,\la)([z])$ has log singularity at the cusps;\\
    $\bullet$\,When $t=0$, the function $\mathfrak{g}(0,y,\la)([z])$ has log-log singularity at the cusps;\\
    $\bullet$\,Otherwise the function $\mathfrak{g}(t,y,\la)([z])$ decreases exponentially at the cusps.\\
    (b).\,The element $\widehat{\omega}+(\mathcal{Z}^{\ast}(0,y,\la),\mathfrak{g}(0,y,\la))$ lies in the usual arithmetic Chow group $\ACH$.
\end{theorem}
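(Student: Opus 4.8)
The plan is to reduce everything to Kudla's analysis of the archimedean function $\beta_1$ and then to a careful study of the lattice sum at the cusps. Recall $\beta_1(r)=\int_1^\infty e^{-rt}t^{-1}\,\textup{d}t$, so that $\beta_1(r)=-\gamma-\log r+O(r)$ as $r\to 0^+$ and $\beta_1(r)=O(e^{-r})$ as $r\to\infty$; since $(z,\overline z)<0$ on $\mathbb{D}$ one has $\textup{R}(x,[z])\geq 0$, with equality exactly when $x\perp z$. Such $[z]$ exists only if $q(x)=t>0$, in which case $x$ spans the positive line and $[z_x]$ is the associated special point. For a single $x$ with $q(x)=t>0$, Kudla's computation \cite[\S12]{Kud97} gives the local Green equation $\textup{d}\textup{d}^c[\beta_1(2\pi\textup{R}(y^{1/2}x,\cdot))]+\delta_{[z_x]}=[\varphi_x]$ with $\varphi_x$ an explicit smooth form, while for $q(x)=t\leq 0$ the term is smooth on $\Yc_\C$.

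First I would prove the current equation in (a). Summing the single-vector equations over $x\in\Delta(N)\setminus\{0\}$ with $q(x)=t$, and using the exponential decay of $\beta_1$ at infinity to justify locally uniform convergence away from $\mathcal{Z}(t)_\C$ as in \cite[\S5]{DY19}, produces $\textup{d}\textup{d}^c\mathfrak{g}(t,y,\la)+\delta_{\mathcal{Z}(t)_\C}=[\omega(t,y,\la)]$ up to a correction supported at the cusps, with $\omega(t,y,\la)$ the form (\ref{differentialform}). This correction is precisely matched by the term $g(t,y,\la)\cdot\mathsf{Cusp}(\Xc)$ of (\ref{modifying-function}) and (\ref{modified}), so that the equation takes the stated form with $\delta_{\mathcal{Z}^\ast(t,y,\la)_\C}$; verifying this matching is part of the cusp analysis below. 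Membership of $(\mathcal{Z}^\ast(t,y,\la),\mathfrak{g}(t,y,\la))$ in $\ach$ then follows once the singularity type at each cusp is identified.

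The main work is the asymptotic of $\mathfrak{g}(t,y,\la)$ at a cusp, say $\infty$, which corresponds to the rational isotropic line $\ell=\langle e_2\rangle\subset\Delta(N)\otimes\Q$ (in the basis $e_1,e_2,e_3$ of Example \ref{lambda}) since $c(\tau)\to[e_2]$ as $\tau\to i\infty$. With local parameter $q=e^{2\pi i\tau}$ I would split the sum according to the position of $x$ relative to $\ell$. Using $\ell^\perp=\langle e_1,e_2\rangle$ together with the identity $q(ae_1+be_2)=-Na^2$, the vectors in $\ell^\perp$ with $q(x)=t$ exist exactly when $-Nt$ is a positive square; for these $x$ one has $(x,e_2)=0$, hence $\textup{R}(y^{1/2}x,c(\tau))\to 0$ and $\beta_1\sim-\log\textup{R}$ yields a genuine $\log|q|$ (logarithmic) singularity whose coefficient is exactly $g(t,y,\la)$. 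When $t=0$ only the isotropic vectors inside $\ell$ itself survive, and a Poisson-type estimate of their contribution gives a leading term $\sim-\log(-\log|q|^2)$, a log-log singularity. In all remaining cases ($t>0$, or $t<0$ with $-Nt$ not a square) every $x$ keeps $\textup{R}$ bounded away from $0$ near the cusp, so $\mathfrak{g}$ decays exponentially. I expect the uniform control of the full lattice sum — separating the finitely many singular vectors in $\ell^\perp$ from the exponentially small remainder with estimates uniform as $\tau\to i\infty$ — to be the main obstacle; once it is in place, the cuspidal correction in the current equation and all three singularity types fall out simultaneously, giving $(\mathcal{Z}^\ast(t,y,\la),\mathfrak{g}(t,y,\la))\in\ach$.

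Finally, for part (b) I would isolate the log-log part at $t=0$ and match it against the Hodge bundle. By Example \ref{hodge-bundle} the metric on $\hod$ produces a Green current whose leading cuspidal term is $-\log v=-\log(-\tfrac{1}{4\pi}\log|q|^2)$, a log-log singularity, and likewise for $W_N^\ast\hod$; by (\ref{modii}) and Corollary \ref{chayi} we have $\widehat\omega=-2\hod+\sum_{p\mid N}\widehat{\mathcal{X}}_p(N)$, where the vertical classes $\widehat{\mathcal{X}}_p(N)$ play no role at the archimedean place, so the log-log coefficient of $\widehat\omega$ is $-2$ times that of $\hod$. Since $c_1(\hod)$ is a constant multiple of the invariant $(1,1)$-form and $\omega(0,y,\la)$ is the same invariant form up to the identical constant (the value forced by the Siegel--Weil normalization, computed in \cite[\S5,\S7]{DY19}), and since the log-log coefficient of a logarithmically singular metric is pinned down by its curvature, the log-log singularity of $\mathfrak{g}(0,y,\la)$ cancels that of $\widehat\omega$ exactly. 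Hence $\widehat\omega+(\mathcal{Z}^\ast(0,y,\la),\mathfrak{g}(0,y,\la))$ has only logarithmic singularities along the cusp divisor and smooth curvature, so it lies in $\ACH$. The delicate point here is matching the two constants so that the cancellation is exact rather than merely of the same order.
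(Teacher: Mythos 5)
Your overall skeleton (Kudla's single-vector Green equation plus a lattice-sum analysis at each cusp) is the right one, and it is essentially the strategy behind the result the paper invokes; note that the paper itself proves this theorem purely by citation, taking part (a) from \cite[Theorem 5.1]{DY19} and part (b) from \cite[Theorem 6.6]{DY19}. However, your execution of what you yourself identify as the main work --- the asymptotics at the cusps --- contains a genuine error, and it occurs in exactly the case that distinguishes the first bullet.

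You claim that for $x\in\ell^{\perp}$ of norm $t$ (the case $-Nt>0$ a square) one has $\textup{R}(y^{1/2}x,c(\tau))\to 0$ as $\tau\to i\infty$, so that $\beta_1\sim-\log\textup{R}$ produces the $\log|q|$ singularity with coefficient $g(t,y,\la)$. This is false: both the numerator $\vert(x,c(\tau))\vert^{2}$ and the denominator $-(c(\tau),\overline{c(\tau)})$ vanish at the cusp at the same rate. Concretely, writing $x=a_0e_1+b_0e_2$ in the basis of Example \ref{lambda}, one computes $(x,c(\tau))=-(2Na_0\tau+b_0)$ and $(c(\tau),\overline{c(\tau)})=-4N(\mathrm{Im}\,\tau)^{2}$, hence
\begin{equation*}
\textup{R}(x,c(\tau))=\frac{(2Na_0\,\mathrm{Re}\,\tau+b_0)^{2}}{4N(\mathrm{Im}\,\tau)^{2}}+Na_0^{2}\,\geq\,Na_0^{2}>0,
\end{equation*}
so no individual term of $\mathfrak{g}(t,y,\la)$ blows up; $\textup{R}\to0$ happens only for the isotropic vectors $x\in\ell$ themselves, i.e.\ in the $t=0$ case. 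Relatedly, ``the finitely many singular vectors in $\ell^{\perp}$'' do not exist: the set of such $x$ is infinite, being a union of cosets of $\Z e_2$, since translation by $e_2$ preserves the norm. The log singularity actually arises because the number of members of such a coset that contribute non-negligibly grows linearly in $\mathrm{Im}\,\tau\sim-\log|q|$, each term being of bounded size; evaluating the coset sum by comparison with the Gaussian-type integral $\int_{\mathbb{R}}\beta_1\left(2\pi y(v^{2}+Na_0^{2})\right)\textup{d}v$ is precisely what produces the factor $\beta_{3/2}(-4\pi ty)/\sqrt{y}$ in $g(t,y,\la)$. Your mechanism cannot produce this shape (a $-\log\textup{R}$ blow-up would give a coefficient with no $\beta_{3/2}$ in it), and summing your purported log-singular terms over the infinite coset would diverge. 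The $t=0$ discussion and the cancellation argument for part (b) are sound in outline (there one does have $\textup{R}\to0$, exactly for $x\in\ell$, and the same coset summation yields both the log term with coefficient $g(0,y,\la)$ and the log-log remainder), but part (a) as written fails at its central step.
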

\begin{proof}
    The part (a) is proved in \cite[Theorem 5.1]{DY19}, the part (b) is proved in \cite[Theorem 6.6]{DY19}.
\end{proof}
\begin{definition}
    For any integer $t\in\Z$ and real number $y>0$, we define the following element in $\ACH$,
    \begin{equation*}
        \ad(t,y,\la)=
        \begin{cases}
            (\mathcal{Z}^{\ast}(t,y,\la),\mathfrak{g}(t,y,\la)), &\textup{if $t\neq0$;}\\
            \widehat{\omega}+(\mathcal{Z}^{\ast}(0,y,\la),\mathfrak{g}(0,y,\la))-(0,\log y), &\textup{if $t=0$.}
        \end{cases}
    \end{equation*}
    \label{special-divisor}
\end{definition}
\begin{lemma}
    For any integer $t\in\Z$ and real number $y>0$, we have the following identity in $\ACH$,
    \begin{equation*}
        W_N^{\ast}\ad(t,y,\la)=\ad(t,y,\la).
    \end{equation*}
    \label{invariant}
\end{lemma}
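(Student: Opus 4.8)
The plan is to reduce the statement to two independent invariances---one algebraic, one archimedean---and verify each. First observe that for $t=0$ the extra terms in Definition \ref{special-divisor} are already manifestly invariant: the current $(0,\log y)$ is the pullback of a constant, and $\widehat{\omega}=-\hod-W_N^{\ast}\hod$ satisfies $W_N^{\ast}\widehat{\omega}=-W_N^{\ast}\hod-(W_N^{\ast})^{2}\hod=\widehat{\omega}$ since $W_N$ is an involution. Hence in every case it suffices to prove
\[
W_N^{\ast}\bigl(\mathcal{Z}^{\ast}(t,y,\la),\mathfrak{g}(t,y,\la)\bigr)=\bigl(\mathcal{Z}^{\ast}(t,y,\la),\mathfrak{g}(t,y,\la)\bigr),
\]
and since $\mathcal{Z}^{\ast}(t,y,\la)=\mathcal{Z}(t,\la)+g(t,y,\la)\cdot\mathsf{Cusp}(\Xc)$ with $g(t,y,\la)$ a scalar depending only on $t,y$, this breaks into (i) $W_N^{\ast}\mathcal{Z}(t,\la)=\mathcal{Z}(t,\la)$ in $\CH^{1}(\Xc)$, (ii) $W_N^{\ast}\mathsf{Cusp}(\Xc)=\mathsf{Cusp}(\Xc)$, and (iii) the $W_N$-invariance of the Green function $\mathfrak{g}(t,y,\la)$.

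For (i) I would argue on the moduli level. The involution $W_N$ sends $(E\stackrel{\pi}\rar E^{\pr})$ to $(E^{\pr}\stackrel{\pi^{\vee}}\rar E)$, and I would lift it to $\mathcal{Z}(t,\la)$ by sending the special isogeny $j$ to its dual $j^{\vee}$. One checks $j^{\vee}\in\textup{Hom}_S(E^{\pr},E)$ satisfies $j\circ j^{\vee}=j^{\vee}\circ j=t$, so the degree condition is preserved; the orthogonality condition for the image, $j^{\vee}\circ\pi+\pi^{\vee}\circ j=0$, follows from the original relation $j\circ\pi^{\vee}+\pi\circ j^{\vee}=0$ by composing on the left with $\pi^{\vee}$ and on the right with $\pi$, which yields $N(\pi^{\vee}j+j^{\vee}\pi)=0$ using $\pi^{\vee}\pi=[N]$, whence the relation itself since $[N]$ is injective on the torsion-free module $\textup{Hom}_S(E^{\pr},E)$. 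This exhibits $W_N$ as an automorphism of $\mathcal{Z}(t,\la)$; as $\mathcal{Z}(t,\la)$ is supported in $\Yc$ for $t>0$ (and zero for $t\leq0$), invariance of the divisor class follows. For (ii), recall from \S\ref{codim11} that $\mathsf{Cusp}(\Xc)=(p_1,p_2)^{\ast}\mathsf{Cusp}(\mathcal{H})$ with $\mathsf{Cusp}(\mathcal{H})=P_{\infty}(1)\times\mathcal{X}_0(1)+\mathcal{X}_0(1)\times P_{\infty}(1)$, and that $W_N$ is induced by the involution $\iota$ of $\mathcal{H}=\mathcal{X}_0(1)\times\mathcal{X}_0(1)$ swapping the factors, so $p_1\circ W_N=p_2$ and $p_2\circ W_N=p_1$. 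Since $\mathsf{Cusp}(\mathcal{H})$ is $\iota$-symmetric, $W_N^{\ast}\mathsf{Cusp}(\Xc)=(p_1,p_2)^{\ast}\iota^{\ast}\mathsf{Cusp}(\mathcal{H})=\mathsf{Cusp}(\Xc)$.

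For (iii) the key is to identify the action of $W_N$ on the uniformization $\Xc_{\C}\simeq\Gamma_0(N)\backslash\mathbb{H}^{+}$ with a lattice isometry. Classically $W_N$ is the Fricke involution $\tau\mapsto -1/(N\tau)$; computing its effect on the point $c(\tau)\in\mathbb{D}$ from \S\ref{green-function}, one finds that $c(-1/(N\tau))$ equals, projectively, ${}^{t}c(\tau)$, so $W_N$ acts through the transpose map $\sigma_N\colon x\mapsto {}^{t}x$, which stabilizes $\la$ and preserves $\det$, hence lies in $\textup{O}(\la)$. Because $\mathfrak{g}(t,y,\la)([z])=\sum_{x\in\la,\,x\neq0,\,(x,x)=t}\beta_1\bigl(2\pi\textup{R}(y^{1/2}x,[z])\bigr)$ is built only from the isometry-invariant quantity $\textup{R}(x,[z])=-\vert(x,z)\vert^{2}(z,\overline z)^{-1}$, the identity $\textup{R}(x,\sigma_N[z])=\textup{R}(\sigma_N^{-1}x,[z])$ together with the fact that $\sigma_N$ permutes $\{x\in\la:(x,x)=t\}$ gives $\mathfrak{g}(t,y,\la)(\sigma_N[z])=\mathfrak{g}(t,y,\la)([z])$ after reindexing. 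Combining (i)--(iii) yields the claim. I expect (iii) to be the main obstacle: one must reconcile the \emph{two} a priori different descriptions of $W_N$---the moduli involution used in (i) and the Fricke involution on $\mathbb{H}^{+}$---and carry out the explicit identification of the induced action on $\mathbb{D}$ with the lattice isometry $\sigma_N$; once this dictionary is established, the invariance of $\mathfrak{g}(t,y,\la)$ (and of the associated form $\omega(t,y,\la)$) is formal.
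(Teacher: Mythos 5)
Your proof is correct, and it takes a genuinely different route from the paper's, whose entire proof is a citation: for $t\neq0$ the paper quotes \cite[Proposition 6.8]{DY19}, and for $t=0$ it appeals to the definition of $\widehat{\omega}$. You supply the underlying argument in full, and each step checks out: the moduli involution $j\mapsto j^{\vee}$ covering $W_N$ (composing the orthogonality relation with $\pi^{\vee}$ on the left and $\pi$ on the right and using torsion-freeness of the Hom group does transfer it to the image point, and $j\circ j^{\vee}=[t]$ is the standard dual-isogeny identity); the swap-symmetry giving $W_N^{\ast}\mathsf{Cusp}(\Xc)=\mathsf{Cusp}(\Xc)$ via $p_1\circ W_N=p_2$; and the computation $c(-1/(N\tau))=[{}^{t}c(\tau)]$, which correctly identifies the Fricke action on $\mathbb{D}$ with the transpose isometry of $\Delta(N)$ and reduces the invariance of $\mathfrak{g}(t,y,\la)$ (and of $\omega(t,y,\la)$) to a reindexing of the lattice sum. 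What your approach buys: it is self-contained, it is visibly valid for arbitrary $N$ (the cited \cite{DY19} works in the square-free setting, so the paper implicitly relies on that proof carrying over unchanged), and it is more careful on the $t=0$ case --- the paper's remark that $t=0$ ``follows from the definition of $\widehat{\omega}$'' glosses over the summand $(\mathcal{Z}^{\ast}(0,y,\la),\mathfrak{g}(0,y,\la))$, which is not manifestly invariant and genuinely requires your steps (ii) and (iii), since $\mathcal{Z}^{\ast}(0,y,\la)=g(0,y,\la)\cdot\mathsf{Cusp}(\Xc)$ and $\mathfrak{g}(0,y,\la)$ is a nontrivial sum over isotropic lattice vectors. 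Two harmless slips, neither affecting correctness: the relation $N(\pi^{\vee}\circ j+j^{\vee}\circ\pi)=0$ lives in $\textup{End}_S(E)$, not $\textup{Hom}_S(E^{\prime},E)$; and for $t>0$ the divisor on $\Xc$ is the closure of the cycle supported over $\Yc$, so one should add the one-line remark that invariance over $\Yc$ implies invariance of the closure because $W_N$ is an automorphism of $\Xc$ preserving $\Yc$.
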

\begin{proof}
    When $t=0$, this follows from the definition of $\widehat{\omega}$ in (\ref{modii}). When $t\neq0$, this is proved in \cite[Proposition 6.8]{DY19}.
\end{proof}
\subsection{The codimension 2 case: Arithmetic special cycles in $\Ach$}
In the section, we define an element in the codimension 2 arithmetic Chow group $\Ach$ of $\Xc$ for any pair $(T,\mathsf{y})$, where $T\in\textup{Sym}_2(\mathbb{Q})$ is a half-integral $2\times 2$ symmetric matrix and $\mathsf{y}\in\textup{Sym}_2(\mathbb{R})$ is a positive definite $2\times2$ matrix. 
\par
\subsubsection{The rank of $T$ is 2} In this case, the matrix $T$ is nonsingular. For any positive definite $2\times2$ matrix $\mathsf{y}$ and $T$-adimissible Schwartz function $\boldsymbol{\varphi}\in\mathscr{S}(\mathbb{V}_f^{2})$, an element $\widehat{\mathcal{Z}}(T,\mathsf{y},\boldsymbol{\varphi})\in\Ach$ has been made in \cite[\S4.3 (16)]{Zhu23}. We define $\widehat{\mathcal{Z}}(T,\mathsf{y})=\widehat{\mathcal{Z}}(T,\mathsf{y},1_{\left(\Delta(N)\otimes\hat{\Z}\right)^{2}})$.
\subsubsection{The rank of $T$ is 1}
\begin{lemma}
    Let $T\in\textup{Sym}_2(\mathbb{Q})$ be a half-integral $2\times 2$ symmetric matrix such that $\textup{rank}(T)=1$, then there exists $t\in\Z$ and an element $\gamma\in\textup{GL}_2(\Z)$ such that
    \begin{equation*}
        T={^{t}\gamma}\cdot\begin{pmatrix}
            0 & 0\\
            0 & t
        \end{pmatrix}\cdot\gamma.
    \end{equation*}
    Moreover, the integer $t$ is uniquely determined by $T$, and is invariant upon replacing $T$ by ${^{t}\sigma}T\sigma$ with $\sigma\in\textup{GL}_2(\Z)$.
    \label{reduce1}
\end{lemma}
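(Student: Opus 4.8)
The plan is to split the statement into an existence part, which produces $\gamma$ and $t$, and a uniqueness part, from which the invariance assertion will follow immediately.

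For existence I would work inside the lattice $\Z^{2}$ with the rational line $\ker T=\{v\in\Q^{2}:Tv=0\}$, which is one-dimensional precisely because $\textup{rank}(T)=1$. Its intersection with $\Z^{2}$ is a saturated rank-one sublattice, so there is a primitive vector $v_{1}\in\Z^{2}$ with $\ker T\cap\Z^{2}=\Z v_{1}$. Since $v_{1}$ is primitive it extends to a $\Z$-basis $\{v_{1},v_{2}\}$ of $\Z^{2}$; let $P=(v_{1}\mid v_{2})\in\textup{GL}_{2}(\Z)$ be the matrix with these columns. Then ${}^{t}P\,T\,P$ is diagonal, because $Tv_{1}=0$ kills its $(1,1)$ and off-diagonal entries, so ${}^{t}P\,T\,P=\textup{diag}\{0,t\}$ with $t={}^{t}v_{2}\,T\,v_{2}$. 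Setting $\gamma=P^{-1}\in\textup{GL}_{2}(\Z)$ gives $T={}^{t}\gamma\,\textup{diag}\{0,t\}\,\gamma$. Two checks remain: first, $t\in\Z$, which follows from $T$ being half-integral, since then ${}^{t}x\,T\,x\in\Z$ for every $x\in\Z^{2}$; second, $t\neq 0$, which holds because $P$ is invertible, forcing $\textup{rank}(\textup{diag}\{0,t\})=\textup{rank}(T)=1$.

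For uniqueness it suffices to show that if $\textup{diag}\{0,t_{1}\}={}^{t}\sigma\,\textup{diag}\{0,t_{2}\}\,\sigma$ for some $\sigma=\begin{pmatrix}a&b\\c&d\end{pmatrix}\in\textup{GL}_{2}(\Z)$, then $t_{1}=t_{2}$; indeed, given two diagonalizations $T={}^{t}\gamma_{i}\,\textup{diag}\{0,t_{i}\}\,\gamma_{i}$ one takes $\sigma=\gamma_{2}\gamma_{1}^{-1}$. A direct computation gives
\begin{equation*}
    {}^{t}\sigma\,\textup{diag}\{0,t_{2}\}\,\sigma=t_{2}\begin{pmatrix}c^{2}&cd\\cd&d^{2}\end{pmatrix},
\end{equation*}
so comparison with $\textup{diag}\{0,t_{1}\}$ forces $t_{2}c^{2}=0$. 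Since $t_{2}\neq 0$ this gives $c=0$, whence $\det\sigma=ad=\pm1$ forces $d=\pm1$, and therefore $t_{1}=t_{2}d^{2}=t_{2}$. The invariance under $T\mapsto{}^{t}\sigma T\sigma$ is then immediate: if $T={}^{t}\gamma\,\textup{diag}\{0,t\}\,\gamma$ then ${}^{t}\sigma T\sigma={}^{t}(\gamma\sigma)\,\textup{diag}\{0,t\}\,(\gamma\sigma)$ exhibits the same $t$, which must coincide with the unique integer attached to ${}^{t}\sigma T\sigma$.

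The routine steps are the basis extension and the matrix computation; the only points needing care are the two checks in the existence step—that half-integrality of $T$ forces $t\in\Z$, and that the rank hypothesis forces $t\neq 0$—together with the observation that the primitivity of $v_{1}$ is exactly what permits the extension to a $\Z$-basis. I expect no genuine obstacle, as the entire argument is elementary linear algebra over $\Z$.
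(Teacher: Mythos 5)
Your proof is correct and complete. Note that the paper itself gives no argument for this lemma: it simply cites \cite[Lemma 2.10]{SSY22}, so your write-up supplies a self-contained proof where the paper defers to a reference. Your argument is the natural elementary one: the kernel line of $T$ meets $\Z^2$ in a saturated rank-one sublattice, its primitive generator extends to a $\Z$-basis, and conjugating by the resulting basis matrix diagonalizes $T$ to $\textup{diag}\{0,t\}$; half-integrality gives $t\in\Z$, invertibility of the base change gives $t\neq0$, and the explicit computation ${}^{t}\sigma\,\textup{diag}\{0,t_2\}\,\sigma=t_2\left(\begin{smallmatrix}c^2&cd\\cd&d^2\end{smallmatrix}\right)$ forces $c=0$, $d=\pm1$, hence $t_1=t_2$, from which the $\textup{GL}_2(\Z)$-invariance follows formally. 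All the delicate points are handled: saturation of the kernel lattice, integrality of $t$ from half-integrality, nonvanishing of $t$ from the rank hypothesis, and the reduction of uniqueness to the congruence $\sigma=\gamma_2\gamma_1^{-1}$ between two diagonalizing matrices.
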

\begin{proof}
    This is proved in \cite[Lemma 2.10]{SSY22}.
\end{proof}
\begin{definition}
    Let $T\in\textup{Sym}_2(\mathbb{Q})$ be a half-integral $2\times 2$ symmetric matrix such that $\textup{rank}(T)=1$, let $\mathsf{y}\in\textup{Sym}_2(\mathbb{R})$ is a positive definite $2\times2$ matrix. Let $t$ be the integer associated to $T$ by Lemma \ref{reduce1}. Let $y_2=t^{-1}\textup{tr}(T\mathsf{y})$ and $y_1=\frac{\textup{det}\,\mathsf{y}}{y_2}$, both of $y_1$ and $y_2$ are positive numbers. Define
    \begin{equation*}
        \widehat{\mathcal{Z}}(T,\mathsf{y})=\widehat{\mathcal{Z}}(t,y_2,\la)\cdot\widehat{\omega}-\left(0,\log y_1\cdot\delta_{\mathcal{Z}^{\ast}(t,y_2,\la)_\C}\right)
    \end{equation*}
    where $\mathcal{Z}^{\ast}(t,y_2,\la)_\C$ is the complex points of the modified special cycle defined in (\ref{modified}).
\end{definition}
\begin{remark}
    When the rank of $T$ is 1. We notice two invariance properties, which both follows from the definitions:
    \begin{equation}
        \widehat{\mathcal{Z}}({^{t}\gamma}T\gamma,\mathsf{y})=\widehat{\mathcal{Z}}(T,\gamma\mathsf{y}{^{t}\gamma}),\,\,\textup{for any}\,\,\gamma\in\textup{GL}_{2}(\Z).
        \label{inv1}
    \end{equation}
    \begin{equation}
        \widehat{\mathcal{Z}}\left(\begin{pmatrix}
            0 & 0\\
            0 & t
        \end{pmatrix},\theta\mathsf{y}{^{t}\theta}\right)=\widehat{\mathcal{Z}}\left(\begin{pmatrix}
            0 & 0\\
            0 & t
        \end{pmatrix},\mathsf{y}\right),\,\,\textup{for any}\,\,\theta=\begin{pmatrix}
            1 & x\\
            0 & 1
        \end{pmatrix}\,\,\textup{where $x\in\mathbb{R}$}.
        \label{inv2}
    \end{equation}
\end{remark}
\subsubsection{The rank of $T$ is 0} In this case, the matrix $T=\boldsymbol{0}_2$.
\begin{definition}
    Let $\mathsf{y}\in\textup{Sym}_{2}(\mathbb{R})$ be a positive definite matrix, we define the following element in $\Ach$,
    \begin{equation*}
        \widehat{\mathcal{Z}}(\mathbf{0}_{2},\mathsf{y})=\widehat{\omega}\cdot \widehat{\omega}+(0,\log\textup{det}\mathsf{y}\cdot[\Omega]).
    \end{equation*}
\end{definition}
Now for any element $\mathsf{z}=\mathsf{x}+i\mathsf{y}\in\mathbb{H}_{2}$, we consider the following generating series with coefficients in $\Ach$,
\begin{equation*}
    \widehat\phi_2(\mathsf{z})=\sum\limits_{T\in\textup{Sym}_2(\mathbb{Q})}\widehat{\mathcal{Z}}(T,\mathsf{y})\cdot q^{T}
\end{equation*}
where $q^{T}=e^{2\pi i \,\textup{tr}\,(T\mathsf{z})}$. Recall that we have defined a degree map $\widehat{\textup{deg}}: \widehat{\textup{CH}}^{2}(\mathcal{X}_{0}(N))\stackrel{\sim}\rightarrow\mathbb{C}$ in $\S$\ref{acg}.
\begin{theorem}
    Let $N$ be a positive integer. The generating series $\widehat\phi_2$ is a nonholomorphic Siegel modular form of genus 2 and weight $\frac{3}{2}$. More precisely, under the isomorphism $\widehat{\textup{deg}}: \widehat{\textup{CH}}^{2}_{\mathbb{C}}(\mathcal{X}_{0}(N))\stackrel{\sim}\rightarrow\mathbb{C}$
\begin{equation*}
    \widehat\phi_2(\mathsf{z})= \frac{\psi(N)}{24}\cdot\partial\textup{Eis}(\mathsf{z},\la^{2}),
\end{equation*}
here $\psi(N)=N\cdot\prod\limits_{p\vert N}(1+p^{-1})$.
\label{mainglobal}
\end{theorem}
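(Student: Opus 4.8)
The plan is to apply the arithmetic degree isomorphism $\widehat{\textup{deg}}\colon\Ach\stackrel{\sim}{\rightarrow}\C$ and to prove the identity (\ref{decomposition}) one Fourier coefficient at a time; once this holds for every $T\in\textup{Sym}_2(\Q)$, the modularity assertion follows immediately from that of $\partial\textup{Eis}(\mathsf{z},\la^2)$. The nonsingular case $\textup{rank}(T)=2$ is already established in \cite[Theorem 1.2.1]{Zhu23}, so I would concentrate on the two singular cases. In each I would first pass to a normal form: using the invariance (\ref{inv1}) on the geometric side and the matching $\textup{GL}_2(\Z)$-equivariance of the coefficients $E_T$ on the analytic side, I may assume $T=\textup{diag}\{0,t\}$ when $\textup{rank}(T)=1$; and using (\ref{inv2}) together with the translation behaviour of the Whittaker integrals I may take $\mathsf{z}=i\mathsf{y}$ with $\mathsf{y}=\textup{diag}\{y_1,y_2\}$, so that Lemma \ref{EtoW}, Proposition \ref{2to1mei} and Corollary \ref{jiexicezhuyaos} become directly applicable.

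For $\textup{rank}(T)=1$ the analytic side is Corollary \ref{jiexicezhuyaos}, which writes $E_T^{\pr}(i\mathsf{y},0,\la^2)$ as $2E_t^{\pr}(iy_2,\tfrac12,\la)$ plus an explicit scalar multiple of $E_t(iy_2,\tfrac12,\la)$ whose constant term involves $\log y_1$, $\log N$, $\Lambda^{\pr}(2)/\Lambda(2)$ and the local factors $\beta_p^{\pr}(0)$. The geometric coefficient is $\widehat{\textup{deg}}\big(\ad(t,y_2,\la)\cdot\widehat{\omega}\big)$ minus the degree of the correction current $(0,\log y_1\cdot\delta_{\mathcal{Z}^{\ast}(t,y_2,\la)_\C})$. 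Since the height pairing $\ad(t,y_2,\la)\cdot\widehat{\omega}$ is a purely genus-$1$ quantity independent of $y_1$, the entire $y_1$-dependence of the geometric side resides in the correction term, and the genus-$1$ Siegel--Weil identity on the complex fibre, which identifies $\deg\mathcal{Z}^{\ast}(t,y_2,\la)_\C$ with a multiple of $E_t(iy_2,\tfrac12,\la)$, matches the $\log y_1$-coefficient exactly. It then remains to expand, via (\ref{modii}),
\begin{equation*}
    \ad(t,y_2,\la)\cdot\widehat{\omega}=-2\,\ad(t,y_2,\la)\cdot\hod+\sum_{p\vert N}\ad(t,y_2,\la)\cdot\widehat{\mathcal{X}}_p(N),
\end{equation*}
and to evaluate each piece.

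The main obstacle is the evaluation of these pieces, in particular the reconciliation of the vertical terms $\ad(t,y_2,\la)\cdot\widehat{\mathcal{X}}_p(N)$ with the local analytic factors $\beta_p^{\pr}(0)$. My tool here is the explicit rational section $\Delta_N$ of $\hod^{\otimes12\varphi(N)}$: by Theorem \ref{div-element} its divisor is $\psi(N)\varphi(N)P_\infty(N)+\sum_{p\vert N}f_p(N)$ with $f_p(N)$ an explicit $\Z$-combination of the special-fibre components $\mathcal{X}_p^a(N)$. Intersecting $\ad(t,y_2,\la)$ with $\hod$ and with $W_N^{\ast}\hod$ thus splits into an archimedean contribution (the Green function $\mathfrak{g}(t,y_2,\la)$ and the modifying function $g(t,y_2,\la)$ integrated against $c_1(\hod)$, together with the values at the cusps $P_\infty(N)$, $P_0(N)$) and a finite contribution that reduces, through $f_p(N)$, to how $\mathcal{Z}(t,\la)$ meets each $\mathcal{X}_p^a(N)$ and to the component intersection numbers $\langle\widehat{\mathcal{X}}_p^a(N),\widehat{\mathcal{X}}_p^b(N)\rangle$ of Lemma \ref{int}. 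The correspondence between the cuspidal components $\mathsf{C}^a(p^n)$ and the vertical components $\mathcal{X}_p^a(N)$ (Corollary \ref{cuspC}, Proposition \ref{luozainali}) is what lets me match the local multiplicities packaged in $f_p(N)$ to the logarithmic-derivative factors $\beta_p^{\pr}(0)$; verifying that these two prime-by-prime bookkeeping schemes agree is the technical heart of the proof.

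Finally, for $T=\mathbf{0}_2$ I would compute the self-intersection of $\widehat{\omega}$ directly. Writing $\widehat{\omega}=-\hod-W_N^{\ast}\hod$, using that $W_N$ is an isometry for the height pairing and Corollary \ref{chayi}, and noting $\widehat{\mathcal{X}}_p(N)\cdot\widehat{\mathcal{X}}_q(N)=0$ for distinct $p,q$, the cross terms collapse to
\begin{equation*}
    \widehat{\omega}\cdot\widehat{\omega}=4\,\hod\cdot\hod-\sum_{p\vert N}\langle\widehat{\mathcal{X}}_p(N),\widehat{\mathcal{X}}_p(N)\rangle.
\end{equation*}
Substituting the value of $\hod\cdot\hod$ from Example \ref{wnself} and of $\langle\widehat{\mathcal{X}}_p(N),\widehat{\mathcal{X}}_p(N)\rangle$ from Corollary \ref{geometric0}, and adding the archimedean term $(0,\log\det\mathsf{y}\cdot[\Omega])$ whose degree is $\tfrac{\psi(N)}{24}\log\det\mathsf{y}$, I would recover precisely $\tfrac{\psi(N)}{24}\,\partial\textup{Eis}_{\mathbf{0}_2}(\mathsf{z},\la^2)$ as given by Proposition \ref{analytic0}. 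Assembling the three cases yields (\ref{decomposition}) for all $T$, and hence the theorem.
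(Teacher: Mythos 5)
Your proposal is correct and follows essentially the same route as the paper: term-by-term comparison after reduction to $T=\textup{diag}\{0,t\}$, $\mathsf{y}=\textup{diag}\{y_1,y_2\}$, with the rank-1 case settled by matching Corollary \ref{jiexicezhuyaos} against the height pairing $\langle\widehat{\mathcal{Z}}(t,y_2,\la),\hod\rangle$ (Corollary \ref{jihecezhuyaos}, whose proof is exactly the $\Delta_N$, $f_p(N)$, and cusp-component bookkeeping you describe), and the rank-0 case by the identity $\widehat{\omega}\cdot\widehat{\omega}=4\,\hod\cdot\hod-\sum_{p\vert N}\langle\widehat{\mathcal{X}}_p(N),\widehat{\mathcal{X}}_p(N)\rangle$ combined with Example \ref{wnself}, Corollary \ref{geometric0} and Proposition \ref{analytic0}. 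The only cosmetic difference is that the paper eliminates the vertical terms $\ad(t,y_2,\la)\cdot\widehat{\mathcal{X}}_p(N)$ at once by $W_N$-invariance (Lemma \ref{invariant}), writing $\widehat{\mathcal{Z}}(T,\mathsf{y})=-2\widehat{\mathcal{Z}}(t,y_2,\la)\cdot\hod-(0,\log y_1\cdot\delta_{\mathcal{Z}^{\ast}(t,y_2,\la)_{\C}})$, whereas you retain them and propose to evaluate them directly; they in fact vanish for the same invariance reason, so your plan closes without extra work.
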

The proof of Theorem \ref{mainglobal} will be given in the next section.

\part{Proof of the main results}
\section{Heights of arithmetic special divisors}
\subsection{Degrees of arithmetic special divisors}
Let $(\mathcal{Z},g)$ be an element in the extended arithmetic Chow group $\ach$, then there exists a smooth $(1,1)$-form $\omega$ on the complex curve $\Xc_{\C}$ such that we have the following identity as Green currents on the complex curve $\Xc_{\C}$,
\begin{equation*}
    \textup{d}\textup{d}^{c}(g)+\delta_{\mathcal{Z}_{\C}}=[\omega].
\end{equation*}
we define the following degree map,
\begin{align*}
    \deg: \ach&\longrightarrow \C\\
    (\mathcal{Z},g)&\longmapsto \deg(\mathcal{Z},g)=\int\limits_{\Xc_{\C}}\omega=\langle(\mathcal{Z},g), (0,2)\rangle.
\end{align*}
\begin{proposition}
    Let $(t,y)$ be a pair such that $t$ is an integer and $y$ is a positive number. Let $\tau=x+iy\in\mathbb{H}^{+}$ and $q=e^{2\pi i\tau}$, we have
    \begin{equation*}
        \deg\widehat{\mathcal{Z}}(t,y,\la)\cdot q^{t}=\frac{2}{\varphi(N)}\mathcal{E}_t(\tau,1,\Delta(N)).
    \end{equation*}
    \label{deg1E}
\end{proposition}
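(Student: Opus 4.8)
The plan is to read off the degree directly from its definition $\deg(\mathcal{Z},g)=\int_{\Xc_{\C}}\omega$, where $\omega$ is the curvature form determined by $\textup{d}\textup{d}^{c}g+\delta_{\mathcal{Z}_{\C}}=[\omega]$. First I would note that a purely archimedean class $(0,g)$ carries $\omega=\textup{d}\textup{d}^{c}g$, and $\int_{\Xc_{\C}}\textup{d}\textup{d}^{c}g=0$ by Stokes on the compact curve; in particular $\deg(0,\log y)=0$, so the term $-(0,\log y)$ in $\ad(0,y,\la)$ is discarded. By the theorem recording $\textup{d}\textup{d}^{c}\mathfrak{g}(t,y,\la)+\delta_{\mathcal{Z}^{\ast}(t,y,\la)_{\C}}=[\omega(t,y,\la)]$, the curvature form attached to $(\mathcal{Z}^{\ast}(t,y,\la),\mathfrak{g}(t,y,\la))$ is exactly the form $\omega(t,y,\la)$ of (\ref{differentialform}). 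Hence for $t\neq 0$ one has $\deg\ad(t,y,\la)=\int_{\Xc_{\C}}\omega(t,y,\la)$, while for $t=0$ one has $\deg\ad(0,y,\la)=\deg\widehat{\omega}+\int_{\Xc_{\C}}\omega(0,y,\la)$. This reduces the proposition to two inputs: the scalar $\deg\widehat{\omega}$, and the integrals $\int_{\Xc_{\C}}\omega(t,y,\la)$.

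Next I would compute $\deg\widehat{\omega}$. By (\ref{modii}) and Corollary \ref{chayi}, $\widehat{\omega}=-\hod-W_{N}^{\ast}\hod$, and the vertical corrections $\widehat{\mathcal{X}}_{p}(N)$ are supported on special fibers, hence empty on $\Xc_{\C}$ and invisible to $\deg$. Since $W_{N}$ is an automorphism of $\Xc_{\C}$, this gives $\deg\widehat{\omega}=-2\deg\hod$. Because $\omega_{N}=p_{1}^{\ast}\omega_{\mathcal{X}_{0}(1)}$ with $p_{1}$ finite flat of degree $\psi(N)$, I obtain $\deg\hod=\psi(N)\cdot\deg\omega_{\mathcal{X}_{0}(1)}=\psi(N)/24$, so that $\deg\widehat{\omega}=-\psi(N)/12$; the same prefactor is visible in Example \ref{wnself}.

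The heart of the argument is the identification of $\sum_{t}\big(\int_{\Xc_{\C}}\omega(t,y,\la)\big)q^{t}=\int_{\Xc_{\C}}\sum_{t}\omega(t,y,\la)q^{t}$ with a special value of the genus $1$ Eisenstein series. The inner sum is the Kudla--Millson theta form, a nonholomorphic weight $3/2$ form valued in $(1,1)$-forms on $\Xc_{\C}$, and since $\Delta(N)\otimes\R$ has rank $3$ and signature $(1,2)$ the Weil-representation section lands in $I(\tfrac12,\chi)$, placing the Siegel--Weil point of $E(\tau,s,\la)$ at $s=\tfrac12$. I would prove the term-by-term identity $\int_{\Xc_{\C}}\omega(t,y,\la)\cdot q^{t}=-\tfrac{\psi(N)}{12}E_{t}(\tau,\tfrac12,\la)$ for $t\neq 0$ by the classical (geometric) Siegel--Weil formula: for $t\neq 0$ the coefficient $E_{t}(\tau,\tfrac12,\la)$ is nonsingular, so by Lemma \ref{E=WW} it equals the global Whittaker function $W_{t}$, whose finite part $\prod_{p}W_{t,p}$ computes through Proposition \ref{non-homo} and the Siegel mass formula the local densities counting the points of $\mathcal{Z}(t,\la)_{\C}$, while its archimedean factor $W_{t,\infty}$ matches the archimedean integral $\int_{\Xc_{\C}}\omega(t,y,\la)$ together with the cusp contribution governed by $g(t,y,\la)\cdot\mathsf{Cusp}(\Xc)$ in (\ref{modifying-function}); the overall constant is then forced by the Hodge normalization $\deg\hod=\psi(N)/24$. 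For $t=0$, adding $\deg\widehat{\omega}=-\psi(N)/12$ supplies precisely the identity-coset contribution to the constant term of $E(\tau,\tfrac12,\la)$, so that $\deg\ad(0,y,\la)=-\tfrac{\psi(N)}{12}E_{0}(\tau,\tfrac12,\la)$ as well.

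Finally I would reconcile constants. From (\ref{twiestedeis}) and $\Lambda(2)=\pi^{-1}\Gamma(1)\zeta(2)=\pi/6$ one computes $C_{N}(1)=-\tfrac{1}{4\pi}\Lambda(2)\big(\prod_{p\mid N}(1-p^{-2})\big)N^{2}=-\tfrac{1}{24}N^{2}\prod_{p\mid N}(1-p^{-2})=-\tfrac{\varphi(N)\psi(N)}{24}$, using $N\prod_{p\mid N}(1-p^{-1})=\varphi(N)$ and $N\prod_{p\mid N}(1+p^{-1})=\psi(N)$. Hence $\tfrac{2}{\varphi(N)}\mathcal{E}_{t}(\tau,1,\la)=\tfrac{2}{\varphi(N)}C_{N}(1)E_{t}(\tau,\tfrac12,\la)=-\tfrac{\psi(N)}{12}E_{t}(\tau,\tfrac12,\la)$, which matches the expression for $\deg\ad(t,y,\la)\,q^{t}$ from the previous step, completing the proof. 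The hardest step is the Siegel--Weil identification of the third paragraph: fixing the proportionality constant and verifying that the archimedean form-integral plus the cusp correction reproduce $W_{t,\infty}$, and that at $t=0$ the constant term matches after incorporating $\deg\widehat{\omega}$. The finite-place bookkeeping is routine given Proposition \ref{non-homo}, but the archimedean and constant-term analysis will require care.
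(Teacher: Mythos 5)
The paper itself does not prove this proposition: its proof is the single line ``This is proved in \cite[Proposition 6.7]{DY19}''. Your preliminary reductions are correct and usefully identify what must actually be shown: $\deg(0,\log y)=0$, so $\deg\ad(t,y,\la)=\int_{\Xc_{\C}}\omega(t,y,\la)$ for $t\neq0$ and $\deg\ad(0,y,\la)=\deg\widehat{\omega}+\int_{\Xc_{\C}}\omega(0,y,\la)$; the value $\deg\widehat{\omega}=-2\deg\hod=-\psi(N)/12$ is right (the vertical classes $\widehat{\mathcal{X}}_{p}(N)$ have no complex points); and your evaluation $C_{N}(1)=-\varphi(N)\psi(N)/24$, hence
\begin{equation*}
\frac{2}{\varphi(N)}\mathcal{E}_{t}(\tau,1,\la)=-\frac{\psi(N)}{12}\,E_{t}(\tau,\tfrac{1}{2},\la),
\end{equation*}
is a correct translation of the statement into the identity your third paragraph must deliver.

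The genuine gap is that third paragraph, which is the entire mathematical content of the proposition and which you assert rather than prove. The ``classical (geometric) Siegel--Weil formula'' cannot be invoked off the shelf here, for two reasons. First, $\Xc_{\C}$ is noncompact and the pair $(\mathrm{SL}_{2},\mathrm{O}(1,2))$ lies outside Weil's convergence range, so the integral of the Kudla--Millson theta form over the modular curve acquires cusp contributions; this is exactly why $\mathcal{Z}^{\ast}(t,y,\la)$ carries the correction $g(t,y,\la)\cdot\mathsf{Cusp}(\Xc)$ and why $\int_{\Xc_{\C}}\omega(t,y,\la)\neq0$ for $t<0$ with $-Nt$ a square even though $\mathcal{Z}(t,\la)=0$. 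Matching these coefficients, and the constant term, against the Eisenstein series is a theorem of Du--Yang/Funke type, not a corollary of the compact-case Siegel--Weil formula. Second, $E(\tau,s,\la)$ is the \emph{incoherent} series (positive definite at $\infty$), whereas any Siegel--Weil statement for the theta integral concerns the \emph{coherent} series attached to $\Delta(N)\otimes\mathbb{R}$ of signature $(1,2)$; to pass between them at $s=\tfrac{1}{2}$ one must compare the archimedean Whittaker function of the signature $(3,0)$ Gaussian with that of the Kudla--Millson form, and this comparison is what produces the sign making $-\frac{\psi(N)}{12}E_{t}(\tau,\tfrac{1}{2},\la)$ positive for $t>0$; you never address it. Finally, the claim that ``the overall constant is forced by the Hodge normalization'' is not an argument for $t\neq0$: $\deg\hod$ enters only the $t=0$ coefficient, so for $t\neq0$ the constant must come out of the computation itself --- the explicit values of $W_{t,p}(1,\tfrac{1}{2},1_{\delta_{p}(N)})$ via Proposition \ref{non-homo}, the explicit archimedean value at $s=\tfrac{1}{2}$, and the class-number count of $\mathcal{Z}(t,\la)(\C)$ via the mass formula. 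Carrying out precisely those computations, for all four cases ($t>0$; $t<0$ with $-Nt$ square; $t<0$ otherwise; $t=0$), is the content of the proof the paper cites; your outline points at them but does not perform any of them.
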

\begin{proof}
    This is proved in \cite[Proposition 6.7]{DY19}.
\end{proof}

\subsection{Height pairings of special divisors and vertical fibers}
Recall that $\mathcal{H}=\mathcal{X}_0(1)\times\mathcal{X}_0(1)$ is the product of the smooth Deligne-Mumford stack $\mathcal{X}_0(1)$ over $\Z$. Let $p$ be a prime number and $\mathcal{H}_p\coloneqq\mathcal{H}\times\textup{Spec}\,\mathbb{F}_p$ be the reduction mod $p$ of $\mathcal{H}$, it is a 2-dimensional smooth Deligne-Mumford stack over $\mathbb{F}_p$. Let $i_p:\mathcal{H}_p\rightarrow\mathcal{H}$ be the closed immersion. Recall that $\mathsf{Cusp}(\mathcal{H})\in\textup{CH}^{1}(\mathcal{H})$ is the cuspidal divisor of the stack $\mathcal{H}$, we define
\begin{equation*}
    \mathsf{Cusp}(\mathcal{H}_p)\coloneqq i_{p}^{\ast}\mathsf{Cusp}(\mathcal{H})\in\textup{CH}^{1}(\mathcal{H}_p).
\end{equation*}
Now we are going to construct an element $\mathcal{Z}^{\sharp}(t)$ in $\textup{CH}^{1}(\mathcal{H})$ for every integer $t$. We first consider the case that $t$ is positive.
\begin{definition}
    For every positive integer $t$. The stack $\mathcal{Z}^{\sharp}(t)$ is defined as follows, its fibered category over a scheme $S$ consists of the following objects,
\begin{equation*}
   ((E, E^{\prime}), j),
\end{equation*}
where $(E, E^{\prime})$ is an object in $\mathcal{H}(S)$, i.e., a pair of generalized elliptic curves. The element $j\in\textup{Hom}_S(E,E^{\pr})$ is an isogeny such that $j^{\vee}\circ j=t$.
\par
We also define the stack $\mathcal{Z}^{\sharp}(t,N)$ as follows, its fibered category over a scheme $S$ consists of the following objects,
\begin{equation*}
   ((E, E^{\prime}), j_1,j_2),
\end{equation*}
where $(E, E^{\prime})$ is an object in $\mathcal{H}(S)$. The element $j_1,j_2\in\textup{Hom}_S(E,E^{\pr})$ is an isogeny such that $j_1^{\vee}\circ j_1=t,j_2^{\vee}\circ j_2=N$ and $j_1^{\vee}\circ j_2+j_2^{\vee}\circ j_1=0$.
\end{definition}
When the integer $t\leq 0$, we give the following definition,
\begin{equation*}
    \mathcal{Z}^{\sharp}(t)\coloneqq\mathsf{Cusp}(\mathcal{H}).
\end{equation*}
Let $p$ be a prime number. For any integer $t$, let $\mathcal{Z}^{\sharp}_p(t)\coloneqq\mathcal{Z}^{\sharp}(t)\times\textup{Spec}\,\mathbb{F}_p$ be the reduction mod $p$ of the stack $\mathcal{Z}^{\sharp}(t)$, it equals to $i_p^{\ast}\mathcal{Z}^{\sharp}(t)$ as an element in $\textup{CH}^{1}(\mathcal{H}_p)$. Let $n=\nu_p(N)$ be the $p$-adic valuation of $N$. Recall that $\mathcal{X}_p^{a}(N)$, where $-n\leq a\leq n$ and $a\equiv n\,\,\textup{mod}\,2$, can also be viewed as codimension $1$ cycles on $\mathcal{H}_p$. We use $\langle\cdot,\cdot\rangle_{\mathcal{H}_p}:\textup{CH}^{1}(\mathcal{H}_p)\times\textup{CH}^{1}(\mathcal{H}_p)\rightarrow\C$ to denote the intersection pairing between codimension $1$ cycles on $\mathcal{H}_p$ defined in Fulton's work \cite[\S6.1]{Fu98}. 
\begin{lemma}
    Let $p$ be a prime number, let $n=\nu_p(N)$ be the $p$-adic valuation of $N$. Let $(t,y)$ be a pair such that $t$ is a nonzero integer and $y$ is a positive number, the following identities hold for any integer $a$ such that $-n\leq a \leq n$ and $a\equiv n\,\,\textup{mod}\,2$.\\
    (a)\,If $t>0$,
    \begin{equation*}
        \langle\widehat{\mathcal{Z}}(t,y,\la),\widehat{\mathcal{X}}_p^{a}(N)\rangle=\left(\sum\limits_{x\in\mathcal{Z}^{\sharp}(t,N)\cap\mathcal{X}_p^{a}(N)(\overline{\mathbb{F}}_p)}\textup{length}\,\mathcal{O}_{\mathcal{H}_p,x}/(f_{t,x},f_{a,x})\right)\cdot\log p,
    \end{equation*}
    where $f_{t,x}$ and $f_{a,x}$ are the local equations of the divisors $\mathcal{Z}(t,y,\Delta(N))$ and $\mathcal{X}_p^{a}(N)$ respectively.\\
    (b)\,If $t<0$,
    \begin{equation*}
        \langle\widehat{\mathcal{Z}}(t,y,\la),\widehat{\mathcal{X}}_p^{a}(N)\rangle=\left(\sum\limits_{x\in\mathcal{Z}^{\sharp}(t,N)\cap\mathcal{X}_p^{a}(N)(\overline{\mathbb{F}}_p)}\textup{length}\,\mathcal{O}_{\mathcal{H}_p,x}/(f_{c,x},f_{a,x})\right)\cdot\log p\cdot g(t,y,\la),
    \end{equation*}
    where $f_{c,x}$ and $f_{a,x}$ are the local equations of the divisors $\mathsf{Cusp}(\mathcal{H}_p)$ and $\mathcal{X}_p^{a}(N)$ respectively.
    \label{toH}
\end{lemma}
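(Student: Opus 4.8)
The plan is to strip the pairing down to its finite part, which is concentrated in the fiber over $p$, and then to move that finite intersection onto the surface $\mathcal{H}_p$, where the explicit local equations of Theorem \ref{insideY}(d) are available. First I would exploit that $\widehat{\mathcal{X}}_p^{a}(N)=(\mathcal{X}_p^{a}(N),0)$ is a vertical cycle carrying the zero Green function. Since $\mathcal{X}_p^{a}(N)(\C)=\varnothing$, the current $\delta_{\mathcal{X}_p^{a}(N)_{\C}}$ vanishes and $c_1(\widehat{\mathcal{X}}_p^{a}(N))=0$ on $\Xc_{\C}$; hence in the pairing of Definition \ref{pairings} the archimedean term $\langle\,\cdot\,,\,\cdot\,\rangle_{\infty}$ contributes nothing. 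Therefore $\langle\widehat{\mathcal{Z}}(t,y,\la),\widehat{\mathcal{X}}_p^{a}(N)\rangle$ reduces to the finite part $(l,m)_{\textup{fin}}=\sum_{x}\log\#\bigl(\mathcal{O}_{\Xc,x}/(l_x,m_x)\bigr)$, where $l$ is a local equation for $\mathcal{Z}^{\ast}(t,y,\la)$ and $m$ a local equation for $\mathcal{X}_p^{a}(N)$. Because $\mathcal{X}_p^{a}(N)$ is supported in the fiber over $p$, only closed points $x$ above $p$ contribute, and this is precisely where the factor $\log p$ will enter.

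Next I would transport this finite intersection to $\mathcal{H}_p$ by means of the closed immersion $(p_1,p_2)\colon\Xc\hookrightarrow\mathcal{H}$ together with the realization of $\mathcal{X}_p^{a}(N)$ as a divisor on $\mathcal{H}_p$ via the morphism (\ref{diagonal}). The essential moduli comparison is that a geometric point of $\mathcal{Z}(t,\la)\cap\mathcal{X}_p^{a}(N)$ is a tuple $((E\stackrel{\pi}\longrightarrow E^{\pr}),j)$ with $(E,E^{\pr})$ supersingular, carrying the Frobenius--Verschiebung structure defining $\mathcal{X}^{a}$, and $j$ the special isogeny; retaining $\pi$ as a second isogeny $j_2=\pi$ with $j_2^{\vee}j_2=N$ and setting $j_1=j$ identifies this datum with a geometric point of $\mathcal{Z}^{\sharp}(t,N)\cap\mathcal{X}_p^{a}(N)$ on $\mathcal{H}_p$, the orthogonality relations $j\circ\pi^{\vee}+\pi\circ j^{\vee}=0$ and $j_1^{\vee}j_2+j_2^{\vee}j_1=0$ corresponding after composition with the isogeny $\pi$. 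Restricting $\mathcal{Z}(t,\la)$ to $\Xc_p\hookrightarrow\mathcal{H}_p$ then converts the pairing into the intersection of the two divisors $\mathcal{Z}^{\sharp}(t,N)$ and $\mathcal{X}_p^{a}(N)$ on $\mathcal{H}_p$, identifying $\mathcal{O}_{\Xc,x}/(l_x,m_x)$ with $\mathcal{O}_{\mathcal{H}_p,x}/(f_{t,x},f_{a,x})$.

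I would then split into cases according to the sign of $t$ and conclude by Serre's Tor-formula. For $t>0$ the modifying function satisfies $g(t,y,\la)=0$, so $\mathcal{Z}^{\ast}(t,y,\la)=\mathcal{Z}(t,\la)$ and the transport above yields (a) directly. For $t<0$ the special divisor $\mathcal{Z}(t,\la)$ is empty, so $\mathcal{Z}^{\ast}(t,y,\la)=g(t,y,\la)\cdot\mathsf{Cusp}(\Xc)$; since $\mathcal{Z}^{\sharp}(t,N)=\mathsf{Cusp}(\mathcal{H})$ for $t\leq 0$ and $\mathsf{Cusp}(\Xc)=(p_1,p_2)^{\ast}\mathsf{Cusp}(\mathcal{H})$, the same transport replaces $f_{t,x}$ by $f_{c,x}$ and factors out the scalar $g(t,y,\la)$, giving (b). In both cases the finite part is rewritten as $\bigl(\sum_x\textup{length}\,\mathcal{O}_{\mathcal{H}_p,x}/(f_{\bullet,x},f_{a,x})\bigr)\cdot\log p$: passing from closed points to $\overline{\mathbb{F}}_p$-points converts each $\log\#\kappa(x)=[\kappa(x):\mathbb{F}_p]\log p$ into the corresponding number of geometric points, leaving the uniform factor $\log p$.

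The hard part will be the comparison carried out in the second paragraph. One must verify that the local intersection multiplicities computed in the two-dimensional regular local rings $\mathcal{O}_{\Xc,x}$ coincide with those on $\mathcal{H}_p$, i.e. that the relevant higher Tor-terms vanish and that $\mathcal{Z}(t,\la)$ really is the restriction of $\mathcal{Z}^{\sharp}(t,N)$ along $\Xc\hookrightarrow\mathcal{H}$ in a neighborhood of the supersingular points, with the non-cyclic and excess loci of $\mathcal{Z}^{\sharp}(t,N)$ disjoint from $\mathcal{X}_p^{a}(N)$. One must also check that the stacky automorphism weights are correctly absorbed into the groupoid count of $\overline{\mathbb{F}}_p$-points, compatibly with the degree map (\ref{degreemap}) and with the bookkeeping already employed in Lemma \ref{int}.
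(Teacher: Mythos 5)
Your proposal follows essentially the same route as the paper: the paper's proof simply observes that the divisors $\mathcal{Z}_p^{\sharp}(t)$ and $\mathcal{X}_p^{a}(N)$ intersect properly on $\mathcal{H}_p$ at the points of $\mathcal{Z}^{\sharp}(t,N)\cap\mathcal{X}_p^{a}(N)(\overline{\mathbb{F}}_p)$ and invokes Serre's Tor formula, which is exactly your reduction to the finite part transported to $\mathcal{H}_p$ followed by the length computation. The supporting steps you spell out or flag --- vanishing of the archimedean term against a vertical cycle, the moduli identification of intersection points, the case split on the sign of $t$ with the factor $g(t,y,\la)$, and the closed-point versus geometric-point bookkeeping --- are all left implicit in the paper's two-sentence proof.
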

\begin{proof}
 For any integer $t$, the divisors $\mathcal{Z}_p^{\sharp}(t)$ and $\mathcal{X}_p^{a}(N)$ on $\mathcal{H}_p$ intersect properly at any point in $x\in\mathcal{Z}^{\sharp}(t,N)\cap\mathcal{X}_p^{a}(N)(\overline{\mathbb{F}}_p)$, therefore these two formulas follow from Serre's Tor formula.
\end{proof}
\begin{corollary}
    Let $p$ be a prime that $n=\nu_p(N)\geq2$, let $a$ be an integer such that $-n<a<n$ and $a\equiv n\,\,\textup{mod}\,2$, then for any pair $(t,y)$ such that $t$ is an integer and $y>0$,
    \begin{equation}
        \langle\widehat{\mathcal{Z}}(t,y,\la),\widehat{\mathcal{X}}_p^{a}(N)\rangle_{\Xc}=\langle\widehat{\mathcal{Z}}(t,y,\Delta(Np^{-2})),\widehat{\mathcal{X}}_p^{a}(Np^{-2})\rangle_{\mathcal{X}_0(Np^{-2})}.
        \label{inv-p-adic}
    \end{equation}
    Notice that we add subscript here to emphasize that the pairing on the left hand side happens in the group $\ACH$ while the right hand side happens in the group $\widehat{\textup{CH}}^{1}(\mathcal{X}_0(Np^{-2}))$.
    \label{reind}
\end{corollary}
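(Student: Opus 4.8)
The plan is to apply Lemma \ref{toH} to both sides, which turns each height pairing into a finite sum of local intersection lengths on the level-independent stack $\mathcal{H}_p=(\mathcal{X}_0(1)\times\mathcal{X}_0(1))_{\mathbb{F}_p}$, and then to match the two sums term by term. Two structural facts make this possible. First, the tame level is preserved: $(Np^{-2})_p=p^{-(n-2)}(Np^{-2})=N_p$. Second, the index $a$ stays admissible, since $a\equiv n\pmod 2$ and $-n<a<n$ force $-(n-2)\leq a\leq n-2$ with $a\equiv n-2\pmod 2$, so $\mathcal{X}_p^a(Np^{-2})$ is defined and $\nu_p(Np^{-2})=n-2$. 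Thus both pairings are computed on the same $\mathcal{H}_p$, against the same supersingular points carrying the same $N_p$-structure.

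First I would dispose of the case $t>0$. Here Lemma \ref{toH}(a) writes each side as $\log p$ times $\sum_x \textup{length}\,\mathcal{O}_{\mathcal{H}_p,x}/(f_{t,x},f_{a,x})$ over the supersingular locus. By Theorem \ref{insideY}(d) the local equation $f_{a,x}$ of $\mathcal{X}_p^a$ at such an $x$ is $t_1-t_2^{p^a}$ (or $t_1^{p^{-a}}-t_2$), depending only on $a$; and on the component $\mathcal{X}^a$ the cyclic isogeny is rigidified to the type-$a$ Frobenius--Verschiebung factorization, so the defining conditions of $\mathcal{Z}^{\sharp}(t,\cdot)$ become conditions on a homomorphism $E\to E^{(p^a)}$ involving only $t$ and $a$. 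Hence $f_{t,x}$ and the local length are identical for the two levels, and, the supersingular points being the same by Theorem \ref{insideY}(e), the sums agree. Both prefactors are $\log p$, giving the equality.

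For $t\leq 0$ the divisor $\mathcal{Z}(t,\la)$ is empty, so by Lemma \ref{toH}(b) the pairing is $\log p\cdot g(t,y,\cdot)$ times the intersection of $\mathsf{Cusp}(\mathcal{H}_p)$ with $\mathcal{X}_p^a(\cdot)$, now supported at cusps. The two levels no longer match factor-for-factor: since $-Nt$ is a square exactly when $-Np^{-2}t$ is, and $\sqrt N=p\sqrt{Np^{-2}}$, one has $g(t,y,\la)=p\cdot g(t,y,\Delta(Np^{-2}))$, and I would show this discrepancy is absorbed by the opposite scaling of the cuspidal intersection numbers, which I would read off from the cusp degrees of Proposition \ref{jiandian} and the reduction description $\mathcal{C}_p^a=p^{(n-\vert a\vert)/2-1}(p-1)\mathcal{X}_p^a$ of Proposition \ref{luozainali}. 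For the summand $\widehat{\omega}$ that appears only when $t=0$, I would note that $\widehat{\mathcal{X}}_p^a(N)$ is vertical, so its pairing with the archimedean pieces $(0,\log y)$ and the Green current vanishes, and the remaining pairing with $\widehat{\omega}=-2\hod+\sum_{q}\widehat{\mathcal{X}}_q(N)$ is computed via Corollary \ref{chayi}, Theorem \ref{div-element}, and Lemma \ref{int}, where the same $N\mapsto Np^{-2}$ invariance is visible on the explicit vertical intersection numbers.

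The hard part will be the $n$-independence claim in the $t>0$ step: showing that the local multiplicity of the special cycle along $\mathcal{X}^a$ near a supersingular point depends only on $(t,a)$ and not on $n$. This rests on analyzing the orthogonality relation $j_1\perp j_2$ against the fixed Frobenius structure that pins down $\mathcal{X}^a$, and is where the geometry genuinely enters. A secondary, more bookkeeping difficulty is verifying that the factor $p$ from $g(t,y,\la)=p\,g(t,y,\Delta(Np^{-2}))$ and the factor from the cuspidal intersection cancel exactly in the $t\leq 0$ case.
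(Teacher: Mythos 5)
Your $t>0$ step is essentially the paper's own proof: the paper likewise invokes the identification $\mathcal{X}_p^{a}(N)=\mathcal{X}_p^{a}(Np^{2k})$ as closed subschemes of $\mathcal{H}_p$, the resulting equality of intersection sets $\mathcal{Z}^{\sharp}(t,N)\cap\mathcal{X}_p^{a}(N)(\overline{\F}_p)=\mathcal{Z}^{\sharp}(t,Np^{2k})\cap\mathcal{X}_p^{a}(Np^{2k})(\overline{\F}_p)$, and Lemma \ref{toH}. Your treatment of the $\widehat{\omega}$ summand at $t=0$ is also in the paper's spirit (the paper pairs $\widehat{\omega}$ against $\widehat{f}_p(N)+W_N^{\ast}\widehat{f}_p(N)$ and uses $\widehat{\textup{div}}(p)=0$ in $\ACH$, which amounts to the same explicit vertical computation you outline).

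The genuine gap is in your $t\leq 0$ step. The cancellation you propose --- the factor $p$ in $g(t,y,\la)=p\cdot g(t,y,\Delta(Np^{-2}))$ being absorbed by an ``opposite scaling of the cuspidal intersection numbers'' --- does not exist, and in fact it contradicts the level-independence you yourself use for $t>0$. In Lemma \ref{toH}(b) the quantity $\sum_{x}\textup{length}\,\mathcal{O}_{\mathcal{H}_p,x}/(f_{c,x},f_{a,x})$ is computed on $\mathcal{H}_p$ from the two divisors $\mathsf{Cusp}(\mathcal{H}_p)$ and $\mathcal{X}_p^{a}(\cdot)$: the first involves no level at all, and the second is the \emph{same} closed subscheme of $\mathcal{H}_p$ for $N$ and $Np^{-2}$ --- this is exactly the identification underlying your $t>0$ argument. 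Hence the two sums are equal, not off by $p^{-1}$. One can also see this on $\Xc$ directly from the results you cite: by Proposition \ref{jiandian} (and Lemma \ref{ramify}), the completion of $\Xc$ along the cusps lying on $\mathcal{X}_p^{a}(N)$ is, up to the prime-to-$p$ level structure common to both sides, $\textup{Spf}\,\Z[\zeta_{p^{m}}][[q]]$ with $m=(n-\vert a\vert)/2$; the divisor $\mathsf{Cusp}(\Xc)=p_1^{\ast}P_{\infty}(1)+p_2^{\ast}P_{\infty}(1)$ has multiplicity $\textup{ra}_1+\textup{ra}_2=1+p^{\vert a\vert}$ along the cuspidal section $\{q=0\}$, depending only on $a$; the component $\mathcal{X}_p^{a}(N)$ is cut out there by $\zeta_{p^{m}}-1$ (since locally $\textup{div}(p)=\varphi(p^{m})\,\textup{div}(\zeta_{p^{m}}-1)$ and only this component meets the cusps); and $\textup{length}\,\Z[\zeta_{p^{m}}][[q]]/(q,\zeta_{p^{m}}-1)=1$ for every $m$. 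The factor $p^{(n-\vert a\vert)/2-1}(p-1)$ of Proposition \ref{luozainali} relates $\textup{div}(p)$, equivalently $\mathcal{C}_p^{a}$, to the \emph{reduced} component $\mathcal{X}_p^{a}(N)$; it never enters the pairing $\langle\mathsf{Cusp}(\Xc),\mathcal{X}_p^{a}(N)\rangle$, which is therefore identical at levels $N$ and $Np^{-2}$.

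Consequently the plan for $t<0$ (and for the cuspidal part of $t=0$) cannot be completed as written. Note that this is also a point on which the paper's proof is silent: it treats $t<0$ by the same level-independence as $t>0$, i.e.\ it implicitly takes the $g$-weighted cuspidal contributions on the two sides of (\ref{inv-p-adic}) to be equal, and never confronts the $\sqrt{N}$ appearing in (\ref{modifying-function}). So your observation that $g$ scales by $p$ identifies a real tension with the stated equality, and a complete argument would have to address it head-on (by re-examining the normalization of $g$ or the cusp term itself) --- not by appealing to a compensating scaling of intersection numbers that provably is not there.
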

\begin{proof}
    We first prove this when $t\neq0$. Recall that the definition of the curve $\mathcal{X}_p^{a}(N)$ is independent of the $p$-adic valuation of $N$, i.e., $\mathcal{X}_p^{a}(N)=\mathcal{X}_p^{a}(Np^{2k})$ as closed subschemes of $\mathcal{H}_p$ for any integer $k$ such that $Np^{2k}\in\Z$. Moreover, as long as the $p$-adic valuation of $Np^{2k}$ greater or equal to $\vert a\vert$, we have
\begin{equation*}
    \mathcal{Z}^{\sharp}(t,N)\cap\mathcal{X}_p^{a}(N)(\overline{\mathbb{F}}_p)=\mathcal{Z}^{\sharp}(t,Np^{2k})\cap\mathcal{X}_p^{a}(N^{2k})(\overline{\mathbb{F}}_p).
\end{equation*}
Therefore (\ref{inv-p-adic}) follows from Lemma \ref{toH}.
\par
Next we consider the case that $t=0$. Same arguments as above implies that
\begin{align*}
    \langle(\mathcal{Z}^{\ast}&(0,y,\la),\mathfrak{g}(0,y,\la),\widehat{\mathcal{X}}_p^{a}(N)\rangle_{\Xc}\\
    &=\langle(\mathcal{Z}^{\ast}(0,y,\Delta(Np^{-2})),\mathfrak{g}(0,y,\Delta(Np^{-2})),\widehat{\mathcal{X}}_p^{a}(Np^{-2})\rangle_{\mathcal{X}_0(Np^{-2})}.
\end{align*}
Recall that $\ad(0,y,\la)=\widehat{\omega}_{\Xc}+(\mathcal{Z}^{\ast}(0,y,\la),\mathfrak{g}(0,y,\la)-(0,\log y)$, here we add the subscript $\Xc$ to the modified Hodge bundle $\widehat{\omega}$ to emphasize that it is defined on the stack $\Xc$. Therefore we only need to show that
\begin{equation*}
    \langle\widehat{\omega}_{\Xc},\widehat{\mathcal{X}}_p^{a}(N)\rangle_{\Xc}=\langle\widehat{\omega}_{\mathcal{X}_0(Np^{-2})},\widehat{\mathcal{X}}_p^{a}(Np^{-2})\rangle_{\mathcal{X}_0(Np^{-2})}.
\end{equation*}
We know that $12\varphi(N)\cdot\hod=(\psi(N)\varphi(N)P_{\infty}(N)+\sum\limits_{p\vert N}f_p(N),-\log\parallel\Delta_N\parallel^{2})$, hence
\begin{align*}
    \langle\widehat{\omega}_{\Xc},\widehat{\mathcal{X}}_p^{a}(N)\rangle_{\Xc}&=-\frac{1}{12\varphi(N)}\langle\widehat{f}_p(N)+W_N^{\ast}\widehat{f}_p(N),\widehat{\mathcal{X}}_p^{a}(N)\rangle_{\Xc}\\
    &=-\frac{n(1-p)-2}{p-1}\cdot\langle(\textup{div}(p),0),\widehat{\mathcal{X}}_p^{a}(N)\rangle_{\Xc}=0.
\end{align*}
Therefore (\ref{inv-p-adic}) is also true for $t=0$.
\end{proof}
\begin{proposition}
    Let $p$ be a prime number, let $n=\nu_p(N)$ be the $p$-adic valuation of $N$. For any pair $(t,y)$ where $t$ is an integer and $y$ is a positive number, let $\tau=x+iy\in\mathbb{H}^{+}$ and $q=e^{2\pi i\tau}$, we have
    \begin{equation*}
        \langle\widehat{\mathcal{Z}}(t,y,\la),\widehat{\mathcal{X}}_p^{n}(N)\rangle\cdot q^{t}=\frac{1}{\varphi(N)}\mathcal{E}_{t}(\tau,1,\la)\log p-\sum\limits_{i=1}^{[n/2]}\frac{p-1}{\varphi(Np^{-2i})}\mathcal{E}_t(\tau,1,\Delta(Np^{-2i}))\log p.
    \end{equation*}
    \label{topdegree}
\end{proposition}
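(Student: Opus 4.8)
The plan is to prove the identity by induction on $n=\nu_p(N)$, drawing on three facts already in place: the vanishing of the principal class $\widehat{\textup{div}}(p)$, the Atkin--Lehner symmetry, and the level-lowering Corollary~\ref{reind}. Write $J^{(a)}\coloneqq\langle\widehat{\mathcal{Z}}(t,y,\la),\widehat{\mathcal{X}}_p^{a}(N)\rangle$. First I would record a \emph{master relation}. Since $\widehat{\mathcal{X}}_p^{a}(N)=(\mathcal{X}_p^{a}(N),0)$ carries the trivial current and has empty complex fibre, while pairing $\widehat{\mathcal{Z}}(t,y,\la)$ against the purely archimedean class $(0,c)$ contributes only $\tfrac{c}{2}\deg\widehat{\mathcal{Z}}(t,y,\la)$ by the definition of $\deg$ in \S\ref{acg}, expanding $0=\widehat{\textup{div}}(p)=(\textup{div}(p),-2\log p)$ through Theorem~\ref{modpirr} gives
\[
\sum_{\substack{-n\le a\le n\\ a\equiv n\,(2)}}\varphi\big(p^{(n-|a|)/2}\big)\,J^{(a)}=\log p\cdot\deg\widehat{\mathcal{Z}}(t,y,\la),
\]
and Proposition~\ref{deg1E} identifies $\deg\widehat{\mathcal{Z}}(t,y,\la)\cdot q^{t}=\tfrac{2}{\varphi(N)}\mathcal{E}_t(\tau,1,\la)$.

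Next I would isolate the top component. Lemma~\ref{invariant} gives $W_N^{\ast}\widehat{\mathcal{Z}}(t,y,\la)=\widehat{\mathcal{Z}}(t,y,\la)$, while $W_N(\mathcal{X}_p^{a})=\mathcal{X}_p^{-a}$ (Theorem~\ref{insideY}(c)); together they force $J^{(a)}=J^{(-a)}$, so the two extreme terms $a=\pm n$ each equal the desired quantity $I_n\coloneqq J^{(n)}$ and contribute $2I_n$ (both weights being $\varphi(p^{0})=1$). For every interior index $-n<a<n$, Corollary~\ref{reind} replaces $J^{(a)}$ by the analogous intersection at level $Np^{-2}$, to which the inductive hypothesis applies. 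Splitting the interior sum at $|a|=n-2$ versus $|a|\le n-4$ and using $\varphi(p^{k+1})=p\,\varphi(p^{k})$ for $k\ge1$ together with $\varphi(p)=(p-1)\varphi(p^{0})$, the interior sum rewrites through the level-$Np^{-2}$ master relation and the lower top value $I_{n-2}$, collapsing to $-2I_{n-2}+p\log p\cdot\deg\widehat{\mathcal{Z}}(t,y,\Delta(Np^{-2}))$. Substituting back into the master relation yields the recursion
\[
I_n=I_{n-2}+\tfrac{\log p}{2}\Big(\deg\widehat{\mathcal{Z}}(t,y,\la)-p\,\deg\widehat{\mathcal{Z}}(t,y,\Delta(Np^{-2}))\Big).
\]
After inserting Proposition~\ref{deg1E} and the inductive formula for $I_{n-2}$, the coefficient $\tfrac{1}{\varphi(Np^{-2})}-\tfrac{p}{\varphi(Np^{-2})}=-\tfrac{p-1}{\varphi(Np^{-2})}$ generated at each stage is exactly the next term of the target alternating sum, so the expression telescopes to the stated formula.

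Because the recursion steps by two, it rests on the two ground cases $n=1$ and $n=2$, both read off directly from the master relation. For $n=1$ the only components are $a=\pm1$, so $2I_1=\log p\cdot\deg\widehat{\mathcal{Z}}(t,y,\la)$ and the empty correction sum reproduces $\tfrac{1}{\varphi(N)}\mathcal{E}_t(\tau,1,\la)\log p$. For $n=2$ the single interior index $a=0$ is lowered by Corollary~\ref{reind} to the irreducible-fibre level $Np^{-2}$, where $\widehat{\textup{div}}(p)=0$ gives directly $J^{(0)}=\log p\cdot\deg\widehat{\mathcal{Z}}(t,y,\Delta(Np^{-2}))$. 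All inputs are insensitive to the sign of $t$ --- Corollary~\ref{reind}, Proposition~\ref{deg1E} and Lemma~\ref{invariant} hold for every integer $t$ --- so the argument is uniform in $t$.

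I expect the main obstacle to be combinatorial rather than geometric: one must track the two distinct regimes of the Euler-$\varphi$ weights across consecutive levels so that the interior contributions telescope cleanly, and one must handle the collision of $a=+m$ and $a=-m$ at $m=0$ with care. Precisely because Corollary~\ref{reind} governs only the strictly interior components, the extreme pair $a=\pm n$ cannot be lowered and must instead be pinned down through the master relation; keeping the single-versus-double counting of $\pm(n-2)$ straight --- which is exactly why $n=2$ demands separate treatment from $n\ge 3$ --- is the one place where an oversight would silently produce a spurious factor.
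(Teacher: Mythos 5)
Your proof is correct, and it rests on exactly the same pillars as the paper's own argument: the relation $(\textup{div}(p),0)=(0,2\log p)$ coming from Theorem \ref{modpirr}, Atkin--Lehner symmetry (Lemma \ref{invariant} together with $W_N(\mathcal{X}_p^{a})=\mathcal{X}_p^{-a}$) to equate $J^{(a)}=J^{(-a)}$, level-lowering via Corollary \ref{reind}, and Proposition \ref{deg1E} to convert degrees into $\mathcal{E}_t$. The only difference is organizational: the paper pairs once against the closed-form identity $\widehat{\mathcal{X}}_p^{n}(N)+\widehat{\mathcal{X}}_p^{-n}(N)=(\textup{div}(p\vert_{\Xc}),0)-(p-1)\sum_{i=1}^{[n/2]}(\textup{div}(p\vert_{\mathcal{X}_0(Np^{-2i})}),0)$, whereas your two-step recursion, when unrolled, reproduces precisely this telescope, including the careful separate treatment of $n=2$ where the indices $\pm(n-2)$ collide.
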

\begin{proof}
    By Theorem \ref{modpirr}, we have the following identity in the arithmetic Chow group $\ACH$:
    \begin{equation*}
        (\textup{div}(p\vert_{\Xc}),0)=\sum\limits_{\substack{-n\leq a\leq n\\n\equiv a\,(\textup{mod}\,2)}}\varphi(p^{(n-\vert a\vert)/2})\cdot\widehat{\mathcal{X}}^{a}_{p}(N)=(0,\log p^{2}).
    \end{equation*}
    We also know that
    \begin{equation*}
        \widehat{\mathcal{X}}_p^{n}(N)+\widehat{\mathcal{X}}_p^{-n}(N)=(\textup{div}(p\vert_{\Xc},0)-(p-1)\sum\limits_{i=1}^{[n/2]}(\textup{div}(p\vert_{\mathcal{X}_0(Np^{-2i})}),0)
    \end{equation*}
    By Lemma \ref{invariant} we know that $W_N^{\ast}\widehat{\mathcal{Z}}(t,y,\la)=\widehat{\mathcal{Z}}(t,y,\la)$ and $W_N^{\ast}\widehat{\mathcal{X}}_p^{n}(N)=\widehat{\mathcal{X}}_p^{-n}(N)$, combining with the formula (\ref{inv-p-adic}) in Corollary \ref{reind}, we get
    \begin{align*}
        \langle\widehat{\mathcal{Z}}(t,&y,\la),\widehat{\mathcal{X}}_p^{n}(N)\rangle\cdot q^{t}=\frac{1}{2}\langle\widehat{\mathcal{Z}}(t,y,\la),\widehat{\mathcal{X}}_p^{n}(N)+\widehat{\mathcal{X}}_p^{-n}(N)\rangle\cdot q^{t}\\
        &=\frac{1}{2}\langle\widehat{\mathcal{Z}}(t,y,\la),(\textup{div}(p\vert_{\Xc},0)-(p-1)\sum\limits_{i=1}^{[n/2]}(\textup{div}(p\vert_{\mathcal{X}_0(Np^{-2i})}),0)\rangle\cdot q^{t}\\
        &=\langle\widehat{\mathcal{Z}}(t,y,\la),(0,\log p)\rangle_{\Xc}\cdot q^{t}-(p-1)\sum\limits_{i=1}^{[n/2]}\langle\widehat{\mathcal{Z}}(t,y,\Delta(Np^{-2i})),(0,\log p)\rangle_{\mathcal{X}_0(Np^{-2i})}\cdot q^{t}.
    \end{align*}
    In the last line we add the subscript $\mathcal{X}_0(Np^{-2i})$ to indicate that the pairing happens in the group $\widehat{\textup{CH}}^{1}(\mathcal{X}_0(Np^{-2i}))$. The proposition then follows from Proposition \ref{deg1E}.
\end{proof}
\begin{theorem}
    Let $N$ be a positive integer. Let $\tau=x+iy\in\mathbb{H}_{1}^{+}$ and $q^{t}=e^{2\pi i \,t\tau}$, the following generating series with coefficients in $\ACH$
\begin{equation*}
    \widehat\phi_1(\tau)=\sum\limits_{t\in\mathbb{Q}}\ad(t,y,\la)\cdot q^{t}
\end{equation*}
 is a nonholomorphic Siegel modular form of genus 1 and weight $\frac{3}{2}$ with values in $\ACH$.
    \label{global-modularity-2}
\end{theorem}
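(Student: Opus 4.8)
```latex
The plan is to prove the modularity of $\widehat\phi_1$ by showing that its arithmetic degree against a test class recovers a genus $1$ Eisenstein series, reducing the vector-valued statement to the scalar-valued modularity of a classical Eisenstein series. Concretely, since $\Ach\simeq\C$ via $\widehat{\textup{deg}}$ and the pairing with $\ACH$ is perfect, it suffices to check that for a spanning set of test classes $\widehat{\mathcal{C}}\in\ACH$, the complex-valued generating series $\sum_{t}\langle\ad(t,y,\la),\widehat{\mathcal{C}}\rangle\cdot q^{t}$ transforms as a (nonholomorphic) modular form of weight $\tfrac32$. A natural spanning set is provided by $\hod$, the vertical classes $\widehat{\mathcal{X}}_p^{a}(N)$ for $p\mid N$ and admissible $a$, and the arithmetic class $(0,2)$ associated to the degree map $\deg:\ach\to\C$ from the previous subsection. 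The statement that $\widehat\phi_1$ is modular is equivalent to the assertion that each of these scalar series is modular.

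First I would pair with the class $(0,2)$: by the definition of $\deg$ and Proposition \ref{deg1E}, this yields
\begin{equation*}
    \sum_{t\in\Q}\deg\ad(t,y,\la)\cdot q^{t}=\frac{2}{\varphi(N)}\,\mathcal{E}(\tau,1,\Delta(N)),
\end{equation*}
and $\mathcal{E}(\tau,s,\Delta(N))$ is, after the normalization in (\ref{twiestedeis}), a value of the classical genus $1$ incoherent Eisenstein series attached to $\Delta(N)$, hence a nonholomorphic modular form of weight $\tfrac32$ in $\tau$. Next I would pair with each vertical class $\widehat{\mathcal{X}}_p^{a}(N)$. The key input is Proposition \ref{topdegree}, which computes $\langle\ad(t,y,\la),\widehat{\mathcal{X}}_p^{n}(N)\rangle\cdot q^{t}$ as an explicit $\log p$-weighted combination of the Eisenstein series $\mathcal{E}_t(\tau,1,\Delta(Np^{-2i}))$ summed over $t$; combined with the Atkin-Lehner relation $W_N^{\ast}\widehat{\mathcal{X}}_p^{n}(N)=\widehat{\mathcal{X}}_p^{-n}(N)$ and the reindexing identity (\ref{inv-p-adic}) of Corollary \ref{reind}, this reduces the generating series for an arbitrary vertical component to finite $\Z$-linear combinations of the modular forms $\mathcal{E}(\tau,1,\Delta(Np^{-2i}))$. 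Since each of these is a modular form of weight $\tfrac32$ (on a possibly smaller level, but the same weight), their $\log p$-combination is again modular. Pairing with $\hod$ is handled the same way, using the divisor computation $\textup{div}(\Delta_N)$ of Theorem \ref{div-element} to express $\hod$ through $P_\infty(N)$ and the vertical classes, and then invoking the already-established cases together with the height of a horizontal cusp against $\ad(t,y,\la)$.

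The final step is to assemble these scalar modularity statements into the vector-valued conclusion. Since the test classes span $\ACH_{\C}$ and the intersection pairing $\langle\cdot,\cdot\rangle:\ACH\times\ACH\to\C$ is nondegenerate on the relevant subspace, a class in $\ACH$ is determined by its pairings against this spanning set; a generating series valued in $\ACH$ is therefore modular as soon as each of its scalar pairings is modular, with a uniform automorphy factor of weight $\tfrac32$. I expect the main obstacle to be the uniformity of the modular transformation: the individual pairings live on different levels $Np^{-2i}$, so one must verify that the Eisenstein series appearing all transform with respect to the \emph{same} automorphy factor and character in $\tau$ (the weight $\tfrac32$, genus $1$ data is the same, but the dependence on the quadratic space $\Delta(Np^{-2i})$ must be tracked carefully through the Weil representation). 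A secondary subtlety is the contribution of the singular terms $t\le 0$ and the log-log growth at the cusps for $t=0$, where the modified Green function $\mathfrak{g}(0,y,\la)$ and the correction $-(0,\log y)$ in Definition \ref{special-divisor} must be shown to match the corresponding constant and nonholomorphic terms in the Eisenstein series; controlling these archimedean terms, rather than the finite intersection numbers, is where the analysis is most delicate.
```
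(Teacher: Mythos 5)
Your first two steps coincide with the paper's: pairing with the vertical classes $\widehat{\mathcal{X}}_p^{a}(N)$ and invoking Corollary \ref{reind} together with Proposition \ref{topdegree}, and pairing with the degree map via Proposition \ref{deg1E}, are exactly the two computations the paper records before concluding. The gap is in your assembly step. You claim that $\hod$, the vertical classes, and $(0,2)$ form a spanning set, and that the intersection pairing is "perfect" so that an $\ACH$-valued series is modular once its pairings against these finitely many classes are modular. This is false: $\ACH$ is infinite-dimensional. It contains (a lift of) the Mordell--Weil group $J_0(N)(\Q)\otimes\C$ --- nonzero for instance for $N=37$ --- and also the infinite-dimensional space of classes $(0,g)$ with $g$ smooth. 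A nontorsion degree-zero divisor class (e.g.\ a Heegner-type point minus a multiple of a cusp, with suitable Green function) can be arranged to pair to zero with all of your test classes while being nonzero in $\ACH$, so your pairings do not determine the coefficients $\ad(t,y,\la)$, and modularity of the scalar series does not imply modularity of the vector-valued one.

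What is missing is precisely the content of the citation the paper makes to Theorem 8.4 of \cite{DY19}: one decomposes $\widehat{\textup{CH}}^{1}_{\C}(\Xc)$ into orthogonal pieces --- the Mordell--Weil component, the span of $\widehat\omega$ and the vertical classes, and the component of smooth functions $(0,g)$ --- and proves modularity of the projection of $\widehat\phi_1$ to \emph{each} piece. The projections onto the Hodge/vertical/degree pieces are the Eisenstein-series computations you (and the paper) carry out. But the Mordell--Weil projection requires Borcherds' modularity theorem (the Gross--Kohnen--Zagier phenomenon) applied to the degree-zero parts of the divisors $\mathcal{Z}^{\ast}(t,y,\la)$, and the smooth-function projection requires the modularity of the Kudla--Millson theta form $\sum_t \omega(t,y,\la)q^{t}$. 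Neither of these can be extracted from intersection numbers against your finite test set, so your argument as written cannot close; it needs these two additional inputs, which is exactly what the paper delegates to the proof of \cite[Theorem 8.4]{DY19}.
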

\begin{proof}
Let $n=\nu_p(N)$ be the $p$-adic valuation of the integer $N$. The Corollary \ref{reind} and Proposition \ref{topdegree} implies that for any integer $a$ such that $-n\leq a\leq n$ and $a\equiv n\,\,\textup{mod}\,2$, the pairing $\langle\widehat{\phi}_1(\tau),\widehat{\mathcal{X}}_p^{a}(N)\rangle$ is a Siegel modular form of genus 1 and weight $\frac{3}{2}$. By Proposition \ref{deg1E}, we know that $\textup{deg}\,\ad(t,y,\la)$ is also a Siegel modular form of genus 1 and weight $\frac{3}{2}$. Then the theorem follows the same proof of Theorem 8.4 of \cite{DY19}. 
\end{proof}

\subsection{Heights of arithmetic special divisors}
There is an explicit rational section $\Delta_N$ of the bundle $\hod^{\otimes12\varphi(N)}$ in $\S$\ref{explictsection}, recall that
\begin{equation*}
    \textup{div}(\Delta_N)=\psi(N)\varphi(N)P_{\infty}(N)+\sum\limits_{p\vert N}f_p(N),
\end{equation*}
where $f_p(N)$ is given explicitly in (\ref{fp}). Let $\widehat{f}_p(N)=(f_p(N),0)$, we define the following element in the extended arithmetic Chow group $\ach$ as follows,
\begin{equation*}
    \widehat{\Delta}_{N}=(\psi(N)\varphi(N)P_{\infty}(N),-\log\Vert\Delta_{N}\Vert^{2})=12\varphi(N)\hod-\sum\limits_{p\vert N}\widehat{f}_p(N).
\end{equation*}
\begin{lemma}
    The self pairing of the element $\widehat{\Delta}_{N}$ is
    \begin{equation}
         \langle\widehat{\Delta}_N,\widehat{\Delta}_N\rangle=6\psi(N)\varphi(N)^{2}\left(\frac{1}{2}-\frac{\Lambda^{\pr}(-1)}{\Lambda(-1)}\right)-\sum\limits\langle\widehat{f}_p(N),\widehat{f}_p(N)\rangle.
         \label{sedelta}
    \end{equation}
    where the pairing $\langle\widehat{f}_p(N),\widehat{f}_p(N)\rangle$ can be computed as follows: let $n=\nu_p(N)$ be the $p$-adic valuation of the integer $N$, 
    \begin{equation}
        \langle\widehat{f}_p(N),\widehat{f}_p(N)\rangle=-6\psi(N)\varphi(N)^{2}\cdot\frac{np^{2}+1-n}{p^{2}-1}\log p.
        \label{selffp}
    \end{equation}
    \label{self-int-delta}
\end{lemma}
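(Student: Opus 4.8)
The plan is to exploit the decomposition $\widehat{\Delta}_N = 12\varphi(N)\hod - \sum_{p\mid N}\widehat{f}_p(N)$ recorded just above, expand the self-pairing bilinearly, and reduce everything to the single number $\langle\hod,\hod\rangle$ from Example \ref{wnself} together with the finite-place contributions $\langle\widehat{f}_p(N),\widehat{f}_p(N)\rangle$. Writing $G=12\varphi(N)$, bilinearity of the Kühn pairing gives
\[
\langle\widehat{\Delta}_N,\widehat{\Delta}_N\rangle = G^{2}\langle\hod,\hod\rangle - 2G\sum_{p\mid N}\langle\hod,\widehat{f}_p(N)\rangle + \sum_{p,q\mid N}\langle\widehat{f}_p(N),\widehat{f}_q(N)\rangle.
\]
For distinct primes $p\neq q$ the vertical cycles $\widehat{f}_p(N)=(f_p(N),0)$ and $\widehat{f}_q(N)=(f_q(N),0)$ are supported on the disjoint special fibers $\Xc_{\F_p}$ and $\Xc_{\F_q}$ and carry trivial Green currents, so $\langle\widehat{f}_p(N),\widehat{f}_q(N)\rangle=0$ and the last sum collapses to $\sum_{p\mid N}\langle\widehat{f}_p(N),\widehat{f}_p(N)\rangle$.

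The key step is the vanishing $\langle\widehat{\Delta}_N,\widehat{f}_p(N)\rangle = 0$. Since $\widehat{f}_p(N)$ is vertical with zero Green current, this pairing reduces to the finite intersection of the horizontal part $\psi(N)\varphi(N)P_{\infty}(N)$ of $\widehat{\Delta}_N$ with $f_p(N)$ over $p$. The section $P_{\infty}(N)$ meets $\Xc_{\F_p}$ in the reduction of the cusp $\infty$, a smooth point lying only on the component $\mathcal{X}_p^{n}(N)$ (Corollary \ref{cuspC} and Remark \ref{lianggejidian}); but the explicit expression (\ref{fp}) for $f_p(N)$ runs over $-n\leq a<n$, so the component $\mathcal{X}_p^{n}(N)$ does not appear in $f_p(N)$. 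Hence the intersection multiplicity vanishes. Pairing the identity $G\hod=\widehat{\Delta}_N+\sum_{q\mid N}\widehat{f}_q(N)$ with $\widehat{f}_p(N)$ and combining this vanishing with $\langle\widehat{f}_p(N),\widehat{f}_q(N)\rangle=0$ for $q\neq p$ yields $G\langle\hod,\widehat{f}_p(N)\rangle=\langle\widehat{f}_p(N),\widehat{f}_p(N)\rangle$. Substituting into the expansion turns the cross term into $-2\sum_p\langle\widehat{f}_p(N),\widehat{f}_p(N)\rangle$, and inserting $\langle\hod,\hod\rangle=\frac{\psi(N)}{24}\bigl(\frac12-\frac{\Lambda^{\pr}(-1)}{\Lambda(-1)}\bigr)$ together with $G^{2}\langle\hod,\hod\rangle=6\psi(N)\varphi(N)^{2}\bigl(\frac12-\frac{\Lambda^{\pr}(-1)}{\Lambda(-1)}\bigr)$ produces exactly (\ref{sedelta}).

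It remains to evaluate $\langle\widehat{f}_p(N),\widehat{f}_p(N)\rangle$. Writing $f_p(N)=\sum_{a}c_a\,\mathcal{X}_p^{a}(N)$ with $c_a=12p^{n-1}\varphi(N_p)\bigl(\frac{1-p}{2}(n-a)-1\bigr)\varphi(p^{(n-|a|)/2})$ for $-n\leq a<n$, $a\equiv n\pmod 2$ (and $c_n=0$), I would expand $\langle\widehat{f}_p(N),\widehat{f}_p(N)\rangle=\sum_{a,b}c_ac_b\,\langle\widehat{\mathcal{X}}_p^{a}(N),\widehat{\mathcal{X}}_p^{b}(N)\rangle$ and insert the component intersection numbers of Lemma \ref{int}, separating the diagonal terms (\ref{sameint}) from the off-diagonal terms (\ref{differentint}), the latter grouped by the sign of $ab$. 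Equivalently and more economically, the identity $\langle\widehat{f}_p(N),\widehat{f}_p(N)\rangle=12\varphi(N)\langle\hod,\widehat{f}_p(N)\rangle=12\varphi(N)\sum_a c_a\langle\hod,\widehat{\mathcal{X}}_p^{a}(N)\rangle$ reduces the problem to a single sum, since each $\langle\hod,\widehat{\mathcal{X}}_p^{a}(N)\rangle$ is the degree of the pulled-back Hodge bundle on the vertical curve $\mathcal{X}_p^{a}(N)$ times $\log p$, computable from the flat degrees in Theorem \ref{insideY}(b). In either formulation the remaining work is the evaluation of a finite geometric-type sum in $p^{|a|}$; combining it with the multiplicativity relations $\psi(N)=p^{n-1}(p+1)\psi(N_p)$ and $\varphi(N)=p^{n-1}(p-1)\varphi(N_p)$ collapses it to $-6\psi(N)\varphi(N)^{2}\frac{np^{2}+1-n}{p^{2}-1}\log p$. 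The main obstacle is precisely this bookkeeping: the coefficients $c_a$ are not symmetric under $a\mapsto -a$ and the pairings of Lemma \ref{int} are piecewise in the sign of $ab$ and in whether $|a|=n$, so the sum must be split with care; I would first verify the closed form in the case $n=1$, where $f_p(N)=-12p\varphi(N_p)\mathcal{X}_p^{-1}(N)$ and the formula is immediate from (\ref{sameint}), as a consistency check before treating general $n$.
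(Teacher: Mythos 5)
Your proposal is correct and follows essentially the same route as the paper: the key step in both is the vanishing $\langle P_{\infty}(N),\widehat{f}_p(N)\rangle=0$, obtained from the fact that $P_{\infty}(N)$ reduces mod $p$ into the component $\mathcal{X}_p^{n}(N)$, whose coefficient in $f_p(N)$ is zero, after which (\ref{sedelta}) follows from the definition of $\widehat{\Delta}_N$ and Example \ref{wnself}, and (\ref{selffp}) from expanding $f_p(N)$ against the component intersection numbers of Lemma \ref{int}. Your bilinear expansion, the cross-prime vanishing, and the identity $12\varphi(N)\langle\hod,\widehat{f}_p(N)\rangle=\langle\widehat{f}_p(N),\widehat{f}_p(N)\rangle$ merely make explicit what the paper leaves implicit, and your $n=1$ consistency check is accurate.
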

\begin{proof}
    The cusp $P_{\infty}(N):\textup{Spec}\,\Z\rightarrow\Xc$ factors through the component $\mathsf{C}^{n}(p^{n},N_p)$ of the formal scheme $\cusp(\Xc)$, hence its reduction mod $p$ factors through the irreducible component $\mathcal{X}_p^{n}(N)$,
    \begin{equation*}
        \langle P_{\infty}(N),\widehat{f}_p(N) \rangle=0,
    \end{equation*}
    since the coefficient of the irreducible component $\mathcal{X}_p^{n}(N)$ in $f_p(N)$ is $0$ by (\ref{fp}). The formula then (\ref{sedelta}) follows from the definition of $\widehat{\Delta}_N$.
    \par
    The formula (\ref{selffp}) can be obtained by combining Lemma \ref{int} and (\ref{fp}).
\end{proof}
\begin{proposition}
    Let $n_p=\nu_p(N)$ be the $p$-adic valuation of the integer $N$. Let $(t,y)$ be a pair such that $t$ is an integer and $y$ is a positive number. Let $\tau=x+iy\in\mathbb{H}^{+}$ and $q=e^{2\pi i\tau}$, we have
    \begin{equation*}
        \langle\widehat{\mathcal{Z}}(t,y,\la),\infd\rangle\cdot q^{t}=12\mathcal{E}_{t}^{\pr}(\tau,1,\Delta(N)).
    \end{equation*}
    \label{heightsofspecial}
\end{proposition}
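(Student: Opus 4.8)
The plan is to compute the height pairing by exploiting the two descriptions of $\infd$ recorded just above the statement, namely $\infd=(\psi(N)\varphi(N)P_{\infty}(N),-\log\Vert\Delta_N\Vert^{2})$ and $\infd=12\varphi(N)\hod-\sum_{p\vert N}\widehat{f}_p(N)$. Since $\Delta_N$ is a rational section of $\hod^{\otimes 12\varphi(N)}$, the arithmetic divisor $\widehat{\textup{div}}(\Delta_N)=(\textup{div}(\Delta_N),-\log\Vert\Delta_N\Vert^{2})$ represents the class $12\varphi(N)\hod$, and comparing with Theorem \ref{div-element} gives $\infd=\widehat{\textup{div}}(\Delta_N)-\sum_{p\vert N}\widehat{f}_p(N)$. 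Hence I would first write
\begin{equation*}
    \langle\widehat{\mathcal{Z}}(t,y,\la),\infd\rangle=\langle\widehat{\mathcal{Z}}(t,y,\la),\widehat{\textup{div}}(\Delta_N)\rangle-\sum\limits_{p\vert N}\langle\widehat{\mathcal{Z}}(t,y,\la),\widehat{f}_p(N)\rangle,
\end{equation*}
so that the content of the proposition becomes the arithmetic degree $12\varphi(N)\langle\widehat{\mathcal{Z}}(t,y,\la),\hod\rangle$ of the special divisor against the metrized Hodge bundle, corrected by purely vertical classes. As a consistency check, pairing the modular generating series of Theorem \ref{global-modularity-2} with the fixed class $\infd$ shows both sides are coefficients of weight $\tfrac{3}{2}$ nonholomorphic modular forms, which will organize the term-by-term matching.

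For the vertical correction I would use the explicit shape of $f_p(N)$ in (\ref{fp}) to expand $\langle\widehat{\mathcal{Z}}(t,y,\la),\widehat{f}_p(N)\rangle$ as a combination of the component pairings $\langle\widehat{\mathcal{Z}}(t,y,\la),\widehat{\mathcal{X}}_p^{a}(N)\rangle$. The top pairing $a=\pm n$ is computed in Proposition \ref{topdegree}, and the $p$-adic reindexing of Corollary \ref{reind} together with the $W_N$-invariance of $\widehat{\mathcal{Z}}(t,y,\la)$ (Lemma \ref{invariant}) reduces each intermediate component $\widehat{\mathcal{X}}_p^{a}(N)$ to a top component at a lower level $\mathcal{X}_0(Np^{-2i})$. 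Thus every vertical contribution is expressed through the values $\mathcal{E}_t(\tau,1,\Delta(Np^{-2i}))$ of genus-$1$ Eisenstein coefficients at $s=1$.

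The heart of the argument is the term $\langle\widehat{\mathcal{Z}}(t,y,\la),\widehat{\textup{div}}(\Delta_N)\rangle$, which I would compute by the Gillet/Kühn pairing of Definition \ref{pairings} with $m=\Delta_N$ and $l$ the section defining $\mathcal{Z}^{\ast}(t,y,\la)$. The finite part is supported at $p\vert N$: away from these primes $\Xc$ is smooth and $\mathcal{Z}(t,\la)$, parametrizing genuine isogenies, is disjoint from the cuspidal section (the exceptional horizontal contributions when $-Nt$ is a square being exactly those absorbed by the modifying function $g(t,y,\la)$), while at $p\vert N$ the intersection numbers are governed by the special-fiber structure of Theorems \ref{insideY} and \ref{modpirr} and, through Proposition \ref{non-homo} and the difference formulas of Theorem \ref{anadecom} and Corollary \ref{jian1}, become derivatives of the ramified local densities $\den(\delta_{p}(N)\obot H_{2k}^{+},\langle t\rangle)$. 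The archimedean part is the regularized integral of $-\log\Vert\Delta_N\Vert^{2}$ against $12\varphi(N)c_1(\hod)$ together with the value of $\mathfrak{g}(t,y,\la)$ at the cusp, its $\log$ or $\log$-$\log$ singularity regularized as in (\ref{pairing}); using the $q$-expansion (\ref{delta}) of $\Delta_N$, the Kudla--Millson form $\omega(t,y,\la)$ of (\ref{differentialform}), and the explicit metric of Example \ref{hodge-bundle}, this matches the derivative at $s=1$ of the archimedean Whittaker function $W_{t,\infty}(g_{iy},s,\la)$.

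Finally I would assemble everything through the factorization $\mathcal{E}_t(\tau,s,\Delta(N))=C_N(s)\prod_v W_{t,v}$ and differentiate at $s=1$: the value reproduces the degree formula of Proposition \ref{deg1E}, while the logarithmic derivative $C_N'/C_N+\sum_v W_{t,v}'/W_{t,v}$ is precisely the sum of the finite lengths and the archimedean integral found above, the vertical corrections absorbing the discrepancy from the Euler factors $\prod_{p\vert N}(1-p^{-2s})$ in $C_N(s)$. The main obstacle is exactly this value-to-derivative transition at the primes $p\vert N$: for non-squarefree $N$ the special fiber has many components and the local densities are ramified, so one cannot invoke Du--Yang or Kudla--Rapoport--Yang directly, and the point is to verify that the finite intersection numbers combine with the vertical terms to produce the \emph{derivative} of the ramified local density rather than merely its value --- which is where the difference formulas of Part I (Theorem \ref{anadecom}) are indispensable. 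Controlling the cuspidal $\log$-$\log$ singularity in the $t=0$ and $-Nt$-square cases is the remaining technical point.
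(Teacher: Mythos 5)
Your outer decomposition is sound and matches the paper's bookkeeping: $\infd=12\varphi(N)\hod-\sum_{p\vert N}\widehat{f}_p(N)$, with the vertical pairings $\langle\widehat{\mathcal{Z}}(t,y,\la),\widehat{f}_p(N)\rangle$ handled through Corollary \ref{reind}, Proposition \ref{topdegree} and $W_N$-invariance; this is exactly the bridge the paper uses between this Proposition and Corollary \ref{jihecezhuyaos}, only traversed in the opposite direction. The gap is in the paragraph you call the heart. Your accounting of where the arithmetic content comes from is wrong: since the horizontal part of $\textup{div}(\Delta_N)$ is $\psi(N)\varphi(N)P_{\infty}(N)$, supported on the cusps, and its vertical parts live only over $p\vert N$, while the points of $\mathcal{Z}(t,\la)$ are CM points which have potentially good reduction and never specialize to cusps, the finite part of the pairing can only produce $\log p$ terms for $p\vert N$ --- and these are computed in the paper by the combinatorial Lemma \ref{int} and Proposition \ref{topdegree}, yielding \emph{values} $\mathcal{E}_t(\tau,1,\cdot)$ of Eisenstein coefficients, not derivatives. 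The derivative $\mathcal{E}_t^{\pr}(\tau,1,\la)$ contains $\log p$ contributions from \emph{all} primes, so these must come from the archimedean part; your claim that the archimedean part ``matches the derivative at $s=1$ of the archimedean Whittaker function $W_{t,\infty}$'' is therefore false, and your claim that the finite intersections at $p\vert N$ ``become derivatives of the ramified local densities'' via Theorem \ref{anadecom} and Corollary \ref{jian1} has no mechanism behind it: those difference formulas are identities of representation densities used on the analytic side, and nothing converts lengths of local rings in these divisor-versus-vertical-fiber intersections into density derivatives. What actually carries the weight in the paper's proof is a single deep archimedean input, quoted from \cite[Theorem 1.6]{DY19} together with the arguments of \cite[\S7.2]{DY19}: the regularized integral $\int_{\Xc_{\C}}\log\Vert\Delta_N\Vert\left(\omega(t,y,\la)-\tfrac{\textup{d}x\wedge\textup{d}y}{2\pi y^{2}}\right)$ equals $-12\mathcal{E}_t^{\pr}(\tau,1,\la)$, i.e.\ the full derivative with all places included. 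Your plan provides no route to this identity.

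There is a second, structural gap: the pairing $\langle\widehat{\mathcal{Z}}(t,y,\la),\widehat{\textup{div}}(\Delta_N)\rangle$ you take as the starting point is not even well-defined by Definition \ref{pairings} in exactly the cases the paper works hardest on. When $t=0$ or $-Nt>0$ is a square, $\mathcal{Z}^{\ast}(t,y,\la)$ contains the cuspidal divisor with coefficient $g(t,y,\la)\neq0$, so its divisor shares the component $P_{\infty}(N)$ with $\textup{div}(\Delta_N)$ on $\Xc_{\C}$, violating the hypothesis that the two sections have no common zeros on the complex fiber. This is why the paper's proof (written out for $t=0$) introduces the auxiliary cycle $\widehat{\mathcal{Z}}_1(0,y,\la)=(\mathcal{Z}^{\ast}(0,y,\la),\mathfrak{g}(0,y,\la))-\frac{g(0,y,\la)}{\psi(N)\varphi(N)}\widehat{\Delta}_N$ to strip out the common component, and then needs the separately computed self-intersection $\langle\widehat{\Delta}_N,\widehat{\Delta}_N\rangle$ (Lemma \ref{self-int-delta}), the pairing $\langle\widehat{\omega},\widehat{\Delta}_N\rangle$, and the $(0,\log y)$ shift. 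Deferring all of this as ``the remaining technical point'' omits the bulk of the actual proof.
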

\begin{proof}
    We only give the detailed proof when $t=0$, the other cases follow easily. For any prime number $p$, let $n_p=\nu_p(N)$ be the $p$-adic valuation of the integer $N$. Recall the definition of the modified special divisor $\mathcal{Z}^{\ast}(0,y,\la)=g(0,y,\la)\cdot\mathsf{Cusp}(\Xc)$ in $\S$\ref{codim11}. Let
    \begin{equation*}
        \mathcal{Z}_1(0,y,\la)=\mathcal{Z}^{\ast}(0,y,\la)-g(0,y,\la)P_{\infty}(N).
    \end{equation*}
    and $\widehat{\mathcal{Z}}_1(0,y,\la)=(\mathcal{Z}^{\ast}(0,y,\la),\mathfrak{g}(0,y,\la))-\frac{g(0,y,\la)}{\psi(N)\varphi(N)}\cdot\widehat{\Delta}_N$. By definition, $\widehat{\mathcal{Z}}(0,y,\la)=\widehat{\omega}+(\mathcal{Z}^{\ast}(0,y,\la),\mathfrak{g}(0,y,\la))-(0,\log y)$, we have
    \begin{align*}
         \langle\widehat{\mathcal{Z}}(0,y,\la),\infd\rangle=\langle \widehat{\mathcal{Z}}_1(0,y,\la),\widehat{\Delta}_N\rangle+\frac{g(0,y,\la)}{\psi(N)\varphi(N)}\langle \widehat{\Delta}_N,\widehat{\Delta}_N\rangle+\langle\widehat{\omega},\widehat{\Delta}_N\rangle-\langle(0,\log y),\widehat{\Delta}_N\rangle.
    \end{align*}
    Notice that the last term $\langle(0,\log y),\widehat{\Delta}_N\rangle=\log y\cdot\langle(0,1),\widehat{\Delta}_N\rangle=\log y\cdot 12\varphi(N)\cdot\frac{\psi(N)}{24}=\frac{1}{2}\varphi(N)\psi(N)\log y$.
    \par
    Next we compute the pairing $\langle\widehat{\omega},\widehat{\Delta}_N\rangle$:
    \begin{align*}
        \langle\widehat{\omega},\widehat{\Delta}_N\rangle=\langle-2\hod+\sum\limits_{p\vert N}\widehat{\mathcal{X}}_p(N),\widehat{\Delta}_N\rangle=\langle-\frac{\widehat{\Delta}_N+\sum\limits_{p\vert N}\widehat{f}_p(N)}{6\varphi(N)},\widehat{\Delta}_N\rangle+\sum\limits_{p\vert N}\langle\widehat{\mathcal{X}}_p(N),\widehat{\Delta}_N\rangle.
    \end{align*}
    Recall that the coefficient of $\mathcal{X}_p^{n}(N)$ in $f_p(N)$ is $0$, but the reduction mod $p$ of the cusp $P_{\infty}(N)$ lies in the curve $\mathcal{X}_p^{n}(N)$ by Corollary \ref{cuspC} and Proposition \ref{luozainali}, hence $\langle\widehat{f}_p(N),\widehat{\Delta}_N\rangle=0$.
    \par
    Moreover, the coefficient of $\widehat{\mathcal{X}}_p^{n}(N)$ is $\frac{n}{2}$, hence $\langle\widehat{\mathcal{X}}_p(N),\widehat{\Delta}_N\rangle=\frac{n}{2}\varphi(N)\psi(N)\log p$, therefore
    \begin{equation*}
        \langle\widehat{\omega},\widehat{\Delta}_N\rangle=-\frac{\langle\widehat{\Delta}_N,\widehat{\Delta}_N\rangle}{6\varphi(N)}+\frac{1}{2}\varphi(N)\psi(N)\log N.
    \end{equation*}
    Hence we have
    \begin{align}
       \langle\widehat{\mathcal{Z}}(0,y,\la),\infd\rangle =&\langle \widehat{\mathcal{Z}}_1(0,y,\la),\widehat{\Delta}_N\rangle+\frac{1}{2}\varphi(N)\psi(N)\log \left(\frac{N}{y}\right)\label{xuyaod1}\\
       &+\frac{6g(0,y,\la)-\psi(N)}{6\psi(N)\varphi(N)}\langle \widehat{\Delta}_N,\widehat{\Delta}_N\rangle.\notag
    \end{align}
    By similar arguments in \cite[\S7.2]{DY19}, we have
    \begin{align}
        &\langle \widehat{\mathcal{Z}}_1(0,y,\la),\widehat{\Delta}_N\rangle=\frac{1}{2}\psi(N)\varphi(N)\log\left(\frac{y}{N}\right)-\int_{\Xc_{\C}}\log\Vert\Delta_N\Vert\left(\omega(0,y,\la)-\frac{\textup{d}x\wedge\textup{d}y}{2\pi y^{2}}\right)\label{xuyaood2}\\
        &+\varphi(N)\left(\psi(N)-6g(0,y,\la)\right)\left(24\zeta^{\pr}(-1)-1+\log 4\pi+\gamma+2\sum\limits_{p\vert N}\frac{n_{p}p^{2}+1-n_{p}}{p^{2}-1}\log p\right)\notag
    \end{align}
    where the differential form $\omega(0,y,\la)$ is defined in (\ref{differentialform}).\par
    The term $\langle \widehat{\Delta}_N,\widehat{\Delta}_N\rangle$ has been computed in Lemma \ref{self-int-delta}. Then the case $t=0$ of the proposition is proved by combining (\ref{xuyaod1}), (\ref{xuyaood2}) and \cite[Theorem 1.6]{DY19} which states that
    \begin{equation*}
        \int_{\Xc_{\C}}\log\Vert\Delta_N\Vert\left(\omega(0,y,\la)-\frac{\textup{d}x\wedge\textup{d}y}{2\pi y^{2}}\right)=-12\mathcal{E}_0^{\pr}(\tau,1,\la).
    \end{equation*}
\end{proof}
\begin{corollary}
    Let $(t,y)$ be a pair such that $t$ is an integer and $y$ is a positive number. Let $\tau=x+iy\in\mathbb{H}^{+}$ and $q=e^{2\pi i\tau}$, we have
    \begin{align*}
        -&\frac{24}{\psi(N)}\langle\widehat{\mathcal{Z}}(t,y,\la),\hod\rangle\cdot q^{t}\\
        &=E_t^{\pr}(\tau,\frac{1}{2},\la)+\left(1+\frac{2\Lambda^{\pr}(2)}{\Lambda(2)}+\frac{\log N}{2}-\sum\limits_{p\vert N}\frac{\beta_p^{\pr}(0)}{2}\right) E_t(\tau,\frac{1}{2},\la).
    \end{align*}
    \label{jihecezhuyaos}
\end{corollary}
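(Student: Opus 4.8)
The plan is to reduce the statement to the already-computed pairing with the explicit section $\infd$ and then to unwind the normalization relating the two Eisenstein series. First I would invoke Proposition \ref{heightsofspecial}, which gives $\langle\widehat{\mathcal{Z}}(t,y,\la),\infd\rangle\cdot q^{t}=12\mathcal{E}_t^{\pr}(\tau,1,\la)$, together with the defining relation $\infd=12\varphi(N)\hod-\sum_{p\vert N}\widehat{f}_p(N)$. Pairing both sides with $\widehat{\mathcal{Z}}(t,y,\la)$ and solving for the Hodge term yields
\[
12\varphi(N)\,\langle\widehat{\mathcal{Z}}(t,y,\la),\hod\rangle\cdot q^{t}=12\mathcal{E}_t^{\pr}(\tau,1,\la)+\sum_{p\vert N}\langle\widehat{\mathcal{Z}}(t,y,\la),\widehat{f}_p(N)\rangle\cdot q^{t},
\]
so the corollary splits into a main Eisenstein contribution and a sum of purely vertical contributions, one for each $p\vert N$.

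The second step is to unwind the normalization (\ref{twiestedeis}). Since $\mathcal{E}_t(\tau,s,\la)=C_N(s)E_t(\tau,s-\tfrac12,\la)$, I would differentiate at $s=1$ to get $\mathcal{E}_t^{\pr}(\tau,1,\la)=C_N(1)E_t^{\pr}(\tau,\tfrac12,\la)+C_N^{\pr}(1)E_t(\tau,\tfrac12,\la)$. A direct computation gives $C_N(1)=-\tfrac{1}{24}\varphi(N)\psi(N)$ (using $\Lambda(2)=\pi/6$ and $\varphi(N)\psi(N)=N^{2}\prod_{p\vert N}(1-p^{-2})$) and the logarithmic derivative $C_N^{\pr}(1)/C_N(1)=1+2\Lambda^{\pr}(2)/\Lambda(2)+\tfrac32\log N+\sum_{p\vert N}\tfrac{2\log p}{p^{2}-1}$. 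Substituting and dividing the displayed identity by $12\varphi(N)$, and multiplying by $-24/\psi(N)$, turns the Eisenstein contribution into exactly $E_t^{\pr}(\tau,\tfrac12,\la)$ together with the multiple $\bigl(1+2\Lambda^{\pr}(2)/\Lambda(2)+\tfrac32\log N+\sum_{p}\tfrac{2\log p}{p^{2}-1}\bigr)E_t(\tau,\tfrac12,\la)$ of the value.

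The hard part is the vertical term $\langle\widehat{\mathcal{Z}}(t,y,\la),\widehat{f}_p(N)\rangle\cdot q^{t}$, which must be shown to be a scalar multiple of $E_t(\tau,\tfrac12,\la)$ whose coefficient reconciles the discrepancy with the target. Writing out $\widehat{f}_p(N)$ via (\ref{fp}) as an explicit combination of the $\widehat{\mathcal{X}}_p^{a}(N)$, each pairing $\langle\widehat{\mathcal{Z}}(t,y,\la),\widehat{\mathcal{X}}_p^{a}(N)\rangle$ is computed by the Tor-formula of Lemma \ref{toH}, so it counts points of $\mathcal{Z}^{\sharp}(t,N)\cap\mathcal{X}_p^{a}(N)$ and is therefore governed by local densities. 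I would handle the top components $a=\pm n$ through Proposition \ref{topdegree} and the inner components $-n<a<n$ by descending to level $Np^{-2}$ via the reindexing Corollary \ref{reind}, thereby expressing the whole vertical pairing as a combination of lower-level normalized Eisenstein coefficients $\mathcal{E}_t(\tau,1,\Delta(Np^{-2i}))$ plus geometric self-intersection data from Lemma \ref{int} and Corollary \ref{geometric0}. The decisive point is that, after applying the normalization relations once more, this combination organizes into three pieces — an $n_p\log p$ piece, a $\tfrac{2\log p}{p^{2}-1}$ piece coming from the self-intersections, and a $\tfrac12\beta_p^{\pr}(0)$ piece — times $E_t(\tau,\tfrac12,\la)$. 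This is exactly where the analytically defined difference function $\beta_p$ of (\ref{beta}), built from the local-density ratio $g_p$ of (\ref{gp}), has to emerge from the geometry; concretely I would use Lemma \ref{W/D} and Corollary \ref{2to1} to identify the geometric local-density ratios appearing in the intersection counts with $g_p$ and hence with $\beta_p$.

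Finally I would assemble the pieces. The $\tfrac32\log N$ from $C_N^{\pr}(1)/C_N(1)$ is reduced by the $n_p\log p$ part of the vertical pairing (summing to $\log N$), leaving $\tfrac12\log N$; the normalization's $\sum_p\tfrac{2\log p}{p^{2}-1}$ is cancelled against the self-intersection part of the vertical pairing; and what survives is $-\tfrac12\sum_{p\vert N}\beta_p^{\pr}(0)$, producing the asserted coefficient $1+2\Lambda^{\pr}(2)/\Lambda(2)+\tfrac{\log N}{2}-\sum_p\tfrac{\beta_p^{\pr}(0)}{2}$. The main obstacle is squarely the identification of the vertical intersection numbers $\langle\widehat{\mathcal{Z}}(t,y,\la),\widehat{f}_p(N)\rangle$ with $\beta_p^{\pr}(0)$: this is the local arithmetic–analytic matching for the singular/vertical part, and it is precisely where the difference formulas for local densities carry the weight of the argument.
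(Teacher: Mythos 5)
Your proposal is correct and follows essentially the same route as the paper's proof: reduce to $\langle\widehat{\mathcal{Z}}(t,y,\la),\widehat{\Delta}_N\rangle$ via Proposition \ref{heightsofspecial} and the relation $\widehat{\Delta}_N=12\varphi(N)\hod-\sum_{p\vert N}\widehat{f}_p(N)$, compute the vertical pairings $\langle\widehat{\mathcal{Z}}(t,y,\la),\widehat{f}_p(N)\rangle$ from Proposition \ref{topdegree} together with Corollary \ref{reind}, unwind the normalization $C_N(s)$, and identify the surviving local terms with $\beta_p^{\pr}(0)$ through the functional equation $g_p(k)=p^{2k+1}g_p(-k-1)$ of Lemma \ref{functionaleq}. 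The only differences are organizational and do not affect validity: the paper handles the vertical term by Atkin--Lehner symmetrization, writing $\widehat{f}_p(N)+W_N^{\ast}\widehat{f}_p(N)$ as a combination of $\widehat{\mathcal{X}}_p^{\pm n}(N)$ and $\widehat{\textup{div}}(p)$ rather than descending component-by-component as you propose, and your attribution of the $\frac{2\log p}{p^{2}-1}$ terms to component self-intersections (Lemma \ref{int}, Corollary \ref{geometric0}) is a mislabel --- no self-intersection numbers enter this proof; that term comes from the Euler factors $\prod_{p\vert N}(1-p^{-2s})$ in $C_N(s)$ and combines with part of the vertical coefficient $\frac{n-1-np}{p-1}$ to produce the $-\frac{\log p}{1+p}$ half of $-\frac{1}{2}\beta_p^{\pr}(0)$, exactly as your final bookkeeping (which is numerically consistent) requires.
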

\begin{proof}
Recall that we have
\begin{equation*}
    \widehat{\Delta}_{N}=12\varphi(N)\hod-\sum\limits_{p\vert N}\widehat{f}_p(N).
\end{equation*}
By Proposition \ref{heightsofspecial}, we only need to compute $\langle\widehat{\mathcal{Z}}(t,y,\la),\widehat{f}_p(N)\rangle$ for every prime $p\vert N$. Let $n=\nu_p(N)$ be the $p$-adic valuation of the number $N$. Since the arithmetic special divisor $\widehat{\mathcal{Z}}(t,y,\la)$ is invariant under the Atkin-Lehner involution $W_N^{\ast}$, we have
\begin{align*}
    \langle\widehat{\mathcal{Z}}(t,y,\la),\widehat{f}_p(N)\rangle=\langle\widehat{\mathcal{Z}}(t,y,\la),W_N^{\ast}\widehat{f}_p(N)\rangle=\frac{1}{2}\langle\widehat{\mathcal{Z}}(t,y,\la),\widehat{f}_p(N)+W_N^{\ast}\widehat{f}_p(N)\rangle
\end{align*}
By formula (\ref{fp}), we have
\begin{equation*}
    \widehat{f}_p(N)+W_N^{\ast}\widehat{f}_p(N)=12p^{n-1}\varphi(N_p)\left(\widehat{\mathcal{X}}_p^{n}(N)+\widehat{\mathcal{X}}_p^{-n}(N)+\left(n(1-p)-2\right)\widehat{\textup{div}}(p)\right).
\end{equation*}
Recall that we have calculated $\langle\widehat{\mathcal{Z}}(t,y,\la),\widehat{\mathcal{X}}_p^{n}(N)\rangle$ in Proposition \ref{topdegree}, then
\begin{align*}
    \langle\widehat{\mathcal{Z}}(t,y,\la),\widehat{f}_p(N)\rangle\cdot q^{t}=\left(n-1-np\right)\frac{12}{p-1}&\mathcal{E}_{t}(\tau,1,\la)\log(p)\\&-\sum\limits_{i=1}^{[n/2]}\frac{12p^{n-1}(p-1)}{\varphi(p^{n-2i})}\mathcal{E}_{t}(\tau,1,\Delta(Np^{-2i}))\log(p).
\end{align*}
By definition of the function $g_p(k)$ in (\ref{gp}) and $C_N(s)$ in the formula (\ref{twiestedeis}), we can easily verify that
\begin{equation*}
    \sum\limits_{i=1}^{[n/2]}\frac{p^{n-1}(p-1)}{\varphi(p^{n-2i})}\mathcal{E}_{t}(\tau,1,\Delta(Np^{-2i}))\log(p)=C_N(1)E_t(\tau,1,\la)\cdot\frac{p^{-1}g_p(0)}{1-p^{-1}g_p(0)}\log(p).
\end{equation*}
Therefore,
\begin{align}
    \varphi(N)&\langle\widehat{\mathcal{Z}}(t,y,\la),\hod\rangle\cdot q^{t}\label{step1}\\&=\mathcal{E}_t^{\prime}(\tau,1,\la)+\sum\limits_{p\vert N}\left(\frac{n-1-np}{p-1}\mathcal{E}_t(\tau,1,\la)-C_N(1)E_t(\tau,1,\la)\cdot\frac{p^{-1}g_p(0)}{1-p^{-1}g_p(0)}\log(p)\right)\notag\\
    &=C_N(1)\left(E_t^{\prime}(\tau,1,\la)+\left(\frac{C_N^{\prime}(1)}{C_N(1)}+\sum\limits_{p\vert N}\left(\frac{n-1-np}{p-1}-\frac{p^{-1}g_p(0)}{1-p^{-1}g_p(0)}\right)\log(p)\right)E_t(\tau,1,\la)\right)\notag.
\end{align}
By simple calculations, we get
\begin{equation}
    \frac{C_N^{\prime}(s)}{C_N(s)}=\frac{1}{s}+\frac{2\Lambda^{\prime}(2s)}{\Lambda(2s)}+2\sum\limits_{p\vert N}\frac{p^{-2s}\log(p)}{1-p^{-2s}}+\frac{3}{2}\log(N).
    \label{step2}
\end{equation}
Moreover, by the definition of the function $\beta_{p}(s)$ in (\ref{beta}), for any prime $p$ dividing $N$,
\begin{equation}
    \beta^{\prime}_p(0)=\left(\frac{2}{1+p}+\frac{2g_p(-1)}{1-g_p(-1)}\right)\log(p)=\left(\frac{2}{1+p}+\frac{2p^{-1}g_p(0)}{1-p^{-1}g_p(0)}\right)\log(p),
    \label{step3}
\end{equation}
here we use the functional equation $g_p(k)=p^{2k+1}g_p(-k-1)$ proved in Lemma \ref{functionaleq}, then the Corollary is proved by combining formulas (\ref{step1}), (\ref{step2}) and (\ref{step3}).
\end{proof}

\section{Proof of the arithmetic Siegel-Weil formula on $\Xc$}
In this last section, we give a proof of the Theorem \ref{mainglobal}. As mentioned before, we will prove it term-by-term, i.e., for any symmetric matrix $T\in\textup{Sym}_2(\mathbb{Q})$, we prove that for any $\mathsf{z}=\mathsf{x}+i\mathsf{y}\in\mathbb{H}_2$,
\begin{equation}
    \widehat{\textup{deg}}\,\widehat{\mathcal{Z}}(T,\mathsf{y})\cdot q^{T}=\frac{\psi(N)}{24}\cdot\partial\textup{Eis}_T(\mathsf{z},\la^{2}).
\end{equation}
\subsection{The rank of $T$ is 2} In this case, the matrix $T$ is nonsingular. When $T$ is positive definite, the Theorem \ref{mainglobal} is the main result in \cite{Zhu23}. When $T$ is not positive definite, the Theorem \ref{mainglobal} has been proved in \cite[\S4.2]{SSY22}.
\subsection{The rank of $T$ is 1}
\noindent\textit{Proof of Theorem \ref{mainglobal}}. By the invariance properties (\ref{inv1}) and (\ref{inv2}), we reduce the proof to the case that 
\begin{equation*}
    T=\begin{pmatrix}
        0 & 0\\ 
        0 & t
    \end{pmatrix}\,\,\textup{and}\,\,\mathsf{y}=\begin{pmatrix}
        y_1 & 0\\
        0 & y_2
    \end{pmatrix}.
\end{equation*}
where $t$ is a nonzero integer and $y_1,y_2$ are positive real numbers.
\par
In this case, by the definition of the element $\widehat{\mathcal{Z}}(T,\mathsf{y})$, we have
\begin{equation*}
    \widehat{\mathcal{Z}}(T,\mathsf{y})=-2\widehat{\mathcal{Z}}(t,y_2,\la)\cdot\hod-(0,\log y_1\cdot\delta_{\mathcal{Z}^{\ast}(t,y_2,\la)_{\C}}).
\end{equation*}
Then the identity in the Theorem \ref{mainglobal} follows from combining Corollary \ref{jiexicezhuyaos} and Corollary \ref{jihecezhuyaos}.
\subsection{The rank of $T$ is 0}\noindent\textit{Proof of Theorem \ref{mainglobal}}. Notice that $\langle\widehat{\omega},\widehat{\mathcal{X}}_{p}(N)\rangle=\langle W_{N}^{\ast}\widehat{\omega},W_{N}^{\ast}\widehat{\mathcal{X}}_{p}(N)\rangle=\langle\widehat{\omega},-\widehat{\mathcal{X}}_{p}(N)\rangle$, hence $\langle\widehat{\omega},\widehat{\mathcal{X}}_{p}(N)\rangle=0$, then
    \begin{align*}
        \widehat{\omega}\cdot \widehat{\omega}=\langle-2\widehat{\omega}_{N}+\widehat{\mathcal{X}}_{p}(N),\widehat{\omega}\rangle=-2\langle\widehat{\omega}_{N},\widehat{\omega}\rangle=-2\langle\hod,-2\hod+\sum\limits_{p\vert N}\widehat{\mathcal{X}}_p(N)\rangle.
    \end{align*}
    We also notice that
    \begin{align*}
        \langle\hod,\widehat{\mathcal{X}}_p(N)\rangle=\langle W_N^{\ast}\hod,W_N^{\ast}\widehat{\mathcal{X}}_p(N)\rangle=\langle\hod-\widehat{\mathcal{X}}_p(N),-\widehat{\mathcal{X}}_p(N)\rangle=-\langle\hod,\widehat{\mathcal{X}}_p(N)\rangle+\langle\widehat{\mathcal{X}}_p(N),\widehat{\mathcal{X}}_p(N)\rangle.
    \end{align*}
    Therefore $\widehat{\omega}\cdot \widehat{\omega}=4\langle\hod,\hod\rangle-\langle\widehat{\mathcal{X}}_p(N)\rangle,\widehat{\mathcal{X}}_p(N)\rangle$.
    By Example \ref{wnself}, we know that 
    \begin{equation*}
        \langle\widehat{\omega}_{N},\widehat{\omega}_{N}\rangle=\frac{\psi(N)}{24}\left(\frac{1}{2}-\frac{\Lambda^{\pr}(-1)}{\Lambda(-1)}\right).
    \end{equation*}
   Therefore Corollary \ref{geometric0} implies that
   \begin{equation*}
       \widehat{\omega}\cdot \widehat{\omega}=\frac{\psi(N)}{24}\left(2-4\frac{\Lambda^{\pr}(-1)}{\Lambda(-1)}-\sum\limits_{p\vert N}\frac{-np^{n+1}+2p^{n}+np^{n-1}-2}{p^{n-1}(p^{2}-1)}\cdot\log(p)\right)
   \end{equation*}
   Finally, we observe that 
   \begin{equation*}
       \widehat{\textup{deg}}(0,\log\textup{det}\mathsf{y}\cdot[\Omega])=\frac{\log\textup{det}\mathsf{y}}{2}\int_{\Xc_{\mathbb{C}}}\frac{\textup{d}x\wedge\textup{d}y}{2\pi y^{2}}=\frac{\psi(N)}{24}\cdot\log\textup{det}\mathsf{y}.
   \end{equation*}
   The theorem follows from the above computations and Proposition \ref{analytic0}.

\end{document}